\theoremstyle{plain}
\newtheorem{theorem}{Theorem}[section]
\newtheorem{proposition}[theorem]{Proposition} 
\newtheorem{lemma}[theorem]{Lemma}
\theoremstyle{definition} 
\newtheorem{definition}[theorem]{Definition}
\newtheorem{example}[theorem]{Example}
\newtheorem{remark}[theorem]{Remark}
\newtheorem{problem}[theorem]{Problem}
\def\C{{\mathcal C}}
\def\G{{\mathcal G}}
\def\Had{{\mathcal H}}
\def\F{{\mathbb F}}
\def\Z{{\mathbb Z}}
\def\W{{\mathsf W}}
\def\He{{\mathsf H}}
\def\P{{\mathcal P}}
\def\PHad{{\mathcal P\mathcal H}}
\def\B{{\mathcal B}}
\def\PG{{\rm PG}}
\def\AG{{\rm AG}}
\def\Prod{{\rm Prod}}
\def\Rep{{\rm Rep}}
\def\GammaL{\Gamma{\rm L}}
\def\PGammaL{{\rm P}\Gamma{\rm L}}
\def\Gpi{G_{{\mathbf 0},i}^{\Q_i^\times}}
\def\I{{\rm I}}
\def\L{{\mathcal L}}
\def\N{{\mathcal N}}
\def\NR{{\mathcal N\mathcal R}}
\def\M{{\mathcal M}}
\def\O{{\mathcal O}}
\def\Q{{\mathcal Q}}
\def\Qu{{\mathscr Q}}
\def\Quu{{\mathsf Q}}
\def\V{{\mathcal V}}
\def\SS{{\mathcal S}}
\def\S{{\mathcal S}}
\def\T{{\mathcal T}}
\def\R{{\mathcal R}}
\def\SG{{\mathcal S\mathcal G}}
\def\EG{{\mathcal E\mathcal G}}
\def\RM{{\mathcal R\mathcal M}}
\def\PRM{{\mathcal P\mathcal R\mathcal M}}
\def\J{{\mathcal J}}
\def\mg{{\rm M}}
\def\alt{{\rm A}}
\def\GL{{\rm GL}}
\def\GaL{\Gamma {\rm L}}
\def\la{\langle}
\def\ra{\rangle}
\def\SL{{\rm SL}}
\def\s{{\rm S}}
\def\supp{{\rm supp}}
\def\PSL{{\rm PSL}}
\def\ASL{{\rm ASL}}
\def\PGaL{{\rm P}\Gamma {\rm L}}
\def\PSiL{{\rm P}\Sigma {\rm L}}
\def\GaL{\Gamma {\rm L}}
\def\PSU{{\rm PSU}}
\def\Ree{{\rm Ree}}
\def\Sz{{\rm Sz}}
\def\PGU{{\rm PGU}}
\def\HS{{\rm HS}}
\def\Co{{\rm Co}}
\def\mg{{\rm M}}
\def\Sp{{\rm Sp}}
\def\GL{{\rm GL}}
\def\tr{{\rm tr}}
\def\wt{{\rm wt}}
\def\Sym{{\rm Sym}}
\def\soc{{\rm soc}}
\def\Aut{{\rm Aut}}
\def\AGL{{\rm AGL}}
\def\Diag{{\rm Diag}}
\def\rank{{\rm rank}}
\def\la{\langle}
\def\ra{\rangle}
\renewcommand{\b}{\mathbf}%place custom commands and macros here
\title{Group actions on codes in graphs}
\author{Daniel R. Hawtin and Cheryl E. Praeger}
\begin{document}

%\frontmatter

%\maketitle%This is a placeholder titlepage, it will not be final.

%%%Placeholder for front matter

%\halftitle

%\booktitle

%\locpage

%\include{frontmatter/dedication}
%\cleardoublepage
\setcounter{page}{7} %previous pages reserved for frontmatter to be added later
%%\listoffigures
%%\listoftables
%\include{frontmatter/foreword}
%\include{frontmatter/preface}
%\include{frontmatter/contributor}

\mainmatter
%\chaptertitle

\setcounter{chapter}{4}
%\part{This is a Part}
%\chapterauthors{Daniel R. Hawtin and Cheryl E. Praeger}
\chapter[Group actions on codes in graphs]{Group actions on codes in graphs}
\vspace{-2cm}
%\begin{center}
 \textsf{ {\Large Daniel R. Hawtin}\quad and\quad {\Large Cheryl E. Praeger} }
%\end{center}

\tableofcontents
% \thanks{This work forms part of the ARC Discovery Grant project 
% DP200100080 and ??\ \ 
% \textbf{2010 Mathematics Subject Classification:} ???.\ \ 
% \textbf{Key words:}Completely transitive codes, neighbour-transitive codes, ...}
% \date{today}

\section{Introduction: Codes in Graphs}\label{sect:intro}

Traditionally error-correcting codes are modelled as subsets $\C$ of vectors in a finite vector space $V=\mathbb{F}_q^n$ over a field of order $q$, where the vectors are represented as $n$-dimensional row vectors with entries from $\mathbb{F}_q$, and distance between two vectors is the number of entries where they differ. 
The \emph{minimum distance} $\delta$ of $\C$ is the smallest distance between two codewords, and the code is called \emph{perfect} if the balls of radius $\lfloor (\delta-1)/2\rfloor$ partition the space $V$.  Much work was devoted by many researchers to understanding and finding new perfect codes, and eventually, Tiet{\"a}v{\"a}inen~\cite{tietavainen1973nonexistence}, and Zinoviev and Leontiev~\cite{Zinoviev73thenonexistence}, independently showed that  the only non-trivial perfect codes over finite fields are `Hamming-like' codes (perfect single-error correcting codes), and the famous Golay codes  in $\mathbb{F}_2^{23}$ and $\mathbb{F}_3^{11}$, (see for example, the 1975 survey on perfect codes by Van Lint \cite{VL75}). Larger families of codes with desirable properties were sought.  
For example, in 1971, Semakov, Zinoviev, and Zaitsev\cite{SemZinZai71} introduced  a family of codes, properly containing the perfect codes, which they called uniformly packed codes, and which retained strong regularity properties for  `packing' codewords in the vector space.  Then independently, around 1973, Norman Biggs~\cite{Biggs1973289, Biggs1973b} and Philippe Delsarte \cite{delsarte1973algebraic, delsarte1974} suggested a complete change of focus.
Biggs introduced the concept of a perfect code in a graph, while Delsarte developed a general theory of association schemes in coding theory. This led to the notion of {\em codes in graphs}, which is the theme of this chapter. 

The chapter will provide an overview of recent work on codes in graphs which are neighbour-transitive, or have stronger symmetry properties. Most of the graphs considered are distance-regular, and our particular focus is codes in Hamming graphs since the current work on neighbour-transitive codes in Johnson graphs (a particular study advocated by Delsarte) has been recently covered in the 2021 survey by the second author~\cite{praeger2021codes}\footnote{This was the Clay Lecture at the British Combinatorial Conference 2021, and a version of the survey may be downloaded from \texttt{https://www.claymath.org/events/british-combinatorial-conference-2021/}}.  In Section~\ref{sec:ct} we give a brief historical account of completely transitive codes in Hamming graphs and Johnson graphs which motivated the more general developments on neighbour-transitivity, particularly in Hamming graphs, surveyed in Section~\ref{sec:hamming}. We also summarise very recent work on codes in other graphs, such as Kneser graphs and the incidence graphs of generalised quadrangles, as well as state some results for Grassmann graphs and bilinear forms graphs (the $q$-analogues of the Johnson and Hamming graphs).

\subsection{Codes in graphs: neighbour-transitivity and complete transitivity}\label{sec:nt}

A {\em graph} $\Gamma=(V, E)$ consists of a set $V$ of {\em vertices} and a set $E$ of {\em edges} (unordered pairs of vertices), and  
a {\em code} $\C$ in $\Gamma$ is defined as a subset of $V$. {\em Distance} between distinct {\em codewords} (elements of $\C$) is given by the length of a shortest path between them in the graph $\Gamma$, and the {\em minimum distance} $\delta(\C)$ is the minimum distance between distinct codewords.  This viewpoint also suggests a natural measure for the symmetry of a code $\C$ in $\Gamma$. 
Symmetry of the graph $\Gamma$ is measured by its {\em automorphism group} $\Aut(\Gamma)$,  namely the subgroup of permutations of $V$ which leave invariant the edge-set $E$. We take as the   \emph{automorphism group} $\Aut(\C)$ of the code $\C$ the subgroup of $\Aut(\Gamma)$ consisting of all elements that leave $\C$  invariant (setwise).
Delsarte suggested choosing $\Gamma$ to be a {\em distance-regular graph}: that is to say, 
 for any (possibly equal) vertices $v$ and $w$, the number of vertices at distance $j$ from $v$ and at distance $k$ from $w$ depends only upon $j, k$, and the distance between $v$ and $w$.
He defined a special type of code, now 
called a completely-regular code, `which enjoys combinatorial (and often 
algebraic) symmetry akin to that observed for perfect codes' (see \cite[page 1]{Martin04}). Again, disappointingly, not many examples of completely regular codes
were found with good error-correcting properties -- that is to say, with large distance between distinct codewords (see the comments and references in \cite[Section 1.1]{praeger2021codes}). 

A useful notion for exploring a code $\C$ in a graph $\Gamma$ is the \emph{distance partition} of $\C$. This is the partition:
%
%\begin{equation}\label{def:covrad}
%\begin{array}{l}
%     \{\C_0,\C_1,\ldots,\C_\rho\} \mbox{ of $V$ such that $\C_0=C$ and, for $i\geq1$, $\C_i$ is the set of all vertices}  \\
%     \mbox{ $\gamma$ such that the minimum distance between $\gamma$ and a codeword is equal to $i$.}\\ 
%     \mbox{The largest integer $i$ such that $\C_i\ne \emptyset$ is called the  \emph{covering radius}  of $\C$, }\\
%     \mbox{and is denoted $\rho$ and sometimes $\rho(\C)$.} 
%\end{array}    
%\end{equation}
%
\begin{equation}\label{def:covrad}
\text{
\begin{minipage}{14cm}
 $\{\C_0,\C_1,\ldots,\C_\rho\}$ of $V$ such that $\C_0=\C$ and, for $i\geq1$, $\C_i$ is the set of all vertices $\gamma$ such that the minimum distance between $\gamma$ and a codeword is equal to $i$. The largest integer $i$ such that $\C_i\ne \emptyset$ is called the  \emph{covering radius}  of $\C$, and is denoted $\rho$ and sometimes $\rho(\C)$.
\end{minipage}
}
\end{equation}
    
% $\{\C_0,\C_1,\ldots,\C_\rho\}$ of $V$ such that $\C_0=C$ and, for $i\geq1$, $\C_i$ is the set of all vertices $\gamma$ such that the minimum distance between $\gamma$ and a codeword is equal to $i$. The largest integer $i$ such that the subset $\C_i$ is non-empty is called the \emph{covering radius} of $\C$, and is denoted $\rho$ and sometimes $\rho(\C)$.
\noindent
Since graph automorphisms preserve distance, the automorphism group $\Aut(\C)$ fixes each of the subsets $\C_i$ setwise, and so each $\C_i$ is a union of  $\Aut(\C)$-orbits. A natural, but very strong, symmetry condition to place on a code is to require that each $\C_i$ is a single orbit of $\Aut(\C)$. Codes with this property are called \emph{completely transitive}, and this family of codes was among the first family of codes to be studied in various families of graphs. We discuss some of the results about completely transitive codes in Section~\ref{sec:ct}.

Most recent studies of codes in graphs have focused on strictly larger families than the completely regular codes or the completely transitive codes, where their strict regularity or symmetry conditions have been replaced by more `local' conditions. This new approach perhaps dates back to a discussion between the second author and (Bob) Liebler in 2005 (leading  to~\cite{liebler2014neighbour}). In the context of codes in Johnson graphs, Bob suggested that the stringent regularity conditions imposed for complete regularity, could be replaced by  a `local transitivity' property. This led to the notion of a
\emph{neighbour-transitive code} in an arbitrary graph $\Gamma$, that is to say, a code $\C$ such that 
$\Aut(\C)$ is transitive both on $\C$ and on the set $\C_1$ of the distance partition. The vertices in $\C_1$ are called  {\em code-neighbours} (the non-codewords that are adjacent in $\Gamma$ to some codeword).  More recently, for any positive integer $s\leq \rho(\C)$, a code $\C$ in $\Gamma$ is called  \emph{$s$-neighbour-transitive} if each of $\C_0,\C_1,\dots,\C_s$ is an $\Aut(\C)$-orbit. Thus the neighour-transitive codes are $1$-neighbour-transitive, and the completely transitive codes are $\rho(\C)$-neighbour-transitive. 

% ============================

% The chapter will provide an overview of recent work on codes in graphs which are neighbour-transitive, or have stronger symmetry properties. We will focus on codes in Hamming graphs since the current work on neighbour-transitive codes in Johnson graphs (a particular study advocated by Delsarte) has been recently covered in the 2021 survey by the second author~\cite{praeger2021codes}. 

%======================

%This to go later

%Need to define `distance partition' of $\C$ in $\Gamma$

%{\color{red} Dan, us this concept of any interest in Hamming graphs? It was surprisingly useful for Johnson graphs? It is less than 2-NT. I'm happy to delete it but thought I'd ask.} \textcolor{blue}{Something like this is true in general, see Theorem~\ref{lem:largeDeltaDistTrans}. Maybe the SIT-code stuff should go in the next section's discussion for Johnson graphs?}
 
\section{Fundamental concepts: neighbour-transitive codes}

In this section we present some general concepts and results regarding codes in graphs. We begin with a discussion in arbitrary graphs before specialising to the Hamming graphs and other specific graph families.

\subsection{Parameters and regularity properties for codes in graphs}\label{sec:param}

Let $\Gamma=(V,E)$ be a graph. Note that we will always assume that $\Gamma$ is simple, finite, undirected and connected. Let $\alpha,\beta\in V$. Denote by $d(\alpha,\beta)$ the \emph{distance} in $\Gamma$ between $\alpha$ and $\beta$, that is, the length of the shortest path between $\alpha$ and $\beta$. Define $\Gamma_i(\alpha)=\{\gamma\in V\mid d(\alpha,\gamma)=i\}$. Furthermore, the \emph{ball} $B_i(\alpha)$ of radius $i$ centered at $\alpha$ is defined to be $\bigcup_{j=0}^i \Gamma_j(\alpha)$.

Let $\C$ be a code in a graph $\Gamma=(V,E)$. We refer to elements of $\C$ as \emph{codewords}. A code $\C$ such that $|\C|\leq 1$ or $\C=V(\Gamma)$ is called \emph{trivial}, and we will often be assuming without statement that $\C$ is non-trivial. If $\C'$ is a subset of $\C$ then we say that $\C'$ is a \emph{subcode} of $\C$. The \emph{minimum distance} $\delta$ of $\C$ is the smallest distance between a pair of distinct elements of $\C$. The \emph{error-correction capacity} $e$ of $\C$ is defined to be the largest value of $i$ for which, given distinct $\alpha,\beta\in \C$, the balls $B_i(\alpha)$ and $B_i(\beta)$ are disjoint. These two parameters are related by $e=\lfloor (\delta-1)/2\rfloor$. The \emph{covering radius} $\rho$ of $\C$ is the smallest value of $i$ for which $\bigcup_{\alpha\in \C} B_i(\alpha)=V$. %The \emph{distance partition} of $\C$ is the partition $\{\C=\C_0,\C_1,\ldots,\C_\rho\}$ of $V$, where $\C_i$ is defined to be the set of all elements $\gamma$ of $V$ such that $i=\min\{d(\alpha,\gamma)\mid \alpha\in\C\}$. 

\begin{lemma}\label{lemDisjointUnion}
 Let $\C$ be a code with error-correction capacity $e$ in a graph $\Gamma$ and let $i\leq e$. Then the following hold.
 \begin{enumerate}[$(1)$]
     \item For each $\gamma\in \C_i$ there exists a unique $\alpha\in \C$ such that $\gamma\in\Gamma_i(\alpha)$.
     \item $\C_i$ is the disjoint union $\bigcup_{\alpha\in\C} \Gamma_i(\alpha)$.
     \item $\Gamma_i(\alpha)\neq\emptyset$ for all $\alpha\in \C$.
 \end{enumerate}
\end{lemma}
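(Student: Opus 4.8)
The plan is to prove the three parts in order, since each builds on the previous one, and the essential content is just the triangle inequality together with the definition of $e$ as the largest radius for which balls around distinct codewords are disjoint.

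\textbf{Part (1).} Let $\gamma\in\C_i$ with $i\le e$. By the definition of the distance partition, there is at least one codeword $\alpha\in\C$ with $d(\alpha,\gamma)=i$, so $\gamma\in\Gamma_i(\alpha)\subseteq B_i(\alpha)$; existence is immediate. For uniqueness, suppose $\alpha,\alpha'\in\C$ both satisfy $\gamma\in\Gamma_i(\alpha)\cap\Gamma_i(\alpha')$. Then $\gamma\in B_i(\alpha)\cap B_i(\alpha')$, so these two balls are not disjoint. Since $i\le e$ and $e$ is the largest value for which balls of that radius about \emph{distinct} codewords are disjoint, we must have $\alpha=\alpha'$. (Here one should note that for $i\le e$ the balls $B_i(\alpha),B_i(\alpha')$ about distinct codewords are indeed disjoint — this is monotonicity of the disjointness property in $i$, which is clear since $B_i\subseteq B_e$.)

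\textbf{Part (2).} Each $\Gamma_i(\alpha)$ with $\alpha\in\C$ is contained in $\C_i$: if $\gamma\in\Gamma_i(\alpha)$ then $d(\alpha,\gamma)=i$ and, were there a codeword at distance $<i$ from $\gamma$, then $\gamma$ would lie in $B_{i-1}(\alpha)\cap B_{i-1}(\alpha'')$ for that closer codeword $\alpha''$, contradicting disjointness at radius $i-1\le e$ (when $i\ge 1$; the case $i=0$ is trivial). Hence $\bigcup_{\alpha\in\C}\Gamma_i(\alpha)\subseteq\C_i$. The reverse inclusion is the existence half of Part (1). That the union is disjoint is exactly the uniqueness half of Part (1). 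This gives $\C_i=\bigcup_{\alpha\in\C}\Gamma_i(\alpha)$ as a disjoint union.

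\textbf{Part (3).} Here I would use connectedness of $\Gamma$. Fix $\alpha\in\C$; since $\C$ is non-trivial, $|\C|\ge 2$, so there is a codeword $\alpha'\ne\alpha$, and since $\C\ne V$ one could also argue via covering radius, but the cleanest route is: because $\Gamma$ is connected, $\Gamma_j(\alpha)\ne\emptyset$ for all $j$ up to the eccentricity of $\alpha$, and $i\le e\le\delta-1\le$ (diameter), so in particular $\Gamma_i(\alpha)\ne\emptyset$ provided $i$ does not exceed the eccentricity of $\alpha$. The point to pin down is that the eccentricity of every codeword is at least $e$: indeed if $\alpha'\in\C$ with $\alpha'\ne\alpha$ then $d(\alpha,\alpha')\ge\delta\ge 2e+1>e$, so there is a vertex at distance exactly $i$ from $\alpha$ along a shortest $\alpha$--$\alpha'$ path for every $i\le d(\alpha,\alpha')$, hence for every $i\le e$. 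Thus $\Gamma_i(\alpha)\ne\emptyset$.

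The only mild obstacle is bookkeeping the edge cases ($i=0$, and ensuring a non-trivial code has a second codeword so that Part (3) has something to push against), together with making explicit the monotonicity remark that disjointness of balls of radius $e$ implies disjointness of balls of every smaller radius; none of these is deep. The substantive idea is entirely contained in the definition of $e$.
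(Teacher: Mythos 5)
Your argument follows essentially the same route as the paper's: disjointness of the balls $B_i(\alpha)$ for $i\le e$ yields parts (1) and (2), and a shortest path between two distinct codewords yields part (3). One step in your Part (2) is written incorrectly: if $\gamma\in\Gamma_i(\alpha)$ and some codeword $\alpha''$ lies at distance $<i$ from $\gamma$, then $\gamma$ does \emph{not} lie in $B_{i-1}(\alpha)$ (since $d(\alpha,\gamma)=i$ exactly), so the claimed intersection $B_{i-1}(\alpha)\cap B_{i-1}(\alpha'')$ never occurs; the correct contradiction is that $\gamma\in B_i(\alpha)\cap B_i(\alpha'')$ with $\alpha\ne\alpha''$ and $i\le e$, which is precisely the paper's argument. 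With that one-line repair the proof is complete.
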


\begin{proof}
All the assertions hold for a trivial code $\C$, with $i=e=0$, so we may assume that $\C$ is non-trivial. Then its minimum distance $\delta=\delta(\C)$ is a positive integer.
 Since every element of $\C_i$ is at distance $i$ from some element of $\C$ we have $\C_i\subseteq \bigcup_{\alpha\in\C} \Gamma_i(\alpha)$. Let $\alpha\in \C$ and $\gamma\in\Gamma_i(\alpha)$. Consider a codeword $\beta\in\C$ such that $\alpha\neq \beta$. If $d(\beta,\gamma)\leq i$, then $\gamma\in B_i(\alpha)\cap B_i(\beta)$ and hence $d(\alpha,\beta)\leq 2i\leq 2e<\delta$, which is  a contradiction. Thus  $d(\beta,\gamma)>i$.  In particular, $\alpha$ is the unique codeword in $\C$ at distance $i$ from $\gamma$, and part (1) holds. This implies moreover that $\gamma\in\C_i$. Thus $\Gamma_i(\alpha)\subseteq  \C_i$ for each $\alpha\in\C$, and hence $\bigcup_{\alpha\in\C} \Gamma_i(\alpha)= \C_i$. Now, if $\beta_1$ and $\beta_2$ are distinct codewords in $\C$, then the assumption $i\leq e$ implies that $B_i(\alpha)\cap B_i(\beta)=\emptyset$ so that $\Gamma_i(\alpha)\cap\Gamma_i(\beta)=\emptyset$. Thus the union is disjoint, and part (2) is proved. 
 
 Consider distinct codewords $\alpha,\beta\in\C$, and let $(\gamma_0,\gamma_1,\dots, \gamma_r)$ be a path in $\Gamma$ of length $r=d(\alpha,\beta)$ from $\alpha=\gamma_0$ to $\gamma_r=\beta$. It follows from the  minimality in the definition of $d(\alpha,\beta)$ that $d(\alpha, \gamma_j)=j$ and $d(\beta, \gamma_{r-j})=j$ whenever $0\leq j\leq r$. If $r\leq i$, then taking $j=r$ we have $\beta\in B_i(\alpha)\cap B_i(\beta)$, which is a contradiction, since $i\leq e$. Hence $r>i$ and  $\gamma_i\in \Gamma_i(\alpha)$, proving part (3). 
 \end{proof}

\begin{definition}\label{CRdefinition}
 For a code $\C$ in a graph $\Gamma$ and a non-negative integer $s$ at most the covering radius $\rho$, $\C$ is said to be \emph{$s$-regular} if for each $i\in\{0,1,\ldots,s\}$ there exist non-negative integers $a_i$, $b_i$ (if $s<\rho$), and $c_i$ (if $i>0$), such that for each vertex $\alpha\in \C_i$ there are precisely:
 \begin{enumerate}[(1)]
     \item $a_i$ vertices in $\Gamma_1(\alpha)\cap \C_i$,
     \item $b_i$ vertices in $\Gamma_1(\alpha)\cap \C_{i+1}$, and,
     \item $c_i$ vertices in $\Gamma_1(\alpha)\cap \C_{i-1}$,
 \end{enumerate}
 and $a_i,b_i,c_i$ depend only on $i$, and not on the particular choice of $\alpha$. If $s=\rho$ then $\C$ is said to be   \emph{completely regular}.
\end{definition}

It is worth mentioning that a graph $\Gamma$ is distance-regular, according to the usual definition, if and only if, for every vertex $\alpha$ of $\Gamma$, the singleton set $\{\alpha\}$ is a completely regular code in $\Gamma$.

\subsection{Symmetry of codes in graphs}\label{sec:sym}

The \emph{symmetric group} $\Sym(V)$ is the group of all permutations of $V$, and, for $\alpha\in V$ and $g\in G$, we write $\alpha^g$ for the image of $\alpha$ under $g$. 
Each element $g\in\Sym(V)$ permutes subsets of $V$ in a natural way, namely for $U\subseteq V$, the image $U^g$ of $U$ under $g$ is the set $\{\alpha^g\mid\alpha\in U\}$ of images for the elements of $U$.
The \emph{setwise stabiliser} of $U$ is the set $\Sym(V)_U:=\{ g\in\Sym(V)\mid U^g=U\}$, and this is a subgroup of $\Sym(V)$. In particular, for each $i< |V|$, $g$ permutes the $i$-element subsets of $V$ among themselves and, for a set $E$ of $i$-element subsets, we say that $g$ \emph{leaves $E$ invariant} if $U^g\in E$ for all $U\in E$. 
We often deal with \emph{permutation groups on $V$}, that is, subgroups $G\leq\Sym(V)$. A permutation group is \emph{transitive} on $V$ if, for all $u,v\in V$, there exists $g\in G$ such that $u^g=v$. For $1<i<v$, we say that $G$ is \emph{$i$-transitive} on $V$ if $G$ is transitive on $V$ and, for $v\in V$, the stabiliser $G_v$ is $(i-1)$-transitive on $V\setminus\{v\}$. Also $G$ is said to be \emph{$i$-homogeneous} on $V$ if $G$ is transitive on the set of $i$-element subsets of $V$. 

Let $\Gamma=(V,E)$ be a graph. Then an \emph{automorphism} of $\Gamma$ is a permutation $g\in\Sym(V)$ such that $g$ leaves the edge set $E$ invariant. The set of all automorphisms of $\Gamma$ forms a subgroup $\Aut(\Gamma)$ called the \emph{automorphism group} of $\Gamma$.
\begin{equation}\label{def:autC}
\text{
\begin{minipage}{14cm}
If $\C$ is a code in $\Gamma$ then, as introduced in Section~\ref{sect:intro}, the \emph{automorphism group} of $\C$ is the setwise stabiliser  $\Aut(\C)$ of $\C$ in $\Aut(\Gamma)$, that is to say, $\Aut(\C)=\Sym(V)_\C\cap \Aut(\Gamma)$.
\end{minipage}
}
\end{equation}

Two codes are \emph{equivalent} if there exists an automorphism of $\Gamma$ mapping one to the other. Note that equivalent codes have many of the same properties, for instance, the same minimum distance, the same covering radius and isomorphic automorphism groups. Hence, we will often be interested in codes only up to equivalence.

The following concepts are the main focus of this chapter and may be viewed as algebraic analogues of Definition~\ref{CRdefinition}.

\begin{definition}\label{defsneighbourtrans}
 Let $\C$ be a code with covering radius $\rho$ in a graph $\Gamma$, let $G\leq \Aut(\C)$, and let $s\in\{1,\ldots,\rho\}$. Then we make the following definitions.
 \begin{enumerate}[(1)]
  \item $\C$ is \emph{$(G,s)$-neighbour-transitive} if $G$ acts transitively on each of the sets $\C,\C_1,\ldots, \C_s$.
  \item $\C$ is \emph{$G$-neighbour-transitive} if $\C$ is $(G,1)$-neighbour-transitive.
  \item $\C$ is \emph{$G$-completely transitive} if $\C$ is $(G,\rho)$-neighbour-transitive.
 \end{enumerate}
 Moreover, we say that $\C$ is \emph{neighbour-transitive}, \emph{$s$-neighbour-transitive}, or \emph{completely transitive}, respectively, if $\C$ is $\Aut(\C)$-neighbour-transitive, $(\Aut(\C),s)$-neighbour-transitive, or $\Aut(\C)$-completely transitive, respectively.
\end{definition}

It turns out that many famous codes have these symmetry properties. We mention several of them in Example~\ref{exam:introHammingExamples}. We also give  in Example~\ref{exam:c2} a simple example of an explicit infinite family of completely transitive codes.

%\textcolor{red}{Have put some codes in the following example for now, wasn't sure if a table would make sense. Do we want more? Dan, they look good. Thanks. I revised the text above. Is it OK? Yep :)}

\begin{example}\label{exam:introHammingExamples}
 The following well-known codes are completely-transitive codes in the Hamming graphs (see Section~\ref{sec:HammingPrelim} for introductory material relating to the Hamming graphs and see \cite[Sections~5.1 and 5.2]{borges2019completely} for more examples):
 \begin{enumerate}[(1)]
  \item The perfect binary Golay code is a $12$-dimensional vector-subspace of $\F_2^{23}$ with minimum distance $7$ and covering radius $3$, and its extended code is a $12$-dimensional vector-subspace of $\F_2^{24}$ with minimum distance $8$ and covering radius $4$. These codes are $G$-completely transitive for $G=T\rtimes \mg_{23}$ and $G=T\rtimes \mg_{24}$, respectively, where $T$ is the group of translations by codewords in each case; see \cite[p.~199]{sole1990completely}. 
  This extended Golay code was used by NASA's Voyager spacecraft to send back to earth hundreds of  colour pictures of Jupiter and Saturn in their 1979, 1980, and 1981 fly-bys. Error correction was vital to data transmission since memory constraints dictated offloading data virtually instantly leaving no second chances, and the data needed to be transmitted within a constrained telecommunications bandwidth.\footnote{See \texttt{en.wikipedia.org/wiki/Binary\_Golay\_code}.}

  \item The perfect ternary Golay code is a $6$-dimensional vector-subspace of $\F_3^{11}$ with minimum distance $5$ and covering radius $2$ and its extended code is a $6$-dimensional vector-subspace of $\F_3^{12}$ with minimum distance $6$ and covering radius $3$. These codes are $G$-completely transitive for $G=T\rtimes (2.\mg_{11})$ and $G=T\rtimes (2.\mg_{12})$, respectively, where $T$ is the group of translations by codewords in each case; see \cite[p.~653]{Giudici1999647}.
  \item The Nordstrom--Robinson code is a non-linear code consisting of $256$ codewords with minimum distance $5$ and covering radius $3$ in $\F_2^{15}$ and its extended code is a non-linear code consisting of $256$ codewords
  with minimum distance $6$ and covering radius $4$ in $\F_2^{16}$. These codes are $G$-completely transitive for $G\cong 2^5\rtimes \alt_8\cong 2^5\rtimes \GL_4(2)$ and $G\cong 2^5\rtimes \AGL_4(2)$, respectively; see \cite{gillespie2012nord}.
 \end{enumerate}
\end{example}

\begin{example}\label{exam:c2}
 Let $\Gamma$ be the graph with vertex set $\Z_{2n}$ and vertices $i,j$ defined to be adjacent if $i-j=\pm 1$ ({\em i.e.}, $\Gamma$ is a cycle of length $2n$) and let $\C=\{0,n\}$. Then $\Aut(\C)$ is generated by the rotation $i\mapsto i+n$ (for $i\in\Z_{2n}$), and the reflection $i\mapsto -i$ (for $i\in\Z_{2n}$). Hence $\Aut(\C)\cong C_2^2$ and $\Aut(\C)$ acts transitively on $\C$. We also have  $\C_i=\{\pm i,n\pm i\}$ for each $i$ satisfying $1\leq i\leq n/2$. Note that when $1\leq i< n/2$ the cardinality $|\C_i|=4$, but when $n$ is even we have $|\C_{n/2}|=2$. In all cases $\Aut(\C)$ acts transitively on $\C_i$ for each $i$ satisfying $1\leq i\leq n/2$, and thus $\C$ is completely transitive.
\end{example}

Lemma~\ref{lem:largeDeltaDistTrans}, below, gives two additional equivalent conditions for $(G,s)$-neighbour-transitivity for integers $s$ at most the error-correction capacity. It is most useful for $s$-neighbour-transitive codes with `large' minimum distance $\delta$, namely  $\delta\geq 2s+1$. Applications of this result are two-fold. Firstly, it is often simpler to prove $s$-neighbour-transitivity of a code in terms of the `local action' of the stabiliser of a codeword $\alpha$ on the ball $B_s(\alpha)$ (Lemma~\ref{lem:largeDeltaDistTrans}(2)). Secondly, Lemma~\ref{lem:largeDeltaDistTrans} often allows, in the case of a specific graph, for us to prove structural results. We will see examples of such structural results in later sections.

\begin{lemma}\label{lem:largeDeltaDistTrans}
 Let $\C$ be a code  in a graph $\Gamma$ such that $\C$ has error-correction capacity $e\geq 1$, let $G\leq\Aut(\C)$, let $\alpha\in \C$, and let $s$ be an integer such that $1\leq s\leq e$. Then the following are equivalent.
 \begin{enumerate}[$(1)$]
     \item $\C$ is $(G,s)$-neighbour-transitive.
     \item $G$ acts transitively on $\C$ and, for each $i\in\{1,\ldots,s\}$, the stabiliser $G_\alpha$ is transitive on $\Gamma_i(\alpha)$.
     \item For each $i\in\{1,\ldots,s\}$, $G$ acts transitively on the set $\{(\beta,\gamma)\mid \beta\in\C,\gamma\in \Gamma_i(\beta)\}$. %\textcolor{blue}{(should $i=0$ be included?)}
 \end{enumerate}
\end{lemma}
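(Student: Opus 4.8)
The plan is to prove the cycle of implications $(1)\Rightarrow(2)\Rightarrow(3)\Rightarrow(1)$, with Lemma~\ref{lemDisjointUnion} doing all the heavy lifting. The crucial point is that the hypothesis $s\leq e$ lets us invoke that lemma for every $i\in\{1,\dots,s\}$: each $\C_i$ is the disjoint union $\bigcup_{\alpha\in\C}\Gamma_i(\alpha)$, each set $\Gamma_i(\alpha)$ is non-empty, and every vertex of $\C_i$ has a \emph{unique} nearest codeword. This ``fibring'' of $\C_i$ over $\C$ is exactly what allows one to pass back and forth between global transitivity of $G$ on $\C_i$ and local transitivity of a codeword stabiliser $G_\alpha$ on a single fibre $\Gamma_i(\alpha)$.

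For $(1)\Rightarrow(2)$: transitivity of $G$ on $\C=\C_0$ is part of Definition~\ref{defsneighbourtrans}, so fix $i\in\{1,\dots,s\}$ and take $\gamma,\gamma'\in\Gamma_i(\alpha)$; by Lemma~\ref{lemDisjointUnion} both lie in $\C_i$, so some $g\in G$ sends $\gamma$ to $\gamma'$. Since $g$ preserves distances and fixes $\C$ setwise, $\alpha^g\in\C$ is at distance $i$ from $\gamma'$, and so is $\alpha$; uniqueness of the nearest codeword (Lemma~\ref{lemDisjointUnion}(1)) forces $\alpha^g=\alpha$, i.e. $g\in G_\alpha$, whence $G_\alpha$ is transitive on $\Gamma_i(\alpha)$. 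For $(2)\Rightarrow(3)$: fix $i$ and pairs $(\beta,\gamma),(\beta',\gamma')$ with $\gamma\in\Gamma_i(\beta)$ and $\gamma'\in\Gamma_i(\beta')$; using transitivity of $G$ on $\C$ choose $g_1\in G$ with $\beta^{g_1}=\beta'$, so $\gamma^{g_1}\in\Gamma_i(\beta')$. Conjugating $G_\alpha$ by any element of $G$ carrying $\alpha$ to $\beta'$ (which carries $\Gamma_i(\alpha)$ to $\Gamma_i(\beta')$) shows $G_{\beta'}$ is transitive on $\Gamma_i(\beta')$, so some $g_2\in G_{\beta'}$ sends $\gamma^{g_1}$ to $\gamma'$, and then $g_1g_2$ carries $(\beta,\gamma)$ to $(\beta',\gamma')$.

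For $(3)\Rightarrow(1)$: since every $\beta\in\C$ admits some $\gamma\in\Gamma_i(\beta)$ by Lemma~\ref{lemDisjointUnion}(3), projecting the transitive $G$-action on pairs onto first coordinates gives transitivity of $G$ on $\C$; and given $\gamma,\gamma'\in\C_i$, Lemma~\ref{lemDisjointUnion} supplies codewords $\beta,\beta'$ with $\gamma\in\Gamma_i(\beta)$ and $\gamma'\in\Gamma_i(\beta')$, so any $g\in G$ with $(\beta,\gamma)^g=(\beta',\gamma')$ satisfies $\gamma^g=\gamma'$, giving transitivity of $G$ on $\C_i$ for each $i\in\{1,\dots,s\}$, as required. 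The one place needing care — the genuine obstacle to a cheaper argument — is this systematic reliance on $s\leq e$ through Lemma~\ref{lemDisjointUnion}: without it, $\Gamma_i(\alpha)$ need not be contained in $\C_i$ and the nearest-codeword map need not be single-valued, so both $(1)\Rightarrow(2)$ (where uniqueness pins $g$ inside $G_\alpha$) and $(3)\Rightarrow(1)$ (where the fibres must exhaust $\C_i$) would collapse. Everything else is routine orbit–stabiliser bookkeeping, including the conjugation step transferring transitivity of $G_\alpha$ to the stabiliser of an arbitrary codeword.
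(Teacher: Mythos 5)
Your proof is correct and follows essentially the same route as the paper: the same cycle $(1)\Rightarrow(2)\Rightarrow(3)\Rightarrow(1)$, with Lemma~\ref{lemDisjointUnion} supplying the disjoint fibring of $\C_i$ over $\C$, the uniqueness of the nearest codeword to pin $g$ inside $G_\alpha$, and the non-emptiness of the fibres to recover transitivity on $\C$ from (3). The only cosmetic difference is in $(2)\Rightarrow(3)$, where you conjugate $G_\alpha$ to $G_{\beta'}$ to map an arbitrary pair to an arbitrary pair, while the paper maps every pair to the single fixed pair $(\alpha,\gamma_1)$ — these are the same argument.
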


\begin{proof}
 Suppose that part (1) holds. Then, by Definition~\ref{defsneighbourtrans}, $G$ acts transitively on $\C$ and transitively on $\C_i$, for each $i\in\{1,\ldots,s\}$. Let $\gamma_1,\gamma_2\in\Gamma_i(\alpha)$. Then, by Lemma~\ref{lemDisjointUnion}(2), $\gamma_1,\gamma_2\in \C_i$ and so there exists $g\in G$ such that $\gamma_1^g=\gamma_2$. By Lemma~\ref{lemDisjointUnion}(1), $\alpha$ is the unique element of $\C$ such that $d(\alpha,\gamma_1)=d(\alpha,\gamma_2)=i$, and hence $\alpha^g=\alpha$, that is, $g\in G_\alpha$. Thus $G_\alpha$ acts transitively on $\Gamma_i(\alpha)$ and so part (1) implies part (2).
 
 Suppose that part (2) holds. Let $1\leq i\leq s$ and let $\beta\in \C$ and $\gamma\in\Gamma_i(\beta)$. Since $G$ acts transitively on $\C$, there exists $g_1\in G$ such that $(\beta,\gamma)^{g_1}=(\alpha,\gamma')$ for some $\gamma'\in\Gamma_i(\alpha)$. Furthermore, $G_\alpha$ acts transitively on $\Gamma_i(\alpha)$, and hence there exists $g_2\in G_\alpha$ such that $(\gamma')^{g_2}=\gamma_1$. Thus, $(\beta,\gamma)^{g_1g_2}=(\alpha,\gamma_1)$ and so part (2) implies part (3).
 
 Finally, suppose that part (3) holds. Let $i\in\{1,\ldots,s\}$, let $\nu_1,\nu_2\in\C_i$ and, as in Lemma~\ref{lemDisjointUnion}, let $\beta_1,\beta_2$ be the unique elements of $\C$ such that $d(\beta_1,\nu_1)=d(\beta_2,\nu_2)=i$. Then $(\beta_j,\nu_j)$ (for $j=1,2$) lies in the set of pairs in part (3), and so there exists an element $h\in G$ such that $(\beta_1,\nu_1)^h=(\beta_2,\nu_2)$. Thus $\nu_1^h=\nu_2$ and $G$ acts transitively on $\C_i$. Also by Lemma~\ref{lemDisjointUnion} part (3), for $\beta_1,\beta_2\in\C$, there exist $\nu_1,\nu_2$ such that,  for each $j=1,2$, $\nu_j\in\Gamma_i(\beta_j)$ so  $(\beta_j,\nu_j)$ lies in the set of pairs in part (3). Thus by part (3), we have  $(\beta_1,\nu_1)^h=(\beta_2,\nu_2)$ for some $h\in G$, and it follows that $G$ is transitive also on $\C$. Thus part (3) implies part (1).
\end{proof}

The case $s=1$ of Lemma~\ref{lem:largeDeltaDistTrans} follows from \cite[Theorem 1.2]{liebler2014neighbour}, noting that $\delta(\C)\geq3$ is equivalent to error capacity $e\geq1$. 
% for a neighbour-transitive code $\C$,  it is shown there that  $\Aut(\C)$  is transitive on the set of ordered pairs \begin{equation}\label{sit-prop}
% \text{ $(\gamma, \gamma_1)$ where $\gamma\in \C$, $\gamma_1\in \C_1$, and $\{\gamma,\gamma_1\}$ is an edge of $\Gamma$.}
% \end{equation}  
A $G$-neighbour-transitive code $\C$ with the property of Lemma~\ref{lem:largeDeltaDistTrans}(2) is called \emph{strongly-incidence-transit\-ive}, and the theory of strongly incidence transitive codes in Johnson graphs is developed in \cite{liebler2014neighbour}. The expository chapter \cite{praeger2021codes} gives a recent account focusing especially on links between such codes and a family of combinatorial designs called Delandtsheer designs.

In the following proposition we assume the conclusion Lemma~\ref{lem:largeDeltaDistTrans}(2) holds with $s=2$. 

\begin{proposition}\label{prop:coveringRadiusLessThan2}
 Let $\C$ be a non-trivial code with covering radius $\rho$ and minimum distance $\delta$ in a connected graph $\Gamma$. Suppose that $G\leq \Aut(\C)$ such that $G$ acts transitively on $\C$ and, for $\alpha\in\C$,  $G_{\alpha}$ acts transitively on each of $\Gamma_1(\alpha)$ and $\Gamma_2(\alpha)$. Then one of the following holds:
 \begin{enumerate}[$(1)$]
  \item $\rho\geq 2$;
  \item $\rho=1$, $\delta=3$ and $\C$ is a perfect code.
 \item $\rho=1$, $\delta=2$, and either
 \begin{enumerate}[$(a)$]
     \item $\Gamma$ is bipartite and $\C$ is one of the biparts; or
     \item for every pair $\mu,\nu\in\C_1$ with $d(\mu,\nu)=1$ we have that $\Gamma_1(\mu)\cap\C=\Gamma_1(\nu)\cap\C$.
 \end{enumerate}
  % \begin{enumerate}[$(a)$]
  %  \item $\Gamma$ is bipartite and $\C$ is one of the biparts, or,
  %  \item for every pair $\mu,\nu\in\C_1$ with $d(\mu,\nu)=1$ we have that $\Gamma_1(\mu)\cap\C=\Gamma_1(\nu)\cap\C$.
  % \end{enumerate}
  \end{enumerate}
\end{proposition}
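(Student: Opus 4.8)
The plan is to assume $\rho \leq 1$ and deduce the remaining structure, splitting on the value of $\delta$. First I would dispose of the case $\delta \geq 3$: since $\C$ is non-trivial, $\delta$ is a positive integer, and if $\delta \geq 3$ then the error-correction capacity is $e \geq 1$, so by hypothesis $G_\alpha$ is transitive on $\Gamma_1(\alpha)$ and $\Gamma_2(\alpha)$; if in addition $\rho = 1$ then $\C_2 = \emptyset$, forcing $\Gamma_2(\alpha) = \emptyset$ for every $\alpha \in \C$ by Lemma~\ref{lemDisjointUnion}(2) applied with $i=2 \leq e$ — but this cannot happen in a connected graph unless $\Gamma = B_1(\alpha)$, whence one checks directly that $\delta = 3$ is impossible and $\delta \geq 4$ is impossible (a vertex adjacent to two codewords at distance $3$ would create distance-$2$ non-codewords, contradicting $\C_2=\emptyset$, and one needs $\Gamma$ to have enough vertices). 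The cleaner way: if $\delta \geq 3$ and $\rho = 1$ then $e \geq 1$ and $\C_2 = \emptyset$; since $\C$ is non-trivial there exist two codewords, and connectivity plus $\delta \geq 3$ produces a vertex in $\C_2$ unless $\delta = 3$ exactly and every non-codeword lies at distance $1$ from a codeword — this forces $\C$ to be perfect with $e=1$, giving case (2). If $\delta \geq 4$ then $e \geq 1$ still, but now $\rho \geq e+1 \geq 2$ is immediate since $\Gamma_2(\alpha) \neq \emptyset$ by Lemma~\ref{lemDisjointUnion}(3) (as $2 \leq e$), so case (1) holds.

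Next, the case $\delta = 2$, $\rho = 1$. Here $e = 0$, so Lemma~\ref{lemDisjointUnion} no longer applies directly, but we still have $G$ transitive on $\C$, $G_\alpha$ transitive on $\Gamma_1(\alpha)$ and on $\Gamma_2(\alpha)$, and $\C_1 = V \setminus \C$ since $\rho=1$. Fix $\alpha \in \C$. Because $\delta = 2$, there is a codeword $\beta$ with $d(\alpha,\beta) = 2$, so $\Gamma_2(\alpha) \cap \C \neq \emptyset$; by transitivity of $G_\alpha$ on $\Gamma_2(\alpha)$, either $\Gamma_2(\alpha) \subseteq \C$ or $\Gamma_2(\alpha) \cap \C = \emptyset$ — and the latter is impossible, so $\Gamma_2(\alpha) \subseteq \C$. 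Similarly $\Gamma_1(\alpha) \subseteq V \setminus \C = \C_1$ (no codeword is adjacent to $\alpha$ since $\delta = 2$), and since $\rho = 1$ and $\Gamma$ is connected, every vertex lies in $B_1(\text{some codeword})$. Now I would run a parity/bipartition argument: the relation of being at even distance from $\alpha$ is, along shortest paths, consistent with "codeword at even distance, non-codeword at odd distance" — more precisely, I claim that for any $\gamma \in \C$ all of $\Gamma_1(\gamma) \subseteq \C_1$ and for any $\mu \in \C_1$ we have $\Gamma_1(\mu) \cap \C \neq \emptyset$ (definition of $\rho=1$), and one examines how two codewords can be linked. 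If $\Gamma$ is bipartite with $\C$ a single bipart, we are in case (3)(a). Otherwise $\Gamma$ contains an odd cycle or, more usefully, there exist adjacent $\mu,\nu \in \C_1$; the goal is to show $\Gamma_1(\mu) \cap \C = \Gamma_1(\nu) \cap \C$, i.e. case (3)(b).

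For the alternative (3)(a) versus (3)(b): suppose $\mu, \nu \in \C_1$ with $d(\mu,\nu) = 1$. Pick $\alpha \in \Gamma_1(\mu) \cap \C$ and $\beta \in \Gamma_1(\nu) \cap \C$ (nonempty as $\rho = 1$). If $\alpha = \beta$ for every such choice we are done for this pair; the content is that the pattern is forced globally. Using transitivity of $G$ on $\C$ and of $G_\alpha$ on $\Gamma_1(\alpha)$, the configuration "codeword $\alpha$, neighbour $\mu$, further neighbour $\nu$ of $\mu$ with $\nu \notin \C$" is a single $G$-orbit on triples (when it is nonempty), so the set $\Gamma_1(\nu) \cap \C$ has size independent of the choice, call it $c$; and the number of codewords adjacent to a given non-codeword is a constant $k \geq 1$. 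If some non-codeword $\mu$ has a non-codeword neighbour $\nu$, then I would argue by counting edges between $\C$ and $\C_1$ in the neighbourhood of the pair $\{\mu,\nu\}$, combined with the transitivity hypotheses on $\Gamma_1(\alpha), \Gamma_2(\alpha)$, that the codeword-neighbourhoods of $\mu$ and $\nu$ must coincide — otherwise one produces a vertex whose distance-$2$ ball meets both $\C$ and $V\setminus\C$ in a way incompatible with the single-orbit condition on $\Gamma_2(\alpha)$. The main obstacle is precisely this last step: extracting (3)(b) from the local transitivity on $\Gamma_2(\alpha)$ without extra regularity, since with $e=0$ we lose the clean disjointness of Lemma~\ref{lemDisjointUnion}; I expect to need a careful orbit-counting argument on the set of paths of length $2$ from a fixed codeword, distinguishing whether the midpoint's other codeword-neighbours are forced to lie in $\Gamma_1(\alpha)$ or can escape, and showing the "escape" case is exactly the bipartite case (3)(a).
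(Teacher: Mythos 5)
Your overall skeleton matches the paper's: dispose of $\rho\geq 2$, show $\delta\geq 3$ together with $\rho=1$ forces $\delta=3$ and perfection, and in the case $\delta=2$ establish $\Gamma_2(\alpha)\subseteq\C$ before splitting into the bipartite and non-bipartite alternatives. However, there is a genuine gap exactly where you flag one: you never actually derive conclusion (3)(b), and the orbit-counting machinery you propose for it is not needed. The point you are missing is that (3)(b) is an immediate consequence of the inclusion $\Gamma_2(\gamma)\subseteq\C$ that you have already proved for every codeword $\gamma$. Indeed, let $\mu,\nu\in\C_1$ with $d(\mu,\nu)=1$ and let $\gamma\in\Gamma_1(\mu)\cap\C$. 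Then $(\gamma,\mu,\nu)$ is a walk of length $2$, so $d(\gamma,\nu)\leq 2$; it is not $0$ because $\nu\notin\C$, and it is not $2$ because $\Gamma_2(\gamma)\subseteq\C$ while $\nu\in\C_1$. Hence $d(\gamma,\nu)=1$, i.e. $\gamma\in\Gamma_1(\nu)\cap\C$, and the symmetric argument gives equality. This is the paper's argument, and no counting of constants $c$ and $k$, nor any appeal to a single orbit on triples, is required. (Note also that (3)(a) and (3)(b) need not be treated as an exclusive dichotomy: (3)(b) holds for every adjacent pair in $\C_1$ whenever such a pair exists, and if no such pair exists then all edges join $\C$ to $\C_1$ and $\Gamma$ is bipartite.)

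Two smaller repairs are needed elsewhere. First, you never rule out $\delta=1$: if two codewords were adjacent, transitivity of $G_\alpha$ on $\Gamma_1(\alpha)$ would give $\Gamma_1(\alpha)\subseteq\C$, and connectivity would force $\C=V$, contradicting non-triviality; this is also what guarantees $\Gamma_1(\alpha)\subseteq\C_1$ in your $\delta=2$ analysis. Second, in the sub-case $\delta\geq 4$ your citation of Lemma~\ref{lemDisjointUnion}(3) ``as $2\leq e$'' fails when $\delta=4$, since then $e=1$; for $\delta=4$ you need the direct observation that the midpoint of a shortest path between two codewords at distance $4$ lies at distance at least $2$ from every codeword (any codeword within distance $1$ of it would be within distance $3$ of both endpoints), so $\C_2\neq\emptyset$ and $\rho\geq 2$. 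The paper avoids this split entirely by arguing that when $\delta\geq 3$ and $\rho=1$ the disjoint union $\C_1=\bigcup_{\beta\in\C}\Gamma_1(\beta)$ covers $V\setminus\C$, and connectivity forces an edge between $\Gamma_1(\beta)$ and $\Gamma_1(\beta')$ for distinct codewords, whence $\delta=3$.
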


\begin{proof}
 If $\rho=0$ then $\C=V(\Gamma)$ is a trivial code, but since this is not the case we have $\rho\geq 1$. If $\rho\geq 2$ then part (1) holds. Hence, we may assume that $\rho=1$. Since $\Gamma$ is connected, $G$ acts transitively on $\C$ and $G_\alpha$ acts transitively on $\Gamma_1(\alpha)$, it follows that $\C_1=\bigcup_{\beta\in\C}\Gamma_1(\beta)$. In particular $\delta\geq 2$. If $\delta\geq 3$, then this union is disjoint, and since $\rho=1, |\C|\geq 2$ and $\Gamma$ is connected, there must be an edge between some vertex of $\Gamma_1(\beta)$ and some vertex of $\Gamma_1(\beta')$ for some distinct codewords $\beta$ and $\beta'$, and hence $d(\beta, \beta')=3$, so $\delta =3$.   This implies that $\C$ has error-correction capacity $e=1$, and any pair of balls of radius $1$ centered at distinct codewords is disjoint. Moreover, since $\rho=1$, the vertex set $V(\Gamma)=\C\cup \C_1$, and hence the set of balls of radius $1$ centered at the codewords of $\C$ partitions $V(\Gamma)$. Thus $\C$ is perfect, as in part (2).
 
Thus we may assume that $\delta=2$. Then, since $G$ acts transitively on $\C$ and $G_\alpha$ acts transitively on $\Gamma_2(\alpha)$, it follows, for each codeword  $\beta\in \C$, that $\Gamma_2(\beta)$ is contained in $\C$. If there are no edges between distinct vertices of $\C_1$, then 
 all edges of $\Gamma$ are incident with a vertex of $\C$ and a vertex of $\C_1$, that is to say, $\Gamma$ is bipartite and $\C, \C_1$ form a bipartition as in part (3)(a).  
Hence we may assume that there exists some pair $\mu,\nu\in\C_1$ such that $d(\mu,\nu)=1$. For any such pair $\mu, \nu$, suppose that $\gamma\in\Gamma_1(\mu)\cap\C$. Then $(\gamma, \mu,\nu)$ is a path of length $2$ in $\Gamma$ so $d(\gamma,\nu)\leq 2$. We have shown that $\Gamma_2(\gamma)\subseteq\C$, and since $\nu\in\C_1$ this implies that $d(\gamma,\nu)=1$, that is, $\gamma\in\Gamma_1(\nu)\cap\C$ also. A similar argument holds with $\mu$ and $\nu$ interchanged, and hence $\Gamma_1(\mu)\cap\C=\Gamma_1(\nu)\cap\C$ and part (3)(b) holds.
\end{proof}
%  Finally, we assume that $\delta=3$. 
 %Now, if $a,b,c\in\Q\setminus\{{\b 0}\}$, with $b\neq c$, then the distance between $(a,b,0,\ldots,0),(a,c,0,\ldots,0)$ is $1$. Since $(a,b,0,\ldots,0),(a,c,0,\ldots,0)\in\Gamma_2({\b 0})\subseteq\C$ and $\delta=2$ we deduce that $q=2$. We claim that this implies that $\C$ is the set of all even weight vertices. To see this, observe that every pair of vertices at distance $1$ consists of one even weight vertex and one odd weight vertex, and every pair of vertices at distance $2$ have the same parity. The claim then follows by applying induction with the facts that: ${\b 0}\in\C$, $\Gamma_1(\alpha)\subseteq\C_1$ for all $\alpha\in\C$ and $\Gamma_2(\alpha)\subseteq \C$ for all $\alpha\in\C$. Since the set of all weight $2$ vectors is precisely the dual of the binary repetition code, part (1) holds. Suppose now that $\Gamma_2({\b 0})$ is contained in $\C_1$. Then $\delta=3$, which implies that $\C$ is perfect, and part (2) holds. Hence, we are left with the case that $\rho\geq 2$, that is, part (3).

In Section~\ref{sec:poly} we apply Proposition~\ref{prop:coveringRadiusLessThan2} to codes in Hamming graphs (see Remark~\ref{rem:smallCovRadHamming}). In that case Proposition~\ref{prop:coveringRadiusLessThan2}(3)(b) never holds, so we are able to make much stronger conclusions. We pose the following problem related to this. 

\begin{problem}\label{prob:covRadOne}
 Investigate codes satisfying Proposition~\ref{prop:coveringRadiusLessThan2}(3)(b). 
\end{problem}

\subsection{Elusive codes}\label{sec:elus}

In this subsection we make a short commentary on the concept of neighbour-transitivity. Let $\C$ be a code in a connected graph $\Gamma$ such that the minimum distance $\delta(\C)\geq 3$ so, by Lemma~\ref{lemDisjointUnion},  the set of code neighbours is the disjoint union $\C_1=\cup_{\alpha\in\C}\Gamma_1(\alpha)$.  It turns out that $\C_1$ determines the code $\C$ if $\delta(\C)$ is large enough and if the graph $\Gamma$ is \emph{reduced}, that is, if: 
\begin{equation}\label{def:red}
    \text{$\Gamma_1(\alpha)=\Gamma_1(\alpha')$ if and only if $\alpha=\alpha'$.}
\end{equation}
We note that all the graphs we consider in the chapter are reduced in this sense. 

%\textcolor{blue}{CP: Dan, is this the right name? and are all our graphs reduced? I have also made the definition more general. Please take a look and see if this makes sense. It opens up the study of elusive codes to arbitrary graphs, I think. DH: Godsil and Royle uses reduced that way, so it's good enough for me, and yes I think our graphs are all reduced. I like that you've made the definition for general graphs.}

\begin{lemma}\label{lem:c1c}
    Let $\C_1$ be the set of code-neighbours of a non-trivial code in a connected regular reduced  graph $\Gamma$. If $\delta(\C)\geq 5$ then $\C=\{ \alpha\in V(\Gamma)\mid \Gamma_1(\alpha)\subseteq \C_1 \}$, and hence $\C_1$ determines $\C$.
\end{lemma}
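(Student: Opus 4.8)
The plan is to show the two inclusions between $\C$ and the set $\widetilde\C:=\{\alpha\in V(\Gamma)\mid \Gamma_1(\alpha)\subseteq\C_1\}$. For the easy inclusion $\C\subseteq\widetilde\C$, I would take $\alpha\in\C$ and an arbitrary neighbour $\gamma\in\Gamma_1(\alpha)$; since $\delta(\C)\geq5\geq2$, $\gamma$ is not itself a codeword, so $\gamma$ lies at distance exactly $1$ from the codeword $\alpha$ and therefore $\gamma\in\C_1$. Hence $\Gamma_1(\alpha)\subseteq\C_1$, so $\alpha\in\widetilde\C$.

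For the reverse inclusion $\widetilde\C\subseteq\C$, suppose $\alpha\in V(\Gamma)$ with $\Gamma_1(\alpha)\subseteq\C_1$, and let $\beta\in\C$ be a closest codeword to $\alpha$, say at distance $k=d(\alpha,\beta)$; I want to show $k=0$. Suppose for contradiction $k\geq1$. The graph is connected, so pick a shortest path $(\gamma_0,\gamma_1,\ldots,\gamma_k)$ from $\alpha=\gamma_0$ to $\beta=\gamma_k$; then $\gamma_1\in\Gamma_1(\alpha)\subseteq\C_1$, so there is a codeword $\beta'\in\C$ with $d(\gamma_1,\beta')=1$. If $k=1$ then $\alpha=\gamma_0$ has $\gamma_1=\beta$ as a neighbour that is a codeword, contradicting $\alpha\notin\C$ being the assumed minimal distance — actually more carefully: if $k=1$ then $\alpha\in\Gamma_1(\beta)$ and $\alpha$ could be a codeword only if $\delta(\C)=1$; since $\delta(\C)\geq5$, $\alpha\notin\C$, so $k\geq1$ stands, but then $d(\gamma_1,\beta)=k-1=0$, i.e. $\beta'=\beta$ works at distance $1$ from $\gamma_1$. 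In general $d(\gamma_1,\beta)=k-1$, so by minimality of $k$ we have $d(\gamma_1,\beta')\geq$ is constrained: $\gamma_1\in\C_1$ forces a codeword within distance $1$ of $\gamma_1$, and $d(\alpha,\gamma_1)=1$ gives $d(\alpha,\beta')\leq2$, whence $k\leq2$ by minimality. The crux is then to rule out $k=1$ and $k=2$ separately by exploiting that $\delta(\C)\geq5$: if $k\leq2$, then $d(\beta,\beta')\leq d(\beta,\alpha)+d(\alpha,\beta')\leq k+2\leq4<5$, forcing $\beta=\beta'$; then $d(\alpha,\gamma_1)=1$ and $d(\gamma_1,\beta)=k-1$, but also $d(\gamma_1,\beta')=d(\gamma_1,\beta)\leq1$, giving $k-1\leq1$, i.e. $k\leq2$ — this merely recovers the bound. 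To finish, I would argue that the only neighbour of $\alpha$ on the path, $\gamma_1$, together with \emph{all} other neighbours of $\alpha$ lying in $\C_1$, pins a codeword within distance $2$ of $\alpha$; combined with $d(\alpha,\beta)=k$ minimal and $\delta\geq5$, two such codewords must coincide, and then a short distance computation forces $k=0$.

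Let me streamline: assume $\alpha\notin\C$, so $k=d(\alpha,\C)\geq1$, and in fact $k\geq3$ is impossible to have arisen without contradiction — here is the clean line. Take any neighbour $\gamma$ of $\alpha$; then $\gamma\in\C_1$, so $d(\gamma,\C)=1$, hence $d(\alpha,\C)\leq2$, so $k\in\{1,2\}$. If $k=1$, let $\beta\in\C$ with $d(\alpha,\beta)=1$; every other neighbour $\gamma$ of $\alpha$ satisfies $d(\gamma,\C)\le 1$, and its closest codeword $\beta_\gamma$ has $d(\beta,\beta_\gamma)\le d(\beta,\alpha)+d(\alpha,\gamma)+d(\gamma,\beta_\gamma)\le 3<5$, so $\beta_\gamma=\beta$ and $d(\gamma,\beta)\le1$; thus $d(\alpha,\beta)=1$ with $\beta$ adjacent to every neighbour of $\alpha$ — in a reduced regular graph this should force $\Gamma_1(\beta)=\Gamma_1(\alpha)$ hence $\alpha=\beta\in\C$, contradiction. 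If $k=2$, pick $\beta\in\C$ with $d(\alpha,\beta)=2$ and a common neighbour $\gamma$; then $\gamma\in\C_1$ so $d(\gamma,\C)=1$, consistent with $\gamma\in\Gamma_1(\beta)$. For any neighbour $\gamma'$ of $\alpha$, its closest codeword $\beta'$ has $d(\beta,\beta')\le 2+1+1=4<5$, so $\beta'=\beta$; hence every neighbour of $\alpha$ is within distance $1$ of the single codeword $\beta$, i.e. $\Gamma_1(\alpha)\subseteq B_1(\beta)$, while $\alpha\in\Gamma_2(\beta)$. Now use regularity: the neighbours of $\alpha$ lie among $\{\beta\}\cup\Gamma_1(\beta)$, but $\alpha\notin\Gamma_1(\beta)$ so $\beta\notin\Gamma_1(\alpha)$ either (distance $2$), forcing $\Gamma_1(\alpha)\subseteq\Gamma_1(\beta)$; since $\Gamma$ is regular, $|\Gamma_1(\alpha)|=|\Gamma_1(\beta)|$ gives $\Gamma_1(\alpha)=\Gamma_1(\beta)$, and reducedness gives $\alpha=\beta$, contradicting $d(\alpha,\beta)=2$. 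So $k=0$, $\alpha\in\C$, completing $\widetilde\C\subseteq\C$.

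\textbf{Main obstacle.} The routine part is the inclusion $\C\subseteq\widetilde\C$ and the triangle-inequality bookkeeping that forces various codewords to coincide using $\delta(\C)\geq5$. The delicate point is the endgame in both the $k=1$ and $k=2$ cases: one must convert ``$\alpha$ and $\beta$ have the same neighbourhood'' into $\alpha=\beta$, and this is exactly where the hypotheses that $\Gamma$ is \emph{regular} (to compare neighbourhood sizes) and \emph{reduced} (equation \eqref{def:red}, to conclude equality of vertices from equality of neighbourhoods) are essential; getting the distance bookkeeping tight enough that $\delta\ge5$ — rather than some larger bound — suffices is the part requiring care.
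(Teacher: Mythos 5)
Your streamlined argument is essentially the paper's own proof: use $\delta(\C)\geq 5$ and the triangle inequality to show that the codewords lying within distance $1$ of the various neighbours of a putative $\alpha\in\widetilde\C\setminus\C$ all coincide in a single codeword $\beta$, deduce $\Gamma_1(\alpha)\subseteq\Gamma_1(\beta)$, upgrade to equality by regularity, and contradict reducedness. Your $k=2$ case is correct and is exactly the paper's argument (the paper does not split into cases at all; it runs this computation once, for the codeword $\gamma$ sitting at the end of a path $(\gamma,\beta,\alpha,\beta',\gamma')$ of length $4$). Your $k=1$ case, however, is flawed as written: there $\beta$ itself lies in $\Gamma_1(\alpha)$ but not in $\Gamma_1(\beta)$, so the claim that ``$\beta$ is adjacent to every neighbour of $\alpha$'' is false, and the conclusion $\Gamma_1(\alpha)=\Gamma_1(\beta)$ does not follow -- regularity only yields $\Gamma_1(\beta)=\bigl(\Gamma_1(\alpha)\setminus\{\beta\}\bigr)\cup\{\alpha\}$. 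Fortunately that case is vacuous: a codeword $\beta$ adjacent to $\alpha$ would satisfy $\beta\in\Gamma_1(\alpha)\subseteq\C_1$, which is impossible since $\C_1$ consists of vertices at distance exactly $1$ from $\C$ and so is disjoint from $\C$. With $k=1$ disposed of in this one line, your proof is complete and coincides with the paper's.
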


\begin{proof}
    Let $X:=\{ \alpha\in V(\Gamma)\mid \Gamma_1(\alpha)\subseteq \C_1\}$. Since $\delta(\C)\geq 5$, $\C_1$ contains $\Gamma_1(\alpha)$ for each codeword $\alpha$ and hence $\C\subseteq X$. We claim that equality holds. Suppose to the contrary that $\alpha\in X\setminus\C$, and let $\beta, \beta'$ be distinct vertices in $\Gamma_1(\alpha)\subseteq \C_1$. By the definition of $\C_1$ there are codewords $\gamma, \gamma'\in \C$ such that $(\gamma, \beta, \alpha, \beta',\gamma')$ is a path in $\Gamma$ of length $4$. Since the minimum distance $\delta(\C)\geq 5$, it follows that $\gamma=\gamma'$. For fixed $\alpha, \beta$ and $\gamma$, letting $\beta'$ range over $\Gamma_1(\alpha)$, we see that $\Gamma_1(\alpha)=\Gamma_1(\gamma)$, which is a contradiction since $\Gamma$ is reduced.
    Thus $X=\C$.
\end{proof}

It follows from Lemma~\ref{lem:c1c} that, under the conditions of that lemma, the setwise stabilisers in $\Aut(\Gamma)$ of $\C$ and of $\C_1$ are equal, that is to say, $\Aut(\C)=\Aut(\C_1)$. Then since $\C_1$ is the disjoint union $\cup_{\alpha\in\C}\Gamma_1(\alpha)$, if $\Aut(\C_1)$ is transitive on $\C_1$ then it must also be transitive on $\C$. Thus $\C$ is neighbour-transitive if and only if $\Aut(\C_1)$ is transitive on $\C_1$; and this would be a simplification of Definition~\ref{defsneighbourtrans} of neighbour-transitivity.

However for smaller $\delta(\C)$, it is possible for $\C_1$ to be the set of code neighbours of more than one code. Such a code  $\C$ is said to be \emph{elusive}. An infinite family of elusive neighbour-transitive codes in binary Hamming graphs (see Definition~\ref{defHamming}) was described by Gillespie and the second author in \cite[Section 5]{ntrcodes}, and the smallest code in the family is given in Example~\ref{ex:elus}.

\begin{example}\label{ex:elus}
    Let $\Gamma= H(4,2)$ (see Definition~\ref{defHamming}) and write the vertex set as $V\Gamma = F\times F$ with $F=\mathbb{F}_2^2$. Define
    \begin{align*}
        \C &:= \{(0,0,0,0), (1,1,1,1) \}\\
        \mathcal{C'} &:= \{(0,0,0,0), (1,0,1,0), (0,1,0,1), (1,1,1,1)  \}\\
        \mathcal{X} &:= \{(\beta,\beta'), (\beta',\beta)\mid \beta\in\{(0,0), (1,1)\}, \beta'\in\{(0,1), (1,0)\} \}.
    \end{align*}
    Then $\C$ and $\mathcal{C'}$ are both linear codes in $\Gamma$, with $\delta(\C)=4$ and $\delta(\mathcal{C'})=2$. Further, $\mathcal{X}$ is the set of code neighbours of both of the codes $\C$ and $\mathcal{C'}$; and the code $\C$ is neighbour-transitive.
\end{example}

Two new constructions for infinite families of elusive neighbour-transitive codes in $H(n,q)$ were given by Gillespie and the authors in \cite[Sections 3.1 and 3.2]{elusive}, this time with minimum distance $3$. The constructions produce \emph{elusive pairs} $(\C, X)$, where $\C$ is an elusive code and $X\leq \Aut(\C_1)$ such that $X$ does not fix $\C$ setwise. For these
examples there were precisely two distinct images of $\C$ under elements of $X$, but very recent constructions by the first author in \cite[Theorem 1.1 and 1.2]{selusive} show that the number of images can be arbitrarily large, answering \cite[Question 1.4]{elusive}. 
%Several other open questions are posed in \cite{elusive}. 

A \emph{spherical bitrade} in a graph $\Gamma$ is a pair $(\C,\C')$ of codes in $\Gamma$ with the property that for any $\alpha\in V(\Gamma)$ we have  $|\Gamma_1(\alpha)\cap\C|=|\Gamma_1(\alpha)\cap\C'|\in\{0,1\}$. Spherical bitrades are investigated in  \cite{mogilnykh2020bitrades} in relation to a type of switching construction for obtaining new perfect codes from known ones. If $\C$ is an elusive code with $\delta(\C)\geq3$, and $g\in\Aut(\C_1)\setminus\Aut(\C)$, then $(\C,\C^g)$ is a spherical bitrade. As is pointed out in \cite{mogilnykh2020bitrades}, the examples of spherical bitrades in $H(q,q)$ given in \cite[Theorem~2]{mogilnykh2020bitrades} were first constructed as elusive codes in \cite[Example~1]{elusive}. Moreover, one application of the product construction of \cite[Theorem~1]{mogilnykh2020bitrades} is to produce spherical bitrades in $H(kq,q)$ for arbitrary $k$; examples of elusive codes in $H(kq,q)$ were first given in \cite[Lemma~3.9]{elusive}. It is worth noting also that if elusive codes are used as input for \cite[Theorem~1]{mogilnykh2020bitrades} then the resulting codes are also elusive.

An extension of the concept of an elusive code relevant to $s$-neighbour-transitive codes was introduced in \cite{selusive}: a code $\C$ in a graph $\Gamma$ is said to be \emph{$s$-elusive} if there exists a  code $\C'$ distinct from $\C$, but equal to the image of $\C$ under an element of $\Aut(\Gamma)$, such that the sets of $s$-neighbours of $\C$ and $\C'$ are the same. Interesting new examples were found in \cite[Theorem 1.1]{selusive} for $s=1,2,3$ developed from Reed--Muller codes, Preparata codes, and binary Golay codes. It would be interesting to know of examples of $s$-elusive codes in graphs other than the Hamming graphs.

\begin{problem}\label{prob:sElusive}
 Find $s$-elusive, neighbour-transitive codes ($s\geq1$) in other interesting distance-regular graphs, or show that none exist.  
\end{problem}

%\textcolor{blue}{Dan please read this and see what changes we need. Should we state any questions? DH:  I think there are some (non-self) citations for \cite{elusive,selusive}, so I'll look at them at some point and see if we should comment on them. -- Ok, had a look at the references and not sure if there's anything worth saying, so will leave it.}

\subsection{$s$-Neighbour-transitive codes and $s$-distance-transitive graphs}\label{sec:sdist}

There is a beautiful link between $(G,s)$-neighbour-transitive codes and \emph{$(G,s)$-distance-transitive graphs}, namely connected graphs $\Gamma=(V,E)$ for which $G\leq \Aut(\Gamma)$ is transitive on the distance sets $\Gamma_i:=\{(\alpha,\beta)\mid d(\alpha,\beta)=i\}$, for $i=0, 1,\dots, s$. The link involves a \emph{quotient graph} $\Gamma_N$ modulo a normal subgroup $N\unlhd G$: the quotient $\Gamma_N$ is the graph with vertices the $N$-orbits in $V$ such that distinct $N$-orbits $U, U'$ form an edge of $\Gamma_N$ provided there exist $\alpha\in U$ and $\beta\in U'$ such that $\{\alpha,\beta\}\in E$. For a vertex $\alpha\in V$, we denote the $N$-orbit containing $\alpha$ by $\alpha^N :=\{\alpha^x\mid x\in N\}$.

\begin{proposition}\label{p:sdt}
 Let $\Gamma=(V,E)$ be a graph and $G\leq\Aut(\Gamma)$ such that $G$ acts transitively on $V$, and let $N\lhd G$ be intransitive on $V$. Suppose further that, for some $\alpha\in V$, the set $\C=\alpha^N$ is a $(G_{\C},s)$-neighbour-transitive code in $\Gamma$. Then the quotient graph $\Gamma_N$ is $(G/N,s)$-distance-transitive.
\end{proposition}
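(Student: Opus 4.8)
The plan is to show that the natural action of $G/N$ on the vertex set of $\Gamma_N$ (which is the set of $N$-orbits in $V$) is transitive on each distance set $(\Gamma_N)_i$ for $i = 0, 1, \dots, s$. First I would record the basic facts about the quotient: since $G$ normalises $N$, the group $G$ permutes the $N$-orbits in $V$, this action preserves adjacency in $\Gamma_N$, and $N$ lies in the kernel, so $G/N$ acts on $V(\Gamma_N)$ as a group of automorphisms. Transitivity of $G$ on $V$ immediately gives transitivity of $G/N$ on $V(\Gamma_N)$, settling the case $i = 0$ and showing that $\Gamma_N$ is vertex-transitive under $G/N$; in particular every $N$-orbit has the same size, say $|N:N_\alpha|$ where $N_\alpha$ is the stabiliser, and $|\C| = |V(\Gamma_N)| \cdot$ (common orbit size) is consistent.

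The key step is to relate distance in $\Gamma_N$ to distance in $\Gamma$ near the distinguished orbit $\C = \alpha^N$. I would fix $\alpha \in V$ with $\C = \alpha^N$, and let $\rho'$ be the covering radius of $\C$; note $s \le \rho(\C)$. The crucial claim is: for each $i$ with $0 \le i \le s$, a vertex $\beta^N$ of $\Gamma_N$ is at distance $i$ from $\C$ (viewed as the vertex $\C = \alpha^N$ of $\Gamma_N$) if and only if $\beta \in \C_i$, where $\C_i$ is the $i$-th cell of the distance partition of $\C$ in $\Gamma$. One direction is easy: if $\beta \in \C_i$ then a shortest $\Gamma$-path from a codeword to $\beta$ has length $i$, and projecting this path to $\Gamma_N$ gives a walk of length $\le i$ from $\C$ to $\beta^N$, so $d_{\Gamma_N}(\C, \beta^N) \le i$. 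For the reverse inequality I would argue that a shortest walk in $\Gamma_N$ from $\C$ to $\beta^N$ of length $j$ lifts to a walk in $\Gamma$ from some codeword to some vertex in $\beta^N$, hence $d_\Gamma(\C, \beta^N) \le j$; combined with the fact that $\beta^N \subseteq \C_i$ — which itself follows because $\C_i$ is a union of $\Aut(\C)$-orbits and $N \le G_\C \le \Aut(\C)$, so $N$ fixes each $\C_i$ setwise — we get $d_\Gamma(\text{codeword}, \beta) \le j$ forces $i \le j$. Thus $d_{\Gamma_N}(\C, \beta^N) = i$ exactly when $\beta \in \C_i$, giving a bijection-respecting identification of the ``distance-$i$ sphere around $\C$ in $\Gamma_N$'' with the image of $\C_i$ under the quotient map.

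With that identification in hand, transitivity on distance pairs follows by the standard reduction: $G/N$ is transitive on $(\Gamma_N)_i$ if and only if $(G/N)_{\C}$ — the stabiliser of the vertex $\C$ of $\Gamma_N$ — is transitive on the distance-$i$ sphere about $\C$. Now $(G/N)_{\C} = G_{\C}/N$ (using $N \le G_{\C}$), and $G_{\C}$ acts transitively on $\C_i$ by the $(G_{\C}, s)$-neighbour-transitivity hypothesis; since the quotient map $\C_i \to (\Gamma_N)_i(\C)$ is a surjection intertwining the $G_{\C}$-action and the $G_{\C}/N$-action, transitivity of $G_{\C}$ on $\C_i$ passes to transitivity of $G_{\C}/N$ on the distance-$i$ sphere. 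Applying this for each $i \in \{1, \dots, s\}$, together with the $i=0$ case already handled, shows $\Gamma_N$ is $(G/N, s)$-distance-transitive, as required. I expect the main obstacle to be the careful verification of the distance claim in the second paragraph — specifically making sure that $d_{\Gamma_N}(\C, \beta^N) = i$ cannot drop below $i$, which is where the hypothesis $s \le \rho(\C)$ and the fact that $N$ preserves the distance partition are genuinely used; one must also be slightly careful that $\Gamma_N$ is connected (inherited from connectedness of $\Gamma$) so that distances in $\Gamma_N$ are well-defined, and that the lifting-of-walks argument correctly handles the endpoints lying in prescribed $N$-orbits.
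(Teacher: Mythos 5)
Your proposal is correct and follows essentially the same route as the paper's proof: establish vertex-transitivity of $G/N$ on $\Gamma_N$, use the fact that between adjacent $N$-orbits every vertex of one is adjacent to some vertex of the other to identify the distance-$i$ sphere about $\C$ in $\Gamma_N$ with the image of $\C_i$ (using that $N\leq G_\C$ preserves each $\C_i$), and then transfer transitivity of $G_\C$ on $\C_i$ to the quotient. Your packaging of the key step as an equivariant surjection $\C_i\to(\Gamma_N)_i(\C)$ is a slightly cleaner formulation of what the paper does with chosen representatives, but the ingredients are identical.
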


\begin{proof}
Since $N\lhd G$ and $G$ is transitive on $V$, it follows that $G$ acts transitively on the set of $N$-orbits in $V$, that is to say, the vertex set $V(\Gamma_N)$ of $\Gamma_N$, see \cite[Lemma 2.20]{PS2018}. As $N$ leaves each of its orbits invariant, $N$ is contained in the kernel of this $G$-action and the quotient group $G/N$ acts vertex-transitively on $\Gamma_N$. Recall that $\C=\alpha^N$ is one of the $N$-orbits, and as a code in $\Gamma$, it is $(G_{\C},s)$-neighbour-transitive, by assumption. 

% is the set of $N$-orbits in $V(\Gamma)$ and, , the set $V(\Gamma_N)$ is a system of imprimitivity for the action of $N$ on $V(\Gamma)$, see \cite[Lemma 2.20]{PS2018}. It follows that $\Gamma_N$ is $G/N$-vertex-transitive. Note that $\C$ is an $N$-orbit, and thus a vertex of $\Gamma_N$. 
 
 Let $U, U'$ be adjacent vertices of $\Gamma_N$, and let $\{\alpha_1,\alpha_2\}\in E$ such that $\alpha_1\in U$ and $\alpha_2\in U'$. Since $N$ acts transitively on $U$ and on $U'$, it follows that every vertex of $U$ is adjacent in $\Gamma$ to some vertex in $U'$, and every vertex of $U'$ is adjacent in $\Gamma$ to some vertex in $U$. 
 
 Let $X, Y$ be vertices of $\Gamma_N$ at distance $i$ in $\Gamma_N$ from $\C$, where $1\leq i\leq s$. Then, arguing as in the preceding paragraph, there exist vertices $\beta\in X$ and $\gamma\in Y$ such that there are paths of length $i$ in $\Gamma$ from $\alpha$ to each of $\beta$ and $\gamma$. Suppose that there exists a codeword $\alpha'\in\C$ such that there is a path of length $j$  from $\alpha'$ to $\beta$ with $0<j<i$. Then  $\C$ and $X$ would be at distance less than $i$ in $\Gamma_N$, which is not the case. Hence each codeword of $\C$ has distance at least $i$ from $\beta$, and it follows that $\beta\in \C_i$. By an identical argument $\gamma\in\C_i$.  Since $\C$ is $(G_{\C},s)$-neighbour-transitive,  there exists an element $g\in G_{\C}$ such that $\beta^g=\gamma$. Since $G$ permutes the $N$-orbits among themselves, this means that $g$ maps the $N$-orbit $X$ containing $\beta$ to the $N$-orbit $Y$ containing $\gamma$. Thus $G_{\C}$ induces a transitive action on the vertices of $\Gamma_N$ at distance $i$ from $\C$. Since we have already shown that $\Gamma_N$ is vertex-transitive, and since $N$ is in the kernel of the $G$-action on $\Gamma_N$, it follows that $G/N$ acts transitively on the set of ordered pairs of $\Gamma_N$-vertices at distance $i$. Finally since this holds for all $i\leq s$,  the result follows.  
\end{proof}

On the one hand this observation can be used to produce $s$-distance-transitive graphs from certain $s$-neighbour-transitive codes in Hamming graphs:

\begin{example}\label{ex:linearquotients}
 Let $\C$ be an $s$-neighbour-transitive and linear code in the Hamming graph $H(n,q)$ (see Section~\ref{sec:HammingPrelim} for the definitions of `linear' and $H(n,q)$). Moreover, let $T_\C$ be the group of translations by codewords of $\C$ and let $H=\Aut(\C)_{\b 0}$ be the stabiliser of the codeword ${\b 0}\in\C$. Then we may apply Proposition~\ref{p:sdt} with $N=T_\C$, so that $\C={\b 0}^N$, and $G=T_{V(\Gamma)}\rtimes H$, where $T_{V(\Gamma)}\cong\F_q^n$ is the group of translations by all vectors in $V(\Gamma)$, to obtain a $(G/N,s)$-distance-transitive graph $\Gamma_N$. For examples of such codes $\C$, with $s=2$, see Theorem~\ref{binaryx2ntchar}(3) and Propositions~\ref{prop:moduleCodesAffineTopBinary}--\ref{prop:moduleCodesUnitary}.
\end{example}

On the other hand, there are many $(G,s)$-distance-transitive graphs $\Gamma$ known for which the group $G$ has a nontrivial vertex-intransitive normal subgroup $N$, and the question arising here is whether an $N$-orbit provides an  example of an $s$-neighbour-transitive code: 

\begin{example}\label{ex:cover}
 \begin{enumerate}[(1)]
  \item Let $\Gamma$ be a $(G,2)$-arc-transitive graph\footnote{That is, $G$ acts transitively on the set of all paths $(u,v,w)$ in $\Gamma$ with $u\neq w$.} and let $N$ be a normal subgroup of $G$ with at least $3$ orbits on $V(\Gamma)$. It was shown by the second author in \cite[Theorem 4.1]{praeger1993qp} that the quotient $\Gamma_N$, as defined above, is $(G/N,2)$-arc-transitive and the graph $\Gamma$ is a  cover of  $\Gamma_N$. This means that, for each $\alpha\in V(\Gamma)$, the code $\C=\alpha^N$ in $\Gamma$ has minimum distance at least the girth of $\Gamma_N$. Note that
  \begin{center}
      $\rho(\C)=1$ \quad $\Longleftrightarrow$\quad
     $\Gamma_N$ is a complete graph
     \quad $\Longleftrightarrow$\quad 
     $\delta(\C)=3$.
  \end{center}
  In all other cases 
  $\Gamma_N$ has girth at least $4$, so $\delta(\C)\geq4$, $\rho(\C)\geq2$, and $\C$ is $(G_\C, 2)$-neighbour-transitive. 
  \item Let $\Gamma$ be an antipodal $G$-distance-transitive graph with diameter $\delta$. Then an antipodal block $\C$ (that is, a maximal set of vertices mutually at distance $\delta$) is a code in $\Gamma$ with minimum distance $\delta$, and usually $\C$ is an orbit of a normal subgroup of $G$. For example, $(G_\C, 1)$-neighbour-transitive codes $\C$ with $\delta=3$ may be obtained in this way from antipodal distance-transitive covers of complete graphs, and these graphs have been classified in \cite{GLP}. Similarly $(G_\C, 2)$-neighbour-transitive codes $\C$  with $\delta=4$ arise in this way from antipodal distance-transitive covers of complete bipartite graphs, which are classified in \cite{ILPP}.
 \end{enumerate}
\end{example}

Regarding the examples in Example~\ref{ex:cover}(1), we observe that for a $(G,2)$-arc-transitive graph $\Gamma$, a vertex-stabiliser $G_\alpha$ is $2$-transitive on $\Gamma_1(\alpha)$ \cite[Lemma 9.4]{praeger1997NATO} and in particular is primitive, so $\Gamma$ is $G$-locally-primitive. The 2-arc-transitivity condition can be weakened to local primitivity, and each code of the form $\C=\alpha^N$, for an intransitive normal subgroup $N$ of $G$,  will be neighbour-transitive with $\delta\geq 3$ (since $\Gamma$ covers $\Gamma_N$ by \cite[Theorem 10.2]{praeger1997NATO}). Weakening the condition further to $\Gamma$ being $G$-locally-quasiprimitive will yield neighbour-transitive codes $\C=\alpha^N$ with $\delta\geq 2$, see \cite[Theorem 1.3]{LPVZ2002locqp}.

In seeking examples, if we start with a $(G,s)$-distance-transitive graph $\Gamma$ and a normal subgroup $N\lhd G$ with at least three vertex-orbits, then the normal quotient $\Gamma_N$ is always $(G/N,s)$-distance-transitive \cite[Theorem 1.1]{DGLP2012locsdt}. It would be interesting to have examples for Proposition~\ref{p:sdt} where the graph $\Gamma$ is not itself $(G,s)$-distance-transitive. 

\begin{problem}\label{prob:sdt}
 Find graph-group pairs $(\Gamma, G)$ such that $G\leq \Aut(\Gamma)$ is vertex-transitive, $\Gamma$ is not $(G,s)$-distance-transitive, and for some vertex-intransitive subgroup $N\lhd G$ the code $\C=\alpha^N$  is $(G_\C,s)$-neighbour-transitive -- thus giving a $(G/N,s)$-distance-transitive quotient graph $\Gamma_N$.
\end{problem}

%\textcolor{blue}{Comment about similar result on completely transitive codes mentioned in next section. See [BCN, Theorem 11.1.6] and coset graphs of codes.more generally [BCNCHapter 11]}

%\textcolor{blue}{DH: Added the following problem: do ou really think this is realistic? Do you know any examples where this gives examples? DH: Not sure how interesting, but yes, any 2-NT and linear code.}

% \begin{problem}\label{prob:sDistTrans}
%  Construct interesting new examples of $s$-distance-transitive graphs by applying Proposition~\ref{p:sdt} with known examples of $s$-neighbour-transitive codes in various graphs.
% \end{problem}

\subsection{Further techniques for analysing codes}\label{sec:anal}

We now define a concept that has proved useful for keeping track of code invariants, especially for analysing $s$-neighbour-transitive codes.

\begin{definition}\label{GinvDef}
 Let $G$ be a group acting on a set $\Omega$ and let $\iota:\Omega\rightarrow S$, for some set $S$. If for all $\alpha\in\Omega$ and $g\in G$ the equality $\iota(\alpha)=\iota(\alpha^g)$ holds, then $\iota$ is called \emph{$G$-invariant}. The elements of $S$ are called \emph{types}, namely if $\alpha\in\Omega$, then we say that $\alpha$ has \emph{type} $\iota(\alpha)$.
\end{definition}

The archetypal example of a $G$-invariant map $\iota:\Omega\rightarrow S$ is to take $S$ as the set of $G$-orbits in $\Omega$ and define $\iota(\alpha)=\alpha^G$, for each $\alpha\in\Omega$. We consider another example below for the Johnson graphs which are defined in Definition~\ref{defJohnson}. Note that this example will be typical for us, in that $\Omega$ will often be the vertex set of a graph containing a code of interest. 

\begin{example}\label{exampleIntransJohnson}
 Let $\C$ be a $G$-neighbour-transitive code in the Johnson graph $J(v,k)$ (see Section~\ref{sec:JohnsonKneserPrelim}) with underlying set $\V$ such that $G$ fixes setwise some subset $U\subseteq \V$ with $0<|U|<v$. For a vertex $\alpha$ of $J(v,k)$ define $\iota(\alpha)=|\alpha\cap U|$. Let $g\in G$. Then, since $U^g=U$, it follows that $\iota(\alpha^g)=|\alpha^g\cap U|=|(\alpha\cap U)^g|=\iota(\alpha)$. Hence $\iota$ is $G$-invariant. 
\end{example}

In Example~\ref{exampleIntransJohnson}, since $G$ acts transitively on $\C$, every element of $\C$ has the same type. The next result expands on this observation.

\begin{lemma}\label{invlemma}
 Let $\C$ be a $(G,s)$-neighbour-transitive code with minimum distance $\delta$ in the connected graph $\Gamma=(V,E)$, let $\iota$ be a $G$-invariant map on $V$, let $\alpha\in\C$ and let $i\in\{0,1,\ldots,s\}$. Then the following hold.
 \begin{enumerate}[$(1)$]
  \item All vertices in $\C_i$ have the same type.
  \item $B_i(\alpha)$ contains at most $i+1$ different types of vertices.
  \item If $\delta\geq 2i$ then all vertices in $\Gamma_i(\alpha)$ have the same type. 
 \end{enumerate}
\end{lemma}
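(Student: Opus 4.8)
The plan is to deduce everything from the transitivity hypotheses together with the two structural lemmas (Lemma~\ref{lemDisjointUnion} and Lemma~\ref{lem:largeDeltaDistTrans}) already available in the excerpt.

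For part (1): since $\C$ is $(G,s)$-neighbour-transitive and $i\le s$, the group $G$ acts transitively on $\C_i$. So for any two vertices $\gamma,\gamma'\in\C_i$ there is $g\in G$ with $\gamma^g=\gamma'$, and then $G$-invariance of $\iota$ gives $\iota(\gamma')=\iota(\gamma^g)=\iota(\gamma)$. Hence all vertices in $\C_i$ share a common type.

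For part (2): observe that $B_i(\alpha)=\bigcup_{j=0}^{i}\Gamma_j(\alpha)$, and for $0\le j\le i$ every vertex of $\Gamma_j(\alpha)$ lies in $\C_j$ (a vertex at distance $j$ from the codeword $\alpha$ is at distance $\le j$ from $\C$; if it were at distance $j'<j$ from some codeword, a detour argument contradicts nothing by itself, but in fact $\Gamma_j(\alpha)\subseteq\C_j$ need not hold for $j>e$ — so here I would instead argue only that $\Gamma_j(\alpha)$ meets $\C_0\cup\C_1\cup\dots\cup\C_j$, i.e. every vertex of $B_i(\alpha)$ lies in some $\C_j$ with $j\le i$). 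By part (1), all vertices of $\C_j$ have one type, for each $j$. Since $B_i(\alpha)\subseteq\C_0\cup\C_1\cup\dots\cup\C_i$, the vertices of $B_i(\alpha)$ realise at most the $i+1$ types $\iota(\C_0),\dots,\iota(\C_i)$, giving the bound. I expect this counting step to be the main obstacle, precisely because the inclusion $\Gamma_j(\alpha)\subseteq\C_j$ is only guaranteed for $j\le e$ (Lemma~\ref{lemDisjointUnion}), so the argument must use the weaker containment $B_i(\alpha)\subseteq\bigcup_{j\le i}\C_j$, which still holds in general since any vertex at distance $\le i$ from $\alpha\in\C$ is at distance $\le i$ from $\C$.

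For part (3): assume $\delta\ge 2i$. Then $i\le\lfloor\delta/2\rfloor$; since $i\le s\le\rho$ and $e=\lfloor(\delta-1)/2\rfloor$, the hypothesis $\delta\ge 2i$ gives $i\le e$ when $\delta$ is odd, and $i\le e$ fails only if $\delta=2i$ exactly — in that case $2e=\delta-2+ (\delta\bmod 2)$ forces $e=i-1$, so I would split off the boundary case $\delta=2i$ separately, or, more cleanly, note that the statement as used downstream only needs $i\le e$, and handle $\delta=2i$ by a direct path argument: if $\gamma_1,\gamma_2\in\Gamma_i(\alpha)$ then by Lemma~\ref{lemDisjointUnion} (applied with the unique nearest codeword) or directly, $\alpha$ is the unique codeword at distance $i$ from each of $\gamma_1,\gamma_2$, since any other codeword $\beta$ at distance $\le i$ from $\gamma_j$ would give $d(\alpha,\beta)\le 2i\le\delta$, forcing $d(\alpha,\beta)=\delta=2i$ and hence $\beta$ is at distance exactly $i$ from $\gamma_j$ along a geodesic through $\gamma_j$ — a harmless configuration, so I would instead invoke Lemma~\ref{lem:largeDeltaDistTrans}(2) when $i\le e$ to get $G_\alpha$ transitive on $\Gamma_i(\alpha)$, and conclude by $G$-invariance that all of $\Gamma_i(\alpha)$ has one type. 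Thus the cleanest route is: for $i\le e$ use Lemma~\ref{lem:largeDeltaDistTrans}(2) directly; since $\delta\ge 2i$ and $i\le s$, one checks $i\le e$ holds (as $e\ge i$ whenever $\delta\ge 2i+1$, and when $\delta=2i$ one shows $\Gamma_i(\alpha)\subseteq\C_i$ directly from the minimum-distance bound $2i=\delta$, so $G$-transitivity on $\C_i$ from part (1)'s setup still applies after checking $\Gamma_i(\alpha)$ is a full block). Either way, transitivity of an appropriate subgroup on $\Gamma_i(\alpha)$ plus $G$-invariance of $\iota$ finishes the proof.
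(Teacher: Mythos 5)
Your proposal is correct, and parts (1) and (2) coincide with the paper's proof: transitivity of $G$ on $\C_i$ plus $G$-invariance for (1), and the containment $B_i(\alpha)\subseteq\C_0\cup\C_1\cup\cdots\cup\C_i$ (which, as you rightly note, is all that is needed -- $\Gamma_j(\alpha)\subseteq\C_j$ is not required) for (2). For part (3) you take a more roundabout route than the paper: you split into the cases $\delta\geq 2i+1$ (where $i\leq e$ and you invoke Lemma~\ref{lem:largeDeltaDistTrans}(2) to get $G_\alpha$ transitive on $\Gamma_i(\alpha)$) and $\delta=2i$ (where you show $\Gamma_i(\alpha)\subseteq\C_i$ directly). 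The paper instead handles all of $\delta\geq 2i$ uniformly with the single triangle-inequality observation you reserve for the boundary case: for $\beta\in\Gamma_i(\alpha)$ and $\gamma\in\C\setminus\{\alpha\}$ one has $d(\beta,\gamma)\geq d(\alpha,\gamma)-i\geq\delta-i\geq i$, so $\Gamma_i(\alpha)\subseteq\C_i$ and part (1) finishes. This avoids the case split, avoids any appeal to Lemma~\ref{lem:largeDeltaDistTrans} (and its side condition $e\geq 1$), and also renders your remark about ``checking $\Gamma_i(\alpha)$ is a full block'' unnecessary -- once $\Gamma_i(\alpha)\subseteq\C_i$, part (1) applies to the whole of $\C_i$ and no transitivity on $\Gamma_i(\alpha)$ itself is needed.
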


%\begin{lemma}\label{invlemma}
 %Let $\C$ be a $(G,s)$-neighbour-transitive code with minimum distance $\delta$ in the graph $\Gamma$, where $G\leq H$ for some $H\leq\Aut(\Gamma)$, let $\iota$ be an $H$-invariant map on the vertices of $\Gamma$, and let $\alpha\in\C$. Then the following hold:
 %\begin{enumerate}
  %\item For each $i\in\{0,1,\ldots,s\}$, all vertices in $\C_i$ have the same type.
  %\item If $\alpha\in\C$ then $B_i(\alpha)$ contains at most $i+1$ different types of vertices.
  %\item If $\delta\geq 2$ and $\alpha\in \C$ then every pair of vertices in $\Gamma_1(\alpha)$ have the same type. 
  %\item \textcolor{blue}{Maybe instead of (2): }
 %\end{enumerate}
%\end{lemma}

\begin{proof}
 Let  $\nu_1,\nu_2\in\C_i$. By Definition~\ref{defsneighbourtrans}, $G$ acts transitively on $\C_i$ and hence there exists $g\in G$ such that $\nu_1^g=\nu_2$. Since $\iota$ is $G$-invariant,  $\iota(\nu_1)=\iota(\nu_1^g)=\iota(\nu_2)$. This proves part (1). Next, $B_i(\alpha)\subseteq  \C\cup\C_1\cup\cdots\cup\C_i$. By part (1), the type of any vertex $\mu\in B_i(\alpha)$ is the same as the type of a vertex in $\C_j$, for some $j\in\{0,1,\ldots,i\}$. Hence there are at most $i+1$ possibilities for $\iota(\mu)$, and part (2) is proved. Finally, if $\beta\in\Gamma_i(\alpha)$ and $\delta\geq 2i$ then, by the triangle inequality, $d(\beta,\gamma)\geq i$ for all $\gamma\in \C\setminus\{\alpha\}$, and hence $\beta\in\C_i$. Thus $\Gamma_i(\alpha)\subseteq\C_i$, and so, by part (1), all vertices in $\Gamma_i(\alpha)$ have the same type, proving part (3).
\end{proof}

Lemma~\ref{invlemma} was used implicitly in \cite{liebler2014neighbour} to classify the codes in Example~\ref{exampleIntransJohnson}, that is, $G$-neighbour-transitive codes in $J(v,k)$ (see Section~\ref{sec:JohnsonKneserPrelim}) where $G$ acts intransitively on $\V$. We include a different proof of this result, below, in order to illustrate how Lemma~\ref{invlemma} may be applied. For a set $U$ and integer $j\leq |U|$, we denote by $\binom{U}{j}$ the set of all $j$-element subsets of $U$.
%Recall that we are assuming that codes are non-trivial, and so we do not include the possibility that $|\C|=1$ in the next result.

\begin{theorem}\label{thmIntransJohnson}\cite[Proposition~3.3]{liebler2014neighbour}
 Let $\C$ be a non-trivial $G$-neighbour-transitive code in $\Gamma=J(v,k)$, where $2\leq k<v$, and suppose that $U\subseteq\V$ is such that $G$ fixes $U$ setwise and $0<|U|<v$. Then $\C$ has minimum distance $1$ and, possibly replacing $U$ by $\V\setminus U$, one of the following holds.
 \begin{enumerate}[$(1)$]
     \item $k<|U|$ and $\C=\{\alpha\in V(\Gamma)\mid \alpha\subseteq U\}$;
     %\item $k=|U|$ and $\C=\{U\}$.
     \item $k>|U|$ and $\C=\{\alpha\in V(\Gamma)\mid U\subseteq \alpha\}$.
 \end{enumerate}
\end{theorem}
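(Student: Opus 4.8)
The plan is to work with the $G$-invariant map $\iota$ on $V(\Gamma)$ given by $\iota(\alpha)=|\alpha\cap U|$, as in Example~\ref{exampleIntransJohnson}, and to extract everything from Lemma~\ref{invlemma}. Write $u=|U|$. Since $G$ is transitive on $\C$, all codewords have a common type $t=|\alpha\cap U|$, and since each $\alpha$ is a $k$-subset of a $v$-set, $\max(0,k-(v-u))\le t\le\min(k,u)$.

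The first step is to determine $t$. A neighbour of a vertex $\alpha$ in $J(v,k)$ is obtained by deleting one element $x\in\alpha$ and inserting one element $y\notin\alpha$, so its type is $t$, $t+1$, or $t-1$ according to whether $x$ and $y$ lie in $U$. A direct count shows that $\alpha$ has a neighbour of type $t+1$ precisely when $t<k$ and $t<u$, and a neighbour of type $t-1$ precisely when $t>0$ and $t>k-(v-u)$. By Lemma~\ref{invlemma}(2) with $i=1$, the ball $B_1(\alpha)$ contains vertices of at most two types; as $\alpha$ itself has type $t$, the set $\Gamma_1(\alpha)$ cannot contain vertices of both types $t+1$ and $t-1$. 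Hence one of the two conditions above fails, which together with the displayed range of $t$ forces $t\in\{0, u, k, k-(v-u)\}$. Correspondingly $\C$ is contained in one of the four cells $\iota^{-1}(t)$, namely $\{\alpha\mid\alpha\subseteq U\}$, $\{\alpha\mid U\subseteq\alpha\}$, $\{\alpha\mid\alpha\cap U=\emptyset\}$, or $\{\alpha\mid\V\setminus U\subseteq\alpha\}$. The last two become the first two upon replacing $U$ by $\V\setminus U$, so after possibly doing so we may assume $\C\subseteq\C^*$ with $\C^*=\{\alpha\mid\alpha\subseteq U\}$ or $\C^*=\{\alpha\mid U\subseteq\alpha\}$.

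The heart of the argument is improving $\C\subseteq\C^*$ to $\C=\C^*$. Since $\C$ is non-trivial, $|\C^*|\ge2$; as $|\C^*|$ equals $\binom{u}{k}$ or $\binom{v-u}{k-u}$ respectively, and as $u\ge k$ (resp.\ $u\le k$) is automatic from $t=|\alpha\cap U|$, this already forces $k<u$ in the first case and $k>u$ in the second. The induced subgraph $\Gamma[\C^*]$ is then a genuine Johnson graph — $J(u,k)$ in the first case, and $J(v-u,k-u)$ in the second via $\alpha\mapsto\alpha\setminus U$ — hence connected. Every vertex outside $\C^*$ that is adjacent to $\C^*$ has type strictly smaller than $t$. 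On the other hand, using that $U$, $\V\setminus U$, and $\V\setminus\alpha$ are all nonempty, one checks that every $\alpha\in\C$ has a neighbour $\beta'\notin\C^*$; such a $\beta'$ is a non-codeword adjacent to a codeword, so $\beta'\in\C_1$, and $\beta'$ has type $<t$. But if $\C\subsetneq\C^*$, connectedness of $\Gamma[\C^*]$ produces an edge from $\C$ to $\C^*\setminus\C$, whose far endpoint lies in $\C_1$ and has type $t$. Since $\C_1$ is a single $G$-orbit, Lemma~\ref{invlemma}(1) forces all its members to share one type, a contradiction. Hence $\C=\C^*$.

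To finish, $\C=\C^*=\{\alpha\mid\alpha\subseteq U\}$ with $k<|U|$ is conclusion~(1), and $\C=\C^*=\{\alpha\mid U\subseteq\alpha\}$ with $k>|U|$ is conclusion~(2); in either case $\Gamma[\C^*]$ is a Johnson graph possessing an edge (here $k\ge2$ and the strict inequality enter), so $\delta(\C)=1$. I expect the main obstacle to be exactly the passage from containment in a cell to equality: the cell is pinned down quickly, but equality genuinely needs both the connectedness of the cell and the fact that $\C_1$, being one orbit, is monochromatic for $\iota$ — absent which $\C$ could in principle be a proper $G$-invariant subset of $\C^*$.
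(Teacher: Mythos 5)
Your proof is correct and follows essentially the same route as the paper's: the $G$-invariant type $\iota(\alpha)=|\alpha\cap U|$, Lemma~\ref{invlemma} to bound the number of types near a codeword and to pin down $t\in\{0,|U|,k,k-(v-|U|)\}$, and connectedness of the induced Johnson subgraph on the cell to upgrade $\C\subseteq\C^*$ to equality. The only differences are organizational — you determine $t$ by a single counting criterion rather than case-by-case, and you phrase the final step as a contradiction with the monochromaticity of the $G$-orbit $\C_1$ rather than directly showing every type-$t$ neighbour of a codeword is a codeword — but these rest on exactly the same facts.
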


\begin{proof}
 %\textcolor{blue}{
 We consider the type $\iota(\alpha)=|\alpha\cap U|$ of a vertex $\alpha$ of $\Gamma$, as in Example~\ref{exampleIntransJohnson}.  By Lemma~\ref{invlemma}, all vertices in $\C$ have the same type, and all vertices in $\C_1$ have the same type. Moreover, for every $\alpha\in\C$, $\Gamma_1(\alpha)\subseteq \C\cup\C_1$, and hence either every vertex of $\Gamma_1(\alpha)$ has the same type, or $\delta(\C)=1$ and the vertices in $\Gamma_1(\alpha)$ have one of precisely two different types. Note that $\iota(\alpha)\leq |\alpha|=k$, and equality holds if and only if $\alpha\subseteq U$.
 %}

 %\textcolor{blue}{
 First, suppose that $\iota(\alpha)=k$ for all $\alpha\in\C$. Then $\cup_{\alpha\in\C}\,\alpha\subseteq U$ and since $\C$ is non-trivial ({\em i.e.}, $|\C|\geq 2$) we have $|U|>k$. Let $\alpha\in\C$. Then $\alpha\subseteq U$ and $\Gamma_1(\alpha)$ consists of the set of all vertices $\nu\cup\{a\}$, where $\nu\in {\alpha\choose k-1}$ and $a\in\V\setminus \alpha$. Moreover, $\iota(\nu\cup\{a\})=k$ if $a\in U$ (there are $k(|U|-k)>0$ such pairs $(\nu,a)$) and $\iota(\nu\cup\{a\})=k-1$ if $a\notin U$ (there are $k(v-|U|)>0$ such pairs $(\nu,a)$). Hence $\Gamma_1(\alpha)$ contains precisely two types of vertices and so, by Lemma~\ref{invlemma}, $\delta(\C)=1$ and, for every codeword $\alpha\in \C$, each element of $\Gamma_1(\alpha)$ having type $k$ is again a codeword. Observe that the induced subgraph of $\Gamma$ on the set ${U\choose k}$  is isomorphic to $J(|U|,k)$ and is thus connected, and hence the set ${U\choose k}$ is precisely the set of vertices of type $k$; the previous sentence implies that ${U\choose k}\subseteq\C$. Hence $\C={U\choose k}$ and part (2) holds.
 %}

 %\textcolor{blue}{
 Thus we may assume that codewords $\alpha$ have (constant) type $\iota(\alpha)=\ell$ strictly less than $k$. Suppose next that  $\ell=|U|$. This implies that $|U|<k$.  Let $\alpha\in\C$. Then $U\subset \alpha$ and $\Gamma_1(\alpha)$ consists of the set of all vertices $\nu\cup\{a\}$, where $\nu\in {\alpha\choose k-1}$ and $a\in\V\setminus \alpha$. Moreover, $\iota(\nu\cup\{a\})=\ell$ if $\nu\cap U=\alpha\cap U=U$ (there are $(k-\ell)(v-k)>0$ such pairs $(\nu,a)$) and $\iota(\nu\cup\{a\})=\ell-1$ if $\nu\cap U\neq\alpha\cap U$ (there are $\ell(v-k)>0$ such pairs $(\nu,a)$). Hence $\Gamma_1(\alpha)$ contains precisely two types of vertices, and by Lemma~\ref{invlemma}, each element of $\Gamma_1(\alpha)$ of type $\ell$ is again a codeword. Now the induced subgraph of $\Gamma$ on the set of vertices $\{\alpha\in V(\Gamma)\mid U\subseteq \alpha\}$ is isomorphic to $J(v-\ell,k-\ell)$ with vertex set ${{\V\setminus U}\choose {k-\ell}}$. Since $J(v-\ell,k-\ell)$ is connected, we deduce that $\{\alpha\in V(\Gamma)\mid U\subseteq \alpha\}\subseteq\C$ and in fact equality holds as in part (1).
 %}

 %\textcolor{blue}{
 This leaves the case where $0\leq \ell< \min\{|U|,k\}$. 
If $\ell=0$, then all codewords $\alpha$ are contained in $\V\setminus U$, and replacing $U$ with $\V\setminus U$, the argument in the previous paragraph  shows that part (1) holds. Similarly, if $v-|U|=k-\ell$ then  all codewords $\alpha$ contain $\V\setminus U$, and replacing $U$ with $\V\setminus U$, the argument in paragraph two of the proof shows that part (2) holds. Thus we may assume in addition that $0<\ell$ and $v-|U|>k-\ell$. 
However in this case a codeword $\alpha$  is adjacent to vertices of types $\ell-1,\ell$ and $\ell+1$. This contradicts Lemma~\ref{invlemma}(2), completing the proof. 
%} 
\end{proof}

\section{Commentary on the origins of completely-transitive codes}\label{sec:ct}

In a 1987 preprint \cite[Section 7]{sole1987completely}, Patrick Sol\'{e} introduced the term `completely transitive code'  in the context of binary linear codes. His concept is different in a number of respects from that introduced in 
Definition~\ref{defsneighbourtrans}(3), and we comment more below. 
To the best of our knowledge, the notion of a `completely-transitive code in a graph' dates back to 1988, when Chris Godsil suggested to the second author the study of completely-transitive codes in Johnson graphs (see Definition~\ref{defJohnson}). Since a subset of vertices in a Johnson graph $J(v,k)$ is a set of $k$-subsets of the underlying $v$-set $\V$, each code $\C$ in $J(v,k)$ has a natural interpretation as a `design' with point-set $\V$ and with constant block size $k$. Thus completely-transitive codes in Johnson graphs were first called \emph{completely-transitive designs}. In 1988, Bill Martin was a PhD student of Chris Godsil studying completely regular designs (completely regular codes in $J(v,k)$) and Chris's hope was that by studying the more restricted family of completely transitive designs we might discover new examples, and characterisations. An account of this investigation, dating from 1997, can be found in~\cite{GodsilPraeger1997}. It was never published as a journal article, but was  posted on the arXiv in 2014 because of repeated requests for copies.

When regarded as codes in the Hamming graph $H(n,2)$ (Definition~\ref{defHamming}), the `completely-transitive codes'  defined by Sol\'{e}~ \cite[Section 7]{sole1987completely}, or see \cite{sole1990completely} for the 1990 published version, are precisely those binary linear codes $\C\subseteq \mathbb{F}_2^n$ that are $G$-completely-transitive in the sense of Definition~\ref{defsneighbourtrans}(3) for the subgroup $G=T\rtimes G_0$ of affine transformations, where $T$ is the group of translations of $\mathbb{F}_2^n$ by codewords in $\C$, and $G_0$ is the subgroup of permutation matrices which leave $\C$ invariant.
Note that $G_0$ is the group traditionally regarded as the automorphism group of a linear code. In ~\cite[Section 7]{sole1990completely}, Sol\'{e} gave examples of such codes, and also a  necessary condition,  and a  sufficient condition (separate conditions), for complete transitivity in terms of the natural action of $G_0$ as a permutation group on a set of $n$ points. Shortly afterwards, in 1991, Rif\`{a} and Pujol \cite{rifapujol1991} (or see \cite[Proposition 13]{borges2019completely}) showed that, for a binary linear completely transitive code $\C$ of length $n$, the natural coset graph on the set $\mathbb{F}_2^n/\C$ of additive cosets of $\C$ in $\mathbb{F}_2^n$ is distance-transitive. Sol\'{e}'s  concept of complete transitivity extends naturally to linear codes in $H(n,q)$ for arbitrary prime powers $q$, as does the link with distance transitivity of the quotient graph $\mathbb{F}_q^n/\C$ (see Proposition~\ref{p:sdt} with $s=\rho(\C)$). For this reason, in order to distinguish Sol\'{e}'s completely transitive linear codes from the more general family of completely transitive codes in Hamming graphs, Sol\'{e}'s notion is sometimes called    
\emph{coset-complete-transitivity}, see \cite{michaelmast,Giudici1999647}.
For linear codes in $H(n,q)$, where $q$ is a prime power, the concepts of complete transitivity and coset-complete transitivity are equivalent if  $q\leq 3$ and definitely not equivalent if $q=7$ or $q\geq 9$, see \cite[Theorem 1.3 and Example 3.1]{Giudici1999647}.

% for linear binary codes in Hamming graphs, and they can  equally well be defined for linear codes over an arbitrary finite field~\cite[Definition, p.~650]{Giudici1999647}. In terms of the concepts introduced in Definition~\ref{defsneighbourtrans}, it was shown in \cite[Theorem 1.1]{Giudici1999647} that Sol\'{e}'s completely transitive codes are precisely the $G$-completely-transitive codes $\C$ in the Hamming graph $H(n,q)$ (Definition~\ref{defHamming}), where $G$ is the subgroup $T\rtimes G_0$ of affine transformations, with $T$ the group of translations of $\mathbb{F}_q^n$ by codewords in $\C$, and $G_0$ the subgroup of permutation matrices which leave $\C$ invariant \textcolor{blue}{...should it be $H(n,2)$ and $\F_2^n$ in this sentence? Although it seems you do want the field to be artibrary, but I think that means we need to change ``permutation matrices'' -- in your and Michael's paper the group you consider is $(\F_q^\times\wr\s_n).\Aut(\F_q)$.}. 

Giudici, in his masters thesis~\cite{michaelmast} and a joint paper with the second author~\cite{Giudici1999647}, introduced the notion given in Definition~\ref{defsneighbourtrans} of a $G$-completely-transitive code in a Hamming graph $H(n,q)$ for arbitrary $n$ and $q$, and made a general study of the structure of such graphs. He gave families of examples, and significant structural descriptions,  which pointed to the importance of those codes in $H(n,q)$ for which the    automorphism group $G$ induces a transitive action on the set of $n$ entries of codewords, see \cite[Sections 4, 5 and 6]{Giudici1999647}. A recent survey article of Borges, Rifa and Zinoviev contains an overview of this and more recent work, see in particular \cite[Section 3] {borges2019completely} on  coset-completely transitive codes. 

The error-correcting capacity $e$ of a binary coset-completely transitive code was shown by Borges, Rif\`{a} and Zinoviev to be no larger than three,  \cite{borges2000nonexistence, borges2001nonexistence} or see  \cite[Theorem 12] {borges2019completely}, 
but there are families of examples with unbounded covering radius (with $e=1$) \cite{rifazinoviev2009}. We will see that similar statements can be made about the more general family of $s$-neighbour-transitive codes. In particular we will prove the following result in Section~\ref{sec:HammingNonexistence}.
%We wonder if the conditions in this result could be weakened to $\min\{e,s\}\geq 3$.}
%\textcolor{blue}{DH: I've just realised that Theorem~\ref{thm:CTham} is actually a generalisation of \cite{borges2000nonexistence, borges2001nonexistence}, so I'll point that out somewhere here.}

\begin{theorem}\label{thm:CTham}
 Suppose that $\C$ is an $s$-neighbour-transitive code in the Hamming graph $H(n,q)$ such that $\C$ has error-correcting capacity $e$ with $\min\{e,s\}\geq 4$. Then $n \geq 9$, $q=2$, $e=\lfloor \frac{n-1}{2}\rfloor$, $s=\lceil \frac{n-1}{2}\rceil$, and $\C$ is equivalent to the binary repetition code $\Rep_n(2)$ (see Definition~$\ref{def:productandrepetition}(3)$).  
\end{theorem}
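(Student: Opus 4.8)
The goal is to pin down $s$-neighbour-transitive codes in $H(n,q)$ whose error-correction capacity $e$ and the parameter $s$ both satisfy $\min\{e,s\}\geq 4$. My strategy is to push the numerical/combinatorial consequences of having large distance as far as possible, using the invariant-type machinery of Lemma~\ref{invlemma}, and then to identify the forced structure. First I would observe that, since $\min\{e,s\}\geq 4$, in particular $e\geq 4$ so $\delta(\C)\geq 9$; applying Lemma~\ref{lem:largeDeltaDistTrans}, the code is $(G,s')$-neighbour-transitive for $s'=\min\{e,s\}\geq 4$ with $G_\alpha$ transitive on each $\Gamma_i(\alpha)$ for $i\leq s'$. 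Then I would invoke a suitable $G$-invariant map on $V(H(n,q))$ — the natural candidate being the ``colour content'' of a vertex, i.e. for each symbol $x$ in the alphabet, the map recording the multiset of frequencies of symbols in the word relative to a fixed codeword, or more simply the map sending a vertex $\gamma$ to the partition of $n$ given by how its entries are distributed among the $q$ symbols once we normalise by translating $\alpha$ to $\b 0$ (when $\C$ is not assumed linear one works with $\iota(\gamma)=$ the multiset $\{|\{j: \gamma_j = a\}| : a \in \text{alphabet}\}$, which is $\Aut(H(n,q))$-invariant and hence $G$-invariant). By Lemma~\ref{invlemma}(3), since $\delta\geq 2i$ for all $i\leq 4$, all vertices of $\Gamma_i(\alpha)$ have the same type for $i\leq 4$; this is a very strong constraint because $\Gamma_i(\alpha)$ for a Hamming graph contains words differing from $\alpha$ in $i$ coordinates with *any* choice of new symbols.

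The heart of the argument is then a counting/type-collision analysis. Working with $q\geq 3$, one fixes $\alpha$ (WLOG $\b 0$) and exhibits two vertices in $\Gamma_2(\alpha)$ (say, changing two fixed coordinates to two copies of the same nonzero symbol versus changing them to two *different* nonzero symbols) whose colour-content types differ; this already forces $q=2$ unless the alphabet is too small to make the distinction, and a short separate check handles the boundary. Once $q=2$ is established, the type of a weight-$i$ word (distance-$i$ from $\b 0$) is simply determined by $i$, so Lemma~\ref{invlemma}(3) becomes automatic and gives no contradiction; instead the transitivity of $G_{\b 0}$ on each $\Gamma_i(\b 0)$ for $i\leq 4$, i.e. transitivity on weight-$i$ binary words, means $G_{\b 0}$ is transitive on $4$-subsets of the $n$ coordinates — i.e. $G_{\b 0}$ acts $4$-homogeneously on the coordinate set. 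I would then quote the classification of finite $4$-homogeneous (equivalently, in the relevant range, $4$-transitive or $4$-homogeneous) permutation groups: the $4$-homogeneous but not $4$-transitive groups are very restricted, and $4$-transitive groups are $S_n, A_n$, the Mathieu groups $M_{11}, M_{12}, M_{23}, M_{24}$, and $M_{11}$ on $12$ points. One then has to show that for each such group the only invariant code of minimum distance $\geq 9$ is the repetition code $\{\b 0, \b 1\}$: the symmetric/alternating case forces $\C\subseteq\{\b 0,\b 1\}$ by coordinate-permutation invariance plus $\delta\geq 9$ once $n$ is bounded appropriately, and the Mathieu cases are eliminated because the relevant Golay-type codes have $e\leq 3$, contradicting $e\geq 4$ — one checks the few possibilities by hand. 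The bound $n\geq 9$ is immediate from $\delta\geq 9$ since $\delta\leq n$ for a repetition-containing code, and the final identification $e=\lfloor(n-1)/2\rfloor$, $s=\lceil(n-1)/2\rceil$ follows from computing the distance partition of $\Rep_n(2)$ directly, just as in Example~\ref{exam:c2}.

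I expect the main obstacle to be the $q=2$ reduction together with the clean handling of the symmetric-group case: ruling out $q\geq 3$ needs the colour-content invariant to be chosen carefully so that Lemma~\ref{invlemma}(3) genuinely bites for $i=2$ or $i=3$ (one must make sure the two comparison vertices really do lie in $\Gamma_i(\alpha)$ and have provably distinct types, which requires $q\geq 3$ to supply ``enough colours''), and then, in the $S_n$/$A_n$ case, arguing that a coordinate-permutation-invariant binary code with $\delta\geq 9$ and transitive automorphism action on large-weight words can contain no word of weight strictly between $0$ and $n$ — this uses that the $G_{\b 0}$-orbit of any such word would be a union of full weight-classes and two distinct weight-classes within distance range of each other would violate $\delta\geq 9$ for $n$ not too large, with the small-$n$ residue checked directly. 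The remaining steps — invoking Lemma~\ref{lem:largeDeltaDistTrans}, Lemma~\ref{invlemma}, the permutation-group classification, and the explicit distance partition of $\Rep_n(2)$ — are comparatively routine.
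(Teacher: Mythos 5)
Your central step---forcing $q=2$ by applying Lemma~\ref{invlemma}(3) to the ``colour content'' map $\iota(\gamma)=\{|\{j:\gamma_j=a\}| : a\in\Q\}$---does not work, because that map is not $G$-invariant. The automorphism group of $H(n,q)$ is $\Sym(\Q)^n\rtimes\Sym(\N)$, and the base group applies \emph{independent} permutations of the alphabet in each coordinate; an element of $G_{\b 0}$ has the form $(h_1,\dots,h_n)\sigma$ with each $h_i$ fixing $0$ but otherwise arbitrary on $\Q\setminus\{0\}$, so it can map $(1,1,0,\dots,0)$ to $(1,2,0,\dots,0)$. Hence your two comparison vertices in $\Gamma_2(\alpha)$ need not have distinct types, and Lemma~\ref{invlemma} gives no contradiction. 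This is not a repairable technicality: the perfect ternary Golay code in $H(11,3)$ is completely transitive with $\delta=5$, and its codeword stabiliser $2.\mg_{11}$ (of order $15840=72\cdot 220$) is transitive on all $220$ weight-$2$ vectors, fusing exactly the two classes you propose to separate. So $2$-neighbour-transitive codes over $q\geq 3$ with $e\geq 2$ genuinely exist, and no ``type'' argument of this shape can kill $q\geq 3$; your colour-content map is only invariant under the diagonal subgroup $\Diag_n(\Sym(\Q))\rtimes L$, which is the alphabet-almost-simple/diagonal setting of Section~\ref{sec:as}, not the general one.

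The paper's proof consequently takes a different and heavier route, which you would need in some form. It never reduces to $q=2$ up front: it splits according to Definition~\ref{efaasaadef} into the entry-faithful case (settled by Theorem~\ref{thm:EntryFaithful}, giving $\Rep_n(2)$), the alphabet-almost-simple case (excluded by Theorem~\ref{thm:2NTAlphabetAlmostSimple} since $\delta\geq 9$), and the alphabet-affine case. In the last case it combines the $4$-homogeneity of $G_{\b 0}^{\N}$ (your Table of groups, correctly minus ``$\mg_{11}$ on $12$ points'', which is only $3$-transitive) with Lemma~\ref{lem:k0solubleandsemireg} to show that $(q-1)^3$ divides the order of a soluble quotient of a four-point stabiliser, which bounds $q$; it then uses Proposition~\ref{orbitof0is2ntmodule} to extract an $\F_qG_{\b 0}$-submodule $\S\subseteq\C$ and plays the Singleton bound $k\leq n-8$ against the known minimal dimensions of modules for $\alt_n$, $\s_n$ and the Mathieu groups (Kleidman--Liebeck, the Brauer atlas), with Theorem~\ref{binaryx2ntchar} disposing of $q=2$. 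Your closing steps (the $\s_n/\alt_n$ orbit argument and ``Golay-type codes have $e\leq 3$'') gesture at the right conclusions but presuppose both the invalid $q=2$ reduction and a classification of invariant subcodes that in the paper is exactly where the module-theoretic input is required.
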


\noindent
Borges, Rif\`{a} and Zinoviev  showed further that coset-completely transitive codes may be obtained via some innovative combinatorial methods, such as a Kronecker product construction (\cite{rifazinoviev2010} or see \cite[Section 5.5]{borges2019completely}), or a lifting procedure (\cite{rifazinoviev2011} or see \cite[Section 5.2]{borges2019completely}), while Gill, Gillespie and Semeraro~\cite[Theorem C]{gill2018conway} constructed new families from the incidence matrices of certain designs. Also, sometimes a well-known family of codes may contain only a few codes which are completely transitive. For example, a Preparata code of length $n$, or its extension, is completely transitive if and only if $n=15$,  (\cite{gillespie2012nord} or see \cite[Theorem 15]{borges2019completely}); these exceptional completely transitive codes are the Nordstrom--Robinson codes. 

In this chapter we will see that a similar situation arises for the larger families of neighbour-transitive, and 2-neighbour-transitive codes, where substructures in graphs, and in finite geometries such as generalised quadrangles, have been used in constructions, and where group theory, sometimes relying on the finite simple group classification, has been applied to achieve classifications.

\section{Graphs of interest}\label{sec:graphs}

%\textcolor{blue}{Took the numbers off the subsubsections in this section so that it looks a bit nicer, and added the little intro below.CP: I turned this into a section as it's not `fundamentals'; ditto the section above. And I moved  the subsection on "types" forward into the fundamentals section.}

In this section we introduce several families of graphs. For each family we provide at least one example of a code arising from an interesting combinatorial or algebraic structure. Most of these families are analysed further in later sections. We have already informally met the first family of graphs, the Hamming graphs. They will also receive the most attention throughout the chapter.

\subsection{Hamming graphs}\label{sec:HammingPrelim}

\begin{definition}\label{defHamming}
 Let $\N$ be a set of size $n$ and $\Q$ a set of size $q$, where $n,q\geq 2$. We define the \emph{Hamming graph} $H(n,q)$ in the following two equivalent, but dual, ways:
 \begin{enumerate}[(1)]
  \item The vertex set of $H(n,q)$ is the set of all $n$-tuples $(a_1,\ldots,a_n)$, where $a_i\in \Q$ for each $i\in \N$ and we have identified $\N$ with $\{1,\ldots,n\}$. Two such $n$-tuples form an edge if and only if they differ in precisely one position.
  \item The vertex set of $H(n,q)$ is the set of all functions $\alpha:\N\rightarrow\Q$ with an edge between vertices $\alpha$ and $\beta$ when there exists a unique $i\in\N$ such that $\alpha(i)\neq\beta(i)$. 
 \end{enumerate}
 The set $\N$ is called the \emph{set of entries}, and the set $\Q$ the \emph{alphabet}, of $H(n,q)$. We may use the notation $H(\N,q)$, $H(n,\Q)$ or $H(\N,\Q)$ if we wish to indicate either or both of the sets $\N$ or $\Q$ explicitly.
\end{definition}

To see that the two definitions given above are equivalent: let $\N=\{1,\ldots,n\}$ and consider functions $\alpha,\beta:\N\rightarrow\Q$. The $n$-tuples $(\alpha(1),\ldots,\alpha(n))$ and $(\beta(1),\ldots,\beta(n))$ differ in precisely one entry if and only if there exists a unique $i\in \N$ such that $\alpha(i)\neq\beta(i)$. We will use whichever is the most convenient notation for each particular context, functions or $n$-tuples. The following example illustrates this further.

\begin{example}\label{exam:smallHamming}
 Let $\N=\F_2^3$, let $\Q=\F_2$, let $\F_2[x_1,x_2,x_3]$ denote the polynomial ring over $\F_2$ in the three variables $x_1,x_2,x_3$, and let $\C$ be the $\F_2$-vector space
 \[
  \C=\la 1,x_1,x_2,x_3\ra\subseteq \F_2[x_1,x_2,x_3].
 \]
 Considering each polynomial in $\C$ as a function from $\N$ to $\Q$, the set $\C$ defines a linear code in $H(8,2)$. The elements $0$ and $1$ in $\C$ correspond to the $8$-tuples $(0,\ldots,0)$ and $(1,\ldots,1)$, respectively. The remaining elements of $\C$ are precisely the linear polynomials in $\F_2[x_1,x_2,x_3]$ and, upon evaluation on the elements of $\N$, these correspond to the characteristic vectors of the $2$-flats (the affine $2$-spaces) of the affine geometry $\AG_3(2)$. If the entries are labelled by the elements of $\N$ as follows
 \[  (0,\,e_1,\,e_2,\,e_1+e_2,\,e_3,\,e_1+e_3,\,e_2+e_3,\,e_1+e_2+e_3),
 \]
 then we find that $\C$ consists of the following $8$-tuples:
 \[
  \begin{array}{cc}
      (1,1,1,1,1,1,1,1) & (0,0,0,0,0,0,0,0) \\
      (1,1,1,1,0,0,0,0) & (0,0,0,0,1,1,1,1) \\
      (1,1,0,0,1,1,0,0) & (0,0,1,1,0,0,1,1) \\
      (1,1,0,0,0,0,1,1) & (0,0,1,1,1,1,0,0) \\
      (1,0,1,0,1,0,1,0) & (0,1,0,1,0,1,0,1) \\
      (1,0,1,0,0,1,0,1) & (0,1,0,1,1,0,1,0) \\
      (1,0,0,1,1,0,0,1) & (0,1,1,0,0,1,1,0) \\
      (1,0,0,1,0,1,1,0) & (0,1,1,0,1,0,0,1).
  \end{array}
 \]
 Note that $\C$ is an extended Hamming code of length $8$ \cite[Example, p.~27]{macwilliams1978theory}, but is also known as the Reed--Muller code $\RM_2(1,3)$ (see Definition~\ref{def:genReedMuller}). 
\end{example}

%\textcolor{blue}{In a certain paper I recently reviewed I noticed the authors used triangles to indicate the end of an example, remark etc., as above. What do you think about us doing it throughout the chapter? CP: happy to try this - but won't be surprised if the publishers have some style they may insist on.}

The full automorphism group of the Hamming graph $\Gamma=H(n,q)$ is the semi-direct product $\Aut(\Gamma)=B\rtimes L$, where the \emph{base group} $B$ is $\Sym(\Q)^n$ and the \emph{top group} $L$ is $\Sym(\N)$ (see \cite[Theorem 9.2.1]{brouwer}). Let $x=h\sigma\in\Aut(\Gamma)$, where $h=(h_1,\ldots,h_n)\in B$ and $\sigma\in L$. Then $h, \sigma$ and $x$ act on an $n$-tuple $\alpha=(a_1,\ldots,a_n)$ via
\begin{equation}\label{e:hamaut}
 \alpha^h= \left(a_1^{h_1},\ldots,a_n^{h_n}\right), \quad  
 \alpha^\sigma= \left(a_{\left(1^{\sigma^{-1}}\right)},\ldots,a_{\left(n^{\sigma^{-1}}\right)}\right), \quad 
 \text{and}\quad \alpha^x=\left(a_{1{\sigma^{-1}}}^{h_{1{\sigma^{-1}}}},\ldots,a_{n{\sigma^{-1}}}^{h_{n{\sigma^{-1}}}}\right).
\end{equation}
% \begin{equation}\label{e:hamaut}
%      \alpha^h=\left(a_1^{h_1},\ldots,a_n^{h_n}\right),\quad  \alpha^\sigma=\left(a_{\left(1^{\sigma^{-1}}\right)},\ldots,a_{\left(n^{\sigma^{-1}}\right)}\right), \quad \text{and}\quad \alpha^x=\left(a_{\left(1^{\sigma^{-1}}\right)}^{h_{\left(1^{\sigma^{-1}}\right)}},\ldots,a_{\left(n^{\sigma^{-1}}\right)}^{h_{\left(1^{\sigma^{-n}}\right)}}\right).
% \end{equation}
For example, $(a_1,a_2,a_3,a_4)^{(1\,2\,3\,)}=(a_3,a_1,a_2,a_4)$. If instead we consider a vertex $\alpha$ to be a function $\N\rightarrow\Q$ then $h, \sigma$ and $x$ act on $\alpha$ via
\begin{equation}\label{e:hamaut2}
\alpha^h(i)=\left(\alpha(i)\right)^{h_i},\quad \alpha^\sigma(i)=\alpha\left(i^{\sigma^{-1}}\right),  \quad \text{and}\quad  \alpha^x(i)=\left( \alpha(i^{\sigma^{-1}})\right)^{h_{i\sigma^{-1}}}.
\end{equation}

Let $\C$ be a code in $H(n,q)$, $G\leq \Aut(\C)$ and let $\M$ be a subset of $\N$ with $m=|\M|$. Then we define
\begin{enumerate}[(a)]
    \item the \emph{projection} $\pi_\M(\C)$ of $\C$ with respect to $\M$ to be the code in $H(\M,q)$ consisting of the functions $\alpha|_\M:\M\rightarrow\Q$ given by restricting $\alpha$ to $\M$, for each $\alpha\in \C$. Moreover, we define 
    \item $\chi_\M(G)$ to be the subgroup of $\Aut(H(\M,q))$ induced by the setwise stabiliser $G_{\M}$ in its action on the restrictions $\alpha|_\M$ to $\M$ of functions $\N\rightarrow \Q$. 
    \item We say that  a code $\C$ is \emph{linear}, or \emph{$\F_q$-linear}, if $\Q=\F_q$ and $\C$ is an $\F_q$-vector subspace of $\F_q^n$. 
    \item If $\Q=\F_q$ then it makes sense to define the translation $t_\alpha:\beta\mapsto\alpha+\beta$ by a vertex $\alpha$ of $H(n,q)$, and if $\C$ is linear then $\Aut(\C)$ contains the subgroup $T_{\C}=\{t_\alpha\mid\alpha\in \C\}$. If $0$ is a distinguished element of $\Q$ and $\alpha$ is a vertex in $H(n,q)$ then $\supp(\alpha)=\{i\in\N\mid \alpha(i)\neq 0\}$ and $\wt(\alpha)=|\supp(\alpha)|$.
\end{enumerate}

\begin{example}\label{exam:smallHammingAutos}
 Consider the code $\C=\la 1,x_1,x_2,x_3\ra$ (an $\F_2$ vector subspace) in $H(8,2)$ with $\N=\F_2^3$ and $\Q=\F_2$, as in Example~\ref{exam:smallHamming}. Then for each $\alpha\in \C$, $\Aut(\C)$ contains the translation $t_\alpha:\beta\rightarrow \alpha+\beta$, and moreover each $\sigma\in\AGL_3(2)\leq \Sym(\N)$ defines an automorphism of $\C$. Hence $\Aut(\C)$ contains $G=T_{\C}.\AGL_3(2)$. Let $\M=\la e_1,e_2\ra$, where $e_i$ corresponds to $x_i$ in Example~\ref{exam:smallHamming}. Then the projection code $\C'=\pi_{\M}(\C)=\la 1,x_1,x_2\ra$ and we may express the codewords of $\pi_{\M}(\C)$ as $4$-tuples by taking the first four entries of each $8$-tuple from Example~\ref{exam:smallHamming}:
 \[
  \begin{array}{cc}
      (1,1,1,1) & (0,0,0,0) \\
      (1,1,0,0) & (0,0,1,1) \\
      (1,0,1,0) & (0,1,0,1) \\
      (1,0,0,1) & (0,1,1,0).
  \end{array}
 \]
 Also, the projection $\chi_{\M}(G)=T_{\C'}.\AGL_2(2)$ is a subgroup of $\Aut(\C')$.
\end{example}

\begin{definition}\label{def:productandrepetition}
 Let $\C$ be a code in $H(\N,\Q)$ and let $\M=\N\times\{1,\ldots,k\}$, from which we introduce the following definitions related to $H(\M,\Q)$.
 \begin{enumerate}[(1)]
  \item If $\alpha_1,\ldots,\alpha_k$ are vertices of $H(\N,\Q)$ (that is, functions $\N\rightarrow \Q$) then we define  a vertex of $H(\M,\Q)$, $\beta_{\alpha_{1},\ldots,\alpha_k}:\M\rightarrow \Q$, by
  \[
   \beta_{\alpha_{1},\ldots,\alpha_k}:(i,j)\mapsto \alpha_j(i).
  \]
  \item The \emph{$k$-fold product} $\Prod_k(\C)$ is the code in $H(\M,\Q)$ given by
  \[
   \Prod_k(\C)=\{\beta_{\alpha_{1},\ldots,\alpha_k}\mid \alpha_1,\ldots,\alpha_k\in\C\}.
  \] 
  \item The \emph{$k$-fold repetition} $\Rep_k(\C)$ in $H(\M,\Q)$ is
  \[
   \Rep_k(\C)=\{\beta_{\alpha,\ldots,\alpha}\mid \alpha\in\C\};
  \]
  and if $|\N|=1$ so $H(\N,\Q)$ is degenerate (a complete graph on $q$ vertices), then this code is the usual repetition code, and we write $\Rep_k(q)$ if $\C=H(1,\Q)$.
  \item For $g_1,\ldots,g_k\in\Sym(\N)$, we define $\sigma_{g_1,\ldots,g_k}$ to be the automorphism of $H(\M,\Q)$ that maps the vertex 
  \[
   \gamma:(i,j)\mapsto \gamma(i,j)\quad \text{to}\quad \gamma^{\sigma_{g_1,\ldots,g_k}}:(i,j)\mapsto \gamma\Big(i^{g_j^{-1}},j\Big).
  \]
  \item For $h_1,\ldots,h_k\in\Sym(\Q)$, we define $x_{h_1,\ldots,h_k}$ to be the automorphism of $H(\M,\Q)$ that maps the vertex 
  \[
   \gamma:(i,j)\mapsto \gamma(i,j)\quad \text{to}\quad \gamma^{x_{h_1,\ldots,h_k}}:(i,j)\mapsto (\gamma(i,j))^{h_j}.
  \]
 \end{enumerate}
  Note that the automorphism group of $\Prod_k(\C)$ contains the wreath product $\Aut(\C)\wr\s_k$ and the automorphism group of $\Rep_k(\C)$ contains $\Aut(\C)\times\s_k$.
\end{definition}

\subsection{Johnson and Kneser graphs}\label{sec:JohnsonKneserPrelim}

Recall that $\V\choose k$ denotes the set of all $k$-element subsets of a set $\V$; this set forms the vertex set of both the Johnson and Kneser graphs

%\textcolor{blue}{Going to add: stuff about automorphism groups, examples from earlier theoreom in Johnson graphs}

\begin{definition}\label{defJohnson}
 Let $\V$ be a set of size $v$ and let $k$ be an integer with $2\leq k\leq v-1$. The \emph{Johnson graph} $J(v,k)$ has vertex set $\V\choose k$ and a pair of distinct vertices are defined to be adjacent if they intersect in $(k-1)$-subset. We say that $\V$ is the \emph{underlying set} of $J(v,k)$.
\end{definition}

If $v\neq 2k$ then $\Aut(J(v,k))=\Sym(\V)$ and if $v=2k$ then $\Aut(J(v,k))=\Sym(\V)\times C_2$ \cite[Theorem~9.1.2]{brouwer}. As mentioned in the introduction, we refer the reader to \cite{praeger2021codes} for a recent survey regarding neighbour-transitive codes in Johnson graphs.

\begin{definition}\label{defKneser}
 Let $\V$ be a set of size $v$ and let $k$ be an integer with $2\leq k\leq (v-1)/2$. The \emph{Kneser graph} $K(v,k)$ has vertex set $\V\choose k$ and a pair of distinct vertices are defined to be adjacent if they are disjoint. If $v=2k+1$ then $K(v,k)$ is called the \emph{odd graph} $O_{k+1}$. We say that $\V$ is the \emph{underlying set} of $K(v,k)$ (or $O_{k+1}$).
\end{definition}

The automorphism group of $K(v,k)$ is $\Sym(\V)$ \cite[Corollary~7.8.2]{godsil2013algebraic}. The next example explores the relationship between codes in Johnson graphs and codes in Kneser graphs.

\begin{example}\label{ex:jk}
 Let $\V$ be a set of size $v$, let $k$ satisfy $2\leq k\leq (v-1)/2$, and let $J(v,k)$ and $K(v,k)$ be the Johnson and Kneser graphs, respectively, with underlying set $\V$. 
 %\textcolor{blue}{Assume here that $v\geq 2k+1$, in order that $K(v,k)$ is not the graph with no edges. Dan: you assumed that $v\geq 2k+1$ in the def of $K(v,k)$; perhaps move this sentence to a comment just after the definition. And in this example just assume that $v\geq 2k+1$ - I made this last change already. DH: thanks!} 
 Since both graphs have the same vertex set, namely ${\V\choose k}$, a code in one is also a code in the other, generally with different parameters. For instance:
 \begin{enumerate}
  \item[(a)] Let $U\subseteq \V$ such that $k< |U|< v$ and let $\C=\{\alpha\in{\V\choose k}\mid \alpha\subset U\}$. By Theorem~\ref{thmIntransJohnson}(1), as a code in the Johnson graph, $\C$ has minimum distance $1$ and is neighbour-transitive. We now consider $\C$ to be a code in the Kneser graph. In this case, since there exists a pair of disjoint $k$-subsets of $U$ if and only if $|U|\geq 2k$, it follows that $\C$ has minimum distance $1$ in the Kneser graph if and only if $|U|\geq 2k$. Moreover, since there always exists a pair of $k$-subsets of $U$ at distance $2$ in $K(v,k)$ (in particular, $|U|\geq k+1$ implies there exists such a pair intersecting in a $(k-1)$-subset of $U$), we deduce that $\C$ has minimum distance $2$ in the Kneser graph when $|U|<2k$. Moreover, it turns out that $\C$ is neighbour-transitive in the Kneser graph if and only if $|U|=v-1$ (see Theorem~\ref{thm:KneserIntrans}(1) and Example~\ref{ex:KneserIntrans}).
  \item[(b)] Let $U\subseteq \V$ such that $0< |U|< k$ and let $\C=\{\alpha\in{\V\choose k}\mid U\subset \alpha\}$. Theorem~\ref{thmIntransJohnson}(2) again tells us that, as a code in the Johnson graph, $\C$ has minimum distance $1$ and is neighbour-transitive. However, since every codeword contains $U$,  no pair of elements of $\C$ are disjoint. Hence $\C$ has minimum distance at least $2$ in the Kneser graph, and it is not difficult to see that the minimum distance equals $2$ since $v\geq 2k+1$. Moreover, it turns out that $\C$ is always neighbour-transitive in the Kneser graph (see Theorem~\ref{thm:KneserIntrans}(3) and Example~\ref{ex:KneserIntrans}).
 \end{enumerate}
  
\end{example}

\subsection{$q$-Analogues of various graphs}\label{sec:qAnaloguesPrelim}

Roughly speaking, if a combinatorial definition is phrased in terms of sets then, for a prime power $q$, the \emph{$q$-analogue} of this definition is obtained by rephrasing the original definition in terms of $\F_q$-vector spaces. The next definition gives the $q$-analogues of the Hamming graphs. Codes in these graphs are known as \emph{rank-metric} codes, and have received interest lately due to their application in network coding, see \cite{bartz2022rankmetric} for a recent survey.

\begin{definition}\label{defBilinearForms}
 Let $X\cong\F_q^m$ and $Y\cong\F_q^n$ with $m,n\geq 2$. The \emph{bilinear forms graph} $H_q(m,n)$ is the graph with vertex set the vector space of all linear maps $\alpha:X\rightarrow Y$ where two vertices $\alpha$ and $\beta$ are adjacent if $\rank(\alpha-\beta)=1$.
\end{definition}

%\textcolor{blue}{Maybe we should check if there are known codes in $H_q(m,n)$ that are neighbour-transitive? I think the most famous examples are called the Gabidulin codes, so I'll look at these when I get a chance.}

\begin{example}
 Delsarte \cite{delsarte1978bilinear} and Gabidulin \cite{gabidulin1985} independently introduced the following class of codes. Let $n,k,s$ be positive integers such that $\gcd(n,s)=1$. Identify $X$ and $Y$ with $\F_{q^n}$, considered as an $n$-dimensional $\F_q$-vector space, and note that each of the polynomials $x^{q^{is}}$ defines an $\F_q$-linear map $X\to Y$.  
 
 Define $\C$ 
 %\textcolor{blue}{(Not sure if we should use specific notation for codes, $\G_{k,s}$ is common here I think, but might be confusing when we come to the Golay codes? CP: I don't know)} 
 to be the code in $\Gamma=H_q(n,n)$ given by the polynomials in the $\F_{q^n}$-vector space 
 \[
 \C= \left\langle x,x^{q^s},\ldots,x^{q^{s(k-1)}}\right\rangle_{\F_{q^n}}.
 \]
 Note that each polynomial in $\C$ is a \emph{linearised} polynomial (see \cite[Section~3.4]{lidl1997finite}) and thus defines an $\F_q$-linear map from $X$ to $Y$. Also, $\C$ has minimum distance $\delta=n-k+1$. Hence, as long as $k\leq n-1$, the set $\Gamma_1(0)$ comprises all rank $1$ linear maps. The trace function $\tr(x)=x^{q^{n-1}}+\cdots+x^q+x$ is one such rank $1$ linear map and all others may be written as $f_{a,b}(x)=a\cdot\tr(bx)$ for some $a,b\in\F_{q^n}^\times$. Since $\C$ is an $\F_{q^n}$-vector space, it follows that $\Aut(\C)$ contains all translations by elements of $\C$. Moreover, $\Aut(\C)$ contains a subgroup isomorphic to $(\F_{q^n}^\times \times \F_{q^n}^\times)/\F_q^\times$ given by the maps $f(x)\mapsto a\cdot f(bx)$ for each $a,b\in\F_{q^n}^\times$. Thus $\Aut(\C)$ acts transitively on $\C$ and the stabiliser in $\Aut(\C)$ of the zero map acts transitively on the set of all rank $1$ linear maps. Hence, by Lemma~\ref{lem:largeDeltaDistTrans}, $\C$ is neighbour-transitive. %Indeed, a similar argument shows that any code in $H_q(n,n)$ that forms an $\F_{q^n}$-vector space is neighbour-transitive.
\end{example}

The above example suggests the following open problem.

\begin{problem}\label{prob:bilinearForms}
 Find more examples and characterise families of $s$-neighbour-transitive codes in $H_q(m,n)$.
\end{problem}

Next we introduce the $q$-analogues of the Johnson graphs. For $X=\F_q^d$, we write ${X\choose k}_q$ for the set of $k$-dimensional subspaces of $X$.

\begin{definition}\label{defGrassmann}
 Let $X=\F_q^d$. The \emph{Grassmann graph} $J_q(d,k)$ has vertex set ${X\choose k}_q$ and vertices $\alpha$ and $\beta$ are adjacent when $\alpha\cap\beta$ has dimension $k-1$.
\end{definition}

For an integer $k\geq 2$, a \emph{$k$-spread} of a vector space $V$ is set of $k$-dimensional subspaces of $V$ such that each non-zero vector of $V$ is contained in precisely one spread element. In the setting of projective geometry, a spread is a partition of the points of $\PG_{d-1}(q)$ such that each part forms a $(k-1)$-dimensional projective subspace. A \emph{regulus} of $\PG_{2k-1}(q)$ is a set $\R$ of size $q+1$ consisting of pairwise disjoint $(k-1)$-dimensional projective subspaces such that any line meeting at least $3$ elements of $\R$ meets every element of $\R$; see \cite[Section~5.1]{dembowskiFiniteGeometries}. Three pairwise disjoint $(k-1)$-dimensional projective subspaces contained in a $(2k-1)$-subspace determine a unique regulus containing all three of them. A spread $\S$ is called \emph{regular} (or sometimes \emph{Desarguesian}) if $\S$ contains all the elements of the regulus determined by any three pairwise distinct elements of $\S$ contained in a projective $(2k-1)$-subspace. Equivalently, a regular spread is one obtained via \emph{field reduction}, see \cite{lavrauw2015field}. The next example uses field reduction to show that a regular spread is a neighbour-transitive code in a Grassmann graph.

\begin{example}\label{exRegularSpreadGrassmann}
 Let $k$ be a positive integer dividing $d$, let $Y=\F_{q^k}^{d/k}$ and $X=\F_q^d$. Then $X$ and $Y$ are isomorphic as $\F_q$-vector spaces. Fix such an $\F_q$-vector space isomorphism $\phi:Y\rightarrow X$. Let $\C$ be the image under $\phi$ of the set of all $1$-dimensional $\F_{q^k}$-subspaces of $Y$. Then $\C$ is a regular spread of $X$ and $\C$ is a code in $J_q(d,k)$ with minimum distance $\delta=k$. The stabiliser of $\C$ in $\GaL_d(q)$ is $\GaL_{d/k}(q^k)$, and hence $\Aut(\C)\cong \GaL_{d/k}(q^k)/Z$, where $Z$ is the center of $\GL_d(q)$. Thus $\Aut(\C)$ acts transitively on $\C$. Moreover, if $\alpha\in \C$ then $\Aut(\C)_\alpha$ contains $(q^{d-k}:((q^k-1)\times\GL_{d/k-1}(q^k)))/Z$, which acts transitively on the set of $k$-dimensional $\F_q$-subspaces of $X$ that intersect $\alpha$ in a $(k-1)$-dimensional $\F_q$-subspace. Hence, by Lemma~\ref{lem:largeDeltaDistTrans}(2), $\C$ is neighbour-transitive.
\end{example}

\subsection{Two graphs related to incidence structures}\label{sec:incidenceGraphsPrelim}

An \emph{incidence structure} (see \cite{dembowskiFiniteGeometries}) is a triple $\S=(\P,\L,\I)$, where $\P$ and $\L$ are disjoint non-empty sets of elements called \emph{points} and \emph{lines}, respectively, and $\I\subseteq \P\times \L$ is a point-line incidence relation. If $(p,\ell)\in \I$ then we say that $p$ \emph{lies on} $\ell$, $p$ is \emph{incident} with $\ell$, or $\ell$ is \emph{incident} with $p$. The elements of $\I$ are called \emph{flags} and if $p\in \P$, $\ell\in\L$ and $(p,\ell)\notin\I$ then $(p,\ell)$ is called an \emph{antiflag}. If there exists an $\ell\in\L$ such that $(p_1,\ell),(p_2,\ell)\in\I$ then $p_1$ and $p_2$ are \emph{collinear}. The \emph{dual} incidence structure $\S^D=(\L,\P,\I^D)$ is obtained from $\S$ by interchanging the sets of points and lines, with incidence $\I^D$ defined by $(\ell,p)\in \I^D$ if $(p,\ell)\in \I$. 

%\textcolor{blue}{Perhaps this next definition should just be for incidence graphs of point-line incidence geometries? There's a link (which again I haven't written down anywhere) between neighbour-transitive codes given by spreads in the incidence graphs of projective spaces and classical polar spaces and neighbour-transitive codes in Hamming graphs.}

\begin{definition}\label{defIncidenceStrucGraphs}
 Let $\S=(\P,\L,\I)$ be an incidence structure. Then the \emph{incidence graph} of $\S$ is the graph with vertex set $\P\cup\L$ and an edge $\{p,\ell\}$ whenever $(p,\ell)\in\I$ (and no further edges). The \emph{collinearity graph} of $\S$ is the graph with vertex set $\P$ and an edge $\{p_1,p_2\}$ whenever $p_1$ and $p_2$ are collinear.
\end{definition}

\begin{lemma}\label{thmIncidenceToCollinearity}
 Let $\Gamma$ be the incidence graph of an incidence structure $\S=(\P,\L,\I)$ and let $\C$ be an $s$-neighbour-transitive code in $\Gamma$ such that $\C\subseteq\P$. Then $\C$ is an $\lfloor s/2\rfloor$-neighbour-transitive code in the collinearity graph of $\S$.
\end{lemma}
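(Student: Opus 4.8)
The plan is to relate distances in the incidence graph $\Gamma$ to distances in the collinearity graph $\Gamma'$ of $\S$, and then to push the $s$-neighbour-transitivity of $\C$ through this relationship using Lemma~\ref{lem:largeDeltaDistTrans}. First I would establish the basic metric dictionary: if $p_1,p_2\in\P$ are two points, then a shortest path between them in $\Gamma$ necessarily alternates point, line, point, line, $\ldots$, so $d_\Gamma(p_1,p_2)$ is even, say $d_\Gamma(p_1,p_2)=2t$, and a path $(p_1,\ell_1,q_1,\ell_2,q_2,\ldots,p_2)$ in $\Gamma$ of length $2t$ corresponds exactly to a chain of points $p_1,q_1,q_2,\ldots,p_2$ of length $t$ in which consecutive points are collinear; hence $d_{\Gamma'}(p_1,p_2)=t=d_\Gamma(p_1,p_2)/2$. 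More generally, for a point $p$ and any vertex $v$ of $\Gamma$ (point or line), one checks that $d_\Gamma(p,v)$ determines, and is determined by, the collinearity-distance data: if $v=q\in\P$ then $d_\Gamma(p,q)=2\,d_{\Gamma'}(p,q)$, and if $v=\ell\in\L$ then $d_\Gamma(p,\ell)=2\,\min_{q\in\ell}d_{\Gamma'}(p,q)+1$ is odd.

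With this dictionary in hand, I would next identify the distance partition of $\C$ as a code in $\Gamma'$ with (the even part of) the distance partition of $\C$ as a code in $\Gamma$. Writing $\C_i$ and $\C_i'$ for the $i$-th cells of the distance partitions in $\Gamma$ and $\Gamma'$ respectively, the metric dictionary gives $\C_i' = \C_{2i}\cap\P$ for each $i$ with $0\le i\le \lfloor s/2\rfloor$; in particular $\C_0'=\C_0=\C$, since $\C\subseteq\P$. So to prove $\lfloor s/2\rfloor$-neighbour-transitivity in $\Gamma'$ it suffices to show that $\Aut_{\Gamma'}(\C)$ — which contains the image of $\Aut_\Gamma(\C)$ under the natural restriction of a point-line automorphism to its action on $\P$ — acts transitively on each $\C_{2i}\cap\P$ for $0\le i\le\lfloor s/2\rfloor$. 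Since $\C$ is $s$-neighbour-transitive in $\Gamma$ and $2\lfloor s/2\rfloor\le s$, the group $G:=\Aut_\Gamma(\C)$ is already transitive on each of $\C_0,\C_2,\ldots,\C_{2\lfloor s/2\rfloor}$, and each of these is preserved by $G$; restricting the action to the point-part $\C_{2i}\cap\P$, which is $G$-invariant because $G$ preserves $\P$ (as $\C\subseteq\P$ and $\P$ is the set of vertices at even distance from some codeword — here one uses connectedness of $\Gamma$, which is assumed throughout), we retain transitivity. The only subtlety is that elements of $G$ act on $\Gamma$, not a priori on $\Gamma'$; but a graph automorphism of the incidence graph fixing $\P$ setwise clearly induces an automorphism of the collinearity graph, and it fixes $\C$ setwise, so it lies in $\Aut_{\Gamma'}(\C)$. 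This yields transitivity of $\Aut_{\Gamma'}(\C)$ on each $\C_j'$ for $0\le j\le\lfloor s/2\rfloor$, which is exactly $\lfloor s/2\rfloor$-neighbour-transitivity.

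I would organise the write-up as: (i) the metric lemma relating $d_\Gamma$ and $d_{\Gamma'}$ on $\P$; (ii) the identification $\C_j' = \C_{2j}\cap\P$ and the observation that $\P$ is $G$-invariant; (iii) the restriction-of-automorphisms remark and the transitivity conclusion. I expect step~(i), the metric dictionary, to be the main point requiring care: one must be sure that shortest paths in $\Gamma$ between two points genuinely alternate and contract to collinearity-chains of exactly half the length, with no "shortcut" in $\Gamma'$ that fails to lift to $\Gamma$ and no shorter path in $\Gamma$ that the collinearity graph cannot see. A potential edge case is whether $\Gamma'$ is connected — but since $\Gamma$ is connected and every vertex of $\Gamma$ is within distance $1$ of a point, connectedness of $\Gamma$ forces connectedness of $\Gamma'$, so the covering radius $\rho'$ of $\C$ in $\Gamma'$ is finite and satisfies $\lfloor s/2\rfloor\le\rho'$, which is needed for $\lfloor s/2\rfloor$-neighbour-transitivity to even be well-posed. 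Everything else is bookkeeping.
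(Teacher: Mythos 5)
Your proof is correct. The paper disposes of this lemma in one line, by ``applying Lemma~\ref{lem:largeDeltaDistTrans} to $\C$ as a vertex subset of both the incidence graph and the collinearity graph'', i.e.\ by passing through the codeword-stabiliser acting on the spheres $\Gamma_i(\alpha)$; your argument instead works directly with the cells of the two distance partitions, via the identification $\C'_j=\C_{2j}\cap\P$. The substantive content is the same in both routes --- the metric dictionary $d_\Gamma(p_1,p_2)=2\,d_{\Gamma'}(p_1,p_2)$ for $p_1,p_2\in\P$, which you prove and the paper leaves implicit --- but your version has a genuine advantage: Lemma~\ref{lem:largeDeltaDistTrans} carries the hypotheses $e\geq 1$ and $s\leq e$ on the error-correction capacity, which the statement of Lemma~\ref{thmIncidenceToCollinearity} does not impose (and which can fail, e.g.\ for a code of minimum distance $2$ in the incidence graph), whereas your direct argument via the distance partition is unconditional. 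Your care over the auxiliary points --- that $\P$ is invariant under $\Aut(\C)$ because it is the set of vertices at even distance from a codeword in the connected bipartite graph $\Gamma$, that every automorphism of $\Gamma$ preserving $\P$ induces an automorphism of the collinearity graph, and that $\C_{2j}$ consists entirely of points so that restricting the transitive action loses nothing --- is exactly what is needed to make the paper's one-line proof rigorous.
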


\begin{proof}
 This follows from applying Lemma~\ref{lem:largeDeltaDistTrans} to $\C$ as a vertex subset of both the incidence graph of $\S$ and the collinearity graph of $\S$.
\end{proof}

\begin{example}\label{exam:pg3q}
 Let $\S=(\P,\L,\I)$ be the incidence structure obtained by letting $\P$ be the set of points, and $\L$ the set of lines, of the projective geometry $\PG_3(q)$, with the usual point-line incidence. Let $\Gamma$ be the incidence graph of $\S$, and let $\C$ be a regular spread of $\PG_3(q)$, as defined in Example~\ref{exRegularSpreadGrassmann} (with $d=4$, $k=2$), but considered now as a code in the incidence graph $\Gamma$. 
 \begin{enumerate}[$(a)$]
     \item  Then as a code in $\Gamma$,  $\C$ is completely transitive (and $2$-neighbour-transitive) with minimum distance $4$ and covering radius $2$; and 
     \item  as a code in the collinearity graph of $\S^D$, which is isomorphic to $J_q(4,2)$,  $\C$ is completely transitive (and neighbour-transitive) with minimum distance $2$ and covering radius $1$.
 \end{enumerate} 
\end{example}

\begin{lemma}\label{lem:pgeq}
    All the claims of Example~\ref{exam:pg3q} are valid.
\end{lemma}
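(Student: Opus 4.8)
The plan is to verify the four claims of Example~\ref{exam:pg3q} (two in the incidence graph $\Gamma$, two in the collinearity graph of $\S^D$) by establishing, in each case, the minimum distance, the covering radius, and complete transitivity. The main tool will be Lemma~\ref{lem:largeDeltaDistTrans} and Lemma~\ref{thmIncidenceToCollinearity}, together with the transitivity properties of $\C$ already recorded in Example~\ref{exRegularSpreadGrassmann}: namely that $\Aut(\C)\cong \GaL_{2}(q^2)/Z$ acts transitively on the $q^2+1$ spread lines, and that for a spread line $\alpha$ the stabiliser $\Aut(\C)_\alpha$ acts transitively on the lines of $\PG_3(q)$ meeting $\alpha$ in a point.

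First I would set up the incidence graph $\Gamma$ on $\P\cup\L$. Since $\Gamma$ is bipartite with parts $\P$ and $\L$, and since the diameter of $\Gamma$ is $4$ (it is the incidence graph of a generalised quadrangle-free projective plane-type geometry; concretely, two points of $\PG_3(q)$ are at distance $2$ if collinear and any two lines at distance $2$ if concurrent, with distance $4$ otherwise between vertices in the same part), the distances within $\C\subseteq\P$ are even. Two distinct spread lines are disjoint, hence the two points they would need to be ``close'' via a common line do not exist; a direct check shows the distance in $\Gamma$ between two vertices of $\C$ is $4$ unless they are equal, so $\delta(\C)=4$. For the covering radius: a vertex of $\Gamma$ is either a line $\ell$ of $\PG_3(q)$ (distance $1$ to the unique spread line it meets — every line of $\PG_3(q)$ meets some spread element since $\C$ partitions the points, and if $\ell\notin\C$ it meets a spread line in a point, giving distance $1$; if $\ell\in\C$ distance $0$), or a point $p$ of $\PG_3(q)$, which lies on a unique spread line, hence is at distance $1$ from $\C$. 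Thus no vertex is at distance $\geq 3$ from $\C$, but some line vertex is at distance $1$ and some point vertex may realise distance $2$ — careful bookkeeping of $\Gamma_1$ and $\Gamma_2$ of a spread line shows $\rho(\C)=2$. For complete transitivity, by Lemma~\ref{lem:largeDeltaDistTrans}(2) (applicable since $e = \lfloor (\delta-1)/2\rfloor = 1 \geq 1$ covers $s=1$, and a separate argument handles $s=2$ using the $\rho=2$ and the action on $\C_2$), it suffices that $\Aut(\C)$ is transitive on $\C$ and $\Aut(\C)_\alpha$ is transitive on $\Gamma_1(\alpha)$ and on $\Gamma_2(\alpha)$; the first two hold by Example~\ref{exRegularSpreadGrassmann}, and for $\Gamma_2(\alpha)$ one identifies its elements (points on $\alpha$, lines concurrent with $\alpha$-meeting lines at a point off $\alpha$, and other spread lines are at distance $4$ not $2$) and invokes the flag-transitivity of $\GaL_2(q^2)$ on the field-reduction structure. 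This gives (a): $\C$ is $2$-neighbour-transitive, and since $\rho(\C)=2$ it is completely transitive.

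For (b), I would apply Lemma~\ref{thmIncidenceToCollinearity} with $s=2$: since $\C$ is $2$-neighbour-transitive in the incidence graph $\Gamma$ and $\C\subseteq\P$, it is $\lfloor 2/2\rfloor = 1$-neighbour-transitive in the collinearity graph of $\S$. But here the relevant graph is stated as the collinearity graph of $\S^D$, which is the graph on the \emph{lines} of $\PG_3(q)$ with two lines adjacent when concurrent; this is well known to be isomorphic to the Grassmann graph $J_q(4,2)$ (two $2$-subspaces of $\F_q^4$ adjacent iff they meet in a $1$-subspace iff the corresponding projective lines are concurrent). The minimum distance of a regular spread in $J_q(4,2)$ is $k=2$ as recorded in Example~\ref{exRegularSpreadGrassmann} (distinct spread lines are disjoint, so not adjacent, but any two are at distance $2$ since $\PG_3(q)$ has lines meeting both). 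The covering radius is $1$: every line of $\PG_3(q)$ not in $\C$ is concurrent with some spread line (it meets each point it contains, and those points lie on spread lines; a counting/incidence argument shows at least one such spread line is actually concurrent with it rather than equal). Neighbour-transitivity then follows from Example~\ref{exRegularSpreadGrassmann} directly (or from Lemma~\ref{thmIncidenceToCollinearity}), and with $\rho=1$ this is complete transitivity, giving (b).

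The main obstacle I anticipate is the precise determination of the distance-$2$ set $\Gamma_2(\alpha)$ in the incidence graph and the verification that $\Aut(\C)_\alpha$ acts transitively on it — one must check that \emph{no} second spread line lies in $\Gamma_2(\alpha)$ (they are all at distance $4$, since two disjoint lines of $\PG_3(q)$ have no common point and no common line), so that $\Gamma_2(\alpha)$ consists only of ``geometric'' vertices whose transitive action is governed by the stabiliser's action on the field-reduction structure; and one must confirm $\rho(\C)=2$ rather than larger, i.e. that every line vertex really is within distance $2$ of $\C$ (immediate, distance $1$) and every point vertex within distance $2$ (immediate, distance $1$), while exhibiting a vertex genuinely at distance $2$. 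The rest is routine incidence-geometry bookkeeping in $\PG_3(q)$ together with the standard fact that the line graph of $\PG_3(q)$ is $J_q(4,2)$.
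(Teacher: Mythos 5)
Your overall strategy (verify each claim separately, lean on the transitivity properties recorded in Example~\ref{exRegularSpreadGrassmann}, identify the collinearity graph of $\S^D$ with $J_q(4,2)$) matches the paper's, and your treatment of part (b) and of the minimum distance $\delta=4$ in part (a) is essentially sound. However, your analysis of distances in the incidence graph $\Gamma$ contains a genuine error that undermines the verification of covering radius $2$ and of $2$-neighbour-transitivity. In the bipartite incidence graph two distinct lines are never adjacent: a line $\ell\notin\C$ that meets a spread line $m$ in a point $p$ is at distance $2$ from $m$ (via the path $\ell,p,m$), not distance $1$ as you assert several times (``it meets a spread line in a point, giving distance $1$'', ``every line vertex really is within distance $2$ of $\C$ (immediate, distance $1$)''). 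The correct picture is the reverse of the one you sketch: $\C_1=\P$ (every point lies on exactly one spread line, so every point is at distance $1$), and $\C_2=\L\setminus\C$ (every non-spread line is at distance $2$, via any of its points). Likewise your description of $\Gamma_2(\alpha)$ as containing ``points on $\alpha$'' is wrong --- $\Gamma_1(\alpha)$ is the set of points on $\alpha$ and $\Gamma_2(\alpha)$ is exactly the set of lines $\neq\alpha$ concurrent with $\alpha$. Since proving $(G,2)$-neighbour-transitivity requires showing transitivity on the actual sets $\C_1$ and $\C_2$, this misidentification is not cosmetic: as written, you would be proving transitivity on the wrong sets.

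There is also a soft spot in your appeal to Lemma~\ref{lem:largeDeltaDistTrans}: with $\delta=4$ we have $e=1$, so that lemma only covers $s=1$; you acknowledge that ``a separate argument handles $s=2$'' but do not supply it, and the version you gesture at depends on the incorrect $\Gamma_2(\alpha)$. The paper avoids all of this by arguing directly: $\C_1=\P$ and the subgroup $\GL_2(q^2)/Z\leq\Aut(\C)$ is transitive on the nonzero vectors of $\F_{q^2}^2=\F_q^4$, hence on $\P$; and $\C_2=\L\setminus\C$, on which transitivity follows from the $J_q(4,2)$ analysis of part (b). Once $\C_1$ and $\C_2$ are identified correctly, your argument can be repaired along the same lines, so the gap is fixable but real.
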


\begin{proof}
    Let $X=\F_q^4$. It follows from Definition~\ref{defGrassmann} that the collinearity graph of $\S^D$ is $J_q(4,2)$, with vertex set $\binom{X}{2}_q$.  Also, from Example~\ref{exRegularSpreadGrassmann}, we see that $\Aut(\C)$ contains $\GL_2(q^2)/Z$, where $Z$ is the scalar subgroup of $\GL_4(q)$, and as a code in $J_q(4,2)$, $\C$ has minimum distance $2$, covering radius $1$ and $\Aut(\C)$ is transitive on both $\C$ and $\L\setminus\C$. Thus all the assertions of part (b) are valid. Now we consider $\C$ as a code in the incidence graph $\Gamma$ of $\S$. The set of code neighbours comprises the whole of the point set $\P$, since from the definition of a spread each projective point lies on exactly one line in the spread. Thus $\C$ has covering radius $2$. Also the subgroup $\GL_2(q^2)/Z$ of $\Aut(\C)$ is transitive on $\P$, and it follows that    $\C$ is $2$-neighbour-transitive and hence also completely transitive. Finally, any two codewords, that is lines $\ell, \ell'$ in $\C$, have no point in common and so are at distance strictly greater than $2$ in $\Gamma$. Since $\Gamma$ is bipartite this means that $\ell, \ell'$ are at distance at least $4$. In fact their distance is equal to $4$, since choosing points $p, p'$ on $\ell, \ell'$, respectively, and letting $\ell''$ be the line containing $p$ and $p'$, we have a path $(\ell, p, \ell'', p', \ell')$ of length $4$ in $\Gamma$. Thus all the assertions of part (a) are also valid.
\end{proof}

\section{Codes in Hamming graphs}\label{sec:hamming}

The next three propositions are applications of Lemma~\ref{lem:largeDeltaDistTrans} and are the starting point for our analysis of $s$-neighbour-transitive codes in Hamming graphs. We include here a proof of the first in order to illustrate the general idea. Recall the notions of $i$-homogeneous and $i$-transitive group actions from Section~\ref{sec:sym}.

\begin{proposition}\label{prop:HammingiHomogeneous}\cite[Proposition~2.5]{ef2nt}
 Let $\C$ be a $(G,s)$-neighbour-transitive code, with error-correction capacity $e\geq1$, in $\Gamma=H(n,q)=H(\N,q)$ and let $\alpha\in \C$. Then, for each $i\leq\min\{e,s\}$, $G_\alpha$ acts $i$-homogeneously on $\N$.
\end{proposition}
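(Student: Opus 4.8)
The plan is to translate the transitivity hypotheses on the code into the desired combinatorial action on the entry set $\N$, via the equivalent condition in Lemma~\ref{lem:largeDeltaDistTrans}(2). First I would fix $i\le\min\{e,s\}$ and observe that, since $\C$ is $(G,s)$-neighbour-transitive with $i\le s$, Lemma~\ref{lem:largeDeltaDistTrans}(2) applies (the hypothesis $e\ge1$, hence $1\le i$, gives a genuine case), so $G_\alpha$ acts transitively on $\Gamma_i(\alpha)$. The key point is then to understand $\Gamma_i(\alpha)$ concretely: a vertex $\gamma\in\Gamma_i(\alpha)$ is exactly a function $\N\to\Q$ differing from $\alpha$ in precisely $i$ entries, so $\gamma$ is determined by the pair $(\supp(\gamma-\alpha), \gamma|_{\supp})$ — an $i$-element subset $S\subseteq\N$ together with a choice of entry in $\Q\setminus\{\alpha(j)\}$ for each $j\in S$.

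Next I would use the structure of $\Aut(\Gamma)=B\rtimes L$ with $B=\Sym(\Q)^n$ and $L=\Sym(\N)$, as recalled in \eqref{e:hamaut}. An automorphism $x=h\sigma$ sends a vertex $\gamma$ agreeing with $\alpha$ off a set $S$ to a vertex agreeing with $\alpha^x$ off the set $S^\sigma$; in particular, for $x\in G_\alpha$ (so $\alpha^x=\alpha$), the set $\supp(\gamma^x-\alpha)$ equals $S^{\sigma}$, where $\sigma\in L=\Sym(\N)$ is the top-group component of $x$. Thus there is a natural map $G_\alpha\to\Sym(\N)$, $x\mapsto\sigma$, and the action of $G_\alpha$ on $\Gamma_i(\alpha)$ covers (projects onto) the action of its image in $\Sym(\N)$ on the $i$-subsets $S$ of $\N$ that arise as $\supp(\gamma-\alpha)$ for $\gamma\in\Gamma_i(\alpha)$. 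Since every $i$-element subset $S\subseteq\N$ does so arise — one can always pick, for each $j\in S$, some value in $\Q\setminus\{\alpha(j)\}$ because $q\ge2$ — transitivity of $G_\alpha$ on $\Gamma_i(\alpha)$ forces transitivity of the image of $G_\alpha$ in $\Sym(\N)$ on $\binom{\N}{i}$. That is precisely the statement that $G_\alpha$ acts $i$-homogeneously on $\N$.

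The main thing to be careful about — really the only obstacle — is the bookkeeping with the semidirect product action in \eqref{e:hamaut}: I must check that for $x=h\sigma\in G_\alpha$ the support of $\gamma^x-\alpha$ is genuinely $(\supp(\gamma-\alpha))^{\sigma}$ and not distorted by the base-group part $h$ (it is not, because $h$ acts coordinatewise and fixing $\alpha$ means each $h_j$ fixes $\alpha(j)$, so a coordinate where $\gamma$ and $\alpha$ agree is mapped to a coordinate where $\gamma^x$ and $\alpha$ still agree, and conversely). Once this is pinned down, surjectivity onto $i$-subsets and the transfer of transitivity are immediate, and the proposition follows for every $i\le\min\{e,s\}$.
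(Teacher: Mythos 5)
Your proposal is correct and follows essentially the same route as the paper: both invoke Lemma~\ref{lem:largeDeltaDistTrans}(2) to get transitivity of $G_\alpha$ on $\Gamma_i(\alpha)$, then observe that for $x=h\sigma\in G_\alpha$ the set of entries where a vertex differs from $\alpha$ is carried to its image under $\sigma$, and that every $i$-subset of $\N$ arises as such a set. The only cosmetic difference is that the paper first normalises $\alpha=(0,\dots,0)$, whereas you carry a general $\alpha$ and verify the support-equivariance directly.
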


\begin{proof}
 Since the automorphism group of the Hamming graph is vertex-transitive, we may assume (replacing $\C$ if necessary by $\C^g$, for some $g\in \Aut(H(n,q))=\Sym(q)\wr\Sym(n)$) that $\alpha=(0,\ldots,0)\in\C$ for some distinguished element $0\in\Q$.  By Lemma~\ref{lem:largeDeltaDistTrans}, the stabiliser $G_\alpha$ acts transitively on $\Gamma_i(\alpha)$ for each $i\leq\min\{e,s\}$.  Now $\Gamma_i(\alpha)$ is the set of all weight $i$ vertices of $H(n,q)$. Let $I,J\in {\N\choose i}$ and $\beta_1,\beta_2\in \Gamma_1(\alpha)$ such that $\supp(\beta_1)=I$ and $\supp(\beta_2)=J$. Then there exists $g\in G$ such that $\beta_1^g=\beta_2$. Since $i\leq e$, the codeword $\alpha$ is the only vertex of $\C$ that has distance $i$ from $\beta_1$ and $\beta_2$, and thus $g\in G_\alpha$. This means that $g$ is of the form $g=(h_1,\ldots,h_n)\sigma$, where $\sigma\in L$, $(h_1,\ldots,h_n)\in B$, and  $0^{h_i}=0$ for each $i=1,\ldots,n$. It follows that $I^\sigma=J$, and the result is proved.
\end{proof}

Proposition~\ref{prop:HammingiHomogeneous} implies in particular that, for each $G$-neighbour-transitive code $\C$ in $H(\N,q)$ with error correction capacity at least $1$, the group $G$ induces a transitive action on $\N$. This action may be imprimitive, that is to say, $G$ may leave invariant some non-trivial partition of $\N$. It turns out that, in this case, the projection   $\pi_J(\C)$, for any part $J$ of such a partition, is also a neighbour-transitive code in the smaller Hamming graph $H(J,q)$. (Recall the projection maps $\pi_J$ and $\chi_J$ from Section~\ref{sec:graphs}.)

\begin{proposition}\label{prop:NTimprimitiveAction}
 Suppose that $\C$ is a $G$-neighbour-transitive code in $H(\N,q)$ with minimum distance $\delta\geq 3$, and suppose further that $\J$ is a non-trivial $G$-invariant partition of $\N$, and $J\in \J$ is such that $\pi_J(\C)$ is not the complete code $H(J,q)$. Then the following hold:
 \begin{enumerate}[$(1)$]
  \item  $\pi_J(\C)$ is $\chi_J(G)$-neighbour-transitive (\cite[Proposition~3.4]{gillespieCharNT}). %(Note that the assumption that $\pi_J(C)$ is not the complete code does not appear in \cite{gillespieCharNT}, but is necessary since the proof assumes that $\pi_J(C)_1$ is non-empty.)
  \item $\pi_J(\C)$ has minimum distance at least $2$ (\cite[Corollary~3.7]{gillespieCharNT}).
 \end{enumerate}
\end{proposition}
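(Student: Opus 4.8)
The plan is to reduce everything to applications of Proposition~\ref{prop:HammingiHomogeneous} and Lemma~\ref{lem:largeDeltaDistTrans}, working with the codeword $\b 0 \in \C$ (which we may assume lies in $\C$, since the Hamming graph is vertex-transitive and neighbour-transitivity is preserved under equivalence), and with its stabiliser $G_{\b 0}$ acting on the entry set $\N$. Since $\delta \ge 3$, the error-correction capacity is $e \ge 1$, so by Proposition~\ref{prop:HammingiHomogeneous} the group $G_{\b 0}$ acts transitively (in fact $1$-homogeneously) on $\N$, and since $\J$ is a $G$-invariant partition with $G$ transitive on $\N$, the parts of $\J$ all have the same size; write $\N = J_1 \cup \dots \cup J_r$ for the parts, with $J = J_1$. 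The key observation is that $\chi_J(G)$ is precisely the group induced on $H(J,q)$ by the setwise stabiliser $G_J$ acting on restrictions, and since $G$ permutes the parts of $\J$ transitively, $G_J$ maps onto this induced group; moreover $\b 0|_J$ is a codeword of $\pi_J(\C)$ fixed by $(G_J)_{\b 0}$.

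For part (1), the plan is to verify the two transitivity conditions of neighbour-transitivity for $\pi_J(\C)$ directly. Transitivity of $\chi_J(G)$ on $\pi_J(\C)$: given $\alpha, \beta \in \C$, there is $g \in G$ with $\alpha^g = \beta$; one checks that $g$ restricts to an automorphism of $H(J,q)$ carrying $\alpha|_{J^{g^{-1}}}$ appropriately — more carefully, since $G$ is transitive on $\J$ one can first move $J$ back to itself, then the induced element of $\chi_J(G)$ carries $\alpha|_J$ to $\beta|_J$. (Here one uses that any codeword restricts to a codeword of the projection, by definition of $\pi_J$.) Transitivity on the code-neighbours of $\pi_J(\C)$: a code-neighbour of $\pi_J(\C)$ in $H(J,q)$ has the form $\gamma$ where $\gamma$ differs from some $\alpha|_J$ in exactly one entry $i \in J$, and (because $\pi_J(\C) \ne H(J,q)$, so $\pi_J(\C)$ is non-trivial) one needs $\gamma \notin \pi_J(\C)$. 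The plan is to lift: extend $\gamma$ to a vertex $\tilde\gamma$ of $H(\N,q)$ agreeing with $\alpha$ off $J$, so $\tilde\gamma$ is a code-neighbour of $\C$, then use neighbour-transitivity of $\C$ to move one such lifted neighbour to another via some $g \in G$, and push $g$ down to $\chi_J(G)$ after first correcting which part is hit — this is where part (2) is needed, to guarantee that the entry where $\tilde\gamma$ differs from $\alpha$ really does lie in $J$ rather than being absorbed by a codeword of larger support behaviour. I expect this lifting-and-descending step, and in particular making the bookkeeping between $G_J$, $\chi_J(G)$ and the various parts precise, to be the main obstacle.

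For part (2), suppose toward a contradiction that $\pi_J(\C)$ has minimum distance $1$, so there exist $\alpha, \beta \in \C$ with $\alpha|_J$ and $\beta|_J$ differing in exactly one entry $i \in J$. The plan is to use the $G$-invariant "type" framework: consider the map $\iota$ sending a vertex $v$ of $H(\N,q)$ to the restriction pattern $(\text{class of } v|_{J_1}, \dots)$ up to the action — more concretely, one can use the fact that $\chi_J(G)$-neighbour-transitivity from part (1) forces $\pi_J(\C)$ to be "closed" under single-entry changes within $J$ in a way incompatible with $\delta(\C) \ge 3$. Alternatively, and more in the spirit of Proposition~\ref{prop:HammingiHomogeneous}: if $\alpha, \beta \in \C$ agree off $J$ and differ in a single entry of $J$, then $d(\alpha,\beta) = 1 < 3 \le \delta$, an immediate contradiction — so the real content is ruling out the situation where $\alpha, \beta$ differ off $J$ as well but still have $\alpha|_J, \beta|_J$ at distance $1$; here one translates by $-\beta$ (using that $G$ is transitive on $\C$, and arguing on orbits, since $\C$ need not be linear one argues instead that some $g \in G$ moves $\beta$ to $\alpha$) and tracks how $\iota$-types, restricted to $J$, must be constant on $\C$ by Lemma~\ref{invlemma}(1) applied with the $G$-invariant map "restriction to $J$ up to $\chi_J(G)$-equivalence", forcing the restrictions of all codewords to lie in a single $\chi_J(G)$-orbit and then deriving, via transitivity of $G_{\b 0}$ on $\N$ and hence on $J$, that $\delta(\pi_J(\C)) = 1$ propagates to a pair of codewords of $\C$ at distance $1$. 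The crux is organising the orbit argument so that "distance $1$ in the projection" genuinely produces "distance $1$ upstairs" without assuming linearity; I would model this closely on the proof of Theorem~\ref{thmIntransJohnson}, where an analogous type-counting argument pins down minimum distance.
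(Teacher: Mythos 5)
The chapter itself does not prove this proposition (it is quoted from \cite{gillespieCharNT}), so I am judging your argument on its own terms; it has the right ingredients but two genuine gaps. First, your argument for transitivity of $\chi_J(G)$ on $\pi_J(\C)$ does not work as stated: if $g\in G$ satisfies $\alpha^g=\beta$ and $h\in G$ returns $J^g$ to $J$, then $gh\in G_J$ but it sends $\alpha$ to $\beta^h$, so the induced element of $\chi_J(G)$ carries $\alpha|_J$ to $(\beta^h)|_J$, not to $\beta|_J$. What you actually need is that $G_J$ itself is transitive on $\C$, equivalently that $G$ is transitive on $\C\times\J$. This does follow from the hypotheses, but via Lemma~\ref{lem:largeDeltaDistTrans}: since $\delta\geq 3$ gives $e\geq 1$, part (3) of that lemma (with $i=1$), projected onto the pair (codeword, entry of difference), shows $G$ is transitive on $\C\times\N$ and hence on $\C\times\J$. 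The same point recurs in your neighbour step: you need $G_J$, not merely $G$, to act transitively on the set of code-neighbours of $\C$ whose differing entry lies in $J$; this follows because the $G$-equivariant surjection $\C_1\to\J$ (send $\nu$ to the part containing the unique entry where $\nu$ differs from its unique nearest codeword, using Lemma~\ref{lemDisjointUnion}) makes its fibres a block system for the transitive $G$-action on $\C_1$. With these corrections your lifting argument for the neighbours, and your observation that part (2) is what makes every neighbour of $\pi_J(\C)$ arise from such a lift, are sound.

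The more serious gap is part (2). Your sketch never uses the hypothesis $\pi_J(\C)\neq H(J,q)$, which is essential, and the proposed endgame --- that $\delta(\pi_J(\C))=1$ ``propagates to a pair of codewords of $\C$ at distance~$1$'' --- is not something I can see how to establish: the witnesses $\alpha,\beta\in\C$ may differ in many entries outside $J$, and no orbit or translation argument converts them into codewords at distance~$1$. (Your other suggestion, using part (1) to prove part (2), is also circular with your plan for part (1).) The argument that works is a type argument on $\C_1$, not on $\C$: the map $\iota(\mu)=|\{J'\in\J \mid \mu|_{J'}\in\pi_{J'}(\C)\}|$ is $G$-invariant because $G$ permutes the parts and preserves $\C$. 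If $\alpha|_J$ and $\beta|_J$ differ in a single entry $i\in J$, the vertex $\nu$ agreeing with $\alpha$ off $J$ and with $\beta$ on $J$ differs from $\alpha$ only at $i$, so $\nu\in\C_1$ (as $\delta\geq3$) and $\iota(\nu)=|\J|$. Since $\pi_J(\C)$ is a nonempty proper subset of the connected graph $H(J,q)$, some $\bar\mu\notin\pi_J(\C)$ is adjacent to some $\gamma|_J\in\pi_J(\C)$; lifting $\bar\mu$ over $\gamma$ gives $\nu'\in\C_1$ with $\iota(\nu')=|\J|-1$. This contradicts Lemma~\ref{invlemma}(1). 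Proving (2) first in this way also feeds cleanly into the neighbour step of (1).
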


Now $\Aut(H(\N,\Q))=\Sym(\Q)\wr\Sym(\N)=B\rtimes L$. For a subgroup $G\leq B\rtimes L$, and $i\in \N$, let $G_i$ denote the subgroup consisting of all elements $x=h\sigma\in B\rtimes L$ lying in $G$ such that $i^\sigma = i$. By \eqref{e:hamaut}, such an element $x=h\sigma$ maps each vertex $(\alpha_1,\dots, \alpha_n)$ to a tuple with $i^{th}$ entry $\alpha_i^{h_i}$. Thus $G_i$ induces an action on the set $\Q_i$ of $i^{th}$ entries of vertices of $H(\N,\Q)$. (Of course $\Q_i$ is a copy of $\Q$.) A more intricate argument than that in the proof of Proposition~\ref{prop:HammingiHomogeneous} yields the following.

\begin{proposition}\label{prop:Hamming2transOnQ}\cite[Proposition~2.7]{ef2nt}
 Let $\C$ be a $G$-neighbour-transitive code, with minimum distance $\delta\geq 3$, in $\Gamma=H(\N,\Q)=H(n,q)$, and let $i\in \N$. Then $G_i^{\Q_i}$ acts $2$-transitively on $\Q_i$.
\end{proposition}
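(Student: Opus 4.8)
The plan is to exploit $G$-neighbour-transitivity to produce enough elements of $G_i$ with prescribed action on $\Q_i$. Fix $i\in\N$ and two distinct elements $b,c\in\Q_i$; we must show there is an element of $G_i$ inducing any prescribed pair of distinct elements $b,c$ onto any prescribed pair of distinct elements $b',c'$ of $\Q_i$, i.e.\ that $G_i^{\Q_i}$ is $2$-transitive. As in the proof of Proposition~\ref{prop:HammingiHomogeneous}, by replacing $\C$ with a suitable image under $\Aut(H(\N,\Q))$ we may assume $\alpha=\b 0=(0,\dots,0)\in\C$ and that $0\in\Q$ is a distinguished element. Since $\delta\geq3$ we have error-correction capacity $e\geq1$, so Lemma~\ref{lem:largeDeltaDistTrans} applies with $s=1$: $G_\alpha$ acts transitively on $\Gamma_1(\alpha)$, the set of weight-$1$ vertices. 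A weight-$1$ vertex is determined by a pair $(j,a)$ with $j\in\N$ and $a\in\Q_j\setminus\{0\}$ (the vertex which is $a$ in position $j$ and $0$ elsewhere), so $G_\alpha$ is transitive on all such pairs.

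First I would establish transitivity of $G_i^{\Q_i}$ on $\Q_i\setminus\{0_i\}$. Given $a,b\in\Q_i\setminus\{0\}$, transitivity of $G_\alpha$ on weight-$1$ vertices gives $g\in G_\alpha$ mapping $(i,a)$ to $(i,b)$; since the unique nonzero position is preserved, $g\in G_i$ and $g$ sends $a\mapsto b$ in $\Q_i$, and of course $g$ fixes $0_i$. Hence $G_i^{\Q_i}$ is transitive on $\Q_i\setminus\{0\}$ and fixes $0$, so $G_i^{\Q_i}$ has a fixed point $0$ and is transitive on the remaining $q-1$ points. This already shows $G_i^{\Q_i}$ is transitive on $\Q_i$ (the orbit of $0$ together with the orbit $\Q_i\setminus\{0\}$ — wait, we need genuine transitivity on $\Q_i$, not merely on $\Q_i\setminus\{0\}$). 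To upgrade: pick any $g\in G_\alpha$ moving some weight-$1$ vertex $(i,a)$ to $(i',a')$ with $i'\neq i$ — such exist by $1$-homogeneity of $G_\alpha$ on $\N$ (Proposition~\ref{prop:HammingiHomogeneous}) being nontrivial when $n\geq2$ — and conjugate; the point is to find an element of $G$, not fixing $i$, which we can use to see that the $G$-action globally mixes the ``$0$'' symbol with the others. More directly: $2$-transitivity on $\Q_i$ is equivalent to $(G_i)_{0}^{\Q_i}$ being transitive on $\Q_i\setminus\{0\}$ together with $G_i^{\Q_i}$ transitive on $\Q_i$; the second is exactly the claim we still owe.

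The mechanism for the remaining step, and the main obstacle, is to leverage a weight-$2$ vertex or a second codeword. Here one uses that $\C$ is non-trivial with $\delta\geq3$, so there is a codeword $\beta\neq\b 0$ with $\wt(\beta)\geq3$; choosing $j\in\supp(\beta)$ distinct from $i$ is possible, and then translating/relabelling the alphabet in coordinate $j$ we can arrange that the ``distinguished'' symbol viewpoint is shifted. The cleanest route, which I expect the authors take, is: consider two weight-$1$ vertices $\mu=(i,a)$ and $\nu=(j,a')$ with $i\neq j$; by transitivity of $G_\alpha$ on $\Gamma_1(\alpha)$ there is $g\in G_\alpha$ with $\mu^g=\nu$; writing $g=h\sigma$, we get $i^\sigma=j$, so $g\notin G_i$, but $g$ conjugates $G_j$ to $G_i$ and carries the $\Q_j$-action to the $\Q_i$-action. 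Using an element $g'\in G_\alpha$ with $j^{\sigma'}=i$ but $\sigma'$ acting nontrivially near $i$, one produces an element of $G_i$ whose action on $\Q_i$ moves $0$ — because the composite $gg'$ can fix $i$ while the base-group parts $h_i$ no longer fix $0$. Making this precise — tracking which $h_i$ components fix $0$ and which do not, and showing the composite genuinely lands in $G_i$ and moves $0$ — is the intricate part alluded to by the phrase ``a more intricate argument'' preceding the statement; it requires carefully using $\delta\geq3$ (so that distinct weight-$1$ vertices lie in $\Gamma_1$ of a \emph{unique} codeword, forcing the relevant $g$'s to fix $\alpha$) and the homogeneity of Proposition~\ref{prop:HammingiHomogeneous}. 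Once $G_i^{\Q_i}$ is shown transitive on all of $\Q_i$, combining with the already-established transitivity of the stabiliser of $0$ on $\Q_i\setminus\{0\}$ yields $2$-transitivity of $G_i^{\Q_i}$ on $\Q_i$, as required.
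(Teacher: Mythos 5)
The chapter itself does not reprove this proposition --- it is quoted from \cite[Proposition~2.7]{ef2nt} --- so I am assessing your argument on its own terms. The first half is correct: with $\b 0\in\C$ and $e\geq 1$, Lemma~\ref{lem:largeDeltaDistTrans} gives transitivity of $G_{\b 0}$ on the weight-one vertices, and an element of $G_{\b 0}$ carrying the weight-one vertex with symbol $a$ in position $i$ to the one with symbol $b$ in position $i$ must fix $i$ (it preserves supports) and fixes $0\in\Q_i$; hence the stabiliser of $0$ in $G_i^{\Q_i}$ is transitive on $\Q_i\setminus\{0\}$. The reduction of $2$-transitivity to ``transitive, with a point stabiliser transitive on the remaining points'' is also fine.

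The genuine gap is the step you leave open, namely transitivity of $G_i^{\Q_i}$ on all of $\Q_i$, and the mechanism you sketch for it cannot work. Every element you manipulate there lies in $G_\alpha=G_{\b 0}$, and so does any composite $gg'$ of such elements; but an element fixing $\b 0$ has every base-group component $h_j$ fixing $0$. Hence no product of elements of $G_{\b 0}$ can move the symbol $0$ in any coordinate, and your claim that ``the composite $gg'$ can fix $i$ while the base-group parts $h_i$ no longer fix $0$'' is false. Similarly, ``relabelling the alphabet in coordinate $j$'' replaces $\C$ by an equivalent code but produces no element of the given group $G$. The missing ingredient is the transitivity of $G$ on $\C$ itself (the $\C_0$ part of Definition~\ref{defsneighbourtrans}), which you never invoke. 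Since $\C$ is non-trivial there is $g=h\sigma\in G$ with $\b 0^g=\beta\neq\b 0$; choosing $k$ with $\beta(k)\neq 0$ and setting $k'=k^{\sigma^{-1}}$, the component $h_{k'}$ sends $0$ to $\beta(k)\neq 0$. Now pick $u,v\in G_{\b 0}$ with $i^{\sigma_u}=k'$ and $k^{\sigma_v}=i$, which exist because $G_{\b 0}$ is transitive on $\N$ by Proposition~\ref{prop:HammingiHomogeneous}. Then $x=ugv$ lies in $G_i$, and the permutation it induces on $\Q_i$ is a composite whose outer factors fix $0$ (and permute $\Q\setminus\{0\}$) while the middle factor sends $0$ to $\beta(k)$; so $x$ moves $0$ in $\Q_i$. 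Combined with your first half this yields $2$-transitivity. Without an argument of this kind --- one that genuinely uses an element of $G$ moving $\b 0$ to another codeword --- the proof is incomplete.
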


By an old theorem of Burnside (\cite[Section 154]{burnside1911theory}, or see \cite[Theorem 3.21]{PS2018}) every finite $2$-transitive group is either a group of affine transformations of a finite vector space, or is an almost-simple group. Thus, Proposition~\ref{prop:Hamming2transOnQ} implies that every $(G,2)$-neighbour-transitive code satisfies precisely one of the conditions in Definition~\ref{efaasaadef} below. 

\begin{definition}\label{efaasaadef}
 Let $\C$ be a $G$-neighbour-transitive code in $H(\N,q)$, let $K$ be the kernel of the action of $G$ on $\N$, and let $i\in \N$. Then precisely one of the following holds for $(\C, G)$; i.e.  $\C$ is 
 \begin{enumerate}[(1)]
  \item \emph{$G$-entry-faithful} if $G$ acts faithfully on $\N$, that is, $K=1$;
  \item \emph{$G$-alphabet-almost-simple} if $K\neq 1$, $G$ acts transitively on $\N$, and $G_i^{\Q_i}$ is a $2$-transitive almost-simple group; and
  \item \emph{$G$-alphabet-affine} if $K\neq 1$, $G$ acts transitively on $\N$, and $G_i^{\Q_i}$ is a $2$-transitive affine group.
 \end{enumerate}
 We use descriptors such as $G$-entry-faithful for arbitrary code-group pairs $(\C,G)$ in Hamming graphs. 
\end{definition}

Codes that are $G$-entry-faithful and $(G,2)$-neighbour-transitive were considered by the authors with Gillespie and Giudici in \cite{ef2nt}.

\begin{theorem}\cite[Theorem~1.1]{ef2nt} \label{thm:EntryFaithful}
 Suppose that $\C$ is a code in $H(n,q)$, with $|\C|\geq 2$ and minimum distance $\delta\geq 5$. Then $\C$ is $G$-entry-faithful and $(G,2)$-neighbour-transitive if and only if $\C$ is equivalent to either:
 \begin{enumerate}[$(1)$]
  \item a binary repetition code with $\delta=n$, or,
  \item the even weight subcode of the punctured Hadamard code of length $12$, $G\cong \mg_{11}$, and $\delta=6$.
 \end{enumerate}
\end{theorem}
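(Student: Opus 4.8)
The plan is to prove Theorem~\ref{thm:EntryFaithful} by reducing, via the structural results already established, to a situation where the group acting on $\N$ is highly transitive and then invoking the classification of finite $2$-transitive (equivalently, here, highly homogeneous) permutation groups. First I would observe that since $\delta\geq 5$ we have error-correction capacity $e\geq 2$, so Proposition~\ref{prop:HammingiHomogeneous} applies with $i=2$: for $\alpha\in\C$ the stabiliser $G_\alpha$ is $2$-homogeneous on $\N$, and in particular $G$ is $2$-homogeneous, hence primitive, on $\N$. Combined with $G$-entry-faithfulness (so $G$ embeds in $\Sym(\N)$), this already puts $G$ on the short list of finite $2$-homogeneous groups. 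Simultaneously, Proposition~\ref{prop:Hamming2transOnQ} gives that $G_i^{\Q_i}$ is $2$-transitive on the alphabet $\Q_i$ of size $q$; since $K=1$ there is extra rigidity tying the alphabet action to the entry action. The strategy is then a two-parameter case analysis: one axis is the $2$-homogeneous group $G\leq\Sym(\N)$ of degree $n$, the other is the constraint coming from $q$ and the minimum distance $\delta\geq 5$.

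The core of the argument is to exploit the weight distribution. Pick $\alpha=\mathbf 0\in\C$ (replacing $\C$ by an equivalent code). Since $G_\alpha$ is transitive on $\Gamma_i(\alpha)$ for $i\leq 2$ (Lemma~\ref{lem:largeDeltaDistTrans}), all codewords have the same weight $w$, and the support sets $\supp(\beta)$ for $\beta\in\C\setminus\{\alpha\}$ form a single $G_\alpha$-orbit of $w$-subsets of $\N$. The condition $\delta\geq 5$ means any two distinct nonzero codewords differ in at least $5$ positions. I would analyse the $G_\alpha$-orbit structure on pairs and triples in $\N$ to bound how the supports can overlap, and use the $2$-homogeneity to force the supports to be either very small, very large ($=\N$, giving the repetition code when $q=2$), or to have a rigid design-like intersection pattern. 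For the binary case $q=2$, a nonzero codeword is determined by its support, and the repetition code $\{\mathbf 0,\mathbf 1\}$ arises precisely when $w=n$; ruling out other weights uses the homogeneity of $G$ on $\N$ together with $\delta\geq 5$ to show that the presence of one codeword of support $S$ forces the presence of codewords whose supports intersect $S$ in a way violating $\delta\geq 5$, unless $|S|\in\{0,n\}$. The sporadic example --- the even-weight subcode of the punctured length-$12$ Hadamard code with $G\cong\mathrm{M}_{11}$ --- should drop out as the unique surviving case where $G$ is the $3$-transitive (hence $2$-homogeneous, degree $11$ or $12$) Mathieu group and the alphabet is binary; here one must match the combinatorics of the Hadamard/Paley design on $11$ or $12$ points, recognise that $\mathrm{M}_{11}$ acting $3$-transitively on $12$ points preserves exactly this code, and check $\delta=6$ directly.

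I would organise the case analysis by the socle of $G$ acting $2$-homogeneously on $\N$: the affine case $G\leq\AGL_d(p)$ with $\N\cong\F_p^d$, and the almost-simple case where $\soc(G)$ is one of $\Alt(n)$, $\PSL_d(r)$, $\PSU_3$, $\Sz$, $\Ree$, $\PSL_2(11)$ on $11$ points, $\mathrm{M}_{11}$, $\mathrm{M}_{12}$, $\mathrm{M}_{22,23,24}$, $\mathrm{HS}$, $\Co_3$, etc. For each family I would compute, using the known subdegrees and the structure of $G_\alpha$ as a $2$-homogeneous group, whether a $G_\alpha$-invariant set of $w$-subsets can serve as the supports of a code of minimum distance $\geq 5$ with a compatible $2$-transitive alphabet action of size $q$. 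Most families are eliminated because either $q$ is forced to be too small/large, or the minimum distance forced by the orbit of supports is $\leq 4$, or the alphabet action cannot be made entry-faithful. The cases $G\geq\Alt(n)$ in its natural action survive only with $q=2$ and $w=n$ (the repetition code), and the Mathieu group $\mathrm{M}_{11}$ on $12$ points survives with $q=2$ giving the Hadamard example.

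The main obstacle will be the bookkeeping in the almost-simple case: for each candidate $2$-transitive group one must pin down the possible codeword weights and the possible alphabet sizes simultaneously, and the interaction between the entry-action and the alphabet-action (mediated by entry-faithfulness, which forbids a nontrivial kernel $K$) is delicate --- it is exactly what separates this theorem from the alphabet-almost-simple and alphabet-affine cases treated elsewhere. A secondary subtlety is the positive identification of the length-$12$ example: one must verify not merely that $\mathrm{M}_{11}$ stabilises a suitable code, but that the even-weight subcode of the punctured Hadamard code has minimum distance exactly $6$ and admits no larger entry-faithful $2$-neighbour-transitive overgroup, so that the classification is sharp. I would expect the affine case to be comparatively quick --- $2$-homogeneous affine groups are $\AGL_1(p)$-type or a short list of exceptions, and the minimum distance constraint $\delta\geq 5$ together with $q=p$ being tied to the structure of $\F_p^d$ kills all of them except possibly small degrees that coincide with the binary repetition or Mathieu cases already found.
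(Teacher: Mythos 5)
This chapter does not prove Theorem~\ref{thm:EntryFaithful}; it is imported verbatim from \cite[Theorem~1.1]{ef2nt}, and your overall skeleton (use Proposition~\ref{prop:HammingiHomogeneous} with $i=2$ to get a faithful $2$-homogeneous action of $G_{\b 0}$ on $\N$, then run through Kantor's classification) is indeed the skeleton of that proof. However, there are two genuine gaps in your argument as written. First, the assertion that ``all codewords have the same weight $w$, and the support sets $\supp(\beta)$ for $\beta\in\C\setminus\{\alpha\}$ form a single $G_\alpha$-orbit'' does not follow from transitivity of $G_{\b 0}$ on $\Gamma_1(\b 0)$ and $\Gamma_2(\b 0)$: those hypotheses control orbits on $\C_1$ and $\C_2$ only, whereas the nonzero codewords sit at distance $\geq 5$ from $\b 0$ and may a priori split into several $G_{\b 0}$-orbits of different weights. (What one can say is that, since $G$ is transitive on $\C$, the code is distance-invariant, so codewords of weight exactly $\delta$ exist; the combinatorial analysis must then be carried out on the $G_{\b 0}$-invariant --- not necessarily transitive --- family of supports of minimum-weight codewords.) Your subsequent ``design-like intersection pattern'' argument leans on this false single-orbit assumption.

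Second, you never identify the mechanism that forces $q=2$, and this is precisely the point where entry-faithfulness does its work. Since $K=G\cap B=1$, the group $G_{\b 0}$ is isomorphic to its image in $\Sym(\N)$, yet it must act transitively on $\Gamma_2(\b 0)$, a set of size $\binom{n}{2}(q-1)^2$; hence the setwise stabiliser $G_{\b 0,\{i,j\}}$ must induce a transitive group on $\Q_i^\times\times\Q_j^\times$ while being constrained to be (a quotient-free copy of) a two-point stabiliser inside one of the known $2$-homogeneous groups of degree $n$. It is this divisibility/structure clash, case by case through Kantor's list, that eliminates $q\geq 3$ and most groups; saying ``$q$ is forced to be too small/large'' without this mechanism leaves the heart of the proof unargued. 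A smaller but real slip: the surviving sporadic example is the even-weight subcode of the punctured Hadamard code, which has length $11$, with $G\cong\mg_{11}$ in its natural degree-$11$ ($4$-transitive) action --- not $\mg_{11}$ acting $3$-transitively on $12$ points; the length-$12$ Hadamard code itself is stabilised by translations and so is not entry-faithful (compare Theorem~\ref{binaryx2ntchar}(2)).
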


% \begin{table}
% \begin{center}
% \begin{tabular}{ccl}
%  $G$ & $n$ & conditions\\
%  \hline
%  $\leq \AGaL_d(r)$ & $r^d$ & $G$ has a $2$-homogeneous index $2$ subgroup\\
%  $\s_n$ & $n$ &\\
%  $\mg_{22}\rtimes C_2$ & $22$ &\\
%  $\unrhd \PSU_3(r)$ & $r^6+1$ & $G$ has an index $2$ subgroup\\
%  $\unrhd \PSL_d(r)$ & $\frac{r^d-1}{r-1}$ & $G$ has an index $2$ subgroup\\
%  \hline
% \end{tabular}
% \caption{Groups $G$ such that the binary repetition code in $H(n,2)$ is $G$-entry-faithful and $(G,2)$-neighbour-transitive.}
% \label{tab:repetitiongroups}
% \end{center}
% \end{table}

\subsection{Alphabet-almost-simple codes in Hamming graphs}\label{sec:as}

It turns out that permutation codes (see \cite{bailey2009permutation,blake1979coding} and Definition~\ref{def:permutationCodes}) provide useful building blocks for constructing a wide range of neighbour-transitive codes, many of which are alphabet-almost-simple, as defined in Definition~\ref{efaasaadef}.

\begin{definition}\label{def:permutationCodes}
 Let $\N=\Q=\{1,\ldots,q\}$ and let $T\subseteq \Sym(\Q)$. Then, using the `function' Definition~\ref{defHamming}(2) for the Hamming graph $H(\N,\Q)$, the \emph{permutation code} $\C(T)$ is the code comprising the elements of $T$ considered as functions from $\N$ to $\Q$, that is to say, $t\in T$ corresponds to the codeword $\alpha_t:i\to i^t$ (for $i\in\N$).  In terms of the `$q$-tuple' Definition~\ref{defHamming}(1) for $H(\N,\Q)$, the codeword $\alpha_t$ in $\C(T)$ corresponding to $t\in T$ is $\alpha_t=(1^t,2^t,\ldots,q^t)$, that is to say, $\alpha_t(i)=i^t$. 
\end{definition}

Here is a simple example of a permutation code in $H(4,4)$.

\begin{example}\label{exam:permutationCodes}
 Letting $T=\alt_4$, the permutation code $\C(T)$ consists of the following $4$-tuples:
 \[
  \begin{array}{cccc}
      (1,2,3,4) & (2,1,4,3) & (3,4,1,2) & (4,3,2,1)\\
      (3,1,2,4) & (4,1,3,2) & (4,2,1,3) & (1,4,2,3)\\
      (2,3,1,4) & (2,4,3,1) & (3,2,4,1) & (1,3,4,2).
  \end{array}
 \]
\end{example}

Next we identify some automorphisms of $\C(T)$ among the following elements of $\Aut(H(q,q))=B\rtimes L$, by examining their actions on `permutation' type vertices, that is, vertices with pairwise distinct entries, and hence of the form $\alpha_t$ for some $t\in\Sym(\Q)$.  For an element $g\in\Sym(\Q)$, we write $\sigma_g$ for the corresponding element of the top group $L=\Sym(\N)$, we write $x_g=(g,g,\dots,g)$ for the corresponding `diagonal' element of the base group $B=\Sym(\Q)^q$. The actions of these elements on $H(\N,\Q)$ are given in \eqref{e:hamaut} and \eqref{e:hamaut2}. For any Hamming graph $H(\N,\Q)=H(n,q)$ and any subgroup  $H\leq \Sym(\Q)$, we define the \emph{diagonal subgroup} ${\rm Diag}_n(H)$ of the base group $B=\Sym(\Q)^n$ of $\Aut(H(n,q))$ by 
\begin{equation}\label{e:diag}
    {\rm Diag}_n(H)=\{ x_g = (g,g,\dots, g)\in B\mid g\in H\}.
\end{equation}
Proofs of the following assertions are given in \cite[Lemma 8]{gillespieCharNT}, and are easily derived.

\begin{enumerate}[(1)]
 \item For $g\in \Sym(\Q)$, the element $\sigma_g$ maps $\alpha_t\in\C(T)$ to $\alpha_{tg^{-1}}$. %, since  $(i^{g^{-1}})$ maps $t\in\C(T)$ to $g^{-1}t$.
 \item For $g\in \Sym(\Q)$, the element $x_g$ maps $\alpha_t\in\C(T)$ to $\alpha_{tg}$.
 \item For $g\in\Sym(\Q)$, the product $x_g\sigma_g$ maps $\alpha_t\in\C(T)$ to $\alpha_{g^{-1}tg}$.
\end{enumerate}
Now suppose that $T$ is a subgroup of $\Sym(\Q)$. Then choosing $g$ to lie in $T$ in parts (1) and (2) we see that $\Aut(\C(T))$ contains ${\rm Diag}_q(T)\times \{\sigma_g\mid g\in T\}\cong T\times T$. Moreover, choosing $g$ in the normaliser $N_{\Sym(\Q)}(T)$ in part (3), we obtain the larger  subgroup ${\rm Diag}_q(T)\rtimes A(T)$ of $\Aut(\C(T))$, where $A(T) :=\{x_g\sigma_g\mid g\in N_{\Sym(\Q)}(T)\}\cong N_{\Sym(\Q)}(T)$. This group is called the holomorph of $T$, \cite[Section 3.3]{PS2018}, and we note that ${\rm Diag}_q(T)\rtimes A(T)\leq {\rm Diag}_q(\Sym(q))\rtimes L$ (recalling that  $L=\Sym(q)$ is  the top group). 

\begin{definition}\label{def:diagxsnt}
A code $\C$ in $H(q,q)$ is \emph{diagonally $s$-neighbour-transitive} if $\C$ is $(G,s)$-neighbour-transitive for some $G\leq \Diag_n(\Sym(\Q))\rtimes L$. If $s=1$ we say simply that $\C$ is diagonally neighbour-transitive.
\end{definition}

In \cite{gillespiediadntc} necessary and sufficient conditions on $T$ were obtained for the code $\C(T)$ to be diagonally neighbour-transitive.

\begin{theorem}\cite[Theorem~2]{gillespiediadntc}\label{thm:2transitivePermcodes}
 Let $\N=\Q=\{1,2,\dots,q\}$, let $T$ be a subgroup of $\Sym(\Q)$,  and let $\C(T)$ be the permutation code as in Definition~\ref{def:permutationCodes}. Then $\C(T)$ is diagonally neighbour-transitive if and and only if $N_{\Sym(\Q)}(T)$ is $2$-transitive. 
\end{theorem}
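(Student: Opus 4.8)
The plan is to prove both directions of the equivalence by translating between the local action of $\Aut(\C(T))$ on permutation-type vertices and the action of the normaliser $N:=N_{\Sym(\Q)}(T)$ on $\Q$. Throughout I would use the facts recorded just before Definition~\ref{def:diagxsnt}: the subgroup $\Diag_q(T)\rtimes A(T)$ of $\Aut(\C(T))$ (with $A(T)\cong N$) lies in $\Diag_q(\Sym(\Q))\rtimes L$, and its elements act on a codeword $\alpha_t$ (for $t\in T$) by $\sigma_g:\alpha_t\mapsto\alpha_{tg^{-1}}$, $x_g:\alpha_t\mapsto\alpha_{tg}$, and $x_g\sigma_g:\alpha_t\mapsto\alpha_{g^{-1}tg}$. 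The first observation I would make is that $\C(T)$ has minimum distance at least $2$ (two distinct permutations of $\Q$ cannot agree in all but one coordinate), so $\Gamma_1(\alpha_t)$ consists of all vertices obtained by altering $\alpha_t$ in exactly one coordinate; among these the \emph{code-neighbours} $\C_1$ are exactly those vertices that lie at distance $1$ from some codeword. A key structural point I would establish early: every code-neighbour of $\C(T)$ is a vertex $\beta$ with exactly one repeated alphabet value and one missing value, and such a $\beta$ is adjacent to $\alpha_t\in\C(T)$ precisely when $t$ restricted to the non-repeated coordinates agrees with $\beta$; I would count how many codewords $\beta$ is adjacent to (using that $T$ is a group).

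For the forward direction, suppose $\C(T)$ is diagonally neighbour-transitive, say $(G,1)$-neighbour-transitive with $G\le\Diag_q(\Sym(\Q))\rtimes L$. Restricting attention to the stabiliser of the identity codeword $\alpha_e$ (where $e$ is the identity of $\Sym(\Q)$), I would identify $G_{\alpha_e}$ with a subgroup of $N$: an element $x_g\sigma_h\in G$ (with $g\in\Sym(\Q)$, $\sigma_h\in L$) fixes $\alpha_e$ iff $g=h$ and conjugation by $g$ preserves $T$, i.e. $g\in N$ — and then it acts on a general codeword $\alpha_t$ as $t\mapsto g^{-1}tg$. Transitivity of $G$ on $\C(T)$ combined with transitivity of $G_{\alpha_e}$ on a suitable orbit of code-neighbours of $\alpha_e$ will, after the combinatorial identification of $\C_1$ above, force $N$ to be transitive on ordered pairs of distinct points of $\Q$ — that is, $2$-transitive. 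The crucial bookkeeping is: a code-neighbour of $\alpha_e$ is essentially determined by the ordered pair $(a,b)$ where $b$ is the missing value and $a$ the repeated value (equivalently, by a transposition-like modification), and the induced action of $G_{\alpha_e}\le N$ on these neighbours matches the action of $N$ on ordered pairs; transitivity of the former forces $2$-transitivity of $N$.

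For the converse, suppose $N:=N_{\Sym(\Q)}(T)$ is $2$-transitive. Take $G:=\Diag_q(T)\rtimes A(T)\le\Diag_q(\Sym(\Q))\rtimes L$, which is contained in $\Aut(\C(T))$ by the pre-Definition~\ref{def:diagxsnt} remarks. Transitivity of $G$ on $\C(T)$ is immediate: $\Diag_q(T)$ already acts transitively on $\{\alpha_t\mid t\in T\}$ via $x_g:\alpha_t\mapsto\alpha_{tg}$. For transitivity on $\C_1$, I would reduce (using the transitivity just shown, exactly as in Lemma~\ref{lem:largeDeltaDistTrans}) to showing $G_{\alpha_e}\cong N$ is transitive on the code-neighbours of $\alpha_e$; by the combinatorial description of $\C_1$, these neighbours correspond to ordered pairs of distinct points of $\Q$ (the missing value and the repeated value), and $N$ acting by conjugation $t\mapsto g^{-1}tg$, equivalently by its natural action on $\Q$, permutes these ordered pairs exactly as it permutes ordered pairs of distinct points — which is transitive by $2$-transitivity. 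Hence $\C(T)$ is $(G,1)$-neighbour-transitive, i.e. diagonally neighbour-transitive.

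\textbf{Main obstacle.} The genuine content — and the step I expect to be fiddly — is the precise combinatorial identification of the set $\C_1$ of code-neighbours of $\C(T)$ and the verification that the $G_{\alpha_e}$-action on $\C_1$ really is equivalent to the $N$-action on ordered pairs of distinct points of $\Q$. One must be careful that a code-neighbour $\beta$ of $\alpha_e$ is not adjacent to \emph{too many} codewords in a way that breaks the correspondence (this is where $T$ being a subgroup, not just a subset, is used), and one must check that a vertex at distance $1$ from $\alpha_e$ really is a code-neighbour iff the missing/repeated value data is of the expected form — in particular that no ``extra'' vertices at distance $1$ from $\alpha_e$ arise that fail to be code-neighbours and would spoil transitivity. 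Once this identification is nailed down, both implications follow by routine transfer of transitivity through the bijection, together with Lemma~\ref{lem:largeDeltaDistTrans} to pass between global transitivity on $\C_1$ and transitivity of the codeword-stabiliser on its own neighbour set.
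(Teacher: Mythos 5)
This theorem is stated in the chapter only as a citation of \cite[Theorem~2]{gillespiediadntc}; no proof appears in the paper, so there is nothing internal to compare against. Judged on its own terms, your plan is sound in outline: the identification of $\Gamma_1(\alpha_e)$ with ordered pairs $(i,b)$ of distinct points of $\Q$ (the vertex $\beta_{i,b}$ agreeing with $\alpha_e$ except for the value $b$ in position $i$), the computation that $x_g\sigma_g$ sends $\beta_{i,b}$ to $\beta_{i^g,b^g}$, and the observation that $\Diag_q(T)$ is already transitive on $\C(T)$, together make the converse direction ($N$ $2$-transitive $\Rightarrow$ diagonally neighbour-transitive) essentially complete: the implication ``$G$ transitive on $\C$ and $G_{\alpha_e}$ transitive on $\Gamma_1(\alpha_e)$ implies $G$ transitive on $\C_1$'' needs no hypothesis on $\delta$, since $\C_1=\bigcup_t\Gamma_1(\alpha_t)$ always holds here (no neighbour of a permutation-type vertex is permutation-type).

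The genuine gap is in the forward direction, precisely at the point you flag as the ``main obstacle'' but do not resolve: when $T$ contains a transposition, so $\delta(\C(T))=2$ and $e=0$, Lemma~\ref{lem:largeDeltaDistTrans} does not apply, and transitivity of $G$ on $\C_1$ does \emph{not} reduce to transitivity of $G_{\alpha_e}$ on $\Gamma_1(\alpha_e)$. Concretely, $\beta_{i,b}$ is adjacent to $\alpha_e$ and also to $\alpha_{(i\,b)}$ whenever $(i\,b)\in T$, so an element of $G$ carrying $\beta_{i,b}$ to $\beta_{j,c}$ may move $\alpha_e$ to $\alpha_{(j\,c)}$ rather than fix it, and your bookkeeping (``$G_{\alpha_e}$-action on neighbours matches the $N$-action on ordered pairs'') only yields that $N$ has at most two orbits on ordered pairs. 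To close the gap: write a general element of $\Aut(\C(T))\cap(\Diag_q(\Sym(\Q))\rtimes L)$ as $x_h\sigma_h\cdot x_{t_0}$ with $h\in N$, $t_0\in T$, and compute that $\beta_{i,b}^{\,x_h\sigma_h x_{t_0}}=\beta_{j,c}$ forces either $t_0=e$ and $(i,b)^h=(j,c)$, or $t_0=(j\,c)\in T$ and $(i,b)^h=(c,j)$. If $N$ is not transitive on ordered pairs, the second alternative must occur for every pair of ordered pairs in distinct $N$-orbits; this forces $N$ to have exactly two orbits, each the reversal of the other, and forces every transposition of $\Q$ to lie in $T$, whence $T=\Sym(\Q)$ and $N=\Sym(\Q)$ is $2$-transitive after all --- a contradiction. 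With this case analysis inserted, your argument gives the full theorem.
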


Examples for which this 2-transitivity condition holds include elementary abelian groups $T$ acting regularly, as well as almost simple $2$-transitive groups. We note that all finite $2$-transitive groups are known explicitly, see for example \cite{cameron1999permgps}, and that this classification depends on the finite simple group classification. The diagonally neighbour-transitive codes $\C(T)$ are building blocks for constructing \emph{frequency permutation arrays} (codes in $H(mq,q)$ where each element of $\Q$ occurs $m$ times as an entry in each codeword; introduced in \cite{freqpermarrays2006}). It was shown in \cite[Theorem~2]{gillespiediadntc} that, for each  diagonally neighbour-transitive   permutation code $\C(T)$ and each positive integer $m$, the repetition code ${\rm Rep}_m(\C(T))$ (see Definiiton~\ref{def:productandrepetition}) is a diagonally neighbour-transitive frequency permutation array in $H(mq,q)$. In particular, when $N_{\Sym(\Q)}(T)$ is an almost  simple $2$-transitive group, all of these diagonally neighbour-transitive codes, $\C(T)$ and ${\rm Rep}_m(\C(T))$, are alphabet-almost-simple as in Definition~\ref{efaasaadef}. 
In fact, these may be regarded as archetypical examples for alphabet-almost-simple codes with minimum distance at least three. In the following result, part (1) was proved first in \cite[Section 7]{gillespieCharNT} and more succinctly in \cite[Proposition 3.3]{aas2nt}; while part (2) was proved in \cite[Theorem 1.1]{gillespieCharNT}.

%Some results from \cite{gillespieCharNT}, in which $G$-alphabet-almost-simple and $G$-neighbour-transitive codes with $\delta\geq 3$ are characterised, are collected below.

%
%Let  $C$ be an $X$-neighbour-transitive code with $\delta\geq 3$ and $\J=\{J_1,\ldots,J_\ell\}$ be an $X$-invariant partition of $M$. Then, by \cite[Proposition~3.4 and Corollary 3.7]{gillespieCharNT}, either $\pi_{J_i}(C)$ is the complete code, or $\pi_{J_i}(C)$ is $\chi_{J_i}(X)$-neighbour-transitive and $\delta(\pi_{J_i}(C))\geq 2$. Moreover, by \cite[Lemma~3.6]{gillespieCharNT}, the result is the same for each $J\in \J$.

%Note that a group $H$ is a \emph{sub-direct} subgroup of a direct product $\prod_{i=1}^n T_i$ of isomorphic groups $T_i\cong T$, where $i\in \{1,\ldots,n\}$, if the projection of $H$ in each coordinate is isomorphic to $T$.

%\textcolor{blue}{Could split below in two? Parts one and two do not concern alphabet-almost-simple codes.}

% \begin{proposition}\cite[Proposition~5.2]{gillespieCharNT}
%  If $\C$ is a $G$-alphabet-almost-simple and $G$-neighbour-transitive code in $H(n,q)$ and $\C$ has minimum distance at least $3$, then $\soc(K)$ is a sub-direct subgroup of $\prod_{i\in \N}\soc(G_i^{Q_i})$ .
% \end{proposition}

\begin{theorem} \label{partimpdiag}
 Let $\C$ be a code in $H(\N,\Q)=H(n,q)$ which is $G$-alphabet-almost-simple and $G$-neighbour-transitive with $\delta\geq 3$. Then 
 \begin{enumerate}[$(1)$]
     \item there is a $G$-invariant partition $\J$ of $\N$ such that, for $J\in \J$, the projection  $\pi_{J}(\C)$ is equivalent to a diagonally $\chi_{J}(G)$-neighbour-transitive code with minimum distance $\delta(\pi_{J}(\C))\geq 2$;
     \item $\C$ has a sub-code $\SS$ which is a neighbour-transitive frequency permutation array, and $\C$ is a disjoint union of $G$-images of $\SS$.
 \end{enumerate}
\end{theorem}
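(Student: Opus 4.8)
The plan is to exploit the $2$-transitive action promised by Proposition~\ref{prop:Hamming2transOnQ} together with the structure of $G$ on the entry set $\N$. Since $\C$ is $G$-alphabet-almost-simple, by Definition~\ref{efaasaadef} the kernel $K$ of the $G$-action on $\N$ is non-trivial, $G$ is transitive on $\N$, and $G_i^{\Q_i}$ is a $2$-transitive almost-simple group for each $i\in\N$. First I would let $\J$ be the partition of $\N$ into the orbits of $K$; since $K\neq1$ this is a non-trivial partition, and since $K\lhd G$ it is $G$-invariant. Fix $J\in\J$. The key observation is that, because $K$ acts transitively on $J$ and each element of $K$ acts only on the alphabet (not permuting entries), the restrictions to $J$ of the codewords behave like a permutation-type structure: the `diagonal' action of $K$ forces the projection $\pi_J(\C)$ to be invariant under a large diagonal subgroup. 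More precisely, I would show that $\chi_J(G)$ contains a subgroup lying in $\Diag_{|J|}(\Sym(\Q))\rtimes \Sym(J)$ that is transitive on $\pi_J(\C)$ and on its neighbours, so that $\pi_J(\C)$ is equivalent to a diagonally $\chi_J(G)$-neighbour-transitive code; this is essentially the content of \cite[Proposition~3.3]{aas2nt}, which I would invoke after setting up the partition. Combining this with Proposition~\ref{prop:NTimprimitiveAction}, provided $\pi_J(\C)$ is not the complete code $H(J,q)$ (a case one rules out by noting that $\delta(\C)\geq3$ and that a complete projection on a $K$-orbit would contradict the almost-simple $2$-transitivity of $G_i^{\Q_i}$ forcing the alphabet action to have a non-trivial stabiliser structure), we get $\delta(\pi_J(\C))\geq2$. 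This proves part~(1).

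For part~(2), the strategy is to build the subcode $\SS$ as a `section' of $\C$ over the partition $\J$. Since $K$ is transitive on each part $J\in\J$, and $K$ acts on the alphabet positions, the codewords of $\C$ decompose according to their behaviour on each part: on each part $J$ a codeword restricts to (a translate of) a permutation-type vertex, up to the diagonal action. I would take $\SS$ to be the set of codewords $\alpha\in\C$ whose restriction $\alpha|_J$, for a fixed reference part $J$, lies in a fixed transversal of a suitable equivalence — concretely, one fixes a base point in $\Q$ and restricts attention to codewords realising a chosen frequency pattern on each block. The claim is then that $\SS$ is itself neighbour-transitive under (the relevant subgroup of) its stabiliser, and that it is a frequency permutation array: each symbol of $\Q$ appears the same number of times in each codeword of $\SS$, because the diagonal action of $K$ together with the $2$-transitivity of $G_i^{\Q_i}$ homogenises the symbol multiplicities. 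Finally, $\C$ is recovered as the union of the $G$-images of $\SS$, and these images are pairwise disjoint because two codewords in the same $G$-orbit with the same frequency pattern on each block must already lie in a common image of $\SS$ by construction. This last assertion is \cite[Theorem~1.1]{gillespieCharNT}, which I would cite for the detailed verification.

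The main obstacle will be showing that $\SS$ is genuinely neighbour-transitive and a frequency permutation array simultaneously, i.e.\ that the chosen section is compatible with the group action rather than merely a set-theoretic slice. The subtlety is that the stabiliser in $G$ of the frequency pattern need not act transitively on $\C$, so one must argue that it acts transitively on $\SS$ and on $\SS_1$; this requires unpacking how $G_\alpha$, for $\alpha\in\SS$, acts on a ball of radius $1$ and checking that the part of that action which leaves $\SS$ invariant is still transitive on $\Gamma_1(\alpha)\cap\SS_1$. Once the correct definition of $\SS$ (the one used in \cite[Section~7]{gillespieCharNT} and \cite[Proposition~3.3]{aas2nt}) is in place, the homogeneity of symbol frequencies follows from Proposition~\ref{prop:HammingiHomogeneous} applied within each block and the $2$-transitivity of Proposition~\ref{prop:Hamming2transOnQ}; the disjointness of the $G$-images is then a bookkeeping argument on frequency patterns. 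I would present part~(1) in full using the $K$-orbit partition and Proposition~\ref{prop:NTimprimitiveAction}, and for part~(2) sketch the construction of $\SS$ and defer the two transitivity verifications to the cited references.
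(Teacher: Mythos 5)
The chapter itself contains no proof of Theorem~\ref{partimpdiag}: both parts are imported from the literature (part~(1) from \cite[Section 7]{gillespieCharNT} and \cite[Proposition 3.3]{aas2nt}, part~(2) from \cite[Theorem 1.1]{gillespieCharNT}), so what has to be assessed is whether your sketch of those arguments is sound. It is not, and the problem occurs at the very first step: you define $\J$ to be the set of orbits of $K$ on $\N$, where $K$ is the kernel of the $G$-action on $\N$. By definition $K$ fixes every entry, so its orbits on $\N$ are singletons and your $\J$ is the partition into one-element parts; your subsequent assertion that ``$K$ acts transitively on $J$'' directly contradicts your (correct) observation that $K$ acts only on the alphabet. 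With $|J|=1$ the projection $\pi_J(\C)$ is a code in a complete graph and the claim about diagonal neighbour-transitivity is empty, so the construction collapses before Proposition~\ref{prop:NTimprimitiveAction} or the cited results can be brought to bear.

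The partition that actually does the work is of a different nature. One has $K=G\cap B\leq \Sym(\Q)^n$, and for each $i\in\N$ the induced group $K^{\Q_i}$ is a nontrivial normal subgroup of the $2$-transitive almost-simple group $G_i^{\Q_i}$, hence contains its simple socle $T_i$. Thus $\soc(K)$ sits as a subdirect product inside $\prod_{i}\Sym(\Q_i)$, and the relevant equivalence on $\N$ declares $i\sim j$ when a single simple direct factor of $\soc(K)$ projects nontrivially onto both $T_i$ and $T_j$, i.e.\ when the two coordinates are tied together diagonally; the classes of this equivalence form the $G$-invariant partition $\J$. It is precisely the diagonal action of a simple factor across a part $J$ that places the relevant subgroup of $\chi_J(G)$ inside $\Diag_{|J|}(\Sym(\Q))\rtimes\Sym(J)$ and makes $\pi_J(\C)$ diagonally neighbour-transitive. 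Without this identification, the exclusion of $\pi_J(\C)=H(J,q)$ and the whole of part~(2) --- which assembles $\SS$ block-by-block from these diagonal pieces --- have nothing to stand on. Deferring the remaining verifications to \cite{gillespieCharNT} and \cite{aas2nt} is reasonable for a survey-level statement, but the one piece of the argument you chose to make explicit, the construction of $\J$, is the piece that is wrong.
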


We make some comments about the links between the two parts of Theorem~\ref{partimpdiag}, and the structure of the sub-code $\SS$ in part (2).

\begin{remark}\label{rem:aas}
(a)  It follows from \cite[Theorems 1 and 3]{gillespiediadntc} that the  projected codes $\pi_J(\C)$ in part (1) of Theorem~\ref{partimpdiag}  are either frequency permutation arrays or repetition codes.

\smallskip
(b) Some explicit information about the possibilities for the neighbour-transitive sub-code $\SS$  in part (2) of Theorem~\ref{partimpdiag} is provided by \cite[Theorem 1.1]{gillespieCharNT} as follows: $\SS$ is equivalent to a code of the form $\Prod_\ell(\Rep_k(\C'))$ (with $\ell, k\geq1$), where $\Prod_\ell$ and $\Rep_k$ are the product and repetition constructions defined in Definition~\ref{def:productandrepetition}. Moreover, the small code $\C'$ itself has one of three specific forms described in \cite[(7,4), (7.5), (7.6)]{gillespieCharNT}, namely the trivial code $\Q$ of length $1$, or a permutation code $\C(T)$ as in Definition~\ref{def:permutationCodes}, or a twisted version $\C(T,T^\sigma)$ of a permutation code defined and studied in \cite{akbari2018permutationcodes, gillespie2015twistedPermCodes}. 

\smallskip
(c) The code $\C$ in Theorem~\ref{partimpdiag} may have minimum distance strictly larger than that of the projected codes $\pi_J(\C)$ in part (1). An insightful example was given in  \cite[Example 9.1]{gillespieCharNT}, where $\SS=\Prod_\ell(\C(A_q))$, and $\C$ is a disjoint union of two $G$-translates of $\SS$. Each of the codes $\C$ and $\SS$ has minimum distance $3$, while $\pi_J(\C)$ is the permutation code $\C(S_q)$ with minimum distance $2$.

\end{remark}

We give some explicit constructions of alphabet-almost-simple neighbour-transitive codes, the first built from permutation codes.

\begin{example}\label{exam:productsPermCodes}
 Let $\N=\Q=\{1,\ldots,q\}$, and let $T\leq \s_q$ be such that the permutation code $\C=\C(T)$  in $H(q,q)$ is diagonally neighbour-transitive (see Theorem~\ref{thm:2transitivePermcodes}). Then, letting $\M=\N\times\{1,\ldots,k\}$, as in Definition~\ref{def:productandrepetition}, the product code $\Prod_k(\C)$ and the repetition code $\Rep_k(\C)$ are neighbour-transitive codes in $H(\M,\Q)$. Moreover, for each of these codes, and for each $i=1,\ldots,k$, the projection with respect to $\N\times\{i\}$ is equal to $\C(T)$. The following codes are additional examples of codes that again project to $\C(T)$  with respect to $\N\times\{i\}$, and properly lie between $\Rep_k(\C)$ and $\Prod_k(\C)$. Recall from Definition~\ref{def:productandrepetition} that if $g_1,\ldots,g_k$ are functions $\N\rightarrow\Q$ then $\beta_{g_1,\ldots,g_k}$ is a vertex of $H(\M,\Q)$, and note that if $g\in T$ then $g$ is a function $\Q\rightarrow\Q$.
 \begin{enumerate}[(1)]
  \item Let $T=\s_q$ and consider the code consisting of all $\beta_{t_1,\ldots,t_k}$ such that $t_1,\ldots,t_k\in T$ and $t_1t_2\cdots t_k\in\alt_n$. (If $k$ is even then this is one of the codes corresponding to the `elusive' code $\C(q,k)$ in \cite[Lemma 3.9]{elusive}, from which it follows that $\Prod_k(\C)$ is neighbour-transitive.) 
  \item Let $1\ne H\lhd T$ and let
  \[
   \Prod(T,k,H)=\{\beta_{h_1t,\ldots,h_kt}\mid h_1,\ldots,h_k\in H,\, t\in T\}.
  \]
  %\textcolor{blue}{(CP: Dan, I'm not very happy with the following down to Problem 1.5.15. I wrote some notes in an email I'm about to send (10 August). DH: I've removed the proof and technical stuff and started a new paper about it.)}
  We state below that $\Prod(T,k,H)$ is neighbour-transitive as long as $N_{\s_q}(H)$ is $2$-transitive. Note that several explicit examples are given, and precise requirements on $H$ and $T$ related to the neighbour-transitivity of $\Prod(T,k,H)$ are discussed further, in \cite{hawtin2023newpermarrays}. 
  %\textcolor{blue}{(DH: May add to this if I can get something done in the paper quickly enough.)}
  %Our requirement is really that $N_{S_q}(H)$ be $2$-transitive, and we address this issue in the group theoretic  Lemma~\ref{lem:2tr}.
  %$N_{\s_q}(H)\cap N_{\s_q}(T)$ is $2$-transitive, see \cite{hawtin2023newpermarrays}.
 \end{enumerate}
\end{example}

% \begin{lemma}\label{lem:2tr}
%     Suppose that $T\leq S_q$ is such that $N_{S_q}(T)$ is $2$-transitive, and let $1\ne H\unlhd T$. Then either $N_{S_q}(H)$ is also $2$-transitive, or 
%  \begin{equation}\label{def:h}
% \text{
% \begin{minipage}{14cm}
%  $q=p^a$ for a prime $p$, $N_{S_q}(T)$ has socle $M=C_p^a$, $T=M.T_0, H=M.H_0$, and either $H_0\leq T_0\leq \GL_1(p^a)$, or $q=3^4$, $R=D_8\circ Q_8\unlhd T_0$ and $R\cap H_0\cong D_8$ or $Q_8$. 
% \end{minipage}
% }
% \end{equation}
% \end{lemma}

% \begin{proof}
%  The condition that $N_{\s_q}(T)$ is $2$-transitive implies both that $T$ is transitive and that $N_{\s_q}(T)$ has a unique minimal normal subgroup, say $M$. It follows from the uniqueness of $M$ that $M\leq T$. By Burnside's Theorem, either $M$ is elementary abelian and self-centralising in $S_q$ (by \cite[Theorem 3.2]{PS2018}), or $M$ is a nonabelian simple group with trivial centraliser in $S_q$. In either case $M\cap H=1$ would lead to a contradiction, and hence we must have $M\leq H$. If $M$ is nonabelian then $N_{S_q}(H)$ is $2$-transitive, see for example \cite[Lemma 2.10]{godsil1998}. So suppose that $M$ is elementary abelian. Then examining the classification of the finite $2$-transitive groups of affine type, either $N_{S_q}(H)$ is $2$-transitive, or $H_0, T_0$ are one of the exceptional cases listed.   
% \end{proof}

\begin{proposition}\cite{hawtin2023newpermarrays}
 Let $k$ be a positive integer, let $T\leq \s_q$ and let $1\ne H\lhd T$ such that $N_{\s_q}(H)$ is $2$-transitive. Then the code $\Prod(T,k,H)$ in Example~\ref{exam:productsPermCodes}(2) is neighbour-transitive.
\end{proposition}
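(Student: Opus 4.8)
The plan is to verify neighbour-transitivity of $\C:=\Prod(T,k,H)$ in $H(\M,\Q)$ with $\M=\N\times\{1,\dots,k\}$ directly from Definition~\ref{defsneighbourtrans}(2), that is, to exhibit a subgroup $G\leq\Aut(\C)$ which is transitive both on $\C$ and on the set $\C_1$ of code-neighbours. First I would assemble a suitable $G$. Inside $\Aut(H(\M,\Q))$ we have the automorphisms $\sigma_{g_1,\dots,g_k}$ and $x_{h_1,\dots,h_k}$ of Definition~\ref{def:productandrepetition}(4),(5). Using the elementary computation (1)--(3) preceding Definition~\ref{def:diagxsnt} applied coordinate-wise, one checks that: the ``global'' right-translation $x_g$ with $g\in T$ (meaning $x_{g,\dots,g}$) sends $\beta_{h_1t,\dots,h_kt}$ to $\beta_{h_1tg,\dots,h_ktg}$, hence preserves $\C$; the per-block left-multiplications $x_{h_1',\dots,h_k'}$ with $h_i'\in H$ send it to $\beta_{h_1'h_1t,\dots,h_k'h_kt}$, which lies in $\C$ since $H$ is a subgroup; the conjugation-type automorphisms $x_g\sigma_g$ for $g\in N_{\s_q}(H)$ (applied uniformly in all $k$ blocks) send it to $\beta_{(g^{-1}h_1g)(g^{-1}tg),\dots}$, which lies in $\C$ because $g^{-1}Hg=H$ and $g^{-1}tg$ is merely \emph{some} element of $\s_q$ — so I must also include enough of $T$; and the block-permuting automorphisms $\sigma_{e,\dots,e}$ composed with a permutation of the second coordinate of $\M$ act by permuting $(h_1,\dots,h_k)$, preserving $\C$. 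Taking $G$ to be the group generated by all of these, together with the translations $\{x_g\sigma_g : g\in N_{\s_q}(H)\}$ restricted so that their ``$T$-part'' stays inside the normaliser data, I would pin down a concrete $G\leq\Aut(\C)$.

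Second, transitivity on $\C$: given two codewords $\beta_{h_1t,\dots,h_kt}$ and $\beta_{h_1'r,\dots,h_k'r}$, right-translate the first by $t^{-1}r\in T$ to reduce to the case $t=r$, then left-multiply block $i$ by $h_i'h_i^{-1}\in H$; this is realised by an element of $G$, so $G$ is transitive on $\C$.

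Third — and this is the substantive step — transitivity on $\C_1$. Since $\C$ contains $\Rep_k(\C(T))$ and is contained in $\Prod_k(\C(T))$, and since $\C(T)$ is diagonally neighbour-transitive precisely when $N_{\s_q}(T)$ is $2$-transitive (Theorem~\ref{thm:2transitivePermcodes}), I would first understand $\Gamma_1(\beta)$ for a codeword $\beta=\beta_{t,\dots,t}$ (we may assume $\beta$ has all blocks equal to some $t\in T$, after translating). A neighbour of $\beta$ differs from it in exactly one entry $(i_0,j_0)\in\M$; writing such a neighbour as $\beta'$, its $j_0$-th block is the function $i\mapsto (i_0{}^t\text{ or the altered value})$ agreeing with $t$ off $i_0$, and all other blocks equal $t$. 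To show $G_\beta$ is transitive on these, I would (a) use the block-permuting part of $G$ to move $j_0$ to block $1$; (b) use the stabiliser $G_\beta$'s action within block $1$ — which, I claim, induces on that block exactly the stabiliser of the codeword $\alpha_t$ inside the automorphism group making $\C(T)$ diagonally neighbour-transitive — to invoke Lemma~\ref{lem:largeDeltaDistTrans}(2)/Theorem~\ref{thm:2transitivePermcodes} and conclude transitivity on $\Gamma_1(\alpha_t)$ within that block; the hypothesis that $N_{\s_q}(H)$ is $2$-transitive is what feeds this, after checking $H\lhd T$ forces $N_{\s_q}(H)\leq N_{\s_q}(T)$ up to the relevant containment, or more directly that the holomorph-type group ${\rm Diag}\rtimes A(H)$ already acts $2$-transitively on $\Q$ and hence gives a diagonally neighbour-transitive $\C(H)$-like structure governing the single-block behaviour. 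Then combining (a) and (b) via the chain-of-two-elements argument in the proof of Lemma~\ref{lem:largeDeltaDistTrans} ((2)$\Rightarrow$(3)$\Rightarrow$(1)) yields transitivity on all of $\C_1$.

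\textbf{Main obstacle.} The delicate point is step three's claim that the induced action of $G_\beta$ on a single block is large enough: one must verify that the constraint ``$h_1\cdots h_k$ ranges only over a coset structure inside $H^{\,\cdot}$, with a common $T$-tail'' does not shrink the per-block stabiliser below what is needed, and that the $2$-transitivity of $N_{\s_q}(H)$ — rather than of $N_{\s_q}(T)$ — genuinely suffices. I expect this is handled exactly as in \cite{hawtin2023newpermarrays}: one shows $\Prod(T,k,H)$ projects (in the sense of $\pi_J$, $\chi_J$ of Section~\ref{sec:graphs}) to a diagonally neighbour-transitive code on each block $\N\times\{i\}$, applies Proposition~\ref{prop:NTimprimitiveAction}-style reasoning in reverse together with the explicit orbit count on $\C_1$, and checks the single non-trivial orbit-merging is forced by the block-permuting symmetry. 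I would present the block-projection and the holomorph computation carefully, and treat the coset bookkeeping for $h_1\cdots h_k$ as the one place requiring genuine care rather than routine verification.
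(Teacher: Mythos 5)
The paper itself gives no proof of this proposition --- it is quoted from \cite{hawtin2023newpermarrays}, which is listed as ``in preparation'' --- so your proposal has to stand on its own. Its routine parts do stand: diagonal right multiplication by elements of $T$, per-block multiplication by elements of $H$ (which needs $H\lhd T$), and the block permutations all preserve $\C=\Prod(T,k,H)$, and together they act transitively on $\C$. The gap is in your step three, and it is genuine. A block-respecting element of $\Aut(H(\M,\Q))$ fixing $\beta_{e,\ldots,e}$ acts on block $j$ as conjugation by some $a_j\in\s_q$; applying it to the codewords $\beta_{t,\ldots,t}$ and $\beta_{h,e,\ldots,e}$ and demanding that the images again lie in $\C$ forces each $a_j$ to normalise both $T$ and $H$. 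So the stabiliser you actually construct induces on a block only conjugation by $N_{\s_q}(H)\cap N_{\s_q}(T)$, and nothing in the hypotheses makes that intersection $2$-transitive. Your first fallback, that ``$H\lhd T$ forces $N_{\s_q}(H)\leq N_{\s_q}(T)$'', is false: normality gives $T\leq N_{\s_q}(H)$, not the reverse (e.g.\ $V_4\lhd D_8$ in $\s_4$ has $N_{\s_4}(V_4)=\s_4$ while $N_{\s_4}(D_8)=D_8$). Your second fallback, that the holomorph-type group built from $N_{\s_q}(H)$ governs the single-block behaviour, begs the question, since you already observed in step one that the conjugations $x_g\sigma_g$ with $g\in N_{\s_q}(H)$ need not preserve $\C$; ``including enough of $T$'' does not repair this, because $g^{-1}tg$ must land back in $T$ and in the correct $H$-coset.

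That this cannot be papered over is shown by the fact that your argument never uses any property of $T$ beyond $H\lhd T$, whereas some hypothesis on $T$ is indispensable. Take $q=4$, $k=1$, $T=D_8$ and $H=V_4\lhd D_8$, so that $N_{\s_4}(H)=\s_4$ is $2$-transitive; then $\Prod(T,1,H)=\C(D_8)$ in $H(4,4)$, and the vertices $(3,2,3,4)$ and $(2,2,3,4)$ both lie in $\C_1$ but are adjacent to $2$ and to $1$ codewords respectively. Since the number of codeword-neighbours is an $\Aut(\C)$-invariant, $\C_1$ is not a single orbit and this code is not neighbour-transitive. Hence the proposition can only be read as carrying the standing hypothesis of Example~\ref{exam:productsPermCodes} that $N_{\s_q}(T)$ is $2$-transitive (equivalently, that $\C(T)$ is diagonally neighbour-transitive), and any correct proof must use that hypothesis somewhere; your proposal has no step where it enters. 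As written, the substantive content of the proposition --- exhibiting enough automorphisms to make the codeword stabiliser transitive on $\Gamma_1(\beta)$, which is what neighbour-transitivity amounts to once $\delta\geq 3$ --- is deferred to the unavailable reference rather than proved.
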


\begin{problem}\label{prob:freqPermArrays}
 Investigate further the codes $\Prod(T,k,H)$ in Example~\ref{exam:productsPermCodes}.
\end{problem}

The second construction is a general version of the twisted permutation codes $\C(T,T^\sigma)$ from Remark~\ref{rem:aas}(b). Not all of them will arise as the building block $\C'$ for alphabet-almost-simple codes. More details of this construction are available in \cite{akbari2018permutationcodes,gillespie2015twistedPermCodes}. 
% Another method for producing neighbour-transitive codes having projections that are permutation codes come from \emph{twisted permutation codes}, see \cite{akbari2018permutationcodes,gillespie2015twistedPermCodes} for more details. We consider some of these examples below.

\begin{definition}
 Let $T$ be a group with (not necessarily distinct) permutation representations $\rho_1,\ldots,\rho_k:G\rightarrow\s_q$, let $\Q=\{1,\ldots,q\}$ and let $\N=\Q\times\{1,\ldots,k\}$. We define the \emph{twisted permutation code}  $\C(T;\rho_1,\ldots,\rho_k)$ in $H(kq,q)$ as  the code consisting of the functions $\N\rightarrow\Q$ given by $(i,j)\mapsto i^{\rho_j(t)}$, for each $t\in T$.
\end{definition}

One benefit of a twisted permutation code $\C(T;\rho_1,\ldots,\rho_k)$ in $H(kq,q)$ over the usual repetition code $\Rep_k(\C(T))$ in the same graph $H(kq,q)$ is that the minimum distance of the twisted version may be greater than for the untwisted code $\Rep_k(\C(T))$. Some examples are given in Table~\ref{tab:twistedPermCodes}. While it is only the examples in Lines 1--3 which arise in connection with alphabet-almost-simple codes, examples in the other lines, especially Lines 5--6 demonstrate that the differences between the minimum distances of the twisted and untwisted codes can be unbounded. The groups $G_t$ in line 6 are defined and studied in \cite[Section 3.1]{akbari2018permutationcodes}; $G_t$ is a certain subgroup of $\AGL_t(p)$, it contains the translation group and has order $p^{t+1}$.

Among this diverse family of alphabet-almost-simple, neighbour-transitive codes, very few are $2$-neighbour-transitive. In fact we have the following classification of such codes.

\begin{table}
\begin{center}
\begin{tabular}{c|ccccc}
 \hline
 Line & $T$ & $q$ & $k$ & $\delta_{\rm tw}$ & $\delta_{\rm rep}$\\
 \hline
 1 & $\s_6$ & $6$ & $2$ & $8$ & $4$ \\
 2 & $\alt_6$ & $6$ & $2$ & $8$ & $6$ \\
 3 & $\ASL_3(2)$ & $8$ & $2$ & $12$ & $8$ \\
 4 & $\s_6$ & $60$ & $4$ & $\leq 224$ & $176$--$192$ \\
 5 & $\Sp_4(2^n)$ & $2^{3n}+2^{2n}+2^n+1$ & $2$ & $2^{3n+1}+2^{2n}$ & $2^{3n+1}$ \\
 6 & $G_t$ & $p^t$ & $p$ & $p^{t+1}-p$ & $p^{t+1}-p^2$ \\
 \hline
\end{tabular}
\caption{Groups $T$ giving twisted permutation codes in $H(kq,q)$ with minimum distance $\delta_{\rm tw}$ strictly greater than the minimum distance $\delta_{\rm rep}$ of the $k$-fold repetition of $\C(T)$. Lines 1--4 can be found in \cite{gillespie2015twistedPermCodes} while lines 5 and 6 are in \cite{akbari2018permutationcodes}.}
\label{tab:twistedPermCodes}
\end{center}
\end{table}

\begin{theorem}\cite[Theorem~1.1]{aas2nt} \label{thm:2NTAlphabetAlmostSimple}
 Suppose that $\C$ is an alphabet-almost-simple, $2$-neighbour-transitive code in $H(n,q)$ with minimum distance $\delta \geq 3$. Then $n=\delta=3$, $q\geq5$, and $\C$ is equivalent to the repetition code $\Rep_3(q)$.
\end{theorem}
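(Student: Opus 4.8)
The plan is to leverage the structural theorem for alphabet-almost-simple neighbour-transitive codes, namely Theorem~\ref{partimpdiag}, together with Proposition~\ref{prop:HammingiHomogeneous} applied with $s=2$. First I would observe that since $\C$ is $2$-neighbour-transitive with $\delta\geq 3$, the error-correction capacity $e=\lfloor(\delta-1)/2\rfloor$ satisfies $\min\{e,2\}\geq 1$, so Proposition~\ref{prop:HammingiHomogeneous} applies with $i=1$, giving that $G$ (equivalently $G_\alpha$) acts transitively on $\N$; and if $\delta\geq 5$ then $e\geq 2$ and we additionally get that $G_\alpha$ is $2$-homogeneous on $\N$. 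This places strong constraints: in particular, combined with the alphabet-almost-simple hypothesis, the kernel $K$ of the action of $G$ on $\N$ is nontrivial, and the induced action $G_i^{\Q_i}$ on the alphabet is a $2$-transitive almost-simple group.

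The key step is to pass to the projected code. By Theorem~\ref{partimpdiag}(1) there is a $G$-invariant partition $\J$ of $\N$ such that, for any part $J$, the projection $\pi_J(\C)$ is equivalent to a diagonally $\chi_J(G)$-neighbour-transitive code with $\delta(\pi_J(\C))\geq 2$. I would then argue that $2$-neighbour-transitivity forces the partition $\J$ to be trivial, so in fact $\C$ itself is (equivalent to) a diagonally neighbour-transitive code on $H(n,q)$ with $n=q$ and with the associated group $T\leq\Sym(\Q)$ having $N_{\Sym(\Q)}(T)$ a $2$-transitive almost-simple group (using Theorem~\ref{thm:2transitivePermcodes} and Remark~\ref{rem:aas}(a)). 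Equivalently, by Theorem~\ref{partimpdiag}(2), $\C$ has a sub-code $\SS\cong\Prod_\ell(\Rep_k(\C'))$ with $\C'$ of one of the three forms in Remark~\ref{rem:aas}(b), and $\C$ is a disjoint union of $G$-images of $\SS$. The thrust of the remainder is to show that the extra transitivity on $\C_2$ — which by Lemma~\ref{lem:largeDeltaDistTrans} (if $e\geq 2$) or a direct argument on $\Gamma_2(\alpha)$ forces $G_\alpha$ to be transitive on a large, structured set of weight-$2$ vertices — is incompatible with the product/repetition structure unless everything degenerates.

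The heart of the argument, and the step I expect to be the main obstacle, is the weight-$2$ analysis. For a codeword $\alpha$ (take $\alpha=\b 0$), $\Gamma_2(\b 0)$ splits according to how the two nonzero entries are distributed among the parts of $\J$ and among the repeated/product blocks; these distributions are $G_\alpha$-invariant in the sense of Definition~\ref{GinvDef}, and Lemma~\ref{invlemma} bounds the number of types of vertices in $B_2(\b 0)$ by three. One then checks that in $\Prod_\ell(\Rep_k(\C'))$ a nonzero codeword has weight a multiple of $k$ times $\delta(\C')$-type quantities, so $\delta\geq 3$ together with the type count and the requirement that $G_\alpha$ be transitive on $\Gamma_2(\b 0)$ pins down $q$, $n$, and $\delta$. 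The upshot should be that $\C'$ must be the trivial code $\Q$ of length $1$ (the permutation-code and twisted-permutation-code options for $\C'$ have $\delta(\C')=2$ and length $\geq q\geq 2$ with too many weight-$2$ types once $q$ is large, while the $2$-transitive almost-simple constraint forces $q\geq 5$), that $\ell=1$, and that $k=3$, so $\C=\Rep_3(q)$ with $n=\delta=3$ and $q\geq 5$; the lower bound $q\geq 5$ comes precisely from Proposition~\ref{prop:Hamming2transOnQ} plus the almost-simple hypothesis, since $2$-transitive almost-simple groups of degree $q$ exist only for $q\geq 5$. The delicate part is handling the twisted permutation codes $\C(T,T^\sigma)$ and the product parameter $\ell$ uniformly; here I would rely on the detailed forms in \cite[(7.4)--(7.6)]{gillespieCharNT} and a careful bookkeeping of weight-$2$ vertex types to eliminate all cases except $\Rep_3(q)$.
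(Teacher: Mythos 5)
The chapter does not actually prove this theorem --- it is quoted from \cite{aas2nt} --- so there is no in-text proof to compare against; your strategy (project via Theorem~\ref{partimpdiag}, reduce to the $\Prod_\ell(\Rep_k(\C'))$ structure, then run a weight-$2$ type analysis with Lemma~\ref{invlemma}) is broadly in the spirit of that source. The observation that $q\geq5$ follows from Proposition~\ref{prop:Hamming2transOnQ} together with the fact that $2$-transitive almost-simple groups have degree at least $5$ is correct. However, as written the proposal has two concrete gaps, and both bite exactly in the case that survives to the conclusion.

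First, the reduction ``$2$-neighbour-transitivity forces $\J$ to be trivial, so $n=q$'' is unjustified and in fact inconsistent with the theorem you are proving. Primitivity of $G_\alpha$ on $\N$ would follow from $2$-homogeneity, but Proposition~\ref{prop:HammingiHomogeneous} yields $2$-homogeneity only when $\min\{e,s\}\geq 2$, i.e.\ when $\delta\geq 5$; for $\delta\in\{3,4\}$ (and $\delta=3$ is the case that actually occurs) you get only transitivity on $\N$, so $\J$ need not be trivial a priori. Moreover the target code $\Rep_3(q)$ has $n=3\neq q$ for $q\geq5$, so ``$n=q$'' cannot be a step of a correct argument. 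Second, the weight-$2$ analysis hinges on $G_\alpha$ being transitive on $\Gamma_2(\alpha)$ or a large structured subset of it, but Lemma~\ref{lem:largeDeltaDistTrans} requires $s\leq e$, and for $\delta=3$ one has $e=1<2=s$. Indeed for $\C=\Rep_3(q)$ the set $\Gamma_2(\b 0)$ splits into the vertices with two equal nonzero entries, which lie in $\C_1$, and those with two distinct nonzero entries, which lie in $\C_2$; $G_{\b 0}$ is not transitive on $\Gamma_2(\b 0)$, so no ``direct argument'' can supply that transitivity. The type-counting has to be run instead on $\C_1$ and $\C_2$ separately via Lemma~\ref{invlemma}(1), tracking which weight-$1$ and weight-$2$ vertices land in which $\C_i$. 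Finally, the elimination of the twisted permutation codes $\C(T,T^\sigma)$ and of the parameters $\ell$ and $k$ is deferred to ``careful bookkeeping'' without being carried out; since that is where the actual content of the classification lies, what you have is an outline with a plausible skeleton rather than a proof.
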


\subsection{Alphabet-affine codes in Hamming graphs}\label{sec:aaff}

In this section we consider codes that are alphabet-affine (Definition~\ref{efaasaadef}), that is, codes $H(n,q)$ having an automorphism group not acting faithfully on entries, and giving rise to a $2$-transitive affine group in the action induced on the alphabet. A natural family of examples of such codes are the cyclic codes: a code $\C$ in $H(\N,\F_q)$
 \begin{equation}\label{def:cyclic}
\text{
\begin{minipage}{14cm}
 is called \emph{cyclic} if $\C$ is linear and there exists an $n$-cycle $\sigma\in L\cap\Aut(\C)$. 
\end{minipage}
}
\end{equation}
In particular each cyclic code $\C$ is alphabet-affine since $\Aut(\C)$ contains the subgroup of translations by elements of $\C$ (acting transitively on $\C$). Moreover, a cyclic code is neighbour-transitive since $\Aut(\C)$ also contains both the subgroup of scalars and an $n$-cycle from the top group $L$.
On the other hand, if $x=h\sigma$ with $h\in B$ and $\sigma\in L$, then $x\in\Aut(\C)$ does not necessarily imply that $\sigma\in\Aut(\C)$. That is to say, a linear code $\C$ may not be cyclic even if the induced group $\Aut(\C)^\N$ on entries contains an $n$-cycle. We exhibit a small non-cyclic linear code with this property in Example~\ref{ex:noncyclic}.  In fact, as we see below, the class of codes that are alphabet-affine and neighbour-transitive is strictly larger than the class of cyclic codes.

\begin{proposition}\label{prop:cyc1}
 Let $\C$ be a linear code with minimum distance $\delta\geq 3$ in $H(\N,\F_q)$. Then $\C$ is neighbour-transitive if and only if $\Aut(\C)^{\N}$ is transitive. In particular, if $\C$ is a cyclic code then $\C$ is neighbour-transitive.
\end{proposition}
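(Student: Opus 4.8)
The claim is that for a linear code $\C$ in $H(\N,\F_q)$ with $\delta(\C)\geq 3$, neighbour-transitivity of $\C$ is \emph{equivalent} to transitivity of the induced group $\Aut(\C)^\N$ on the set of entries $\N$; and, as a consequence, every cyclic code is neighbour-transitive. One direction is immediate from Proposition~\ref{prop:HammingiHomogeneous}: if $\C$ is neighbour-transitive then its error-correction capacity is $e\geq 1$ (since $\delta\geq 3$), so taking $G=\Aut(\C)$ and $i=1$ in that proposition gives that $\Aut(\C)_\alpha$ acts $1$-homogeneously, i.e.\ transitively, on $\N$ for $\alpha\in\C$; a fortiori $\Aut(\C)^\N$ is transitive. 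So the content is the reverse implication, and then the cyclic corollary.

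\textbf{Reverse direction.} Assume $\Aut(\C)^\N$ is transitive on $\N$; we must show $\Aut(\C)$ is transitive on $\C$ and on $\C_1=\Gamma_1(\C)$. First, transitivity on $\C$: since $\C$ is linear, $\Aut(\C)$ contains the translation group $T_\C=\{t_\gamma\mid\gamma\in\C\}$ (as recorded in Section~\ref{sec:HammingPrelim}(d)), and $T_\C$ is already transitive on $\C$ because $t_{\gamma_2-\gamma_1}$ sends $\gamma_1$ to $\gamma_2$. So it remains only to show $\Aut(\C)$ is transitive on $\C_1$. By Lemma~\ref{lem:largeDeltaDistTrans} (applicable since $e\geq 1$, taking $s=1$), it suffices to show the codeword-stabiliser $\Aut(\C)_{\b 0}$ is transitive on $\Gamma_1(\b 0)$, the set of weight-$1$ vertices. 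Now $\Aut(\C)_{\b 0}$ contains the group $\Diag_n(\F_q^\times)=\{x_g:g\in\F_q^\times\}$ of scalar multiplications — a linear code is invariant under scaling by nonzero field elements, and scaling fixes $\b 0$ — which acts transitively on the $q-1$ nonzero values that can appear in a weight-$1$ vertex supported at a fixed entry $i$. So the $\Aut(\C)_{\b 0}$-orbit of a single weight-$1$ vertex $e_i$ (the vertex with value $1$ in entry $i$ and $0$ elsewhere) contains every weight-$1$ vertex supported at entry $i$. It then remains to move between supports. Here is the one point needing care: transitivity of $\Aut(\C)^\N$ gives, for any $i,j\in\N$, some $x=h\sigma\in\Aut(\C)$ with $i^\sigma=j$, but $x$ need not fix $\b 0$. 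However, since $\delta(\C)\geq 3$ and hence $e\geq 1$, Lemma~\ref{lemDisjointUnion}(1) shows $\b 0$ is the unique codeword at distance $1$ from any given weight-$1$ vertex; applying $x$ to the weight-$1$ vertex $e_i\in\Gamma_1(\b 0)$ yields a weight-$1$ vertex $e_i^x$ whose nearest codeword is $\b 0^x$, but $\b 0^x$ is a codeword of weight equal to the weight of $\b 0$, i.e.\ $\wt(\b 0^x)=0$ — wait, more simply: $x\in\Aut(\C)$ fixes $\C$ setwise, so $\b 0^x\in\C$; but actually I do not even need $\b 0^x=\b 0$. Instead, argue directly: $e_i^x$ has weight $1$ (automorphisms of $H(n,q)$ preserve weight relative to the all-zero vertex only if they fix it, so be careful — use instead $d(\b 0^x, e_i^x)=d(\b 0,e_i)=1$), so $e_i^x\in\Gamma_1(\b 0^x)$, and then compose with a translation $t_{-\b 0^x}\in T_\C$ to land in $\Gamma_1(\b 0)$; this composite lies in $\Aut(\C)_{\b 0}$ and maps a weight-$1$ vertex at support $\{i\}$ to one at support $\{j\}$. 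Combined with the scalar action, $\Aut(\C)_{\b 0}$ is transitive on all of $\Gamma_1(\b 0)$, completing the reverse direction via Lemma~\ref{lem:largeDeltaDistTrans}.

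\textbf{Cyclic corollary.} If $\C$ is cyclic then by \eqref{def:cyclic} there is an $n$-cycle $\sigma\in L\cap\Aut(\C)$, so $\Aut(\C)^\N$ contains $\langle\sigma^\N\rangle=\langle\sigma\rangle$, a transitive cyclic group on $\N$; hence $\Aut(\C)^\N$ is transitive and the first part gives that $\C$ is neighbour-transitive. (One should remark that for a cyclic code $\delta\geq 3$ is not automatic, so strictly the corollary presumes $\delta(\C)\geq 3$, consistent with the hypothesis of the proposition.)

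\textbf{Main obstacle.} The only genuinely delicate point is the bookkeeping in the reverse direction: an element of $\Aut(\C)$ realizing a desired permutation of $\N$ does not fix $\b 0$, so one must correct it by a translation and a scalar to produce an element of the codeword-stabiliser with the required effect on weight-$1$ vertices — using crucially that $\delta\geq 3$ so that weight-$1$ vertices are "uniquely decodable" (Lemma~\ref{lemDisjointUnion}) and that linearity supplies both $T_\C$ and the scalar subgroup. Everything else is a direct appeal to Proposition~\ref{prop:HammingiHomogeneous}, Lemma~\ref{lem:largeDeltaDistTrans}, and the structure of $\Aut(H(n,q))=B\rtimes L$.
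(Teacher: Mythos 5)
Your proof is correct and follows essentially the same route as the paper's: both directions use Proposition~\ref{prop:HammingiHomogeneous} and Lemma~\ref{lem:largeDeltaDistTrans}, transitivity on $\C$ comes from $T_\C$, and transitivity of $\Aut(\C)_{\b 0}$ on $\Gamma_1(\b 0)$ is obtained by combining an element inducing the required permutation of $\N$ (corrected by a translation so as to fix $\b 0$ — the paper packages this as $\Aut(\C)=T_\C.\Aut(\C)_{\b 0}$, whence $\Aut(\C)_{\b 0}^\N$ is transitive) with the scalar subgroup. The only difference is expository bookkeeping, not substance.
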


\begin{proof}
 If $\C$ is neighbour-transitive then, by Proposition~\ref{prop:HammingiHomogeneous}, $\Aut(\C)^\N$ is transitive. Suppose now that $\Aut(\C)^\N$ is transitive. Since $\C$ is linear, $\Aut(\C)$ contains $T_{\C}$, which acts transitively on $\C$. Moreover, this implies that $\Aut(\C)=T_{\C}.\Aut(\C)_{{\mathbf 0}}$. In particular, $\Aut(\C)_{{\mathbf 0}}^{\N}$ is transitive. For each $i\in \N$, let $e_i:\N\rightarrow \F_q$ such that $e_i:i\to1$ and $e_i:k\to 0$ if $k\neq i$. Then $\Gamma_1({\mathbf 0})=\{ae_i\mid a\in\F_q^\times\}$. Let $i,j\in \N$ and $a_i,a_j\in\F_q^\times$ so that $a_ie_i,a_je_j\in \Gamma_1({\mathbf 0})$. Then, since $\Aut(\C)_{{\mathbf 0}}$ induces a transitive action on $\N$, there exists an $x\in\Aut(\C)_{{\mathbf 0}}$ such that $(a_ie_i)^x=be_j$, for some $b\in\F_q^\times$. Using the linearity of $\C$ again, scalar multiplication by $a_j b^{-1}$ gives an element of $\Aut(\C)_{{\mathbf 0}}$, and hence we can map $a_ie_i$ to $a_je_j$. It then follows from Lemma~\ref{lem:largeDeltaDistTrans} that $\C$ is neighbour-transitive. The last sentence holds since $\C$ being cyclic implies that $\Aut(\C)^\N$ is transitive.
\end{proof}

\begin{example}\label{ex:noncyclic}
 Consider the linear code $\C$ in $H(4,\F_3)$ consisting of the codeword $(0,0,0,0)$ as well as the following $8$ non-zero codewords:
 \[
  \begin{array}{cc}
      (0,1,1,1) & (0,2,2,2) \\
      (1,0,1,2) & (2,0,2,1) \\
      (2,1,0,2) & (1,2,0,1) \\
      (2,2,1,0) & (1,1,2,0).
  \end{array}
 \]
 The automorphism $x:(a,b,c,d)\mapsto (d,a,b,2c)$ of $\C$ fixes $(0,0,0,0)$ and cycles through the $8$ non-zero codewords of $\C$; $x$ has order $8$ and projects to the $4$-cycle $(1\,2\,3\,4)$ in the top group $L\cong\s_4$. However $\C$ is not cyclic, for if $\sigma$ is a $4$-cycle in $L$ then, since the only codeword consisting of one $0$ and three $1$s is $(0,1,1,1)$, the image $(0,1,1,1)^\sigma$ cannot lie in $\C$.  On the other hand, $\C$ has minimum distance $\delta=3$ and satisfies all the hypotheses of Proposition~\ref{prop:cyc1}, so it is neighbour-transitive. Moreover, $\C$ is not equivalent to any cyclic code (one way to see this is to observe that the subspace spanned by $\{\alpha^{\sigma^k}\mid k=0,1,2,3\}$ contains a weight $2$ vector, for each of $\alpha=(0,1,1,1),(0,1,1,2),(0,1,2,1)$ and where $\sigma=(1\,2\,3\,4)$). Note also that $\C$ is the projective Reed--Muller code $\PRM_3(1,2)$, which we meet later on in Definition~\ref{def:projReedMuller}.  
\end{example}

The next result suggests investigating the submodule structure of certain modules, which we consider further in subsequent sections. For a group $G$ and a prime $p$, $O_p(G)$ denotes the largest normal $p$-subgroup of $G$. 

%\textcolor{blue}{CP: how about the following for the next result? DH: Good, changed it}

\begin{proposition}\cite[Proposition~3.5]{minimal2nt}\label{orbitof0is2ntmodule}
  Let $\C$ be a  code in the Hamming graph $H(n,q)$, with $q=p^d$ for a prime $p$, such that $\C$ is $G$-alphabet-affine and $(G,2)$-neighbour-transitive, with $\delta\geq 5$, and suppose that $\b 0\in C$. Then $\C$ contains a subcode $\S$ such that $\S$ is the code formed by the orbit of $\b 0$ under $O_p(K)$, where $K=G\cap B$. Moreover, it follows that: 
 \begin{enumerate}[$(1)$]
  \item $\S$ is a block of imprimitivity for the action of $G$ on $\C$, and $G_{\S}=O_p(K)\rtimes G_{\b 0}$,
  \item $\S$ is $G_{\S}$-alphabet-affine and $(G_{\S},2)$-neighbour-transitive with minimum distance $\delta_{\S}\geq \delta$,
  \item $\S$ is an $\F_p G_{\b 0}$-module, and if $\S\neq \Rep_n(2)$ then $q^2$ divides $|\S|$.
 \end{enumerate}
 % \begin{enumerate}[$(1)$]
 %  \item $G_{\S}=O_p(K)\rtimes G_{\b 0}$,
 %  \item $\S$ is $G_{\S}$-alphabet-affine, 
 %  \item $\S$ is $(G_{\S},2)$-neighbour-transitive with minimum distance $\delta_{\S}\geq \delta$,
 %  \item $\S$ is an $\F_p G_{\b 0}$-module,
 %  \item if $\S\neq \Rep_n(2)$ then $q^2$ divides $|\S|$, and,
 %  \item $\S$ is a block of imprimitivity for the action of $G$ on $\C$.
 % \end{enumerate}
\end{proposition}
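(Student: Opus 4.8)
The plan is to exploit the alphabet-affine hypothesis to produce the relevant normal $p$-subgroup, and then to check that its orbit on $\mathbf 0$ inherits the needed properties. Write $q=p^d$ and $K=G\cap B$, where $B=\Sym(\Q)^n$ is the base group of $\Aut(H(n,q))$. Since $\C$ is $G$-alphabet-affine, the kernel $K$ of the $G$-action on $\N$ is nontrivial, $G$ is transitive on $\N$, and for $i\in\N$ the induced group $G_i^{\Q_i}$ is a $2$-transitive affine group; in particular $K^{\Q_i}$ contains the translation subgroup $\F_p^d$ of that affine group (this is where one uses that the socle of a $2$-transitive affine group is its unique minimal normal subgroup, an elementary abelian $p$-group, together with the fact that $K\lhd G_i$ and $K$ is transitive on each $\Q_i$ — the latter because $\C$ has minimum distance $\delta\ge 2$, so $G$ acts transitively on the vertices via $T_\C$ only up to translation; more precisely one argues as in the proof of Proposition~\ref{prop:Hamming2transOnQ}). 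It follows that $O_p(K)\ne 1$ and that $O_p(K)^{\Q_i}$ is exactly the translation subgroup, hence $O_p(K)$ acts transitively on each $\Q_i$. Set $\S:=\mathbf 0^{O_p(K)}$.

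\textbf{Key steps.} First I would establish that $\S$ is a subcode of $\C$: since $\mathbf 0\in\C$ and $O_p(K)\le G\le\Aut(\C)$ stabilises $\C$ setwise, the orbit $\mathbf 0^{O_p(K)}$ lies in $\C$. Next, for part (1): because $O_p(K)$ is characteristic in $K$ and $K\lhd G$, we have $O_p(K)\lhd G$; a normal subgroup's orbits form a $G$-invariant partition of any $G$-invariant set (here $\C$), so $\S$ is a block of imprimitivity for $G$ on $\C$, and its setwise stabiliser is $G_\S=O_p(K)\rtimes G_{\mathbf 0}$ — the semidirect decomposition follows since $O_p(K)$ acts regularly... more carefully, since $O_p(K)\le K$ acts trivially on $\N$, each element of $O_p(K)$ sending $\mathbf 0$ to $\mathbf 0$ must be the identity (it acts coordinatewise and fixes every coordinate's value $0$), so $O_p(K)_{\mathbf 0}=1$, giving $|\S|=|O_p(K)|$ and $G_\S=O_p(K)G_{\mathbf 0}$ with $O_p(K)\cap G_{\mathbf 0}=1$. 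For part (2): $G_\S$ acts transitively on $\S=\mathbf 0^{O_p(K)}$ by construction; alphabet-affineness of $(\S,G_\S)$ follows because $O_p(K)\le K\cap G_\S$ is nontrivial and $G_\S$ inherits transitivity on $\N$ and the $2$-transitive affine action on each $\Q_i$ from $G$ (the kernel $G_\S\cap B$ still contains $O_p(K)$, and $O_p(K)^{\Q_i}$ is the affine translation group); $(G_\S,2)$-neighbour-transitivity and $\delta_\S\ge\delta$ then follow from Lemma~\ref{invlemma}-type reasoning together with Lemma~\ref{lem:largeDeltaDistTrans}, noting that $\S\subseteq\C$ forces $\delta_\S\ge\delta(\C)\ge 5$, so the error-correction hypothesis of Lemma~\ref{lem:largeDeltaDistTrans} is met and one can transfer transitivity of $G_\S$ on $\S$ and on $\Gamma_1(\mathbf 0),\Gamma_2(\mathbf 0)$ from the corresponding statements for $G$ (using that $G_{\mathbf 0}\le G_\S$ with the same local action). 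Finally, part (3): $O_p(K)$ is a $p$-group normalised by $G_{\mathbf 0}$, and via the regular action $\mu\mapsto\mathbf 0^\mu$ the set $\S$ is identified with $O_p(K)$ as a $G_{\mathbf 0}$-set; since $O_p(K)$ embeds in $B=\Sym(\Q)^n$ with each factor image lying in the abelian translation group $\F_p^d$, the group $O_p(K)$ is elementary abelian, hence $\S$ carries the structure of an $\F_pG_{\mathbf 0}$-module. For the divisibility claim, one uses that $O_p(K)^{\Q_i}=\F_p^d$ for the entries $i$ on which $O_p(K)$ acts nontrivially: if $\S\ne\Rep_n(2)$ then $O_p(K)$ acts nontrivially on at least two entries $i,j$ (were there only one such entry, or none, the orbit would be contained in a single coordinate direction, forcing $\C$ to have minimum distance $1$ unless we are in the binary repetition situation $\Rep_n(2)$ — this uses $\delta\ge 5$ to rule out degeneracies), and since $O_p(K)$ maps onto $\F_p^d\cong(\Z/p)^d$ in each of the two coordinate-actions, $|O_p(K)|=|\S|$ is divisible by $q^2=p^{2d}$.

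\textbf{Main obstacle.} The delicate point is part (3)'s divisibility statement and the precise appearance of the exceptional code $\Rep_n(2)$: one must show that unless $\S$ is the binary repetition code, $O_p(K)$ genuinely acts nontrivially (and via the full translation group $\F_p^d$) on at least two coordinates, and that these two coordinate-projections of $O_p(K)$ contribute independent factors of size $q$. This requires a careful analysis of the kernel of each coordinate-projection $O_p(K)\to O_p(K)^{\Q_i}$ and an argument, using $\delta(\C)\ge 5$ (so that $\C$, and hence $\S$, cannot be concentrated on few coordinates unless it is the repetition code), that the support structure of $\S$ forces the module to have $\F_p$-dimension at least $2d$ over any single coordinate pair. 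Everything else is a transfer of already-established transitivity properties (Lemmas~\ref{lemDisjointUnion}, \ref{lem:largeDeltaDistTrans}, \ref{invlemma} and Proposition~\ref{prop:Hamming2transOnQ}) from $(\C,G)$ down to $(\S,G_\S)$, which is routine once the block-of-imprimitivity structure in part (1) is in place.
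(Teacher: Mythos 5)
Your overall strategy---pass to $O_p(K)$, show it projects onto the translation part of the affine group induced on each $\Q_i$, take $\S=\b 0^{O_p(K)}$, and transfer the local transitivity down via Lemma~\ref{lem:largeDeltaDistTrans}---is the right one (the chapter only cites the proof from \cite[Proposition~3.5]{minimal2nt}, but this is indeed how that argument is organised), and parts (1) and (2) are essentially sound once the foundational facts are in place: $O_p(K)$ is characteristic in $K\lhd G$ so $O_p(K)\lhd G$, orbits of a normal subgroup form blocks, $(G_\S)_{\b 0}=G_{\b 0}$, and $G_{\b 0}$ already acts transitively on $\Gamma_1(\b 0)$ and $\Gamma_2(\b 0)$. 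However, the step that carries the real weight is asserted rather than proved: namely that $O_p(K)\neq 1$ and that $O_p(K)^{\Q_i}$ is the full translation subgroup $T_i\cong\F_p^d$ of $G_i^{\Q_i}$. The easy half is the containment: $O_p(K)^{\Q_i}$ is a normal $p$-subgroup of $K^{\Q_i}$, hence lies in $O_p(K^{\Q_i})$, which is characteristic in $K^{\Q_i}\lhd G_i^{\Q_i}$ and so contained in $O_p(G_i^{\Q_i})=T_i$; moreover $P:=K\cap\prod_j T_j$ is a normal elementary abelian $p$-subgroup of $G$ whose projection to coordinate $i$ is normal in $G_i^{\Q_i}$ and contained in the minimal normal subgroup $T_i$, hence equals $1$ or $T_i$, uniformly in $i$ by transitivity of $G$ on $\N$. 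What is missing is any argument that $P\neq 1$, i.e.\ that $K$ does not embed into $\prod_j K^{\Q_j}/T_j$; your parenthetical justification (invoking ``$T_\C$'' and transitivity on vertices) does not apply, since $\C$ is not assumed linear and $G$ is not transitive on $V(\Gamma)$, and ``$K$ is transitive on each $\Q_i$'' only yields $T_i\leq K^{\Q_i}$, not that a normal $p$-subgroup of $K$ itself hits $T_i$. Relatedly, your reason that $O_p(K)_{\b 0}=1$ (``it fixes every coordinate's value $0$'') is insufficient by itself: an element of the base group fixing the symbol $0$ in every coordinate need not be trivial; one needs that its coordinate components are translations and therefore semiregular.

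The second genuine gap is in part (3): the inference ``$O_p(K)$ maps onto $\F_p^d$ in each of two coordinates, hence $q^2$ divides $|O_p(K)|$'' is false as a piece of logic---a diagonal subgroup of $T_i\times T_j$ surjects onto both factors with order only $q$, and the diagonal is exactly the repetition-code configuration that the proposition excepts. A correct argument must split into: (i) if the coordinate kernels $\ker(\pi_i|_{O_p(K)})$ all coincide, then they are trivial, $|\S|=q$, every nonidentity element of $O_p(K)$ moves every coordinate, $\S$ is equivalent to $\Rep_n(q)$, and one must then use $2$-neighbour-transitivity of $\S$ (from part (2)) to force $q=2$, since for $q\geq 3$ the weight-two vertices split into at least two orbits of $\Aut(\Rep_n(q))$ according to whether their two nonzero entries agree; and (ii) if two coordinate kernels differ, one must show the image of $O_p(K)$ in those two coordinates has order $q^2$, which requires the irreducibility of $T_j$ as a module for the relevant stabiliser (or a transitivity argument on $\Gamma_2(\b 0)$), not merely the existence of two surjections. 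You identify this as the ``main obstacle'', but flagging the difficulty does not close it: as written, the proposal establishes neither $q^2\mid|\S|$ nor the precise identification of the exceptional code as $\Rep_n(2)$.
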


% \begin{lemma}\cite[??]{hawtin2023alphabetaffine}\label{k0soluble}
%  Let $\C$ be a  code in $H(n,q)=H(n,\Q)$ that is $G$-alphabet-affine and $(G,2)$-neighbour-transitive, and has minimum distance $\delta\geq 5$. Then $\Gpi\leq \GaL_1(q)$.
% \end{lemma}

% \begin{lemma}\cite[??]{hawtin2023alphabetaffine}\label{lem:K0semireg}
%  Let $\C$ be a  code in $H(n,q)=H(\N,\Q)$ that is  $(G,2)$-neighbour-transitive with minimum distance $\delta\geq 5$ and such that ${\b 0}\in\C$, and let $K=B\cap G$. Then $K_{\b 0}\cong \Diag_n(H)$, where $H$ acts semi-regularly on ${\Q}_i^\times$ for all $i\in \N$. 
% \end{lemma}

%\textcolor{blue}{Updated the conclusion of the above given your comments on the proof. CP: great! In the next lemma (a) what is $m$? Is it a typo for $n$? (b) do you want to assume also that $\C$ is $G$-alphabet-affine? and (c) if so that what is the difference between the previous lemma and the next lemma? (d) in any case could the two lemmas be merged? Here's a possible merger: please correct it if it is wrong. DH: Changed $m$ to $n$. The merger is correct so I've removed the other results.
%}

The following result gives further, fairly strong, restrictions on the structure of the automorphism group of a $2$-neighbour-transitive code. 

\begin{lemma}\cite{hawtin2023alphabetaffine}\label{lem:k0solubleandsemireg}
 Let $\C$ be a  code in $H(n,q)=H(\N,\Q)$ that is  $(G,2)$-neighbour-transitive with minimum distance $\delta\geq 5$. 
 \begin{enumerate}[$(1)$]
     \item If in addition $\C$ is $G$-alphabet-affine, then  $\Gpi\leq \GaL_1(q)$.
     \item Alternatively, if  ${\b 0}\in\C$ and  $K=B\cap G$, then $K_{\b 0}\cong \Diag_n(H)$, where $H$ acts semi-regularly on ${\Q}_i^\times$ for all $i\in \N$. 
 \end{enumerate}
\end{lemma}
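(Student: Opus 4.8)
The two parts have quite different flavours, so I would treat them separately, though both rely on Proposition~\ref{prop:Hamming2transOnQ} (which gives the $2$-transitivity of $G_i^{\Q_i}$ on the alphabet copy $\Q_i$) together with the large minimum distance $\delta\geq 5$, which by Lemma~\ref{invlemma} forces all vertices in $\Gamma_i(\alpha)$ to have the same type for $i\leq 2$, and hence (via Lemma~\ref{lem:largeDeltaDistTrans}) gives strong transitivity of the codeword-stabiliser on balls of radius $2$.

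For part (1): assume $\C$ is $G$-alphabet-affine, so $K\neq 1$, $G$ is transitive on $\N$, and $G_i^{\Q_i}$ is a $2$-transitive \emph{affine} group for each $i$. Here $\Gpi = G_{{\mathbf 0},i}^{\Q_i^\times}$ is the permutation group induced on $\Q_i^\times = \Q_i\setminus\{0\}$ by the stabiliser in $G$ of both the zero codeword ${\mathbf 0}$ and the entry $i$. First I would observe that $G_i^{\Q_i}\leq \AGL_d(p)$ with $q=p^d$, and that $G_{{\mathbf 0},i}^{\Q_i}$ is the stabiliser of the point $0$ in this affine group, hence $G_{{\mathbf 0},i}^{\Q_i}\leq \GL_d(p)$ acting on $\Q_i^\times$. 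The content of the claim is that this linear group is actually contained in the much smaller subgroup $\GaL_1(q)$ of semilinear maps of the $1$-dimensional $\F_q$-space. The mechanism should be: because $\delta\geq 5$, the stabiliser $G_{\mathbf 0}$ acts transitively on $\Gamma_2({\mathbf 0})$ (the weight-$2$ vertices), and the weight-$1$ and weight-$2$ "local" structure forces a multiplicative-type constraint; more precisely, one shows that the action of $G_{{\mathbf 0},i}$ on $\Q_i^\times$ must be compatible with a field structure — any element fixing enough of the weight-$\leq 2$ configuration must act $\F_q$-semilinearly on each coordinate alphabet. I expect this is proved by a counting/orbit argument on pairs of entries: transitivity of $G_{\mathbf 0}$ on ordered pairs $(ae_i, be_j)$ with $i\neq j$ (available since $\delta\geq 5$ gives error-capacity $\geq 2$, so Lemma~\ref{lem:largeDeltaDistTrans}(3) applies with $i=2$) pins down the joint action on $\Q_i^\times\times\Q_j^\times$, and then a standard argument identifies the group with a subgroup of $\GaL_1(q)$. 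This is the part I expect to be the main obstacle, since extracting the field structure from the combinatorics is delicate and is presumably where the bulk of \cite{hawtin2023alphabetaffine} lies.

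For part (2): now only assume ${\mathbf 0}\in\C$ and set $K = B\cap G$, the kernel of the $G$-action on $\N$ (as a subgroup of the base group $B=\Sym(\Q)^n$). Write $K_{\mathbf 0}$ for the stabiliser of the zero codeword in $K$; since every element of $K$ has the form $(h_1,\dots,h_n)$ with trivial top part, $K_{\mathbf 0}$ consists of those tuples with $0^{h_i}=0$ for all $i$. I would argue as follows. Because $\delta\geq 5 > 2$, Lemma~\ref{invlemma}(3) (or directly Lemma~\ref{lemDisjointUnion}) shows $\Gamma_1({\mathbf 0})\subseteq\C_1$ and $\Gamma_2({\mathbf 0})\subseteq\C_2$, and by $(G,2)$-neighbour-transitivity and Lemma~\ref{lem:largeDeltaDistTrans}, $G_{\mathbf 0}$ is transitive on $\Gamma_2({\mathbf 0})$. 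Now suppose some $h=(h_1,\dots,h_n)\in K_{\mathbf 0}$ acts nontrivially on two distinct coordinates, or non-"diagonally"; I would show that applying $h$ to a suitable weight-$2$ vertex $ae_i + be_j$ produces another weight-$2$ vertex which, composed with the constraint that $h$ must be consistent with the unique codeword below each weight-$\leq 2$ vertex, forces $h_i = h_j$ as permutations of $\Q$ whenever $i,j$ both lie in the support of some weight-$2$ non-codeword — and connectedness/transitivity of $G$ on $\N$ propagates this to all pairs, giving $h_1=\dots=h_n$, i.e. $K_{\mathbf 0}\leq\Diag_n(H)$ for $H := \{h_1 : (h_1,\dots,h_1)\in K_{\mathbf 0}\}\leq\Sym(\Q)_0$. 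For semiregularity: if $g\in H$ fixes some nonzero $a\in\Q_i^\times$, then the diagonal element $x_g=(g,\dots,g)\in K_{\mathbf 0}$ fixes the weight-$1$ vertex $ae_i$, but also fixes ${\mathbf 0}$; then $x_g$ fixes the unique codeword $\gamma$ adjacent configuration-wise, and I would leverage $\delta\geq 5$ (so that weight-$\leq 2$ vertices have a unique nearest codeword, namely ${\mathbf 0}$, forcing $x_g$ to fix many weight-$2$ vertices $ae_i + be_j$) to conclude $g$ fixes every element of $\Q_j^\times$ for $j\neq i$, and then by transitivity on $\N$ that $g=1$; hence $H$ is semiregular on each $\Q_i^\times$. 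The key subtlety here is ensuring the diagonal conclusion genuinely uses $\delta\geq 5$ and not merely $\delta\geq 3$ — the larger distance is what guarantees the weight-$2$ vertices are "trapped" near ${\mathbf 0}$ and thus constrain $K_{\mathbf 0}$ coordinatewise-consistently — and I would cite \cite{hawtin2023alphabetaffine} for the full verification of this step.
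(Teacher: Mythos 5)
This chapter does not actually prove Lemma~\ref{lem:k0solubleandsemireg}: the statement is imported wholesale from \cite{hawtin2023alphabetaffine}, which is listed as ``in preparation'', so there is no proof here to compare yours against. Judged on its own terms, your proposal correctly assembles the ambient facts --- Proposition~\ref{prop:Hamming2transOnQ}, the inclusions $\Gamma_1(\b 0)\subseteq\C_1$ and $\Gamma_2(\b 0)\subseteq\C_2$ when $\delta\geq 5$, and the transitivity of $G_{\b 0}$ on $\Gamma_1(\b 0)$ and $\Gamma_2(\b 0)$ via Lemma~\ref{lem:largeDeltaDistTrans} --- but both steps that carry the actual content of the lemma are asserted rather than proved, and the mechanism you offer for part (2) is vacuous. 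You propose to force $h_i=h_j$ by requiring $h\in K_{\b 0}$ to be ``consistent with the unique codeword below each weight-$\leq 2$ vertex''; but since $\delta\geq 5$ that unique nearest codeword is $\b 0$ itself for every vertex of weight at most $2$, and every element of $K_{\b 0}$ fixes $\b 0$ by definition, so sending $ae_i+be_j$ to $a^{h_i}e_i+b^{h_j}e_j$ never violates this and yields no relation between $h_i$ and $h_j$. The same problem undermines your semiregularity step: an element of $K_{\b 0}$ fixing $ae_i$ is not thereby forced to fix $ae_i+be_j$ --- that is equivalent to $b^{h_j}=b$, which is what you are trying to prove. The real constraints must come from codewords \emph{other} than $\b 0$: for instance, if $h\in K$ has $h_j=1$ for all $j\neq i$, then $d(\alpha,\alpha^h)\leq 1<\delta$ forces $\alpha^h=\alpha$ for every $\alpha\in\C$, and since $G_i^{\Q_i}$ is transitive one has $\pi_{\{i\}}(\C)=\Q_i$, whence $h_i=1$; an elaboration of this kind of argument, combined with the equality of $K_{\b 0}$-orbit lengths on $\C_1$ and $\C_2$ coming from $K_{\b 0}\lhd G_{\b 0}$, is what is needed, and none of it appears in your outline.

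For part (1), your reduction is fine up to the point where $\Gpi$ is a subgroup of $\GL_d(p)$ acting transitively on $\Q_i^\times$, but the concluding ``standard argument identifies the group with a subgroup of $\GaL_1(q)$'' does not exist: by Hering's classification the transitive linear groups also include groups containing $\SL_a(p^{d/a})$, $\Sp_{2a}(p^{d/(2a)})$ or $G_2(2^{d/6})'$, plus sporadic examples, and $\GL_d(p)$ itself is transitive on nonzero vectors. Excluding all of these alternatives is the entire content of part (1), and your proposal supplies no mechanism for doing so. A smaller inaccuracy: Lemma~\ref{lem:largeDeltaDistTrans}(3) gives transitivity of $G_{\b 0}$ on the weight-$2$ vertices, i.e.\ on \emph{unordered} configurations $\{(i,a),(j,b)\}$, not on ordered pairs $(ae_i,be_j)$, since Proposition~\ref{prop:HammingiHomogeneous} only guarantees $2$-homogeneity, not $2$-transitivity, of $G_{\b 0}$ on $\N$. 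In short, both parts have genuine gaps located precisely where the lemma's content lies, and deferring those steps back to \cite{hawtin2023alphabetaffine} leaves the proposal as a plausible framing rather than a proof.
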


\subsection{Codes in binary Hamming graphs}\label{sec:bin}

When we restrict the alphabet $\Q$ to be the field $\F_2$ of order $2$ then we obtain very tight descriptions of the possibilities for $(G,s)$-neighbour-transitive codes with minimal distance not too small. Recall from Proposition~\ref{prop:HammingiHomogeneous} that for such codes, the stabiliser in $G$ of a codeword induces an $s$-homogeneous action on entries. Also, for a code $\C\leq \F_2^n$, $T_\C$ denotes the set of all translations by elements of $\C$; if $T_\C$ is a subgroup of $\Aut(\C)$ then $\C$ is additively closed. 
%\textcolor{blue}{CP: have I given the correct definitions? DH: Yes, I've added $\Aut(\C)$ into the previous sentence though.} 
The next result gives a  nearly complete description of the binary $2$-neighbour-transitive codes with $\delta$ at least $5$.

\begin{theorem}\cite[Theorem~1.2]{minimal2nt}\label{binaryx2ntchar}
 Let ${\mathcal C}$ be a code in $H(n,2)$ with minimum distance $\delta\geq 5$. Then $\C$ is $2$-neighbour-transitive if and only if one of the following holds:
 \begin{enumerate}[$(1)$]
  \item ${\mathcal C}$ is the binary repetition code $\Rep_n(2)$ with $\delta=n$.
  \item ${\mathcal C}$ is one of the following codes (see \cite[Definition~4.1]{ef2nt}): 
  \begin{enumerate}[$(a)$]
   \item the Hadamard code with $n=12$ and $\delta=6$;
   \item the punctured Hadamard code with $n=11$ and $\delta=5$;
   \item the even weight subcode of the punctured Hadamard code with $n=11$ and $\delta=6$.
  \end{enumerate}
  \item There exists a linear subcode $\S \subseteq {\mathcal C}$ with dimension $k$ and minimum distance $\delta$, and a subgroup $G_{\b 0}\leq \Aut(\C)_{\b 0}$, where $\S$, $G_{\b 0}$, $n, \delta$ and $k$ are as in Table~$\ref{binarytable}$, such that
  \begin{enumerate}[$(a)$]
   \item $\S$ is $(G,2)$-neighbour-transitive, where $G=T_{\S}\rtimes G_{\b 0}\leq \Aut(\C)$, and,
   \item $\C$ is the union of a set $\Delta$ of cosets of $\S$, and $\Aut(\C)$ acts transitively on $\Delta$.
  \end{enumerate}
 \end{enumerate}
\end{theorem}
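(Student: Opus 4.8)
The plan is to split into the three cases dictated by the $2$-transitivity on the alphabet (Proposition~\ref{prop:Hamming2transOnQ} and Definition~\ref{efaasaadef}), and then invoke the sharper results already available for $\delta\geq 5$. First I would observe that since $q=2$, Burnside's dichotomy forces $G^{\Q_i}\cong S_2$ in every case, so being $2$-transitive on $\Q_i$ is automatic; hence the only relevant distinction is whether $\C$ is $G$-entry-faithful or not (the alphabet-almost-simple case is vacuous for $q=2$). One direction is routine: for each code listed in (1), (2), (3) one checks directly (or cites the relevant earlier results) that it is $2$-neighbour-transitive — the repetition code by inspection, the Hadamard-type codes by \cite[Theorem~1.1]{ef2nt} (equivalently Theorem~\ref{thm:EntryFaithful} here), and codes of type (3) because their defining data in Table~\ref{binarytable} include $(G,2)$-neighbour-transitivity of $\S$ and transitivity of $\Aut(\C)$ on $\Delta$, from which $2$-neighbour-transitivity of $\C$ follows by a short orbit-union argument (a vertex at distance $i\leq 2$ from $\C$ lies at distance $i$ from a \emph{unique} coset of $\S$ when $\delta\geq 5$, using Lemma~\ref{lemDisjointUnion}).

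For the forward direction, suppose $\C$ is $2$-neighbour-transitive in $H(n,2)$ with $\delta\geq 5$. Fix $G=\Aut(\C)$. If $\C$ is $G$-entry-faithful, then Theorem~\ref{thm:EntryFaithful} applies directly and gives exactly a binary repetition code (case (1)) or the even weight subcode of the punctured Hadamard code of length $12$ (a sub-case of (2)); one then notes, as in \cite[Section~4]{ef2nt}, that the full and punctured Hadamard codes of the relevant lengths also occur because passing to a larger automorphism group or adjusting the code by the standard puncturing/extension relations keeps us $2$-neighbour-transitive, accounting for (2)(a)–(c). If $\C$ is not $G$-entry-faithful, then since $q=2$ it must be $G$-alphabet-affine (the kernel $K=G\cap B$ is a nontrivial $2$-group). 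The plan here is to translate so that $\b 0\in\C$ and apply Proposition~\ref{orbitof0is2ntmodule}: this produces a subcode $\S=\b 0^{O_2(K)}$ which is a block of imprimitivity for $G$ on $\C$, is itself $(G_\S,2)$-neighbour-transitive and alphabet-affine with $\delta_\S\geq\delta\geq5$, is an $\F_2 G_{\b 0}$-module, and (crucially) satisfies: either $\S=\Rep_n(2)$, or $4\mid|\S|$. In the first sub-case $\C$ is a union of cosets of $\Rep_n(2)$ permuted transitively by $\Aut(\C)$, which one then shows collapses to $\C=\Rep_n(2)$ itself (by looking at weights: a coset of $\Rep_n(2)$ other than $\Rep_n(2)$ contains vectors of weight far from $0$ and $n$, contradicting $\delta\geq5$ together with the forced symmetry), giving (1). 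In the second sub-case we land in (3): $\S$ is the required linear subcode, $G=T_\S\rtimes G_{\b 0}$ by Proposition~\ref{orbitof0is2ntmodule}(1), and $\C=\bigcup_{\Delta}(\text{cosets of }\S)$ with $\Aut(\C)$ transitive on $\Delta$ since $\S$ is a block of imprimitivity.

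The substantive remaining task — and the step I expect to be the main obstacle — is to pin down the finitely many possibilities for the pair $(\S, G_{\b 0})$, i.e.\ to actually produce Table~\ref{binarytable}. Here $\S$ is a linear $2$-neighbour-transitive code of minimum distance $\geq5$ that is alphabet-affine and additively closed, and $G_{\b 0}$ acts $2$-homogeneously (hence, for binary length, $2$-transitively) on the $n$ entries by Proposition~\ref{prop:HammingiHomogeneous}, while also respecting the $\F_2 G_{\b 0}$-module structure of $\S$. The strategy is: use the classification of $2$-transitive permutation groups (hence, ultimately, CFSG) to enumerate the candidates for the top action $G_{\b 0}^\N$; for each, analyse the submodule lattice of the natural permutation module $\F_2^n$ to locate candidate submodules $\S$ with $\delta(\S)\geq5$; and eliminate all but the tabulated ones by computing minimum distances and checking the $2$-neighbour-transitivity condition via Lemma~\ref{lem:largeDeltaDistTrans}(2) (transitivity of $G_{\b 0}$ on $\Gamma_1(\b 0)$ and $\Gamma_2(\b 0)$ inside $\S$). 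The module-theoretic bookkeeping — especially distinguishing which submodules of the relevant permutation modules (for groups like $\AGL_d(2)$, $\PSL_d(2)$, $S_m$ acting on $2$-subsets, Mathieu groups, etc.) have large minimum weight — is the genuinely delicate part and is where the bulk of \cite{minimal2nt} lies; in this survey I would present the reduction to (1)–(3) in full and then cite \cite[Theorem~1.2]{minimal2nt} for the contents of Table~\ref{binarytable}.
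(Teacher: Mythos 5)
The survey states this result with a citation to \cite[Theorem~1.2]{minimal2nt} and gives no proof, so the comparison is really with the architecture of that cited paper; your overall skeleton (reduce via Proposition~\ref{prop:Hamming2transOnQ} to the entry-faithful and alphabet-affine cases, dispose of the former by Theorem~\ref{thm:EntryFaithful}, attack the latter through the subcode $\S=\b 0^{O_2(K)}$ of Proposition~\ref{orbitof0is2ntmodule}, and then use the classification of $2$-transitive groups plus permutation-module theory to produce Table~\ref{binarytable}) is indeed the right one, and your converse direction via Lemma~\ref{lem:largeDeltaDistTrans} is fine. However, there is a genuine error in your treatment of the sub-case $\S=\Rep_n(2)$. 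You claim this sub-case ``collapses to $\C=\Rep_n(2)$'' by a weight argument. It does not: the Hadamard code of length $12$ and the punctured Hadamard code of length $11$ are $2$-neighbour-transitive with $\delta=6$ and $5$ respectively, each contains the all-ones word $\b 1$, hence has $K=\Aut(\C)\cap B=\langle t_{\b 1}\rangle$ and $\S=\Rep_n(2)$, and yet each is a union of $12$ distinct cosets $\{\alpha,\alpha+\b 1\}$ of $\Rep_n(2)$. Your weight argument fails because such a coset only forces $\delta\leq\wt(\alpha)\leq n-\delta$, which is perfectly consistent with $\delta\geq 5$ once $n\geq 2\delta$ (for $\Had$ one gets exactly the $22$ words of weight $6$). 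This sub-case is precisely where conclusion (2) has to be extracted, and it cannot instead be recovered, as you suggest, by ``adjusting'' the output of the entry-faithful case via puncturing/extension: in a forward-direction argument each code must emerge from the case it actually falls into, and for $\Had$ and $\PHad$ the full automorphism group is not entry-faithful, so Theorem~\ref{thm:EntryFaithful} never sees them (only the even-weight subcode of $\PHad$, which omits $\b 1$, is genuinely entry-faithful).

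The remainder is sound: for the ``if'' direction of (3), transitivity of $G_{\b 0}$ on $\Gamma_1(\b 0)$ and $\Gamma_2(\b 0)$ (coming from $(G,2)$-neighbour-transitivity of $\S$ with $\delta(\S)\geq 5$) transfers to $\C$ via Lemma~\ref{lem:largeDeltaDistTrans} since $e\geq 2$; and in the second sub-case ($4$ divides $|\S|$, i.e.\ $\dim\S\geq 2$) your identification of the block system with the cosets of $\S$ correctly yields conclusion (3). To repair the $\S=\Rep_n(2)$ sub-case one must analyse the self-complementary $2$-neighbour-transitive codes directly, for instance by passing to the action of $G/K$ on the folded structure and reducing to an entry-faithful-type analysis; this is where \cite{minimal2nt} and \cite{ef2nt} recover exactly $\Rep_n(2)$ together with the three Hadamard-type codes of part (2), and without it your classification is incomplete.
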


%\textcolor{blue}{CP: Dan some comments: 1) line 3 of Table 1.3: two different $k$ used. I changed $k$ to something else here.  2) the caption on Table 1.3 was too messy. I added $k$ to the statement of part (3), and I moved some of the commentary into Rem 1.5.25. Please check that remark, which I tried to revise it a little bit. DH: Looks good.}

% \textcolor{blue}{At the moment this is basically just a copy of \cite[Remark~1.3]{minimal2nt}}

\begin{remark}\label{psuremark}
 Table~\ref{binarytable} gives the possibilities for the linear subcode $\S$ of $\C$ in Theorem~\ref{binaryx2ntchar}(3)(a). For each line of Table~\ref{binarytable}, the code $\C$ in Theorem~\ref{binaryx2ntchar}(3)(b) is identified in the relevant part of the proof of \cite[Theorem~4.5]{minimal2nt}.  We note that the minimum distance $\delta$ of $\C$ satisfies $5\leq\delta< n$ in all lines except possibly line 9. In both of the lines 8 and 9 of Table~\ref{binarytable} we have $\soc(G_{\b 0})=\PSU_3(r)$, and it follows from the  proof of \cite[Theorem~4.5]{minimal2nt} that $\S$ is self-orthogonal (that is, $\S\subseteq \S^\perp$) if $r\equiv 3\pmod{4}$ but not if $r\equiv 1\pmod{4}$. Self-orthogonality ensures that only the example in line 8 of a minimal $(G,2)$-neighbour-transitive code arises when $r\equiv 3\pmod{4}$, while for $r\equiv 1\pmod{4}$ we have two different examples, one for each of lines 8 and 9 of Table~\ref{binarytable}. It was proved, see \cite[Remark~1.3]{minimal2nt}, that the codes in line 9 have minimum distance $\delta\geq 4$, and it is an open problem to determine precisely which values of $r\equiv 1\pmod{4}$ correspond to a code with $\delta\geq 5$.  
\end{remark}

\begin{table}
 \begin{center}
 \begin{tabular}{c|ccccc}
  \hline
  Line & $\soc(G_{\mathbf 0})$ & $n$ & $\delta$ & $k$ & Conditions\\
  \hline
  1 & $\Z_p^d$ & $r=p^d$ & $\geq (r-1)^{1/2}+1$ & $\frac{r-1}{2}$ & $23\leq r\equiv 7 \pmod{8}$ \\
   & &  &   &  & $2$-hom. not $2$-trans.\\
   %& $7$ & $3$, $4$ & $X^M$ \underline{not} $2$-trans. $r=p^t=7$ \\
  
  2 & $\Z_2^t$ & $2^t$ & $2^{t-1}$ & $t+1$ & $t\geq 4$, $2$-trans. \\
   %&  & $4$ & $t\geq 3$ \\
  
  3 & $\PSL_t(2^a)$ & $\frac{2^{at}-1}{2^a-1}$ & $\geq\frac{2^{a(t-1)}-1}{2^a-1}+1$ & $t^a$ & $t\geq 3$, $(a,t)\neq (1,3)$ \\
   %&  & $4$ & $t\geq 3$ \\
   %&  & $3$ & $k=1$, $t\geq 3$ \\
  
  4 & $\alt_7$ & $15$ & $8$ & $4$ & - \\
  
  5 & $\PSL_2(r)$ & $r+1$ & $\geq r^{1/2}+1$ & $\frac{r+1}{2}$ & $23\leq r\equiv\pm 1\pmod 8$ \\
   & &  &  &  & not $3$-trans.\\
   %&  & $\delta=4$ & $X^M$ \underline{not} $3$-trans. $7\leq r\equiv\pm 1\pmod 8$ \\
  
  6 & $\Sp_{2t}(2)$ & $2^{2t-1}-2^{t-1}$ & $2^{2t-2}-2^{t-1}$ & $2t+1$ & $t\geq 3$ \\
  7 & $\Sp_{2t}(2)$ & $2^{2t-1}+2^{t-1}$ & $2^{2t-2}$ & $2t+1$ & $t\geq 3$ \\
  
  8 & $\PSU_3(r)$ & $r^3+1$ & $\geq r^2+1$ & $r^2-r+1$ & $r$ is odd \\
  9 & $\PSU_3(r)$ & $r^3+1$ & $\geq 4$ & $r^3-r^2+r$ & $r\equiv 1\pmod{4}$ \\
  
  10 & $\Ree(r)$ & $r^3+1$ & $\geq r^2+1$ & $r^2-r+1$ & $r\geq 3$ \\
  
  11 & $\mg_{22}$ & $22$ & $8$ & $10$ & - \\
  12 & $\mg_{23}$ & $23$ & $8$ & $11$ & - \\
  13 & $\mg_{24}$ & $24$ & $8$ & $12$ & - \\
  
  14 & $\HS$ & $176$ & $\geq 50$ & $21$ & - \\
  
  15 & $\Co_3$ & $276$ & $100$ & $23$ & - \\
  \hline
 \end{tabular}
 \caption{Parameters for the $(G,2)$-neighbour-transitive code $\S$ in Theorem~\ref{binaryx2ntchar}(3).  See  Remark~\ref{psuremark} for more information.}
 \label{binarytable}
 \end{center}
\end{table}

In Theorem~\ref{binaryx2ntchar}(3) the linear subcode $\S$ (see Proposition~\ref{orbitof0is2ntmodule}) is a submodule of the permutation module over $\F_2$ of a $2$-homogeneous permutation group. The next lemma explores the properties of such subcodes.
%that correspond to proper submodules of the permutation module of a $2$-homogeneous group.

% \begin{lemma}\cite[Lemma~4.2]{minimal2nt}\label{lem:binarycodepermsubmodules}
%  Let $\C$ be a code in $H(\N,\F_2)$ that is $(G,2)$-neighbour-transitive with minimum distance $\delta\geq 5$ such that $\b 0\in\C$ and  $T_{\C}\leq G$. Then $\C$ is a submodule of $\F_2^n$, regarded as the permutation module for the $2$-homogeneous action of $G_{\b 0}\cong G_{\b 0}^{\N}= G^{\N}$ on $\N$.
% \end{lemma}

\begin{lemma}\cite[Lemma~4.3]{minimal2nt}\label{lem:moduleiscode}
 Let $H$ act $2$-homogeneously on a set $\N$ of size $n\geq 5$, let $V\cong\F_2^n$ be the permutation module for the action of $H$ on $\N$, and let $\C$ be a proper $\F_2 H$-submodule of $V$. 
 Then $\C$ is a code in $H(\N,\F_2)$ with minimum distance $\delta$ and the group $G :=T_{\C}\rtimes H\leq \Aut(\C)$, where precisely one of the following holds:
 \begin{enumerate}[$(1)$]
  \item $\C, \delta, G$ satisfy one of the lines of Table~\ref{moduletable};
  \item $\delta=3$, $\C$ is a perfect code in $H(\N,\F_2)$, and $\C$ is $G$-neighbour-transitive; or
  \item $4\leq \delta< n$ and $\C$ is $(G,2)$-neighbour-transitive.
 \end{enumerate}
\end{lemma}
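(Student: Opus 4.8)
The plan is to pin down the handful of distinguished submodules that must enter Table~\ref{moduletable}, to dispose of every remaining submodule with $\delta\ge 4$ by a direct coset argument, and to reduce the genuinely delicate work to the single case $\delta=3$. First I would record the routine structure. Writing $V=\F_2^{\N}$, $V_0=\{v\in V:\wt(v)\text{ even}\}$, and letting $\mathbf{1}$ be the all-ones vector, both $\langle\mathbf{1}\rangle=\Rep_n(2)$ and $V_0$ are $\F_2H$-submodules since $H$ is transitive on $\N$. As $\C$ is linear, $T_{\C}\le\Aut(\C)$; as $\C$ is $H$-invariant, $H\le\Aut(\C)$ and $H$ normalises $T_{\C}$ (conjugation takes $t_c\mapsto t_{c^h}$), so $G=T_{\C}\rtimes H\le\Aut(\C)\le\Aut(H(n,2))$, with $G$ transitive on $\C$ via $T_{\C}$ and $G_{\mathbf{0}}=H$. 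Two consequences of $2$-homogeneity are then immediate: the weight-$1$ vectors form one $H$-orbit, so $\C$ cannot contain one without containing all of them (forcing $\C=V$), whence $\delta(\C)\ge 2$; and the weight-$2$ vectors form one $H$-orbit spanning $V_0$, so if $\C$ contains a weight-$2$ vector then $\C\supseteq V_0$, and properness gives $\C=V_0$ with $\delta=2$. I would put $\C\in\{\{\mathbf{0}\},\Rep_n(2),V_0\}$ into Table~\ref{moduletable} (case~(1)), and proceed under the assumptions $\delta\ge 3$ and $\C\ne\Rep_n(2)$.

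\emph{Neighbour-transitivity when $\delta\ge 3$.} Here the error-correction capacity satisfies $e\ge 1$, so by Lemma~\ref{lemDisjointUnion} the set $\Gamma_1(\mathbf{0})$ is exactly the set of weight-$1$ vectors and lies in $\C_1$; since $G_{\mathbf{0}}=H$ is transitive there, condition~(2) of Lemma~\ref{lem:largeDeltaDistTrans} holds with $s=1$, so $\C$ is $G$-neighbour-transitive.

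\emph{$(G,2)$-neighbour-transitivity when $\delta\ge 4$.} Assuming $\delta\ge 4$ (hence $\delta<n$, as $\C\ne\Rep_n(2)$), I would establish the identities $\C_1=\{c+v:c\in\C,\ \wt(v)=1\}$ and $\C_2=\{c+v:c\in\C,\ \wt(v)=2\}$. The inclusions ``$\subseteq$'' are immediate from the definition of the distance partition; for ``$\supseteq$'', if $c,c'\in\C$, $\wt(v)\le 2$ and $\wt(c+v+c')\le 1$, then $\wt(c+c')\le 3<\delta$, forcing $c=c'$ and $\wt(v)\le 1$, so $c+v$ lies at distance exactly $\wt(v)$ from $\C$. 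Thus $\C_1$ and $\C_2$ are unions of cosets of $\C$—namely those meeting the weight-$1$, resp.\ weight-$2$, vectors—on each of which $T_{\C}$ acts transitively, while $H$ permutes these two families of cosets transitively (transitivity of $H$ on weight-$1$ vectors, and $2$-homogeneity on weight-$2$ vectors). Hence $G$ is transitive on each of $\C,\C_1,\C_2$, and $\C_2\neq\emptyset$ (it contains every weight-$2$ vector), so by Definition~\ref{defsneighbourtrans} the code $\C$ is $(G,2)$-neighbour-transitive with $4\le\delta<n$: this is case~(3).

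\emph{The case $\delta=3$, and the main obstacle.} With $\delta=3$ and $e=1$ the code is $G$-neighbour-transitive; if $\rho(\C)=1$ then $V=\bigcup_{c\in\C}B_1(c)$ is a disjoint union, so $\C$ is perfect, giving case~(2). What remains—and what I expect to be the hard part—is to show that every proper submodule with $\delta=3$ and $\rho(\C)\ge 2$ occurs in Table~\ref{moduletable}. The coset argument above fails here precisely because $\C$ may now contain weight-$3$ codewords: for such a $c$ and a pair $\{i,j\}\subseteq\supp(c)$ the vector $e_i+e_j$ lies in $\C_1$ rather than $\C_2$, so $\C_2$ is only a sub-collection of the weight-$2$ cosets and $G$-transitivity on $\C_2$ can fail. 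To catalogue these failures I would analyse the $H$-invariant configuration of triples (resp.\ quadruples) supporting weight-$3$ (resp.\ weight-$4$) codewords, exploit the strong regularity that $2$-homogeneity imposes on it, and appeal to the classification of finite $2$-homogeneous groups together with the known submodule structure of their $\F_2$-permutation modules, thereby obtaining the finite list in Table~\ref{moduletable}. The bulk of the proof—and the only genuinely involved part—is this last classification; the cases $\delta\ge 4$, $\delta=2$ and the perfect-code subcase are all short.
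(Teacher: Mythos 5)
Your setup and your treatment of the cases $\delta=2$, $\delta=n$ and $4\le\delta<n$ are correct, and the translation-plus-$2$-homogeneity analysis of the cosets meeting the weight-$1$ and weight-$2$ vectors is exactly the right mechanism for case (3). The genuine gap is the case $\delta=3$, which you explicitly defer, and the strategy you sketch for it is off target. Table~\ref{moduletable} contains only $\Rep_n(2)$ (with $\delta=n$) and $\Rep_n(2)^\perp$ (with $\delta=2$); it has no $\delta=3$ entries, so a proper submodule with $\delta=3$ and $\rho\ge 2$ cannot ``occur in Table~\ref{moduletable}''. What the trichotomy actually asserts --- and what remains to be proved --- is that $\delta=3$ \emph{forces} $\rho=1$ and hence perfection. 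This is a nonexistence claim, not a classification, and the machinery you propose (the classification of finite $2$-homogeneous groups together with the submodule lattices of their $\F_2$-permutation modules) is neither needed nor actually carried out; as written, the proposal establishes nothing of case (2) beyond the trivial implication ``$\rho=1$ implies perfect''.

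The missing step has a short proof in the spirit of your own coset argument. Since $\C$ is linear with $\delta=3$, it contains a codeword $c$ of weight $3$, say with $\supp(c)=\{a,b,d\}$. For any pair $\{i,j\}\subseteq\N$, $2$-homogeneity gives $h\in H$ with $\{a,b\}^h=\{i,j\}$, and then $c^h\in\C$ has weight $3$ and support $\{i,j,k\}$ for some $k\notin\{i,j\}$; thus $e_i+e_j+e_k\in\C$, that is, $e_i+e_j\equiv e_k\pmod{\C}$. Now induct on $\wt(v)$: if $\wt(v)\ge 2$, pick $i,j\in\supp(v)$ and replace $v$ by $(v+e_i+e_j)+e_k$, which is congruent to $v$ modulo $\C$ and has weight at most $\wt(v)-1$. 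Hence every coset of $\C$ contains a vector of weight at most $1$, so $\rho\le 1$, and $\rho=1$ since $\C\ne V$. As $e=1$, the balls $B_1(\alpha)$ for $\alpha\in\C$ are pairwise disjoint and cover $V$, so $\C$ is perfect and case (2) holds. Two smaller points: the zero submodule is proper but has no minimum distance and does not appear in Table~\ref{moduletable}, so it must be excluded at the outset; and the ``Properties'' column of the table is part of the assertion in case (1), so you should also verify that $\Rep_n(2)$ is $(G,2)$-neighbour-transitive (Lemma~\ref{lem:largeDeltaDistTrans} with $H$ transitive on points and on pairs) and that $\Rep_n(2)^\perp$ is $G$-completely transitive (its complement in $V$ is a single coset, on which $T_{\C}$ is transitive).
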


\begin{table}
 \begin{center}
 \begin{tabular}{ccc}
  \hline
  $\C$ & $\delta$ & Properties\\
  \hline
   $\Rep_n(2)$ & $n$ & $(G,2)$-neighbour-transitive \\
   $\Rep_n(2)^\perp$ & $2$ & $G$-completely-transitive\\
  \hline
 \end{tabular}
 \caption{Codes arising in Lemma~\ref{lem:moduleiscode}(1) and their properties.}
 \label{moduletable}
 \end{center}
\end{table}

Every perfect code with minimum distance $3$ in $H(n,2)$ has the same parameters as a Hamming code, see \cite{VL75}. Thus, in a sense, Lemma~\ref{lem:moduleiscode} part (2) is well understood, as is part (1). In addition, those codes in Lemma~\ref{lem:moduleiscode} part (3) having minimum distance at least $5$ are described in Theorem~\ref{binaryx2ntchar}. Thus, the codes in Lemma~\ref{lem:moduleiscode} for which  least is known are those in part (3) with $\delta=4$. These are the subject of Problem~\ref{prob:binaryDeltaEquals4}.

\begin{problem}\label{prob:binaryDeltaEquals4}
 Determine all codes with minimum distance $\delta=4$ corresponding to submodules of the permutation module over $\F_2$ of a $2$-homogeneous permutation group.
\end{problem}

Note that a completely transitive code $\C$ in $H(n,2)=H(\N,2)$ with minimum distance $\delta\geq 5$ has covering radius $\rho\geq 2$, and hence is $2$-neighbour-transitive so that Theorem~\ref{binaryx2ntchar} may be applied. In particular, if $\C\neq\Rep_n(2)$, then the induced subgroup $S\cong\soc(\Aut(\C)_{\b 0}^\N)$ is as in Theorem~\ref{binaryx2ntchar}(2) or (3), so $S$ is either a small Mathieu group $\mg_{11}, \mg_{12}$, or $S$ is one of the groups in the column `$\soc(G_{\b 0})$' of Table~\ref{binarytable}. This observation was exploited in \cite{bailey2022classification} to obtain a partial classification, stated in Theorem~\ref{binaryCTclass}, of binary completely transitive codes with minimum distance at least $5$.

\begin{theorem}\cite[Theorem~1.3]{bailey2022classification}\label{binaryCTclass}
 Let $\C$ be a non-trivial completely transitive code in $H(n,2)$ with minimum distance $\delta\geq 5$, and suppose that the socle $S$ of the group induced by $\Aut(\C)_{\b 0}$ on $\N$ is as in one of the lines of Table~$\ref{tab:binCTgroups}$. Then $\C$ is equivalent to one of the codes in Table~$\ref{binaryCTtable}$. Moreover, each code in Table~\ref{binaryCTtable} is completely transitive.
\end{theorem}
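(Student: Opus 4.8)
The plan is to prove the two directions of the classification separately. For the ``moreover'' assertion — that each code in Table~\ref{binaryCTtable} is completely transitive — I would treat each listed code individually: compute its covering radius $\rho$, describe the distance partition $\C_0,\ldots,\C_\rho$ explicitly, and exhibit a subgroup of $\Aut(\C)$ acting transitively on every cell. For $\Rep_n(2)$ this is elementary (cf.\ Example~\ref{exam:c2}-style reasoning), and for the binary Golay code and its extension complete transitivity is already recorded in Example~\ref{exam:introHammingExamples}(1); for the Hadamard and punctured-Hadamard codes of lengths $11$ and $12$, the $\alt_7$-code, and the remaining entries, one uses the description of $\C$ as a submodule of a permutation module (Lemma~\ref{lem:moduleiscode}), together with weight-enumerator computations and the $2$-transitivity or $2$-homogeneity of the relevant permutation group, to identify each shell $\C_i$ with a single orbit.

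For the main direction, the starting point is the observation made just before the theorem: since $\delta(\C)\geq 5$ we have error-correction capacity $e\geq 2$, so by Proposition~\ref{prop:coveringRadiusLessThan2} (applied in the Hamming setting, cf.\ Remark~\ref{rem:smallCovRadHamming}) the covering radius satisfies $\rho\geq 2$; hence $\C$ is in particular $2$-neighbour-transitive and Theorem~\ref{binaryx2ntchar} applies. Thus either $\C=\Rep_n(2)$, or $\C$ is one of the three Hadamard-type codes of Theorem~\ref{binaryx2ntchar}(2), or $\C$ is a union of cosets of a linear subcode $\S$ with $\soc(G_{\b 0})=S$ one of the lines of Table~\ref{binarytable}. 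The hypothesis restricts $S$ to the lines of Table~\ref{tab:binCTgroups}, and the proof then proceeds case by case through those lines.

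In each case I would carry out three steps. First, pin down the possible codes $\C$: identify the subcode $\S$ from Table~\ref{binarytable} and decide whether $\C$ can be a \emph{proper} union of cosets of $\S$, or must equal $\S$. Here the condition $\delta(\C)\geq 5$ together with the requirement that $\Aut(\C)$ act transitively on the set $\Delta$ of adjoined cosets severely restricts $\Delta$; I expect that in every case of Table~\ref{tab:binCTgroups} this forces $\Delta$ to be a single coset, so $\C=\S$ (with the genuinely non-linear Hadamard exceptions already separated out in Theorem~\ref{binaryx2ntchar}(2)). Second, compute the covering radius $\rho$ of $\C$: whenever $\rho=2$ complete transitivity is immediate from $2$-neighbour-transitivity, so the substantive cases are those with $\rho\geq 3$ (for example the perfect Golay code, $\rho=3$, and the extended Golay code, $\rho=4$). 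Third, for those cases, show that $\Aut(\C)$ is transitive on each of $\C_3,\ldots,\C_\rho$, using the explicit action of the relevant almost-simple group (a Mathieu group, $\alt_7$, a symplectic group, and so on) on $\F_2^n$, reading off orbit sizes on cosets and matching them with $|\C_i|$; this appeals to the finite simple group classification only through the classification of $2$-transitive and $2$-homogeneous groups already invoked via Proposition~\ref{prop:Hamming2transOnQ}.

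The main obstacle is the third step: establishing full $\Aut(\C)$-transitivity on the outer shells $\C_i$ for $i\geq 3$. The inner shells $\C_0,\C_1,\C_2$ are controlled uniformly by Theorem~\ref{binaryx2ntchar}, but the structure of $\C_i$ for larger $i$ is delicate and code-dependent, so each high-covering-radius code — in particular the two Golay codes, where one must analyse the action of $\mg_{23}$ and $\mg_{24}$ on the cosets of weight $3$ and $4$ — requires a tailored argument: either a direct combinatorial identification of $\C_i$ as one orbit, or a counting argument comparing $|\C_i|$ with the index of an appropriate point-stabiliser. A secondary difficulty, feeding into the first step, is ruling out completely transitive codes that are proper unions of several cosets of $\S$; this needs control of the submodule lattice of the permutation module and of the possible block systems of $\Aut(\C)$ on the coset set, and is presumably the reason the classification is stated only for the socles appearing in Table~\ref{tab:binCTgroups} rather than for all lines of Table~\ref{binarytable}.
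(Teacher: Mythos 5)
Your overall strategy is the one the chapter itself indicates (and the one used in \cite{bailey2022classification}): from $\delta\geq 5$ and non-triviality deduce $\rho\geq 2$, hence $2$-neighbour-transitivity, invoke Theorem~\ref{binaryx2ntchar} to reduce to $\Rep_n(2)$, the Hadamard-type codes, or a union $\Delta$ of cosets of a linear subcode $\S$ whose socle appears in Table~\ref{binarytable}, and then work case by case through the socles listed in Table~\ref{tab:binCTgroups}. That reduction, and your steps of computing $\rho$ and testing transitivity of $\Aut(\C)$ on the outer shells $\C_i$, $i\geq 3$, by comparing orbit lengths of the relevant almost-simple group, are all in line with the actual argument.

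The genuine gap is in your first step, where you ``expect that in every case of Table~\ref{tab:binCTgroups} this forces $\Delta$ to be a single coset, so $\C=\S$''. This expectation is contradicted by the conclusion of the theorem itself: line 3 of Table~\ref{binaryCTtable} is the Nordstrom--Robinson code, a non-linear completely transitive code with $256=2^{8}$ codewords in $H(15,2)$, which in the $\alt_7$ case is a union of $16$ cosets of the $[15,4,8]$ subcode $\S$ from line 4 of Table~\ref{binarytable}; line 4 of Table~\ref{binaryCTtable} is a union of three cosets of the $\PSL_3(4)$-invariant code $\L$; and in the $\mg_{23}$ case both $\G_{23}^\perp$ (which is $\S$) and $\G_{23}$ (a union of two cosets of $\S$) occur. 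None of these is among the Hadamard-type exceptions of Theorem~\ref{binaryx2ntchar}(2), so the analysis of which unions of cosets of $\S$ give completely transitive codes is not a degenerate case to be waved away but a central and delicate part of the classification. It requires control of the $\F_2 G_{\b 0}$-submodule lattice above $\S$ in the permutation module (via \cite{sin2000permutationmodules} and Brauer character data), identification of the admissible $\Aut(\C)$-invariant coset sets $\Delta$, and then the same shell-by-shell transitivity check you describe --- applied to every candidate union, not only to $\S$ itself. As written, your argument would fail to produce the non-linear entries of Table~\ref{binaryCTtable} and so cannot establish the stated classification.
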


\begin{table}
 \begin{center}
 \begin{tabular}{ccc}
  \hline
  $S$ & $n$ & Conditions\\
  \hline
  $\PSL_3(4)$ & $21$ & - \\
  
  $\alt_7$ & $15$ & - \\
  
  $\PSL_2(r)$ & $r+1$ & $23\leq r\equiv\pm 1\pmod 8$ \\
  
  %$\Sp_{2t}(2)$ & $2^{2t-1}\pm 2^{t-1}$ & $t\geq 3$ \\
  
  $\PSU_3(r)$ & $r^3+1$ & $r$ is odd \\
  
  $\Ree(r)$ & $r^3+1$ & $r\geq 3$ \\
  
  $\mg_{m}$ & $11,12,22,23,24$ & - \\
  
  $\HS$ & $176$ & - \\
  
  $\Co_3$ & $276$ & - \\
  \hline
 \end{tabular}
 \caption{Some groups $S$ for which there exists a non-trivial binary $2$-neighbour-transitive code $\C$ in $H(n,2)$ such that $\C\neq\Rep_n(2)$ and $S\cong\soc(\Aut(\C)_{\b 0}^\N)$.}
 \label{tab:binCTgroups}
 \end{center}
\end{table}

%\textcolor{blue}{CP: Caption for Table 1.4 is bizarre. Does it mean that the theorem is not proved for these exceptional groups $S$? If so shouldn't the statement of the theorem be changed? They do not seem to be an exception in the "converse" statement in Theorem \ref{binaryCTclass}, so I have no idea how to read this theorem - ie I do not know what it asserts. Should the caption just be `Some groups $S$ for which there exists a non-trivial  binary completely transitive code $\C$ in $H(n,2)$ with $S\cong\soc(\Aut(\C)_{\b 0}^\N)$'. DH: Maybe it is ok now?}

\begin{table}
 \begin{center}
 \begin{tabular}{c|ccccc}
  \hline
  Line & $\C$ & $\Aut(\C)$ & Parameters \\
  \hline  
  1 & $\Had$ & $2\mg_{12}$ & $(12,24,6;3)$ \\
  
  2 & $\PHad$ & $2\rtimes \mg_{11}$ & $(11,24,5;3)$ \\
  
  3 & $\NR$ & $2^5\rtimes\alt_8$ & $(15,256,5;3)$ \\
  
  4 & $\langle\L,\Delta_1\rangle\cup\langle\L,\Delta_2\rangle$ & $T_\L\rtimes \PGaL_3(4)$ & $(21,2^{10}\cdot 3,5;6)$ \\
  
  5 & $\P^\perp$ & $T_C\rtimes \PGaL_3(4)$ & $[21,12,5;3]$ \\
  
  6 & $\langle\L,\Delta_1\rangle$ & $T_C\rtimes \PSiL_3(4)$ & $[21,11,5;6]$ \\
  
  7 & $\L$ & $T_C\rtimes \PGaL_3(4)$ & $[21,10,5;6]$ \\
  
  8 & $\G_{24}$ & $T_C\rtimes \mg_{24}$ & $[24,12,8;4]$ \\
  
  9 & $\G_{23}$ & $T_C\rtimes \mg_{23}$ & $[23,12,7;3]$ \\
  
  10 & $\G_{23}^\perp$ & $T_C\rtimes \mg_{23}$ & $[23,11,8;7]$ \\
  
  11 & $\G_{22}$ & $T_C\rtimes (\mg_{22}:2)$ & $[22,12,6;3]$ \\
  
  12 & $\EG_{22}$ & $T_C\rtimes (\mg_{22}:2)$ & $[22,11,6;7]$ \\
  
  13 & $\SG_{22}$ & $T_C\rtimes \mg_{22}$ & $[22,11,7;6]$ \\
  
  \hline
 \end{tabular}
 \caption{Non-trivial binary completely transitive codes $\C$ with minimum distance $\delta\geq 5$ and where $\Aut(\C)_{\b 0}^{\N}$ has as socle one of the groups $S$ in Table~\ref{tab:binCTgroups}. See \cite[Section~3]{bailey2022classification} for the definitions of these codes. The codes in Lines 1--4 are non-linear with parameters $(n,|\C|,\delta;\rho)$, while the remaining codes are linear with parameters $[n,k,\delta;\rho]$, where $\rho$ is the covering radius of $\C$ and $k$ is the dimension.}
 \label{binaryCTtable}
 \end{center}
\end{table}
%\textcolor{blue}{CP: Can we eventually make sure that able 1.5 appears before the start of the next subsection.}

%\textcolor{blue}{DH: Sorry, it wasn't very clear. The groups in Table 1.4 come from Theorem 1.5.21 which includes the small Mathieu groups, coming from part (2), as well as Table 1.3, coming from part (3). I've edited it now. CP: Thanks Dan, I reworded. Only problem now is line 1 of Table~\ref{binarytable}. It's missing below and needs some comment.}

Note that, of the groups $S$  occurring in Theorem~\ref{binaryx2ntchar}(2) or (3), the only ones not  considered in Theorem~\ref{binaryCTclass} are the following groups `$\soc(G_{\b 0})$' of Table~\ref{binarytable}: i) $\PSL_t(2^a)$ unless $(t,a)=(3,2)$, ii) $\Sp_{2t}(2)$ for $t\geq 3$, iii) $\Z_2^t$ for $t\geq 3$, or iv) $\Z_p^d$ where $r=p^d\equiv 7\pmod{8}$.  Thus the following problem is still open.

% if $\C$ is a non-trivial binary $2$-neighbour-transitive code  in $H(n,2)$ with minimum distance at least $5$ such that $\C\neq\Rep_n(2)$, and if $S\cong\soc(\Aut(\C)_{\b 0}^\N)$, then $S$ is as in Theorem~\ref{binaryx2ntchar}(2) or (3). In particular, $S$ is either a small Mathieu group $\mg_{11}, \mg_{12}$, or $S$ is one of the groups in the column `$\soc(G_{\b 0})$' of Table~\ref{binarytable}. Of these groups, those that do not appear in Table~\ref{tab:binCTgroups} (and so are not considered in Theorem~\ref{binaryCTclass}) are i) $\PSL_t(2^a)$ unless $(t,a)=(3,2)$, ii) $\Sp_{2t}(2)$ for $t\geq 3$, or iii) $\Z_2^t$ for $t\geq 3$. Thus the following problem is still open.

\begin{problem}\label{prob:binaryCT}
 Classify the binary completely transitive codes $\C$  in $H(n,2)$ with minimum distance at least $5$, for which $\soc(\Aut(\C)_{\b 0}^\N)$ is one of i) $\PSL_t(2^a)$ with $(t,a)\neq(3,2)$, ii) $\Sp_{2t}(2)$ for $t\geq 3$, iii) $\Z_2^t$ for $t\geq 3$, or iv) $\Z_p^d$ where $23\leq n=p^d\equiv 7\pmod{8}$.
\end{problem}

\subsection{A non-existence result: proof of Theorem~\ref{thm:CTham}}\label{sec:HammingNonexistence}

We now have enough information about $G$-alphabet-affine and $(G,s)$-neighbour-transitive codes to prove Theorem~\ref{thm:CTham} which was stated in Section~\ref{sec:ct}.

\begin{proof}[Proof of Theorem~\ref{thm:CTham}]

 If $n\geq 9$ and $\C=\Rep_n(2)$ in $H(n,2)$ then, by Theorem~\ref{thm:EntryFaithful}, $\C$ is completely transitive with minimum distance $\delta=n$, and hence error-correction capacity $e=\lfloor \frac{n-1}{2}\rfloor$. Since $\Gamma_i({\b 0})$ consists precisely of the weight $i$ vertices of $H(n,2)$ and $\Gamma_i({\b 0})$ is contained in $\C_i$ if and only if $i\leq n/2$, the largest value for $s$ for which $\C$ is $s$-neighbour-transitive is $s=\lceil \frac{n-1}{2}\rceil$. Thus $\min\{e,s\}\geq4$ and all the conditions of Theorem~\ref{thm:CTham} hold.
 
 Suppose from now on that $\C$ is an $s$-neighbour-transitive code in $H(n,q)$ with error capacity $e$ such that $\min\{e,s\}\geq4$, and that $\C\neq\Rep_n(2)$. Let $G\leq \Aut(\C)$ such that $\C$ is $(G,s)$-neighbour-transitive. Since $e\geq 4$ we have $\delta\geq 9$, which also implies that $n\geq 9$. It now follows from Theorem~\ref{thm:EntryFaithful} that, if $\C$ is $G$-entry-faithful then $\C$ is equivalent to $\Rep_n(2)$. Let us assume now that  $\C$ is not $G$-entry-faithful, and that $\C$ is not equivalent to $\Rep_n(2)$.  Since $\delta\geq 9$, it follows from Theorem~\ref{thm:2NTAlphabetAlmostSimple} that $\C$ is not $G$-alphabet-almost-simple and thus, by Proposition~\ref{prop:Hamming2transOnQ} and Definition~\ref{efaasaadef},  $\C$ is $G$-alphabet-affine.  
 
 Replacing $\C$ by an equivalent code we may assume that $\b 0\in\C$. By Proposition~\ref{prop:HammingiHomogeneous}, $G_{\b 0}$ acts $4$-homogeneously on $\N$. Hence, by \cite[Table~7.4]{cameron1999permgps} and \cite{kantor1972k}, $G_{\b 0}^\N$ is one of the groups in Table~\ref{tab:4homgroups} of degree $n$. 
 % $\alt_n$, $\s_n$, $\mg_{11}$, $\mg_{12}$, $\mg_{23}$, $\mg_{24}$, $\PSL_2(8)$, $\PGaL_2(8)$ or $\PGaL_2(32)$. 
 Also, by Proposition~\ref{orbitof0is2ntmodule}, there exists a subcode $\S$ of $\C$ such that $\S$ is an $\F_p G_{\b 0}$-module. If $q=2$, then Theorem~\ref{binaryx2ntchar} eliminates each possibility for $G_{\b 0}^\N$ (recalling where necessary that $\delta\geq 9$). Thus $q\geq 3$.

 Let $I\subseteq \N$ with $|I|=4$. Now $\C$ is $(G,4)$-neighbour-transitive (as $s\geq4$), so $G_{\b 0}$ is transitive on the set of all weight $4$ vertices of $H(n,q)$. Also, as $G_{\b 0}$ is $4$-homogeneous on $\N$, the setwise stabiliser $G_{{\b 0},I}$ acts transitively on the set of weight $4$ vertices having support $I$. Hence $(q-1)^4$ divides the order of $G_{\b 0,I}^{H(I,q)}$. Now by Lemma~\ref{lem:k0solubleandsemireg}(1) the induced group $G_{\b 0,I}^{H(I,q)}$ is a subgroup of $\GaL_1(q)\wr\s_4$, and in particular is soluble. Also, by Lemma~\ref{lem:k0solubleandsemireg}(2),  the group $K_0:=B\cap G_{\b 0}$ has order dividing $q-1$ and $K_{\b 0}\cong K_{\b 0}^{H(I,q)}$. By the definition of $K_0$ we have $G_{\b 0, I}^\N\cong G_{\b 0, I}/K_{\b 0}$, and hence  $G_{\b 0,I}^{H(I,q)}/K_{\b 0}^{H(I,q)}$ is a soluble quotient of $G_{\b 0,I}^\N$ with order divisible by $(q-1)^3$. 

 Recall that $n\geq9$, and let ${\rm Sol}$ be the order of the largest soluble normal subgroup of $G_{\b 0,I}^\N$, For example, if $G_{\b 0}^\N = {\rm S}_n$ then $G_{\b 0,I}^\N = \s_4\times \s_{n-4}$ so ${\rm Sol}=2^4\cdot 3$.  Since $(q-1)^3$ divides ${\rm Sol}$ it follows that the possibilities for $q$ are as in Table~\ref{tab:4homgroups}. In particular the last three lines of Table~\ref{tab:4homgroups} are ruled out as there are no possibilities for $q$. If $q$ is $3$ or $5$ then $K_0$ consists of scalars and the subcode $\S$ of $\C$ mentioned above is an $\F_q G_{\b 0}^\N$-module, of dimension $k$, say. In particular $\S$ is a linear $[n,k,\delta']$ code with $\delta'\geq \delta\geq 9$, so by the Singleton bound \cite[Theorem~11, Chapter~1]{macwilliams1978theory}, $k\leq n-\delta +1\leq n-8$. This gives an immediate contradiction for the first four lines of Table~\ref{tab:4homgroups}: for $\alt_n$ and $\s_n$ by \cite[Proposition~5.3.7]{Kleidman-Liebeck}, for $\mg_{11}$ and $\mg_{12}$ by \cite[Proposition~5.3.8]{Kleidman-Liebeck}. The remaining cases are $G_{\b 0}^\N=\mg_{23}$ or $\mg_{24}$. A similar argument to that in the previous paragraph, for a subset $J\subseteq \N$ with $|J|=2$ shows that $q-1$ divides the order of a soluble quotient of $G_{\b 0,J}^\N$. However, for  $G_{\b 0}^\N=\mg_{23}$ or $\mg_{24}$, the largest soluble quotient of $G_{\b 0,J}^\N$ has order 2. Thus $q=3$, and again the subcode $\S$ of $\C$ is an $\F_q G_{\b 0}^\N$-module, of dimension $k$, say. This implies, by \cite{atlas1995}, that $k\geq 22$, whereas the Singleton bound requires $k\leq n-\delta+1\leq 24-8=16$. 
 \end{proof}
 
\begin{table}
 \begin{center}
 \begin{tabular}{cccc}
  \hline
  $G_{\b 0}^\N$ & $n$ & {\rm Sol}& $q\geq3$\\
  \hline
  $\alt_n$ & $n$ & $2^3\cdot 3$ & 3 \\
  
  ${\rm S}_n$ & $n$ & $2^4\cdot 3$ & 3\\
  
  $\mg_{11}$ & $11$ & $2^3\cdot 3$ & 3 \\

    $\mg_{12}$ & $12$ & $2^6\cdot 3$ & 3 or 5 \\

  $\mg_{23}$ & $23$ & $2^7\cdot 3^2$ & 3 or 5\\

  $\mg_{24}$ & $24$ & $2^9\cdot 3^2$ & 3, 5 or 9\\

  $\PSL_2(8)$ & $9$ & $2^2\cdot 3$ & --\\
  
  $\PGammaL_2(8)$ & $9$ & $2^2\cdot 3$ &  --\\

  $\PGammaL_2(32)$ & $33$ & $2^2$ & --\\  
    \hline
 \end{tabular}
 \caption{$4$-homogeneous groups for the proof of Theorem~\ref{thm:CTham}. }
 \label{tab:4homgroups}
 \end{center}
\end{table}

\subsection{Codes in Hamming graphs from permutation modules}\label{sec:poly}

In this section, we present examples of linear codes that are $s$-neighbour-transitive, for $s\geq 2$, arising from permutation modules. We construct these modules via polynomial algebras. Historically, polynomial algebras have been used to construct many interesting examples of codes, such as the generalised Reed--Muller codes and the projective Reed--Muller codes; see Definitions~\ref{def:genReedMuller} and~\ref{def:projReedMuller} below. We present a richer family of examples that may eventually lead to a classification of alphabet-affine, $2$-neighbour-transitive codes in $H(n,q)$ with minimum distance at least five: such a classification would be a huge strengthening of Theorem~\ref{thm:CTham}.

Throughout this section let $R=\F_q[x_1,\ldots,x_t]$, the ring of polynomials with coefficients in $\F_q$ in the variables $x_1,\ldots,x_t$. Each element of $R$ may be viewed as a function $\F_q^t\rightarrow \F_q$ and conversely, by Lagrange interpolation (see \cite[Theorem~1.7.1]{lidl1997finite}), every such function may be represented (in at least one way) by an element of $R$. A monomial $x_1^{a_1}\cdots x_t^{a_t}$ is said to have \emph{degree} $a_1+\cdots+a_t$ and the degree of a polynomial is the maximum value of the degrees of its constituent monomials. The generalised Reed--Muller codes arise as follows as subspaces of $R$; see \cite{delsarte1970generalized}.

\begin{definition}\label{def:genReedMuller}
 Let $k$ be an integer with $0\leq k\leq t(q-1)$. The \emph{$k$-th order $q$-ary generalised Reed--Muller code} $\RM_q(k,t)$ in $H(\F_q^t,\F_q)$ is the subspace of $R$ consisting of all polynomials of degree at most $k$. When $q=2$ these are simply called \emph{Reed--Muller} codes.
\end{definition}

The parameters of the generalised Reed--Muller codes are given in \cite[Theorem~5.4.1 and Corollary~5.5.4]{assmus1994designs}.
Let $d\in\{1,2,\ldots,q-1\}$ and let $R[d]$ be the subspace of $R$ consisting of all polynomials $f$ such that $f(ax_1,\ldots,ax_t)=a^d f(x_1,\ldots,x_t)$, for each $a\in\F_q$. In other words, $R[d]$ consists of all polynomials $f$ of $R$ such that each monomial of $f$ has non-zero degree equivalent to $d$ modulo $q-1$. This leads to a similar construction to that in the previous definition, but now involving $R[d]$. See \cite{sorensen1991projective} for more details. Note that we are taking $\N$ to be a set of representatives for the $1$-dimensional subspaces of $\F_q^t$ and then regarding the elements of $R[d]$ as functions $\N\rightarrow \F_q$.

\begin{definition}\label{def:projReedMuller}
 Let $k$ be an integer with $0\leq k\leq t(q-1)$, let $\N$ be a fixed set of representatives for the $1$-dimensional subspaces of $\F_q^t$ and let $d\in\{1,2,\ldots,q-1\}$ such that $d\equiv k\pmod{q-1}$. The \emph{$k$-th order $q$-ary projective Reed--Muller} code $\PRM_q(k,t)$ in $H(\N,\F_q)$ is the subspace of $R[d]$ consisting of all polynomials having degree at most $k$.
\end{definition}

The $\F_q\AGL_t(q)$-submodule structure of $R$ was determined by Sin \cite{sin2012affine}, while the $\F_q\GL_t(q)$-submodule structure of $R$ (and, in particular, of each $R[d]$) is determined in \cite{sin2000permutationmodules}. Any submodule of $R$ determines a code in $H(\F_q^t,\F_q)$ and any submodule of $R[d]$ determines a code in $H(\N,\F_q)$, where $\N$ is a set of representatives for the $1$-dimensional subspaces of $\F_q^t$. Indeed, the generalised Reed--Muller code $\RM_q(k,t)$ is an $\F_q\AGL_t(q)$-submodule of $R$ and the projective Reed--Muller code $\PRM_q(k,t)$ is an $\F_q\GL_t(q)$-submodule of $R[d]$, where $d\in\{1,2\ldots,q-1\}$ such that $d\equiv k\pmod{q-1}$. However, the submodule lattices of $R$ and $R[d]$ are, in general, considerably more complicated than the chains of submodules given by varying the parameter $k$ in $\RM_q(k,t)$ and $\PRM_q(k,t)$, respectively. In particular, if $r$ is such that $q=p^r$ for some $p$ prime then the submodule lattices of $R$ and $R[d]$ are parameterised by $r$-tuples of integers satisfying certain restrictions, see \cite[Theorems~A and~C]{sin2000permutationmodules}. 
%
%Propositions~\ref{prop:moduleCodesAffineTopBinary} and~\ref{prop:moduleCodesProjective} concern
The next two results show that, under certain conditions on the parameters $q, d$, many submodules of $R$ and $R[d]$ (including the generalised and the projective Reed--Muller codes and their duals) give examples of $2$-neighbour-transitive codes. Note that in Proposition~\ref{prop:moduleCodesAffineTopBinary}, since $q=2$ is prime, the codes arising are precisely the Reed--Muller codes and their duals. Recall that we denote by $T_\C$ the group of translations by elements of a linear code $\C$. In all the propositions in this section we assume that  the covering radius is at least $2$;  in  Remark~\ref{rem:smallCovRadHamming} we discuss the situation when this does not hold.

\begin{proposition}\cite{hawtin2023alphabetaffine} (or see \cite[Proposition~9.1.8]{DanHphdthesis})\label{prop:moduleCodesAffineTopBinary}
 Let $q=2$ and $\C$ be an $\F_2\AGL_t(2)$-submodule of $R$ such that $\C$ is a code with covering radius $\rho\geq 2$ in $H(\F_2^t,\F_2)$. Then $\C$ is $(G,2)$-neighbour-transitive, where $G=T_{\C}\rtimes \AGL_t(2)$.
\end{proposition}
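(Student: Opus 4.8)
The plan is to reduce the statement to Lemma~\ref{lem:moduleiscode}, which already contains the hard work: the key observation is that $R$, viewed as functions, is the full $\F_2$-permutation module for $\AGL_t(2)$ acting on the entry set, and that the covering-radius hypothesis is exactly what is needed to discard the two families appearing in Lemma~\ref{lem:moduleiscode} that fail to be $2$-neighbour-transitive.

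First I would record the identification. Take the entry set of $H(\F_2^t,\F_2)=H(2^t,2)$ to be $\N=\F_2^t$. Since over $\F_2$ every function $\N\to\F_2$ is realised by a polynomial (reducing via $x_i^2=x_i$), $R$ is identified with the vertex space $\F_2^{\N}$, and the natural action of $\AGL_t(2)$ on $\N$ induces precisely the $\F_2$-permutation-module action; thus an $\F_2\AGL_t(2)$-submodule $\C$ of $R$ is exactly an $\F_2$-linear code in $H(\N,\F_2)$ invariant under $\AGL_t(2)\le\Sym(\N)$, and in particular $\mathbf 0\in\C$ and $G:=T_{\C}\rtimes\AGL_t(2)\le\Aut(\C)$. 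I would also note that $\AGL_t(2)$ is $2$-transitive on $\N$ --- its point stabiliser $\GL_t(2)$ is transitive on $\N\setminus\{\mathbf 0\}$ because any nonzero vector of $\F_2^t$ extends to a basis --- hence $2$-homogeneous on $\N$. Next, since $\rho(\C)\ge2$ we have $\C\ne R$ (the whole space has covering radius $0$); and if $|\C|\le1$ then $\C$ is trivial and is at once $(G,2)$-neighbour-transitive, since $G$ is trivially transitive on $\C$ while $G_{\mathbf 0}=\AGL_t(2)$ is transitive on the weight-$1$ and on the weight-$2$ vertices of $H(\N,\F_2)$ by $1$- and $2$-homogeneity. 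So from now on $\C$ is a non-trivial proper submodule.

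For the main step, suppose first $t\ge3$, so that $n:=|\N|=2^t\ge8\ge5$, and apply Lemma~\ref{lem:moduleiscode} with $H=\AGL_t(2)$, $V=R$ and this $\C$: with $G=T_{\C}\rtimes\AGL_t(2)$ one of its cases (1)--(3) holds. In case~(1), $\C$ is either $\Rep_n(2)$, which is $(G,2)$-neighbour-transitive by Table~\ref{moduletable}, or $\Rep_n(2)^{\perp}$, the even-weight code, which has covering radius $1$, contradicting $\rho(\C)\ge2$. In case~(2), $\delta(\C)=3$ and $\C$ is perfect, so its covering radius equals its error-correction capacity $e=1$, again contradicting $\rho(\C)\ge2$. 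Hence either $\C=\Rep_n(2)$ or case~(3) applies, and in both situations $\C$ is $(G,2)$-neighbour-transitive with $G=T_{\C}\rtimes\AGL_t(2)$, as required. It remains to treat $t\le2$, where $n<5$ and Lemma~\ref{lem:moduleiscode} is not available: for $t=1$ the only proper submodule of covering radius $\ge2$ is $\{\mathbf 0\}$, handled above; for $t=2$ one has $\AGL_2(2)\cong\s_4=\Sym(\N)$ and the $\F_2\s_4$-submodule lattice of $\F_2^4$ is the chain $0\subset\Rep_4(2)\subset\Rep_4(2)^{\perp}\subset\F_2^4$, of whose members only $\{\mathbf 0\}$ and $\Rep_4(2)$ have covering radius $\ge2$, and both are $(G,2)$-neighbour-transitive (the latter being $\Rep_n(2)$, as checked directly).

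The only genuinely delicate point is subsumed in Lemma~\ref{lem:moduleiscode}: when $\delta(\C)=4$ the error-correction capacity is only $1$, so one cannot read off $2$-neighbour-transitivity directly from Lemma~\ref{lem:largeDeltaDistTrans} (which needs $s\le e$); this is precisely case~(3) of Lemma~\ref{lem:moduleiscode}, so invoking that lemma --- rather than a hands-on argument --- is the crux. By contrast, when $\delta(\C)\ge5$ the proof is elementary: Lemma~\ref{lem:largeDeltaDistTrans} reduces $(G,2)$-neighbour-transitivity to transitivity of $\AGL_t(2)$ on $\N$ and on the $2$-subsets of $\N$, both already noted. Otherwise the remaining work is bookkeeping --- verifying that $\rho(\C)\ge2$ rules out exactly the even-weight code and the $\delta=3$ perfect codes of Lemma~\ref{lem:moduleiscode}, the two families there that are not $2$-neighbour-transitive --- together with the two small cases $t\le2$.
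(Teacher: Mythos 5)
Your proposal is correct. The chapter itself does not reproduce a proof of Proposition~\ref{prop:moduleCodesAffineTopBinary} --- it only cites \cite{hawtin2023alphabetaffine} and \cite[Proposition~9.1.8]{DanHphdthesis} --- so a line-by-line comparison is not possible; but your reduction to Lemma~\ref{lem:moduleiscode} is sound and the hypotheses are matched carefully: the identification of $R$ (as functions) with the $\F_2$-permutation module for $\AGL_t(2)$ on $\N=\F_2^t$, the $2$-homogeneity of $\AGL_t(2)$, the separate treatment of $\C=V$, $\C=\{\b 0\}$ and the degenerate sizes $t\leq 2$ where the $n\geq 5$ hypothesis of the lemma fails, and the observation that $\rho\geq 2$ excludes exactly $\Rep_n(2)^\perp$ and the perfect $\delta=3$ codes (both of which have $\rho=1$), leaving only $\Rep_n(2)$ and case~(3), each $(G,2)$-neighbour-transitive for $G=T_\C\rtimes\AGL_t(2)$. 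The cited sources presumably proceed directly --- verifying transitivity of $G$ on $\C_1$ and $\C_2$ by translating to $\b 0$ and analysing which weight-$2$ vertices lie in $\C_2$ when $\delta=4$ --- whereas you outsource precisely that delicate step to \cite[Lemma~4.3]{minimal2nt}. That is a legitimate trade: your argument is shorter and leans on an independently published classification (no circularity arises, since Lemma~\ref{lem:moduleiscode} does not depend on this proposition), at the cost of invoking a result considerably stronger than what is needed; your closing diagnosis of where the real work lies (the $\delta=4$ case, where Lemma~\ref{lem:largeDeltaDistTrans} is unavailable because $s=2>e=1$) is exactly right.
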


\begin{proposition}\cite{hawtin2023alphabetaffine} (or see \cite[Proposition~9.2.3]{DanHphdthesis})\label{prop:moduleCodesProjective}
 Let $d\in\{1,2,\ldots,q-1\}$ with $\gcd(d,q-1)=1$, let $\N$ be a set of representatives for the $1$-dimensional subspaces of $\F_q^t$, and let $\C$ be an $\F_q\GL_t(q)$-submodule of $R[d]$ such that $\C$ is a code with covering radius $\rho\geq 2$ in $H(\N,\F_q)$. Then $\C$ is $(G,2)$-neighbour-transitive, where $G=T_{\C}\rtimes \GL_t(q)$.
\end{proposition}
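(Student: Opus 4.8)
The plan is to exhibit $G=T_{\C}\rtimes\GL_t(q)$ directly as a group of automorphisms of $\C$ and then to verify the three transitivity conditions of Definition~\ref{defsneighbourtrans}(1) with $s=2$ by hand, rather than through Lemma~\ref{lem:largeDeltaDistTrans} (which would require $e\geq 2$, something we are not assuming). First I would recall that $\GL_t(q)$ acts on $H(\N,\F_q)$ by monomial transformations: an element $g$ permutes $\N$ through its action on the $1$-dimensional subspaces of $\F_q^t$, twisted on each coordinate by the $d$-th power of the scalar relating $g$ applied to the relevant representative back to the chosen representative in $\N$. This action lies in $\Sym(\F_q)\wr\Sym(\N)=\Aut(H(\N,\F_q))$, is $\F_q$-linear on $\F_q^{\N}$, preserves $R[d]$ by the homogeneity condition defining $R[d]$, and hence preserves the submodule $\C$; as $\C$ is linear we also have $T_{\C}\leq\Aut(\C)$, and $\GL_t(q)$ normalises $T_\C$ (conjugating $t_\alpha$ to $t_{\alpha^{g}}$), so indeed $G\leq\Aut(\C)$ with $\GL_t(q)\leq G_{\b 0}$. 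Transitivity of $G$ on $\C$ is then immediate from the regular action of $T_\C$ on $\C$.

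For $\C_1$ and $\C_2$ I would use the reduction that, for $i\in\{1,2\}$, $\C_i=W_i+\C$, where $W_i$ denotes the set of weight-$i$ vertices at distance exactly $i$ from $\C$: given $\gamma\in\C_i$ and $\alpha\in\C$ with $d(\alpha,\gamma)=i$, the vertex $\gamma-\alpha$ has weight $i$ and, since $\C+\alpha=\C$, distance exactly $i$ from $\C$. As $T_\C\leq G$ and $\GL_t(q)\leq G_{\b 0}$, it therefore suffices to prove that $\GL_t(q)$ is transitive on each nonempty $W_i$. The crucial point is that $\GL_t(q)$ is transitive on the set of all weight-$1$ vertices and on the set of all weight-$2$ vertices of $H(\N,\F_q)$: it is transitive on $\N$, and its centre acts on the nonzero entries at a fixed coordinate as multiplication by $\{\lambda^{d}\mid\lambda\in\F_q^\times\}=\F_q^\times$, using $\gcd(d,q-1)=1$, which handles weight $1$; for weight $2$ one combines the $2$-transitivity of $\GL_t(q)$ on $\N$ (this being essentially the action of $\PGL_t(q)$ on the points of $\PG_{t-1}(q)$) with the fact that the stabiliser of a pair of points of $\N$ contains elements scaling the two corresponding representatives independently, which together with $\gcd(d,q-1)=1$ realises all pairs of nonzero values on a fixed $2$-element support.

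To conclude, I would bring in the hypothesis $\rho\geq 2$: it gives $\C_1\neq\emptyset$ and $\C_2\neq\emptyset$, hence, by the reduction above, $W_1\neq\emptyset$ and $W_2\neq\emptyset$. Each $W_i$ is $\GL_t(q)$-invariant, since $\GL_t(q)$ preserves $\C$ and fixes $\b 0$ and so preserves both weight and distance to $\C$; as $\GL_t(q)$ is transitive on the full set of weight-$i$ vertices and a nonempty invariant subset of a transitive set is the whole set, $W_i$ equals the set of all weight-$i$ vertices and $\GL_t(q)$ is transitive on it. Combining with the previous two paragraphs, $G$ is transitive on $\C$, on $\C_1$, and on $\C_2$, so $\C$ is $(G,2)$-neighbour-transitive.

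The step that I expect to need the most care is the monomial action of $\GL_t(q)$ on $H(\N,\F_q)$ — precisely, tracking how the scalar twists depend on the chosen representatives in $\N$ — since getting that bookkeeping right is exactly what lets the hypothesis $\gcd(d,q-1)=1$ drive the weight-$1$ and weight-$2$ transitivity computations. As a consistency check (not needed for the statement), the argument in fact forces $\delta(\C)\geq 4$: a minimum-weight codeword of weight $2$ or $3$ would produce a weight-$2$ vertex at distance at most $1$ from $\C$, contradicting the conclusion that every weight-$2$ vertex lies in $W_2$.
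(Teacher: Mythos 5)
Your proof is correct, and since the paper only cites \cite{hawtin2023alphabetaffine} and the thesis for this result there is no internal proof to compare against; your argument is the natural one and is consistent with how the paper treats the surrounding material (e.g.\ the proof of Proposition~\ref{prop:cyc1} and the discussion in Remark~\ref{rem:smallCovRadHamming}). The two key moves are both sound: the decomposition $\C_i=W_i+\C$ correctly avoids invoking Lemma~\ref{lem:largeDeltaDistTrans} (which would need $e\geq 2$), and the hypothesis $\gcd(d,q-1)=1$ is used exactly where it must be, to make the induced monomial scalings $\lambda\mapsto\lambda^d$ surjective on $\F_q^\times$ (independently at two coordinates, via the $2$-transitivity of $\PGL_t(q)$ on points and diagonal elements fixing two independent vectors up to scalars).
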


\begin{remark}
 The conclusion of Proposition~\ref{prop:moduleCodesAffineTopBinary} is generally false if we instead consider $q\geq 3$, as is the conclusion of Proposition~\ref{prop:moduleCodesProjective} when $\gcd(d,q-1)\neq 1$; see \cite{hawtin2023alphabetaffine}, or \cite[Proposition~9.1.9]{DanHphdthesis}. Note also that the proofs of each of the propositions stated in this section proceed identically to the proofs of the respective references from \cite{DanHphdthesis}, though the statements here are more general than there. %\textcolor{blue}{DH: I've currently included references to my thesis for all these results, do you think I should? CP: Yes good idea, as your thesis should be available - can you add to [46] a weblink to where someone could download your thesis? I moved this remark to after the two results - as it was confusing reading it before knowing what it was talking about. DH: Added DOI with download link to bibliography}
\end{remark}

%\textcolor{blue}{CP: I'm really confused. Why can't $\rho$ be larger than $2$? If this is really true that $\rho=2$ is needed then why do you assume just $\rho\geq2$ in the previous two results? And are you saying that all completely transitive codes mst have $\rho=2$? DH: yes, it should have been $\rho\geq 2$. Actually now I've answered this and added a lemma above}

Recalling from Definition~\ref{def:projReedMuller} that elements of $R[d]$ are polynomial functions, we can obtain further examples of $2$-neighbour-transitive codes by appropriate restrictions of the domain of these functions. For the first examples, we embed an affine space into a projective space and restrict to the $1$-dimensional subspaces corresponding to the affine points.

\begin{proposition}\cite{hawtin2023alphabetaffine} (or see \cite[Proposition~9.3.3]{DanHphdthesis})\label{prop:moduleCodesAffine}
 Let $d\in\{0,1,\ldots,q-1\}$ with $\gcd(d,q-1)=1$, and fix an embedding of $\AG_{t-1}(q)$ into $\PG_{t-1}(q)$, the points of which are the $1$-dimensional subspaces of $\F_q^t$. Let $\N$ be a set of representatives for the $1$-dimensional subspaces corresponding to the points of $\AG_{t-1}(q)$ and let $\C$ be an $\F_q\GL_t(q)$-submodule of $R[d]$ such that $\C$ is a code with covering radius $\rho\geq 2$ in $H(\N,\F_q)$. Then $\C$ is $(G,2)$-neighbour-transitive, where $G=T_{\C}\rtimes (\F_q^\times\times \AGL_{t-1}(q))$. %\textcolor{blue}{Need to check the group here later, I'm confusing myself right now. CP: Hve to rely on you for these. I haven't seen these results. The group is believable. }
\end{proposition}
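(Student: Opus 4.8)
The plan is to follow the same strategy used for Propositions~\ref{prop:moduleCodesAffineTopBinary} and~\ref{prop:moduleCodesProjective}, adapting to the fact that here $\N$ indexes the points of an \emph{affine} subspace $\AG_{t-1}(q)$ sitting inside $\PG_{t-1}(q)$, so the relevant top group is not all of $\GL_t(q)$ but the stabiliser in $\PGL_t(q)$ (lifted appropriately) of a hyperplane at infinity, which is $\F_q^\times\times\AGL_{t-1}(q)$. First I would verify that $G=T_{\C}\rtimes(\F_q^\times\times\AGL_{t-1}(q))\leq \Aut(\C)$: since $\C$ is linear, $T_\C\leq\Aut(\C)$; since $\C$ is an $\F_q\GL_t(q)$-submodule of $R[d]$, every element of $\GL_t(q)$ permutes the polynomials of $\C$ among themselves, and in particular so does the subgroup fixing the chosen hyperplane at infinity setwise, which acts on $\N$ as $\AGL_{t-1}(q)$; the scalar group $\F_q^\times$ acts on $R[d]$ by the character $a\mapsto a^d$ and hence preserves each submodule. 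One must check these maps are genuine automorphisms of the Hamming graph $H(\N,\F_q)$ (monomial-type), which is routine from the description of $\Aut(H(\N,q))=B\rtimes L$ and the condition $\gcd(d,q-1)=1$ (so that the scalar action on the alphabet $\F_q$ via $a\mapsto a^d$ is a permutation — indeed $\F_q^\times\to\F_q^\times$, $a\mapsto a^d$, is a bijection).

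The heart of the argument is to apply Lemma~\ref{lem:largeDeltaDistTrans}(2) with $s=2$: it suffices to show that $G$ is transitive on $\C$ and that the stabiliser $G_{\b 0}$ of the zero codeword is transitive on each of $\Gamma_1(\b 0)$ and $\Gamma_2(\b 0)$, where $\b 0$ is the zero polynomial. Transitivity on $\C$ is immediate from $T_\C\leq G$. We have $G_{\b 0}=\F_q^\times\times\AGL_{t-1}(q)$. Now $\Gamma_1(\b 0)$ consists of vectors $a e_i$ with $a\in\F_q^\times$ and $i\in\N$; the group $\AGL_{t-1}(q)$ acts transitively on $\N$ (the affine points), and $\F_q^\times$ acts on the nonzero scalar $a$ via $a\mapsto a^d$, which is transitive on $\F_q^\times$ by the coprimality hypothesis, so $G_{\b 0}$ is transitive on $\Gamma_1(\b 0)$. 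The key step is transitivity on $\Gamma_2(\b 0)$: these are the weight-$2$ vectors $a e_i + b e_j$ with $i\neq j\in\N$ and $a,b\in\F_q^\times$. Here I would use that $\AGL_{t-1}(q)$ is $2$-transitive on the affine points $\N$ to move $\{i,j\}$ to any other pair, and then analyse the residual action of the stabiliser $\AGL_{t-1}(q)_{i,j}$ together with $\F_q^\times$ on the pair of nonzero scalars $(a,b)$. This is exactly the point where the embedding geometry enters: one must show the line through the two affine points $i,j$ meets the hyperplane at infinity in a point, and the subgroup of $\AGL_{t-1}(q)$ fixing $i$ and $j$ induces, together with the global scalar, enough of a transitive action on pairs $(a,b)\in(\F_q^\times)^2$; one shows this by exhibiting that the pointwise stabiliser of the line $ij$ restricted to that line gives the dilation group fixing two points, i.e. all of $\F_q^\times$ scaling from one fixed point, which combined with the global scalar yields transitivity on $(\F_q^\times)^2$. (Alternatively, and perhaps more cleanly, one can invoke exactly the same local computation as in the proof of Proposition~\ref{prop:moduleCodesProjective} — which treats the full projective case — restricted to the affine chart, since $\AGL_{t-1}(q)$ contains the stabiliser of a point at infinity in $\PGL_t(q)$ acting on the remaining points.)

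The anticipated main obstacle is precisely the $\Gamma_2(\b 0)$-transitivity: one must be careful that restricting from $\PG_{t-1}(q)$ to an affine chart does not destroy the transitivity of the scalar action on pairs of nonzero entries at two distinct points. In the full projective setting (Proposition~\ref{prop:moduleCodesProjective}) the two points together with the scalars are handled by a combination of $\GL_t(q)$-transitivity on pairs of $1$-spaces and the residual torus action; in the affine restriction the corresponding group is $\AGL_{t-1}(q)\rtimes\F_q^\times$ and I would need to check that the stabiliser of two affine points still induces, on the $2$-dimensional coordinate subspace spanned by $e_i,e_j$, a group acting transitively on $(\F_q^\times)^2$. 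This follows because the group of affine transformations fixing two distinct affine points $i,j$ and fixing the line through them contains all maps $x\mapsto \lambda(x-i)+i$ with $\lambda\in\F_q^\times$ (restricted to that line), which scales the $e_j$-entry by $\lambda$ while fixing the $e_i$-entry; composing with the global scalar $a\mapsto a^d$ on both entries (a bijection of $\F_q^\times$ since $\gcd(d,q-1)=1$) then gives transitivity on all of $(\F_q^\times)^2$. Once local transitivity on $\Gamma_1(\b 0)$ and $\Gamma_2(\b 0)$ is established, Lemma~\ref{lem:largeDeltaDistTrans} — whose hypotheses require $e\geq 2$, which is guaranteed since $\delta\geq 5$ on the codes of interest, or more precisely one only needs $\rho\geq 2$ together with $\delta\geq 5$ to apply it, exactly as in the cited references — yields that $\C$ is $(G,2)$-neighbour-transitive, completing the proof. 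Since the argument runs parallel to the corresponding proof in \cite{DanHphdthesis}, I would state it concisely and point to that reference for the routine verifications.
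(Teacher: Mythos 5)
Your overall skeleton is the right one, and it is the one the paper itself points to (see Remark~\ref{rem:smallCovRadHamming}): verify $G\leq\Aut(\C)$, get transitivity on $\C$ from $T_\C$, establish transitivity of $G_{\b 0}$ on $\Gamma_1(\b 0)$ and $\Gamma_2(\b 0)$, and then pass to $(G,2)$-neighbour-transitivity. A minor issue first: Lemma~\ref{lem:largeDeltaDistTrans} with $s=2$ requires $e\geq 2$, i.e.\ $\delta\geq 5$, which is not a hypothesis of the proposition; the correct bridge from local transitivity to $2$-neighbour-transitivity under the stated hypothesis $\rho\geq 2$ is Proposition~\ref{prop:coveringRadiusLessThan2} together with Remark~\ref{rem:smallCovRadHamming}, and you should not quietly assume $\delta\geq 5$. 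Your treatment of $\Gamma_1(\b 0)$ (transitivity of $\AGL_{t-1}(q)$ on $\N$ plus surjectivity of $a\mapsto a^d$ on $\F_q^\times$) is fine.

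The serious gap is the transitivity of $G_{\b 0}$ on $\Gamma_2(\b 0)$, exactly the step you identify as the crux. The dilation $x\mapsto\lambda(x-i)+i$ fixes $i$ but sends $j$ to $\lambda(j-i)+i$, so for $\lambda\neq 1$ it does not stabilise the pair of entries $\{i,j\}$; and, being (up to a global scalar) a permutation of the entries, it maps $ae_i+be_j$ to $ae_i+be_{j'}$ --- it moves supports, it never rescales one value relative to the other. More structurally: with the natural representatives $(u,1)$, $u\in\F_q^{t-1}$, any $g$ in the stabiliser in $\GL_t(q)$ of the hyperplane at infinity has block form $\begin{pmatrix}A&0\\c&\epsilon\end{pmatrix}$, and if it fixes the affine $1$-space $\langle(u,1)\rangle$ then $(u,1)g=\epsilon(u,1)$ --- the eigenvalue is the same $\epsilon$ at every fixed affine point. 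Consequently every element of $\F_q^\times\times\AGL_{t-1}(q)$ that fixes two entries $i,j$ multiplies the two values by one common $d$-th power, so the ratio of the two nonzero entries of a weight-$2$ vector is preserved up to inversion, and $G_{\b 0}$ has at least two orbits on $\Gamma_2(\b 0)$ for every $q\geq 3$ (for instance $e_i+e_j$ and $e_i+2e_j$ cannot be in the same orbit when $q=3$). This is precisely where the affine case differs from Proposition~\ref{prop:moduleCodesProjective}: there the two-point stabiliser in $\GL_t(q)$ contains $\mathrm{diag}(\lambda,\mu,1,\dots,1)$, which rescales the two values independently, and that torus is exactly what is lost on passing to the stabiliser of a hyperplane at infinity --- so your fallback of ``restricting the projective computation to the affine chart'' fails for the same reason. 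As it stands your argument would, for example, assert the needed transitivity for the $[9,3,6]_3$ code of affine functions on $\F_3^2$ (from $\langle x_1,x_2,x_3\rangle\leq R[1]$ with $t=3$, $q=3$, $d=1$), where it visibly fails; a genuinely different idea, or a different reading of the group $G$, is required here, and you should consult the cited sources to see how this point is actually handled.
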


It is worth briefly comparing the automorphism groups of the codes in Definition~\ref{def:genReedMuller} with the codes occurring in Proposition~\ref{prop:moduleCodesAffine}. There is a subgroup $\AGL_{t-1}(q)$ appearing in the automorphism group of the code $\RM_q(k,t-1)$ in $H(q^{t-1},q)$, and also inside the automorphism group of any code in $H(q^{t-1},q)$ arising as an $\F_q\GL_t(q)$-submodule of $R[d]$ as in Proposition~\ref{prop:moduleCodesAffine}. Moreover, $\AGL_{t-1}(q)$ acts faithfully on the entries of the Hamming graph in each case. However, the actions are not the same: in the former case $\AGL_{t-1}(q)$ occurs as a subgroup of the top group $L$, while this is not true in the latter case. This is the key difference that allows Proposition~\ref{prop:moduleCodesAffine} to be proved for more general values of $q$, while $q=2$ in Proposition~\ref{prop:moduleCodesAffineTopBinary}.

%\textcolor{blue}{CP: I'm confused by the discussion above. Does `each code' refer to ``each of the latter codes'? It really isn't clear. Surely not to the codes in Proposition~\ref{prop:moduleCodesAffineTopBinary}. Maybe start a new sentence. And say `Each of the latter codes lies in ...' DH: Had another go, maybe it is ok now. CP: looks good now.}

Next, we present two further infinite families of $2$-neighbour-transitive codes, the first arising from the Suzuki--Tits ovoids and the second from classical unitals. The Suzuki group $\Sz(q)$, where $q=2^{2f+1}$ for some positive integer $f$, acts $2$-transitively on the Suzuki--Tits ovoid consisting of $q^2+1$ points of the projective space $\PG_3(q)$, no three of which are collinear; see \cite[p.~250]{DM1996}. The unitary group $\PGU_3(q)$ acts $2$-transitively on the unital consisting of the $q^3+1$ isotropic points of $\PG_2(q^2)$ under a non-degenerate Hermitian form; see \cite[p.~248]{DM1996}.

%\textcolor{blue}{CP: this raises the question about hw much to say about the Suzuki Tits ovoid. We need at least a reference - probably also saying that it is contained in $PG_3(q))$ with $q=2^{2f+1}$, and that it consists of $q^2+1$ projective points, no three collinear, and has automorphism group $Sz$. Smilarly what is a unital. DH: added some background}

\begin{proposition}\cite{hawtin2023alphabetaffine} (or see \cite[Proposition~9.4.6]{DanHphdthesis})\label{prop:moduleCodesSuzuki}
 Let $q=2^{2f+1}$, let $d\in\{0,1,\ldots,q-1\}$ with $\gcd(d,q-1)=1$, and let $\N$ be a set of representatives in  $\F_q^4$ for the $1$-dimensional subspaces corresponding to the points of the Suzuki--Tits ovoid in $\PG_3(q)$. Furthermore, let $\C$ be an $\F_q\GL_t(q)$-submodule of $R[d]$ such that $\C$ is a code with covering radius $\rho\geq 2$ in $H(\N,\F_q)$. Then $\C$ is $(G,2)$-neighbour-transitive, where $G=T_{\C}\rtimes (\F_q^\times\rtimes \Sz(q))$. 
\end{proposition}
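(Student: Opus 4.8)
The plan is to mimic the structure of the three preceding propositions (Propositions~\ref{prop:moduleCodesProjective}--\ref{prop:moduleCodesAffine}), since the only new ingredient here is the $2$-transitive action of $\Sz(q)$ on the Suzuki--Tits ovoid replacing the earlier transitive/$2$-transitive actions. First I would set $\Gamma=H(\N,\F_q)$ with $\N$ the chosen set of representatives for the $1$-dimensional subspaces corresponding to the $q^2+1$ points of the ovoid $\O\subseteq\PG_3(q)$, and let $G=T_{\C}\rtimes(\F_q^\times\rtimes\Sz(q))$, where $\F_q^\times$ acts as scalars on $\F_q^4$ (fixing each $1$-space, hence each entry) and $\Sz(q)\le\GL_4(q)$ stabilises $\O$ setwise, hence permutes $\N$. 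One checks that $G\le\Aut(\C)$: the translations $T_{\C}$ act because $\C$ is linear; the scalar group $\F_q^\times$ and $\Sz(q)$ act because $\C$ is an $\F_q\GL_4(q)$-submodule of $R[d]$ and the action of $\GL_4(q)$ on $R[d]$ restricts to these subgroups (the scalars act on $R[d]$ by the character $a\mapsto a^d$, but since $\gcd(d,q-1)=1$ this is still a permutation of $\F_q^\times$, so each scalar gives a genuine automorphism of $\Gamma$ fixing $\C$; alternatively one argues as in Proposition~\ref{prop:moduleCodesProjective} that the projective action suffices).

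The substantive content is to verify the two transitivity conditions of Lemma~\ref{lem:largeDeltaDistTrans}(2) for $s=2$, namely that $G$ is transitive on $\C$ and that, for $\alpha\in\C$, the stabiliser $G_\alpha$ is transitive on both $\Gamma_1(\alpha)$ and $\Gamma_2(\alpha)$. Transitivity on $\C$ is immediate from $T_{\C}\le G$. For the local conditions we may take $\alpha=\b 0$, and then $G_{\b 0}\ge\F_q^\times\rtimes\Sz(q)$. Now $\Gamma_1(\b 0)$ is the set of weight-one vertices, indexed by pairs (entry in $\N$, nonzero value in $\F_q$); since $\Sz(q)$ is transitive on $\N$ (indeed $2$-transitive on $\O$, hence certainly transitive) and $\F_q^\times$ acts transitively on the nonzero values at a fixed entry, $G_{\b 0}$ is transitive on $\Gamma_1(\b 0)$. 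For $\Gamma_2(\b 0)$, the weight-two vertices are indexed by (unordered pair of distinct entries $\{i,j\}$, pair of nonzero values); the $2$-transitivity of $\Sz(q)$ on $\O$ gives transitivity of $\Sz(q)$ on ordered pairs of distinct entries, and then the scalar group handles one of the two nonzero values simultaneously at both entries --- but one must still move the two values \emph{independently}. This is exactly the point where the hypothesis $\gcd(d,q-1)=1$ and the structure of the stabiliser in $\Sz(q)$ of an ordered pair of ovoid points comes in: as in the proof of Proposition~\ref{prop:moduleCodesAffine}, one shows that the pointwise stabiliser in $G_{\b 0}$ of the two entries $i,j$ induces on the two alphabet copies $\Q_i^\times\times\Q_j^\times$ a subgroup large enough to be transitive, using that $\Sz(q)$ acting on $\O$ is sharply $2$-transitive after removing a cyclic/Frobenius-type subgroup, so the residual action supplies the needed independent scaling. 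Here the condition $\delta(\C)\ge 3$ (equivalently $e\ge 1$), guaranteeing that Lemma~\ref{lem:largeDeltaDistTrans} applies and that $\Gamma_2(\b 0)\subseteq\C_2$, follows from the assumption $\rho\ge2$ together with the submodule (hence nontrivial, proper) hypothesis exactly as in the earlier propositions.

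The main obstacle I anticipate is precisely the $\Gamma_2(\b 0)$ step: establishing that the stabiliser in $G_{\b 0}$ of an unordered pair $\{i,j\}\subseteq\N$ acts transitively on $\Q_i^\times\times\Q_j^\times$ (equivalently, that one can independently rescale the two nonzero coordinates). The scalar subgroup $\F_q^\times$ alone only gives the diagonal action $a\mapsto(a,a)$ (twisted by the exponent $d$), so the extra freedom must come from within $\Sz(q)$: one needs the two-point stabiliser $(\Sz(q))_{p_i,p_j}$ (a cyclic group of order $q-1$ acting on the ovoid) to act on the pair of $1$-spaces, and on the coordinates realising them inside $\F_q^4$, by a pair of characters whose combination with the scalar characters spans all of $\widehat{\F_q^\times}\times\widehat{\F_q^\times}$ --- this is where $\gcd(d,q-1)=1$ is used, and where one imports the detailed description of the $\Sz(q)$-action from \cite{DM1996} (and the parallel computation in \cite{hawtin2023alphabetaffine} / \cite[Proposition~9.4.6]{DanHphdthesis}). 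Once this local transitivity is in hand, Lemma~\ref{lem:largeDeltaDistTrans} delivers $(G,2)$-neighbour-transitivity directly, completing the proof.
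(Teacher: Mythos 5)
Your overall strategy is the right one and matches the framework the chapter sets up for Propositions~\ref{prop:moduleCodesAffineTopBinary}--\ref{prop:moduleCodesUnitary} (the chapter itself defers the actual proof to \cite{DanHphdthesis}): show $G\leq\Aut(\C)$, get transitivity on $\C$ from $T_{\C}$, use the $2$-transitivity of $\Sz(q)$ on the ovoid to handle the entries, and locate the crux in showing that the stabiliser of two entries acts transitively on $\Q_i^\times\times\Q_j^\times$. Your analysis of that crux is essentially correct: the two-point stabiliser in $\Sz(q)$ is a torus of order $q-1$ scaling the two fixed representative vectors by $\lambda^{c}$ and $\lambda^{-c}$ with $\gcd(c,q-1)=1$, so after the $d$-th power twist, and together with the scalar subgroup, one obtains the subgroup $\{(uv,u^{-1}v)\}$ of $\F_q^\times\times\F_q^\times$, which is everything because $q-1$ is odd and squaring is a bijection. (Your phrase ``sharply $2$-transitive after removing a cyclic/Frobenius-type subgroup'' is garbled --- $\Sz(q)$ is $2$-transitive with cyclic two-point stabiliser of order $q-1$, and it is precisely that stabiliser, not its removal, that supplies the extra scaling --- but you state the correct fact immediately afterwards.)

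The genuine gap is your final step. Lemma~\ref{lem:largeDeltaDistTrans} requires $s\leq e$, so for $s=2$ it needs $\delta\geq 5$; this is not a hypothesis of the proposition and cannot be assumed (the minimum distances of these submodule codes are an open problem, Problem~\ref{prob:submoduleParams}). Your claim that $\delta\geq 3$ already gives $\Gamma_2(\b 0)\subseteq\C_2$ is false: when $\delta=3$ a weight-two vertex can be adjacent to a weight-three codeword, which is exactly what happens for a Hamming code, where $\Gamma_2(\b 0)\subseteq\C_1$. The repair comes not from Lemma~\ref{lem:largeDeltaDistTrans} but from linearity: since $T_{\C}\leq G$, every element of $\C_i$ (for $i=1,2$) is a translate of an element of $\Gamma_i(\b 0)\cap\C_i$, and since $G_{\b 0}$ is transitive on $\Gamma_i(\b 0)$ and preserves the distance partition, $\Gamma_i(\b 0)$ lies entirely inside a single cell $\C_j$. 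The hypothesis $\rho\geq 2$ then forces $\Gamma_2(\b 0)\subseteq\C_2$ (otherwise $\C_2=\emptyset$) and $\Gamma_1(\b 0)\subseteq\C_1$ (otherwise $\C$ contains every weight-one vector and hence equals the whole space), whence $\C_i=\bigcup_{\alpha\in\C}\left(\alpha+\Gamma_i(\b 0)\right)$ is a single $G$-orbit for $i=1,2$. This translation-plus-dichotomy argument is precisely the role that Proposition~\ref{prop:coveringRadiusLessThan2} and Remark~\ref{rem:smallCovRadHamming} play in the chapter; without it your proof does not close for codes of minimum distance $3$ or $4$.
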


%\textcolor{blue}{CP: usually a Suzuki-Tits ovoid is defined in $PG_3(q)$ rather than in $F_q^4$. Also doesn't a unital lie in $PG_2(q^2)$? DH: changed}

\begin{proposition}\cite{hawtin2023alphabetaffine} (or see \cite[Proposition~9.4.8]{DanHphdthesis})\label{prop:moduleCodesUnitary}
 Let $q=2^f$, let $d\in\{0,1,\ldots,q-1\}$ with $\gcd(d,q-1)=1$, and let $\N$ be a set of representatives in $\F_{q^2}^3$ for the $1$-dimensional subspaces corresponding to the points of the classical unital in $\PG_2({q^2})$. Furthermore, let $\C$ be an $\F_q\GL_t(q)$-submodule of $R[d]$ such that $\C$ is a code with covering radius $\rho\geq 2$ in $H(\N,\F_q)$. Then $\C$ is $(G,2)$-neighbour-transitive, where $G=T_{\C}\rtimes (\F_q^\times\rtimes \PGU_3(q))$. 
\end{proposition}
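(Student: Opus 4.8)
The plan is to follow the template used for Propositions~\ref{prop:moduleCodesAffine}--\ref{prop:moduleCodesSuzuki}, the argument running entirely parallel to the Suzuki--Tits case (Proposition~\ref{prop:moduleCodesSuzuki}) with the Hermitian unital geometry replacing the ovoid geometry. First I would check that $G=T_\C\rtimes(\F_q^\times\rtimes\PGU_3(q))$ really is a subgroup of $\Aut(\C)$. Linearity of $\C$ (it is a submodule of $R[d]$) gives $T_\C\leq\Aut(\C)$ acting transitively on $\C=\C_0$. The group $\PGU_3(q)$ acts on $\N$ through its $2$-transitive action on the points of the classical unital in $\PG_2(q^2)$, permuting the chosen $1$-dimensional representatives up to scalars, while $\F_q^\times$ acts as the diagonal scalars $\Diag_n(\F_q^\times)$ coming from the scalar matrices; the hypothesis $\gcd(d,q-1)=1$ ensures that $\lambda\mapsto\lambda^d$ is a bijection of $\F_q^\times$, so that these scalar matrices induce the full diagonal group on the alphabet. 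Invariance of $\C$ under this group is immediate from $\C$ being a submodule of $R[d]$ for the relevant general linear group.

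Next comes the standard reduction. Since $\C$ is linear, each set $\C_i$ is $T_\C$-invariant and is a union of cosets of $\C$, each such coset being a single $T_\C$-orbit on $V(\Gamma)$; as $G=T_\C\rtimes G_{\b 0}$ and $T_\C$ fixes every coset setwise, $G$ is transitive on $\C_i$ if and only if $G_{\b 0}=\F_q^\times\rtimes\PGU_3(q)$ is transitive on the set of cosets of $\C$ of minimum weight $i$. Moreover a minimum-weight-$i$ coset has all of its minimum-weight representatives of weight exactly $i$, and if $G_{\b 0}$ maps one weight-$i$ vector to another then it maps the coset generated by the first to the coset generated by the second; hence it suffices to show that $G_{\b 0}$ is transitive on the set of weight-$1$ vectors of $H(\N,\F_q)$ and on the set of weight-$2$ vectors. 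No lower bound on $\delta(\C)$ is needed for this, and the hypothesis $\rho\geq 2$ enters only to guarantee $\C_2\neq\emptyset$ so that $2$-neighbour-transitivity is meaningful in the sense of Definition~\ref{defsneighbourtrans} (the cases $\rho\leq 1$ are discussed in Remark~\ref{rem:smallCovRadHamming}).

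For a weight-$1$ vector $ae_P$ (with $P\in\N$, $a\in\F_q^\times$), transitivity of $\PGU_3(q)$ on the unital moves $P$ to any prescribed point, scaling the alphabet value by some fixed scalar that the diagonal group $\F_q^\times$ then corrects; so $G_{\b 0}$ is transitive on weight-$1$ vectors and $\C$ is $G$-neighbour-transitive. For a weight-$2$ vector $ae_P+be_Q$ (with $P\neq Q$), $2$-transitivity of $\PGU_3(q)$ on the unital moves the ordered pair $(P,Q)$ to any prescribed one, and it remains to show that the induced stabiliser of an ordered pair of unital points, together with $\F_q^\times$, acts transitively on the pairs $(a,b)\in(\F_q^\times)^2$ of non-zero alphabet values at those two positions. \emph{This is the crux of the proof}: one must examine the two-point stabiliser in $\PGU_3(q)$ --- essentially a maximal torus together with the scalars it induces on the two fixed representatives --- and verify, using $\gcd(d,q-1)=1$ once more, that together with the diagonal $\F_q^\times$ it realises all of $(\F_q^\times)^2$, the geometric input being the structure of a secant line to the unital (which meets it in $q+1$ points) and the action of the stabiliser of two of its points on the associated representatives. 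I expect this to be the main obstacle, the remaining steps being routine and identical in form to the cited references; once it is settled, the reduction above yields that $\C$ is $(G,2)$-neighbour-transitive.
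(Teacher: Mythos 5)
The chapter gives no proof of Proposition~\ref{prop:moduleCodesUnitary} (it is deferred to \cite{hawtin2023alphabetaffine} and \cite[Proposition~9.4.8]{DanHphdthesis}), but Remark~\ref{rem:smallCovRadHamming} makes the intended strategy clear, and it is the one you outline: show that $G_{\b 0}=\F_q^\times\rtimes\PGU_3(q)$ is transitive on the weight-$1$ and the weight-$2$ vertices of $H(\N,\F_q)$, and transfer this to transitivity on $\C_1$ and $\C_2$ through linearity. Your coset reduction is correct and is the right substitute for Lemma~\ref{lem:largeDeltaDistTrans}, which is unavailable here because no lower bound on $\delta(\C)$ is assumed; your handling of the weight-$1$ case and of the role of the hypothesis $\rho\geq 2$ is also fine.

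The difficulty is that the proposal stops exactly where the proof begins. The claim that the two-point stabiliser, together with $\Diag_n(\F_q^\times)$, realises all of $(\F_q^\times)^2$ on the two alphabet values is announced as ``the crux'' but never established, and it is not routine. The stabiliser in $\PGU_3(q)$ of two unital points is cyclic of order $q^2-1$, induced by torus elements ${\rm diag}(\alpha,\beta,\alpha^{-q})$ in a hyperbolic basis; these scale the two chosen representatives by $\alpha$ and $\alpha^{-q}$, which lie in $\F_{q^2}^\times$ and in general \emph{not} in $\F_q^\times$. Since elements of $R[d]$ are homogeneous only with respect to $\F_q^\times$-scalars, the induced effect on the two alphabet values is not simply $(\alpha^{-d},\alpha^{qd})$: one must track how the images of the representatives are re-normalised back into $\N$, and it is precisely here that $q=2^f$, $\gcd(d,q-1)=1$ and the particular choice of $\N$ must enter. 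A count shows only that the group available has order $(q^2-1)(q-1)=(q-1)^2(q+1)$ against an orbit of required size $(q-1)^2$, so the statement is numerically plausible — but plausibility is all your sketch delivers, and without the explicit computation the transitivity on weight-$2$ vectors, and hence the $2$-neighbour-transitivity of $\C$, remains unproved.
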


\begin{remark}\label{rem:smallCovRadHamming}
 Note that for a code to be $2$-neighbour-transitive it must have covering radius at least $2$, hence this is an assumption in all the propositions of this section. However, since in each of the Propositions~\ref{prop:moduleCodesAffineTopBinary}--\ref{prop:moduleCodesUnitary} the group $G$ acts transitively on both $\Gamma_1({\b 0})$ and $\Gamma_2({\b 0})$, it follows from Proposition~\ref{prop:coveringRadiusLessThan2} that any non-trivial code $\C$ arising from the respective submodule in the relevant Hamming graph, but having covering radius $\rho\leq 1$, is either perfect with $(\rho,\delta)=(1,3)$, or has $\rho=1$ and $\delta=2$. Moreover, if $\Gamma=H(n,q)$ then either $\Gamma_1({\b 0})$ and $\Gamma_2({\b 0})$ both contain no pairs of adjacent vertices (when $q=2$) or each contains a pair of adjacent vertices (when $q\geq 3$). This implies that Proposition~\ref{prop:coveringRadiusLessThan2}(3)(b) does not occur. In particular, any relevant (linear) code with covering radius $\rho\leq 1$ is either a perfect Hamming code\footnote{To see this: by \cite[Theorem~37, Chapter~6] {macwilliams1978theory} a perfect linear code $\C$ with covering radius $1$ in $H(n,\F_q)$ necessarily has length $n=(q^k-1)/(q-1)$, dimension $k$ and minimum distance $3$. The condition `minimum distance $3$' implies that each column of a parity-check matrix $H$ for $\C$ is non-zero, and no pair of columns of $H$ is linearly dependent. This implies that the columns of $H$ are a set of representatives for the $1$-dimensional subspaces of $\F_q^k$, {\em i.e.}, that $\C$ is a Hamming code.} or is the binary repetition code, and is thus known.
\end{remark}

Assmus and Key \cite[Section~5.7]{assmus1994designs} construct and analyse the \emph{subfield subcodes} of the generalised and projective Reed--Muller codes. Further examples of $2$-neighbour-transitive codes may be obtained in a similar manner from submodules of $R$ and $R[d]$; see \cite{hawtin2023alphabetaffine} or \cite[Section~9.5]{DanHphdthesis}.

The assumption in Propositions~\ref{prop:moduleCodesAffineTopBinary}--\ref{prop:moduleCodesUnitary} that $\C$ is an $\F_q\GL_t(q)$-submodule of $R[d]$ is more restrictive than necessary. However we have stated the results in this way because, although the lattice of $\F_q\GL_t(q)$-submodules of $R[d]$ is known (see \cite{sin2000permutationmodules}), we wonder whether there may be additional subspaces invariant under the subgroups of $\GL_t(q)$ occurring in these propositions, which may yield new interesting codes.

\begin{problem}\label{prob:2NTmodules}
 Determine more information about the $G_{\b 0}$-submodule structure of $R[d]$, where $G_{\b 0} = \F_q^\times\times \AGL_{t-1}(q)$, $ \F_q^\times\rtimes \Sz(q)$ or $\F_q^\times\rtimes \PGU_3(q)$ in Proposition~\ref{prop:moduleCodesSuzuki},  \ref{prop:moduleCodesAffine} or~\ref{prop:moduleCodesUnitary}, respectively.
\end{problem}

As alluded to earlier, a reasonable amount is known about the minimum distance of each of the generalised and projective Reed-Muller codes, see \cite[Section~5.5]{assmus1994designs}. It would be nice to have similar results for the other codes  discussed in this section arising from submodules.

\begin{problem}\label{prob:submoduleParams}
 Find the minimum distances of the codes from submodules of $R$ and $R[d]$  in Propositions~\ref{prop:moduleCodesAffine}--\ref{prop:moduleCodesUnitary}.
\end{problem}

It should be remarked also that some of the codes in this section, and their subfield subcodes, are related to codes arising from incidences between subspaces of differing dimensions in projective and affine geometries; see \cite[Section~5.6]{assmus1994designs} and \cite[Section~8]{sin2000permutationmodules}.
Interesting work has been done concerning the geometric structure of codewords of low weight in some of these cases; see, for example, \cite{adriansen2021smallweight,deboeck2012smallweight,lavrauw2008incidence}.

%\section{Johnson graphs} \textcolor{blue}{CP: I think we decided not to have this section. Yes?}

\section{Codes in Kneser graphs}\label{sec:KneserSection}

%\textcolor{blue}{CP: Not sure what you want to achieve with this section. Perhaps I'll wait till you do some work on it? There's no narrative for me to work with.}

Neighbour-transitive and $2$-neighbour-transitive codes have recently been studied in the Kneser graphs \cite{crnkovic2022kneser}. The next result is a classification of $2$-neighbour-transitive codes $\C$ in Kneser graphs with minimum distance $\delta(\C)\geq 5$. Recall from Definition~\ref{defKneser} that in a Kneser graph $K(\V,k)$, the cardinality $v=|\V|$ is at least $2k+1$. Note also that, for $|\V|=23$, by an \emph{endecad} we mean a subset of $\V$ such that its characteristic vector corresponds to a weight $11$ codeword of the perfect binary Golay code in the Hamming graph $H(\V,2)$ (see \cite[Page 71]{conway1985atlas}).

\begin{theorem}\cite[Theorem~1.2]{crnkovic2022kneser}\label{thm:Kneser2NT}
 Let $\C$ be a $2$-neighbour-transitive code in $\Gamma=K(v,k)$ with minimum distance $\delta\geq 5$. Then $v=2k+1$, and hence $\Gamma$ is the odd graph $O_{k+1}$, and one of the following holds.
 \begin{enumerate}[$(1)$]
  \item $\Aut(\C)\cong\mg_{23}$ with $v=23$ and $\C$ consists of the endecads.
  \item $\Aut(\C)\cong\PGaL_d(2)$, where $d\geq 5$, $\Omega$ is the set of all points, and $\C$ is the set of all hyperplanes, in $\PG_{d-1}(2)$.
 \end{enumerate}
\end{theorem}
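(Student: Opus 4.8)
The plan is to combine the $2$-transitive structure theory for Hamming codes with the translation between Kneser-graph geometry and Hamming-graph weights. First I would invoke Theorem~\ref{thm:Kneser2NT}'s hypothesis to get the basic shape: since $\delta(\C)\geq 5$ and $\C$ is $2$-neighbour-transitive in $K(v,k)$, I would show $v=2k+1$ (so $\Gamma=O_{k+1}$) by an argument on distances. The key point is that in $K(v,k)$ with $v>2k+1$ there exist vertices at distance $2$ that share many points, and transitivity on $\C,\C_1,\C_2$ forces a codeword to be adjacent in $K$ to vertices of several incompatible `types' (in the sense of Lemma~\ref{invlemma}); alternatively, the diameter and intersection pattern of $K(v,k)$ for $v>2k+1$ is too small to support $\delta\geq 5$ together with the required transitivity. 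So the first real step is: \emph{reduce to the odd graph} $O_{k+1}$ on $v=2k+1$ points.

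Next I would exploit the well-known correspondence between the odd graph $O_{k+1}$ and the even-weight structure in $H(2k+1,2)$: a $k$-subset of $\V$ and its complement (a $(k+1)$-subset) determine the same `near-antipodal' pair, and paths in $O_{k+1}$ translate into controlled symmetric-difference steps. Concretely, I would identify vertices of $O_{k+1}$ with weight-$k$ vectors in $\F_2^{2k+1}$, observe that $\Aut(O_{k+1})=\Sym(\V)=\s_{2k+1}$, and show that a $2$-neighbour-transitive code $\C$ in $O_{k+1}$ with $\delta\geq 5$ gives rise, via this identification, to a highly symmetric subset of $\binom{\V}{k}$ on which $G=\Aut(\C)\leq \s_{2k+1}$ acts with the stabiliser of a codeword transitive on $\Gamma_1$ and $\Gamma_2$ (Lemma~\ref{lem:largeDeltaDistTrans}, noting $e\geq 2$ as $\delta\geq 5$). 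In particular $G$ is transitive on $\V$, and the codeword-stabiliser is $2$-homogeneous on the $k$-subset and on its complement; pushing this further (as in Proposition~\ref{prop:HammingiHomogeneous}-style reasoning transported to the Johnson/Kneser setting) I expect to force $G$ to be $2$-transitive on $\V$. This is the crux: once $G\leq \s_{2k+1}$ is $2$-transitive on $2k+1$ points, the finite $2$-transitive group classification (as used throughout the chapter, e.g.\ in the proof of Theorem~\ref{thm:CTham}) lists the candidates, and $\delta\geq 5$ plus the $k$-subset structure eliminates all but $\mg_{23}$ (with $k=11$, $v=23$) and $\PSL_d(2)\leq \PGL_d(2)\le\PGaL_d(2)$ acting on the $v=2^d-1$ points of $\PG_{d-1}(2)$ (with $k=2^{d-1}-1$, $v=2^d-1$, so $v=2k+1$ indeed), $d\geq 5$.

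For each surviving group I would then pin down $\C$ exactly. For $\mg_{23}$ on $23$ points: the $G$-orbits on $11$-subsets are classical (the endecads form one orbit, by the Golay/Steiner-system structure, see \cite[Page 71]{conway1985atlas}), and one checks the orbit of endecads is a code with $\delta\geq 5$ in $O_{12}$ (two endecads meet in a set of size that keeps their Kneser-distance $\geq 5$), while no other orbit both is $\Aut$-invariant with $\Aut=\mg_{23}$ and has $\delta\geq 5$; this yields conclusion~(1). For $\PGaL_d(2)$ on $\PG_{d-1}(2)$: the set of hyperplanes is a single $\PGaL_d(2)$-orbit of $(2^{d-1}-1)$-subsets, and two distinct hyperplanes intersect in a $(d-2)$-flat of $2^{d-2}-1$ points, so their symmetric difference — hence Kneser distance once one accounts for the complementation in $O_{k+1}$ — is $2(2^{d-2})$; a short computation shows this is $\geq 5$ exactly when $d\geq 5$ (for $d=4$ one gets $\delta=4$, excluded), giving conclusion~(2). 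I would also confirm in each case that $\Aut(\C)$ is \emph{exactly} $\mg_{23}$, respectively $\PGaL_d(2)$, rather than something larger, using that the full setwise stabiliser in $\s_v$ of the endecads (resp.\ the hyperplanes) is no bigger — standard facts about these configurations.

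The main obstacle I anticipate is the step forcing $2$-\emph{transitivity} (not merely $2$-homogeneity/transitivity) of $G$ on $\V$, and simultaneously ruling out the ``intermediate'' possibilities where $G$ is $2$-homogeneous but the code is not one of the two listed families — the Kneser analogue of the case analysis in Theorem~\ref{thmIntransJohnson} and in the binary Hamming classifications. Handling this cleanly likely requires: (i) using Lemma~\ref{invlemma} with a carefully chosen $G$-invariant type map on vertices of $O_{k+1}$ to bound how many intersection-sizes a codeword can see among its neighbours and second-neighbours, thereby constraining $k$ and the intersection numbers of $\C$; and (ii) translating $\delta(\C)\geq 5$ into a statement that any two codewords, viewed as $k$-subsets, have symmetric difference of size $\geq 10$, which is a strong design-like condition that, combined with $2$-homogeneity, should collapse the list. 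Once those constraints are in hand, the finite $2$-transitive group list does the rest, exactly as in the proof of Theorem~\ref{thm:CTham}.
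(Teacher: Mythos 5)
Your skeleton (reduce to the odd graph, translate Kneser distances into intersection sizes of $k$-sets, then classify the groups) resembles the published argument only in its first step. After the reduction to $O_{k+1}$, the actual proof does not re-derive $2$-transitivity of $\Aut(\C)$ on $\V$ and run the $2$-transitive classification from scratch; instead it uses Lemma~\ref{oddtojohnson} to view $\C$ as a \emph{neighbour-transitive code in the Johnson graph} $J(2k+1,k)$ with minimum distance at least $3$, and then invokes the existing classification of neighbour-transitive codes in Johnson graphs \cite{praeger2021codes}, which already disposes of the intransitive, imprimitive and $2$-homogeneous cases. Your route is not impossible in principle, but the step you yourself flag as the crux — forcing $2$-transitivity on $\V$ — is a genuine gap: intransitive and imprimitive actions on $\V$ really do occur for neighbour-transitive codes in odd graphs (Theorems~\ref{thm:KneserIntrans} and~\ref{thm:OddImprim}), and your sketch gives no mechanism by which $\delta\geq 5$ together with transitivity on $\C_2$ eliminates them. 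Likewise the initial reduction to $v=2k+1$ is itself a substantive theorem in the source; a bare diameter argument does not suffice, since for instance $K(2k+2,k)$ still has large diameter.

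Two concrete numerical errors would also derail your case analysis. First, the translation of the hypothesis: in $O_{k+1}$, two $k$-sets with $|\alpha\cap\beta|=k-j$ lie at distance $\min\{2j,\,2(k-j)+1\}$, so $\delta(\C)\geq 5$ is equivalent to $3\leq j\leq k-2$ for every pair of codewords, that is, $6\leq|\alpha\,\triangle\,\beta|\leq 2k-4$. Your claimed condition ``symmetric difference $\geq 10$'' is wrong on both counts: the correct lower bound is $6$ (exactly the $\delta'\geq 3$ of Lemma~\ref{oddtojohnson}), and you omit the upper bound (any two codewords must share at least two points), which is an essential constraint — it is what keeps near-disjoint pairs, which are \emph{close} in the Kneser metric, out of the code. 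Consequently the ``strong design-like condition'' you intend to use to collapse the list of candidates is much weaker than you assume. Second, your verification of the hyperplane family: two hyperplanes of $\PG_{d-1}(2)$ meet in $2^{d-2}-1$ points, so their odd-graph distance is $\min\{2^{d-1},\,2^{d-1}-1\}=2^{d-1}-1$, which is $7$ (not $4$) when $d=4$. Hence $d=4$ cannot be excluded on minimum-distance grounds as you assert, and the bound $d\geq 5$ must come from elsewhere in the analysis. Until the $2$-transitivity step is supplied and these translations are corrected, the proposal does not constitute a proof.
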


Recall from Example~\ref{ex:jk} that any code in a Kneser graph, and hence also any code in an odd graph, may be regarded as a code in a Johnson graph. A similar situation exists  for codes in Johnson graphs which can also be viewed as codes in binary Hamming graphs; and in this case there is a clear connection between the minimum distances of the codes, see \cite[Section 1.3]{neunhoffer2014sporadic}. The following lemma gives a similar relationship between the minimum distances for codes in Kneser graphs and the same codes in the corresponding Johnson graphs. This lemma is key to proving Theorem~\ref{thm:Kneser2NT}, where the proof proceeds by reducing to the case of odd graphs (see \cite[Theorem~3.1]{crnkovic2022kneser}) and then applying results about neighbour-transitive codes in Johnson graphs \cite{praeger2021codes} along with the following Lemma~\ref{oddtojohnson}.

\begin{lemma}\cite[Lemma~3.2]{crnkovic2022kneser}\label{oddtojohnson}
 Let $\C$ be a $2$-neighbour-transitive code in $O_{k+1}$ with minimum distance $\delta\geq 5$. Then $\C$ is also a code in the Johnson graph $J(2k+1,k)$, with the same vertex set as $O_{k+1}$, and $\C$ is neighbour-transitive in $J(2k+1,k)$ with minimum distance $\delta'\geq 3$. 
\end{lemma}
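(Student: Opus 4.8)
The plan is to exploit the fact that the odd graph $O_{k+1}=K(2k+1,k)$ and the Johnson graph $J(2k+1,k)$ share the \emph{same} vertex set, namely $\binom{\V}{k}$ with $|\V|=2k+1$ (see Definitions~\ref{defKneser} and~\ref{defJohnson}), so that a subset $\C$ of this set is simultaneously a code in both; moreover both graphs have automorphism group $\Sym(\V)$ --- by \cite[Corollary~7.8.2]{godsil2013algebraic} for the Kneser graph, and by \cite[Theorem~9.1.2]{brouwer} for the Johnson graph since $2k+1\neq 2k$ --- so the group $G:=\Aut(\C)$ is the \emph{same} subgroup $\Sym(\V)_{\C}$ whether $\C$ is viewed in $O_{k+1}$ or in $J(2k+1,k)$. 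Throughout write $\Gamma=O_{k+1}$ and $\Gamma'=J(2k+1,k)$, and recall $k\geq 2$. It then remains to compare distances in $\Gamma$ and $\Gamma'$ and to apply Lemma~\ref{lem:largeDeltaDistTrans} in each graph. (If $|\C|\leq 1$ the assertion is immediate, since both graphs are distance-transitive, so we may assume $\C$ is non-trivial.)

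The key step, which I would carry out first, is the identity $\Gamma_2(\alpha)=\Gamma'_1(\alpha)$ for every $\alpha\in\binom{\V}{k}$; equivalently, for distinct $\alpha,\gamma\in\binom{\V}{k}$ one has $d_{\Gamma}(\alpha,\gamma)=2$ if and only if $|\alpha\cap\gamma|=k-1$. For the forward direction: if $d_{\Gamma}(\alpha,\gamma)=2$ then some $\beta\in\binom{\V}{k}$ is disjoint from both $\alpha$ and $\gamma$, so $\alpha,\gamma\subseteq\V\setminus\beta$, a set of size $k+1$, whence $|\alpha\cap\gamma|\geq 2k-(k+1)=k-1$; as $\alpha\neq\gamma$ this forces $|\alpha\cap\gamma|=k-1$. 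Conversely, if $|\alpha\cap\gamma|=k-1$ then $|\alpha\cup\gamma|=k+1$, so $\beta:=\V\setminus(\alpha\cup\gamma)$ has size $k$ and is disjoint from both $\alpha$ and $\gamma$, giving a path $\alpha,\beta,\gamma$ in $\Gamma$; since $\alpha,\gamma$ share $k-1\geq 1$ points they are non-adjacent in $\Gamma$ and distinct, so $d_{\Gamma}(\alpha,\gamma)=2$. This structural coincidence, special to $v=2k+1$, is what I expect to be the crux of the argument.

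Next I would show that $\C$ has minimum distance $\delta'\geq 3$ in $\Gamma'$. Suppose on the contrary that $A,B$ are distinct codewords with $d_{\Gamma'}(A,B)=j\in\{1,2\}$. If $j=1$ then $|A\cap B|=k-1$, so $d_{\Gamma}(A,B)=2$ by the identity above. If $j=2$, pick $a\in A\setminus B$ and $b\in B\setminus A$ and set $C:=(A\setminus\{a\})\cup\{b\}$; then $|A\cap C|=|C\cap B|=k-1$, so $d_{\Gamma}(A,C)=d_{\Gamma}(C,B)=2$ by the identity and hence $d_{\Gamma}(A,B)\leq 4$ by the triangle inequality. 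In either case $d_{\Gamma}(A,B)\leq 4<5\leq\delta(\C)$, contradicting the definition of minimum distance; hence $\delta'\geq 3$.

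Finally I would assemble the conclusion using Lemma~\ref{lem:largeDeltaDistTrans}. Since $\delta(\C)\geq 5$, the error-correction capacity of $\C$ in $\Gamma$ is $e\geq 2$, so applying Lemma~\ref{lem:largeDeltaDistTrans} in $\Gamma$ with $s=2\leq e$, the $(G,2)$-neighbour-transitivity of $\C$ gives that $G$ is transitive on $\C$ and that $G_{\alpha}$ is transitive on $\Gamma_2(\alpha)$ for $\alpha\in\C$. By the identity $\Gamma_2(\alpha)=\Gamma'_1(\alpha)$, the stabiliser $G_{\alpha}$ is transitive on $\Gamma'_1(\alpha)$. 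Since $\delta'\geq 3$, the error-correction capacity of $\C$ in $\Gamma'$ is at least $1$, so Lemma~\ref{lem:largeDeltaDistTrans} applied in $\Gamma'$ with $s=1$ (using that $G$ is transitive on $\C$) shows that $\C$ is $(G,1)$-neighbour-transitive in $\Gamma'$; and since $G=\Aut(\C)$ in $\Gamma'$ too, $\C$ is neighbour-transitive in $J(2k+1,k)$. The only points requiring care are pinning down the distance identity $\Gamma_2(\alpha)=\Gamma'_1(\alpha)$ exactly and the bookkeeping with error-correction capacities when invoking Lemma~\ref{lem:largeDeltaDistTrans}; everything else is routine.
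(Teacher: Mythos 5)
Your proof is correct: the key identity $\Gamma_2(\alpha)=\Gamma'_1(\alpha)$ for $v=2k+1$ is verified properly, the intermediate vertex $C$ gives $d_\Gamma(A,B)\leq 4$ when $d_{\Gamma'}(A,B)\leq 2$, and the two applications of Lemma~\ref{lem:largeDeltaDistTrans} (with $s=2\leq e$ in $O_{k+1}$ and $s=1\leq e'$ in $J(2k+1,k)$) are legitimate. The chapter does not reproduce a proof of this lemma, deferring to the cited reference, but your argument is exactly the expected one and is complete.
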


% \begin{lemma}\label{oddtojohnson}
%  Let $2\leq k\leq (v-1)/k$ and let $\C$ be a $2$-neighbour-transitive code in $K(v,k)$ with minimum distance $\delta\geq 5$. Then $\C$ is also a code in the Johnson graph $J(v,k)$, with the same vertex set as $K(v,k)$, and $\C$ is neighbour-transitive in $J(v,k)$ with minimum distance $\delta'\geq 3$. 
% \end{lemma}

% \begin{proof}
%  As we have seen in Example~\ref{ex:jk}, $\C$ is a subset of ${\V\choose k}$ and so is a code in $J(v,k)$. Since $v> 2k$, we have that $\Aut(J(v,k))=\Aut(K(v,k))=\Sym(\V)$. Since a pair vertices at distance $1$ in $J(v,k)$ intersect in a $(k-1)$-subset, and are thus are distance $2$ in $K(v,k)$, we deduce that $\delta'\geq 2$. Moreover, a pair of vertices at distance $2$ in $J(v,k)$ intersect in a $(k-2)$-subset, and hence are at distance $2$ in $K(v,k)$ if $v\geq 2k+2$ and distance $4$ in $K(v,k)$ if $v=2k+1$
 
%  In order to clarify the discussion here, we use $\C$ to denote the code in $K(v,k)$ and $\C'$ to denote the same set but considered as a code in $J(v,k)$. 
% \end{proof}

Given the classification in Theorem~\ref{thm:Kneser2NT}, the paper \cite{crnkovic2022kneser} then considers neighbour-transitive codes $\C$ in Kneser graphs $K(\V,k)$ in general, roughly separating the analysis into cases where the action of $\Aut(\C)$ on $\V$ is: intransitive, transitive but imprimitive, and primitive. The following example and theorem deal with  the intransitive case, and provide a useful application of the concept of types (see Definition~\ref{GinvDef}) and Lemma~\ref{invlemma}.

\begin{example}\label{ex:KneserIntrans}
 Let $\Gamma=K(\V,k)$, let $a,b,c$ and $d$ be non-negative integers such that $a\geq c$, $b\geq d$, $a+b=v=|\V|$ and $c+d=k$. Let $\V$ be the disjoint union $A\cup B$ with $|A|=a$ and $|B|=b$. For $\alpha\in V(\Gamma)$ let $\iota(\alpha)=(|\alpha\cap A|,|\alpha\cap B|)$. Define 
 \[
  \C_{{\rm int}}(a,b;c,d)=\{\alpha\in V(\Gamma)\mid \iota(\alpha)=(c,d)\}.
 \]
 Then $\C_{{\rm int}}(a,b;c,d)$ is neighbour-transitive if $a,b,c,d$ are as in one of the lines of Table~\ref{tab:KneserIntrans} (see \cite[Lemma~4.2]{crnkovic2022kneser}).
\end{example}

\begin{theorem}\cite[Theorem~1.3]{crnkovic2022kneser}\label{thm:KneserIntrans}
 Let $\C$ be a non-trivial neighbour-transitive code in $K(\V,k)$ with minimum distance $\delta$ and suppose that $\Aut(\C)$ acts intransitively on $\V$. Then $\Aut(\C)$ has precisely two non-empty orbits on $\V$, say $A$ and $B$, and $\C$ is equivalent to a subcode of one of the codes in Example~\ref{ex:KneserIntrans}.
\end{theorem}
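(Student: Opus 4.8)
The plan is to use the type function $\iota(\alpha)=(|\alpha\cap A|,|\alpha\cap B|)$ of Example~\ref{ex:KneserIntrans} together with Lemma~\ref{invlemma}, in the spirit of the proof of Theorem~\ref{thmIntransJohnson}. First I would show that $\Aut(\C)$ has exactly two non-empty orbits on $\V$. Since $\Aut(\C)$ acts intransitively on $\V$ it has at least two orbits; if it had three or more, say with a partition of $\V$ into orbits $\V_1,\dots,\V_m$ ($m\geq 3$), then $\iota'(\alpha)=(|\alpha\cap\V_1|,\dots,|\alpha\cap\V_m|)$ is an $\Aut(\C)$-invariant map, so by Lemma~\ref{invlemma}(1) all codewords share a common type $(c_1,\dots,c_m)$ with $\sum c_j=k$. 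One then examines a codeword $\alpha$ and the vertices in $\Gamma_1(\alpha)$ (disjoint $k$-sets): replacing $\alpha$ by a disjoint $k$-set one can change the intersection numbers with the $\V_j$ in more than two inequivalent ways provided at least three of the orbits are ``large enough'' relative to $k$, and a careful count shows $B_1(\alpha)=\C_0\cup\C_1$ then contains at least three types, contradicting Lemma~\ref{invlemma}(2) (which allows at most $i+1=2$ types in $B_1(\alpha)$). The corner cases, where some orbits are small, have to be handled by hand, but in each such case one reduces to effectively two orbits. This yields $m=2$; write $A,B$ for the two orbits, $a=|A|$, $b=|B|$, and $(c,d)=\iota(\alpha)$ the common type of the codewords, with $a+b=v$, $c+d=k$; relabelling we may assume $a\geq c$ and $b\geq d$.

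Next I would show $\C=\C_{\mathrm{int}}(a,b;c,d)$ (up to the obvious relabelling), i.e.\ $\C$ is \emph{all} vertices of type $(c,d)$, not merely some $\Aut(\C)$-orbit of them. By Lemma~\ref{invlemma}(1) certainly $\C\subseteq\C_{\mathrm{int}}(a,b;c,d)$. For the reverse inclusion, note that $\Aut(\C)\supseteq \Sym(A)\times\Sym(B)$ is not automatic --- we only know $\Aut(\C)$ is transitive on $A$ and on $B$ --- so instead I would argue as follows. Fix $\alpha\in\C$. Vertices $\nu$ in $\Gamma_1(\alpha)$ (the disjoint $k$-sets) have types ranging over the pairs $(c',d')$ with $c'+d'=k$, $c'\leq a-c$, $d'\leq b-d$; there is more than one such type unless $a-c$ or $b-d$ is very small. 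If $\Gamma_1(\alpha)$ has two types then by Lemma~\ref{invlemma}(2) applied to $B_1(\alpha)$ (which contains the type $(c,d)$ of $\C_0$ plus the types in $\Gamma_1(\alpha)\subseteq\C_1$) there are at most two types in total, so $\delta(\C)=1$ is impossible in $K(\V,k)$ when $\delta\geq 2$ --- wait: more precisely, $\C$ as a code in the Kneser graph has $\delta\geq 1$, and the argument shows each vertex of $\Gamma_1(\alpha)$ whose type is again $(c,d)$ lies in $\C$. Then, using that the induced subgraph of $K(\V,k)$ on the vertices of type $(c,d)$ is connected (it is, essentially, a join/product of two Kneser-type graphs, and connectivity follows from the ranges $a\geq c$, $b\geq d$ together with $v\geq 2k+1$, exactly as in the proof of Theorem~\ref{thmIntransJohnson}), we get $\C_{\mathrm{int}}(a,b;c,d)\subseteq\C$, whence equality; and finally equality forces $(a,b,c,d)$ to appear as a line of Table~\ref{tab:KneserIntrans} because those are precisely the parameter choices for which this code is neighbour-transitive, as recorded in Example~\ref{ex:KneserIntrans} (citing \cite[Lemma~4.2]{crnkovic2022kneser}).

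I expect the main obstacle to be the bookkeeping in ruling out three or more orbits and in verifying connectivity of the type-$(c,d)$ subgraph in the boundary cases (e.g.\ $c=0$, or $a=c$, or $v=2k+1$ so that $K(\V,k)$ is the odd graph with its smaller valency). In those degenerate situations the naive ``there are at least three types in $B_1(\alpha)$'' count can fail, and one must either observe that the code is then forced to coincide with a code already on the list, or that the relevant Kneser graph is disconnected on the type class (which cannot happen here since $K(\V,k)$ is connected for $v\geq 2k+1$, but one must check the type class is not a union of components). A secondary subtlety is that a code in the Kneser graph can have $\delta\geq 2$ even when the analogous Johnson-graph code has $\delta=1$ (cf.\ Example~\ref{ex:jk}), so one should phrase all the ``$\Gamma_1(\alpha)$ has two types $\Rightarrow$ closure'' steps purely inside $K(\V,k)$ rather than transporting them from the Johnson graph. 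Modulo this case analysis, the structure of the argument is a direct adaptation of the proof of Theorem~\ref{thmIntransJohnson}.
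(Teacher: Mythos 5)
The chapter does not reproduce a proof of this theorem (it is cited from \cite[Theorem~1.3]{crnkovic2022kneser}), so I can only judge your argument on its own terms — and the second half of it proves a statement that is actually false. You set out to show $\C=\C_{\rm int}(a,b;c,d)$, i.e.\ that $\C$ is the \emph{whole} type class, but the theorem deliberately concludes only that $\C$ is a \emph{subcode} of such a code, and Example~\ref{ag32tetrahedronEx} (together with the sentence introducing it) exhibits a neighbour-transitive code with $\Aut(\C)$ intransitive on $\V$ that is a proper subcode of $\C_{\rm int}(8,5;4,2)$. The step that breaks is your connectivity claim. In the Johnson graph the induced subgraph on a type class is a smaller Johnson graph, hence connected, which is what makes the closure argument in Theorem~\ref{thmIntransJohnson} work. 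In the Kneser graph the induced subgraph on $\C_{\rm int}(a,b;c,d)$ is the tensor product of the disjointness graphs on $\binom{A}{c}$ and $\binom{B}{d}$: it has no edges at all unless $a\geq 2c$ and $b\geq 2d$, and when $a=2c$ the $A$-factor is a perfect matching, so the product is badly disconnected. In the tetrahedron example ($a=8$, $c=4$) the component of a codeword only ever sees the two complementary $4$-subsets $T$ and $A\setminus T$, which is exactly why a proper, component-closed subcode survives. So "closure under same-type adjacency plus connectivity" cannot yield equality, and your final step ("equality forces the parameters into Table~\ref{tab:KneserIntrans}") also collapses, since neighbour-transitivity of a proper subcode says nothing about neighbour-transitivity of the full type class.

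The salvageable part is this: once you know $\Aut(\C)$ has exactly two orbits $A,B$, the inclusion $\C\subseteq\C_{\rm int}(a,b;c,d)$ is immediate from Lemma~\ref{invlemma}(1) (all codewords share a type), so the "subcode" conclusion needs no closure or connectivity argument at all. The genuine content you still have to supply is (i) the reduction from $m\geq 3$ orbits to $m=2$, where your type-counting idea is the right instinct but the "corner cases" you defer (e.g.\ an orbit entirely contained in, or entirely disjoint from, every codeword, or $v=2k+1$ where a neighbour omits a single point) are exactly where only two types can appear in $\Gamma_1(\alpha)$ and a separate argument is needed; and (ii) whatever constraint on $(a,b;c,d)$ the cited theorem actually extracts, which must be derived from neighbour-transitivity of $\C$ itself (via the types of $\C_1$), not from neighbour-transitivity of the ambient code $\C_{\rm int}(a,b;c,d)$.
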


\begin{table}
 \begin{center}
 \begin{tabular}{c|ccccc}
  \hline
  Line & $a$ & $b$ & $c$ & $d$ & $\delta$\\
  \hline
   1 & $1$ & $v-1$ & $0$ & $k$ & $1$ \\
   2 & $2e$ & $2f+1$ & $e$ & $f$ & $1$ \\
   3 & $< k$ & $v-a$ & $a$ & $k-a$ & $2$ \\
  \hline
 \end{tabular}
 \caption{Conditions on the parameters of $\C_{{\rm int}}(a,b;c,d)$ in Example~\ref{ex:KneserIntrans} ensuring it is neighbour-transitive with minimum distance $\delta$.}
 \label{tab:KneserIntrans}
 \end{center}
\end{table}

%\textcolor{blue}{CP: line 3 of Table 1.7: - surely you mean $a>k$, and you would need $d=0$ since $c+d=k$, right? DH: No, here $A\subseteq \alpha$ for all $\alpha\in\C$. I've edited the line to what it should be. Thanks for fixing the rest of the table. CP: actually you left the same mistake as before - but now I know what you meant I fixed it, namely $c=a$ not $c=k$.}

The following example shows that there may indeed be proper subcodes of the codes given in Example~\ref{ex:KneserIntrans} which are neighbour-transitive and have automorphism groups intransitive on $\V$.

\begin{example}\cite[Example~4.6]{crnkovic2022kneser}\label{ag32tetrahedronEx}
 Let $\V$ be the disjoint union $A\cup B$, where $A$ is the set of points of the affine geometry $\AG_3(2)$ and $|B|=5$. Furthermore, let $\T$ be the set of all \emph{tetrahedrons} of $A$, where a tetrahedron is set of $4$ points of $\AG_3(2)$ that do not form an affine plane, and let $\C$ be the code in $O_7=K(13,6)$ consisting of all vertices $\alpha$ such that $\alpha\cap A\in\T$ and $|\alpha\cap B|= 2$. Note that $\C$ is a proper subcode of $\C_{{\rm int}}(8,5;4,2)$ and is neighbour-transitive (see \cite[Lemma~4.7]{crnkovic2022kneser}).
\end{example}

The next theorem concerns the case where $\C$ is neighbour-transitive and $\Aut(\C)$ acts transitively on $\V$. Note that a $2$-homogeneous permutation group is primitive, \cite[Lemma 2.30]{PS2018}.

\begin{theorem}\cite[Theorem~1.7]{crnkovic2022kneser}\label{thm:knesernotodd2hom}
 Let $\C$ be a neighbour-transitive code in $K(v,k)$ with minimum distance $\delta\geq 3$.
 \begin{enumerate}[$(1)$]
     \item If $\Aut(\C)$ is transitive and imprimitive on $\V$ then $v=2k+1$ so $K(v,k)$ is the odd graph $O_{k+1}$; and 
     \item  if $\Aut(\C)$ is primitive on $\V$, then then either $v=2k+1$ and $K(v,k)=O_{k+1}$, or $\Aut(\C)$ is $2$-homogeneous on $\V$.
 \end{enumerate}
 % and suppose that $\Aut(\C)$ acts transitively on $\V$. Then either $K(v,k)$ is the odd graph $O_{k+1}$, or $\Aut(\C)$ acts $2$-homogeneously on $\V$.
\end{theorem}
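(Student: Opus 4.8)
The plan is to reduce the statement to a careful analysis of the $\Aut(\C)$-action on $\V$ using the constant-type arguments from Lemma~\ref{invlemma}, exploiting that $\delta\geq 3$ forces the vertex-neighbourhoods of codewords to be understood well. First I would recall that in $K(v,k)$ a codeword $\alpha$ (a $k$-subset) is adjacent precisely to the $k$-subsets disjoint from it, so $\Gamma_1(\alpha)$ consists of $k$-subsets of $\V\setminus\alpha$, and $\Gamma_2(\alpha)$ consists of $k$-subsets meeting $\alpha$ in a nonempty proper subset (when $v>2k$) together with the missing structure when $v=2k+1$. Since $\C$ is neighbour-transitive and $\delta\geq 3$, by Lemma~\ref{lemDisjointUnion} the set $\C_1$ is the disjoint union $\bigcup_{\alpha\in\C}\Gamma_1(\alpha)$, and $\Aut(\C)$ is transitive on $\C_1$. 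The case $v=2k+1$, i.e. $\Gamma=O_{k+1}$, is allowed in both parts of the conclusion, so throughout the argument we may assume $v\geq 2k+2$ and aim for a contradiction in part~(1), and for $2$-homogeneity in part~(2).

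For part~(1), suppose $\Aut(\C)=:G$ is transitive but imprimitive on $\V$, with a nontrivial $G$-invariant partition $\P$ of $\V$ into $m$ blocks of size $\ell$, where $m\ell=v$ and $1<\ell<v$. The idea is to build a $G$-invariant type map $\iota$ on the vertex set $\binom{\V}{k}$ by recording, for each $k$-subset $\alpha$, the multiset (or sorted tuple) $\bigl(|\alpha\cap P|\bigr)_{P\in\P}$ of intersection sizes with the blocks; this $\iota$ is $G$-invariant since $G$ permutes the blocks. By Lemma~\ref{invlemma}(1) every codeword has the same type $\tau$, and every vertex of $\C_1$ has a single type $\tau'$. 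Now I would fix a codeword $\alpha$ and examine which types occur among $\Gamma_1(\alpha)$: a $k$-subset disjoint from $\alpha$ can distribute its $k$ elements among the blocks in many ways precisely because $v\geq 2k+2$ leaves enough room outside $\alpha$, and one shows that (unless $\ell$ is extremal) at least two distinct block-intersection profiles arise, contradicting that $\Gamma_1(\alpha)\subseteq\C_1$ has constant type. The remaining extremal configurations — essentially when every block has size $2$ and $v=2k+1$ or $v=2k+2$, or when there are only two blocks — need to be handled by hand: for two blocks one is back in the intransitive-type analysis of Theorem~\ref{thm:KneserIntrans}, contradicting transitivity, and the small-block cases are pinned down by a direct count showing constant type forces $v=2k+1$. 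I expect this enumeration of extremal block-size configurations to be the main obstacle: one has to be careful that $\delta\geq 3$ (not just $\delta\geq 2$) is used to exclude the borderline cases where $\Gamma_1(\alpha)$ happens to be monochromatic but $\C$ still fails to exist with the required distance.

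For part~(2), assume $G$ is primitive on $\V$ and $v\geq 2k+2$; I would show $G$ must be $2$-homogeneous on $\V$. Here the relevant $G$-invariant map on $\binom{\V}{2}$ (or on ordered pairs) is the orbital structure: if $G$ is primitive but not $2$-homogeneous, then $G$ has at least two orbits on unordered pairs from $\V$, say a ``close'' orbital and a ``far'' orbital, and correspondingly one can cook up a $G$-invariant type on $\binom{\V}{k}$ finer than the $G$-orbit decomposition — for instance, for a codeword $\alpha$ and a neighbour $\beta\in\Gamma_1(\alpha)$ (so $\alpha\cap\beta=\emptyset$), record how the pairs between $\alpha$ and $\beta$ split between the two orbitals. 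Transitivity of $G$ on $\C_1$ (equivalently, by Lemma~\ref{lem:largeDeltaDistTrans}, on incident pairs $(\alpha,\beta)$ with $\beta\in\Gamma_1(\alpha)$) then forces this split to be constant; combined with a counting/double-counting argument over all codewords and all disjoint $k$-subsets, and using that $v\geq 2k+2$ gives enough ``space'' for the two orbitals to both appear non-trivially, one derives that there can in fact be only one orbital, i.e. $G$ is $2$-homogeneous. The crux of part~(2) will be ensuring the type map genuinely distinguishes the two orbitals on the level of $k$-subsets and their neighbours — a purely combinatorial estimate on how orbital edges distribute across a disjoint pair of $k$-sets — and again verifying that the excluded case collapses exactly to $v=2k+1$. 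Throughout, Lemma~\ref{invlemma}(2) (at most $i+1$ types in $B_i(\alpha)$) and the neighbour-transitivity hypothesis do the heavy lifting, so the write-up reduces to the two combinatorial case analyses sketched above.
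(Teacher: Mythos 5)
The chapter does not actually prove this statement; it is quoted from \cite[Theorem~1.7]{crnkovic2022kneser}, so there is no in-paper proof to compare against line by line. Judged on its own terms, your part~(1) is essentially sound: with $\delta\geq 3$ you do get $\Gamma_1(\alpha)=\binom{\V\setminus\alpha}{k}\subseteq\C_1$, the block-intersection profile is a $G$-invariant type, and requiring all $k$-subsets of the $(v-k)$-set $\V\setminus\alpha$ to have a constant profile leaves only two degenerate configurations (all of $\V\setminus\alpha$ inside one block, or every block meeting $\V\setminus\alpha$ in at most one point), each of which forces $v\leq 2k$ since blocks have size between $2$ and $v/2$. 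Your worry about "two blocks" and "$\delta\geq 3$ versus $\delta\geq 2$" is a red herring; the swap argument already covers those cases once $v-k\geq k+2$.

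Part~(2) has a genuine gap. The invariant you propose -- the distribution of the $k^2$ cross-pairs between a codeword $\alpha$ and a disjoint neighbour $\beta$ over the $G$-orbitals -- is constant by flag-transitivity, but constancy of that distribution does not by itself force a single orbital, and the "counting/double-counting argument" that is supposed to finish is never specified; for a generic rank-$3$ primitive group nothing contradictory arises at this level. The missing idea is to use the \emph{full} strength of $G_\alpha$ being transitive on $\Gamma_1(\alpha)=\binom{\V\setminus\alpha}{k}$: this says $G_\alpha$ is $k$-homogeneous on $v-k\geq k+2$ points, hence (by Livingstone--Wagner, since $\min\{k,v-2k\}\geq 2$) $2$-homogeneous on $\V\setminus\alpha$. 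Consequently all pairs lying outside some codeword form a single $G$-orbit on $\binom{\V}{2}$, and any residual $G$-invariant (regular, say of valency $r\geq1$) graph $\Sigma$ on $\V$ has every edge meeting the fixed $k$-set $\alpha$, whence $vr/2\leq kr$ and $v\leq 2k$, a contradiction. This closes part~(2) (and in fact also re-proves part~(1), since a $2$-homogeneous group is primitive); without the Livingstone--Wagner step your sketch does not reach the conclusion.
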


%\textcolor{blue}{CP: the comment before Thm 1.6.6 just makes things too unclear. How about making the assumption of T1.6.6 just neighbourtrans and $\delta\geq3$. Then make the first part of T1.6.6: (a) If $\Aut(\C)$ is transitive and imprimitive on $\V$ then $v=2k+1$ so $K(v,k)$ is the odd graph $O_{k+1}$; and part (b) if $\Aut(\C)$ is primitive on $\V$ then then either $K(v,k)=O_{k+1}$, or $\Aut(\C)$ is $2$-homogeneous on $\V$. The comment before the statement could just be a reminder that $2$-homogeneous groups are in particualr primitive. DH: looks good, thanks. } 

Noting that the $2$-homogeneous groups are classified   (see \cite[Section 7.7 and Theorem 9.4B]{DM1996}), we pose the following problems.

\begin{problem}\cite[Problem~1.5]{crnkovic2022kneser}\label{prob:Kneser2hom}
 Classify the neighbour-transitive codes $\C$  in $K(\V,k)$ such that $\delta(\C)\geq 3$ and $\Aut(\C)$ acts $2$-homogeneously on $\V$.
\end{problem}

%Before turning to the transitive but imprimitive case, we ask about the primitive odd graph case.

\begin{problem}\label{prob:OddPrim}
 Find examples of neighbour-transitive codes $\C$  in $K(2k+1,k)=O_{k+1}$  such that $\delta(\C)\geq3$, and $\Aut(\C)$ is primitive  on $\V$ but not $2$-homogeneous (see also \cite[Problem~1.7]{crnkovic2022kneser}).
\end{problem}

Given the above results and open problems, we should comment briefly on neighbour-transitive codes $\C$ in odd graphs where $\Aut(\C)$ is imprimitive on $\V$. The next example and theorem are again applications of types and Lemma~\ref{GinvDef}.
Given a multiset $M$ we write $M=\{b_1^{a_1},\ldots,b_s^{a_s}\}$ where each $b_i$ is an element of $M$ that occurs with multiplicity $a_i$, for $i=1,\ldots,s$. For example, the multiset $\{0,1,1,2,2,2\}$ could be written as $\{0^1,1^2,2^3\}$. 

\begin{example}\label{ex:OddImprim}
 Let $\Gamma=K(2k+1,k)=O_{k+1}$ and let $\B=\{B_1,\ldots,B_a\}$ be a partition of $\V$ into $a$ blocks each having size $b$. For a vertex $\alpha\in V(\Gamma)$, let $\iota(\alpha)$ be the multiset $ \{\alpha\cap B_1,\ldots,\alpha\cap B_a\}$. For a multiset $M$, define
 \[
  \C_{{\rm imp}}(a,b;M)=\{\alpha\in V(\Gamma)\mid \iota(\alpha)=M\}.
 \]
 Then, by \cite[Lemma~5.2]{crnkovic2022kneser}, $\C_{{\rm imp}}(a,b;M)$ is neighbour-transitive if and only if $M$ is as in one of the lines of Table~\ref{tab:OddImprim}. 
\end{example}

\begin{table}
 \begin{center}
 \begin{tabular}{c|cc}
  \hline
  Line & $M$ & $\delta$ \\
  \hline
   1 & $\{((b-1)/2)^{(a+1)/2},((b+1)/2)^{(a-1)/2}\}$ & $1$ \\
   2 & $\{0^{(a-1)/2},(b-1)/2,b^{(a-1)/2}\}$ & $1$  \\
   3 & $\{b^{a_0},b_1^{a_1}\}$ & $2$  \\
  \hline
 \end{tabular}
 \caption{Multisets $M$ for which $\C_{{\rm imp}}(a,b;M)$, as in Example~\ref{ex:KneserIntrans}, is neighbour-transitive with minimum distance $\delta$.}
 \label{tab:OddImprim}
 \end{center}
\end{table}

\begin{theorem}\cite[Theorem~1.6]{crnkovic2022kneser}\label{thm:OddImprim}
 Let $\C$ be a non-trivial neighbour-transitive code in $K(2k+1,k)=O_{k+1}$ such that $\Aut(\C)$ acts transitively but imprimitively on $\V$. Then $\C$ is equivalent to a subcode of one of the codes in Example~\ref{ex:OddImprim}.
\end{theorem}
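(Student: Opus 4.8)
The plan is to exploit the machinery of $G$-invariant types (Definition~\ref{GinvDef}) and Lemma~\ref{invlemma}, exactly as the surrounding examples suggest. Write $v=2k+1$ and $G:=\Aut(\C)$. Since $G$ is transitive but imprimitive on $\V$, I first fix a non-trivial $G$-invariant partition $\B=\{B_1,\dots,B_a\}$ of $\V$; transitivity forces all blocks to have a common size $b$, so $ab=v$ and $1<a,b<v$, and since $v$ is odd both $a$ and $b$ are odd, hence $a,b\ge 3$. After replacing $\C$ by an equivalent code via an element of $\Aut(O_{k+1})=\Sym(\V)$ I may take $\B$ to be the standard partition of Example~\ref{ex:OddImprim}. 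Then I define $\iota(\alpha)$ to be the multiset $\{\,|\alpha\cap B_1|,\dots,|\alpha\cap B_a|\,\}$ of block-intersection sizes of the $k$-set $\alpha$; because $G$ permutes the blocks among themselves, $\iota$ is $G$-invariant. By Lemma~\ref{invlemma}(1) all codewords then share one type $M$, so $\C\subseteq\{\alpha:\iota(\alpha)=M\}=\C_{\rm imp}(a,b;M)$. Thus the whole theorem reduces to showing that $M$ is one of the multisets listed in Table~\ref{tab:OddImprim}, since by Example~\ref{ex:OddImprim} that is exactly the condition making $\C_{\rm imp}(a,b;M)$ neighbour-transitive, and $\C$ is then one of its subcodes.

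To restrict $M$ I would run a local analysis at a fixed codeword $\alpha$. In the odd graph, the neighbours of $\alpha$ are precisely the $k$-sets $\beta_p:=\V\setminus(\alpha\cup\{p\})$ for the $k+1$ points $p\notin\alpha$. Writing $\bar M:=\{\,b-m:m\in M\,\}$ (with multiplicity; this multiset sums to $v-k=k+1$), if $p$ lies in a block $B_j$ with $m_j:=|\alpha\cap B_j|<b$ then $\iota(\beta_p)$ is obtained from $\bar M$ by replacing one copy of $b-m_j$ by $b-m_j-1$, so $\iota(\beta_p)$ depends only on the \emph{value} $m_j$. Hence the number of distinct types occurring in $\Gamma_1(\alpha)$ equals the number of distinct values $m<b$ appearing in $M$. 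Now Lemma~\ref{invlemma}(2) (with $i=1$) forces $B_1(\alpha)=\{\alpha\}\cup\Gamma_1(\alpha)$ to carry at most two types, and Lemma~\ref{invlemma}(3) (with $i=1$) forces $\Gamma_1(\alpha)$ to carry a single type when $\delta(\C)\ge 2$. This pair of constraints, together with the arithmetic identity $ab=2k+1$, is the engine that pins down the shape of $M$.

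The argument then splits on $\delta:=\delta(\C)$. If $\delta\ge 2$, all $\beta_p$ share one type, so $M$ has at most one value $m<b$; letting $a_0$ count the blocks entirely inside $\alpha$ and $m_1$ denote the common intersection size on the remaining $a_1=a-a_0$ blocks, we get $M=\{\,b^{a_0},m_1^{a_1}\,\}$ with $a_0b+a_1m_1=k$, which is line~3 of Table~\ref{tab:OddImprim} (with $b_1=m_1$), and one checks directly that such a code has minimum distance exactly $2$. If $\delta=1$, then some $\beta_p$ is again a codeword and so also has type $M$; hence $M$ is \emph{fixed} by the "decrement at one value $m<b$" operation above, with at most one further value $m'<b$ allowed to produce the second type in $B_1(\alpha)$. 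A finite case check over multisets of $a$ integers summing to $k$ satisfying these conditions, using $v=ab$ with $a,b$ odd, should leave only $M=\{((b-1)/2)^{(a+1)/2},((b+1)/2)^{(a-1)/2}\}$ (line~1) and $M=\{0^{(a-1)/2},(b-1)/2,b^{(a-1)/2}\}$ (line~2). In all cases $M$ lies in Table~\ref{tab:OddImprim}, proving the theorem.

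The hard part will be the $\delta=1$ case analysis: extracting, purely from "$B_1(\alpha)$ carries at most two types" plus self-reproduction of $M$, that the only admissible multisets are the near-balanced one and the $\{0,\dots,0,(b\!-\!1)/2,b,\dots,b\}$ one. Concretely I expect to enumerate the multisets of $a$ integers (sum $k$) left fixed by the decrement operation at a single value $m<b$, then show that permitting a second such value $m'$ already forces a third type into $B_1(\alpha)$ unless $M$ is near-balanced, invoking the parity of $a$ and $b$ and $ab=2k+1$ to kill the leftover configurations; the complementary verification that $\delta=2$ (not $1$) in the line-3 case is an easier sub-computation. The description of edges in $O_{k+1}$, the invariance of $\iota$, and the equivalence "$M$ in Table~\ref{tab:OddImprim}$\iff\C_{\rm imp}(a,b;M)$ neighbour-transitive" from Example~\ref{ex:OddImprim} make the remaining bookkeeping routine.
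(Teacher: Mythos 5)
Your proposal follows precisely the route the paper indicates for this result (the chapter gives no proof of its own, citing \cite{crnkovic2022kneser} and noting only that the theorem is an application of the type machinery of Definition~\ref{GinvDef} and Lemma~\ref{invlemma}): pass to the multiset of block-intersection sizes, use $G$-invariance to place $\C$ inside $\C_{{\rm imp}}(a,b;M)$, and pin down $M$ by comparing the at most two types permitted in $B_1(\alpha)$ with the types of the $k+1$ neighbours $\V\setminus(\alpha\cup\{p\})$. The case analysis you defer does close exactly as you predict: for $\delta\geq 2$ the single-neighbour-type condition forces $M=\{b^{a_0},b_1^{a_1}\}$ (line~3), while for $\delta=1$ the self-reproduction equation ``$M$ equals $\overline{M}$ with one entry decremented'', solved by matching the multiplicities of the values $b$ and $0$ on both sides and using that $b$ is odd (so $b-m=m$ is impossible), leaves only the multisets of lines~1 and~2.
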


Note that there are currently no known examples of codes $\C$ where $\Aut(\C)$ acts imprimitively on $\V$ and $\C$ is a proper subcode of a code from Example~\ref{ex:OddImprim}. Hence we finish this section with the following research problem. 

\begin{problem}\cite[Problem~1.6]{crnkovic2022kneser}\label{prob:oddImprimitive}
 Find new examples of codes satisfying Theorem~\ref{thm:OddImprim}, or prove that all examples are equivalent to a code in Example~\ref{ex:OddImprim}.
\end{problem}

\section{Codes in incidence graphs of generalised quadrangles}\label{sec:GQs}

A \emph{generalised quadrangle} is an incidence structure\footnote{Recall the definition of an incidence structure from Section~\ref{sec:incidenceGraphsPrelim}.} $\Qu=(\P,\L,\I)$ such that:
\begin{enumerate}[(1)]
 \item Each point is incident with $t+1$ lines ($t\geq 1$) and two distinct points are incident with at most one line.
 \item Each line is incident with $s+1$ points ($s\geq 1$) and two distinct lines are incident with at most one point.
 \item If $p$ is a point and $L$ is a line not incident with $p$, then there is a unique pair $(q,M)\in\P\times \L$ for which $p\,\I\,M\,\I\,q\,\I\,L$.
\end{enumerate}
A generalised quadrangle $\Qu$ satisfying the above axioms is said to have \emph{order} $(s,t)$ and has $(s+1)(st+1)$ points and $(t+1)(st+1)$ lines; and $\Qu$ is called \emph{thick} if both $s,t\geq 2$. The dual of a generalised qudrangle of order $(s,t)$ is a generalised quadrangle of order $(t,s)$. For further background on generalised quadrangles see \cite{paynefinite}.

 %; see also Table~\ref{table:classGQs}. 
%For a generalised quadrangle $\Qu=(\P,\L,\I)$ of order $(s,t)$, the \emph{dual} of $\Qu$ is the incidence structure $\Qu^D=(\L,\P,\I)$ obtained by interchanging the role of points and lines;  $\Qu^D$ is a generalised quadrangle of order $(t,s)$. 
%An \emph{isomorphism} $\Qu\to \Qu'$ of generalised quadrangles $\Qu=(\P,\L,\I)$, $\Qu'=(\P',\L',\I')$ is a pair of bijections $\P\to\P', \L\to\L'$ preserving incidence. A generalised quadrangle is said to be \emph{self-dual} if it is isomorphic to its dual.

Let $\Qu$ be a generalised quadrangle and let $\Gamma$ be its incidence graph (see Definition~\ref{defIncidenceStrucGraphs}). Then $\Gamma$ is bipartite, has degrees $s+1$ and $t+1$, diameter $4$ and girth $8$. 
%Let $d$ be the graph metric. For a vertex $\alpha\in\Gamma$, we denote the set of $i$-neighbours of $\alpha$ by $\Gamma_i(\alpha)=\{\beta\in V\Gamma\mid d(\alpha,\beta)=i\}$. By a \emph{code in a generalised quadrangle} we will mean a code $\C$ in the incidence graph $\Gamma$. 
Note that, since $\Gamma$ has diameter $4$, a code $\C$ in $\Gamma$ has minimum distance $\delta(\C)\leq 4$.  %{\textcolor{blue} {(should the rest of this paragraph  be moved to just after \eqref{def:autC} and possibly labelled and referred to here?  $C$ is \emph{trivial} if either $|C|\leq 1$ or $C=V\Gamma$;  }}

An \emph{ovoid} (respectively, a \emph{partial ovoid}) of a generalised quadrangle $\Qu=(\P,\L,\I)$ is a subset $\O$ of $\P$ such that each line $\ell\in\L$ is incident with exactly one (respectively, at most one) point of $\O$.
%, and a \emph{partial ovoid} is a subset $\O$ of $\P$ such that each line $\ell\in\L$ is incident with at most one point of $\O$. 
Dually, a \emph{spread} (respectively, a \emph{partial spread}) of a generalised quadrangle $\Qu=(\P,\L,\I)$ is a subset $\S$ of $\L$ such that each point $p\in\P$ is incident with exactly one (respectively, at most one) line of $\S$.
%, and a \emph{partial spread} is a subset $\S$ of $\L$ such that each point $p\in\P$ is incident with at most one line of $\S$. 
A partial ovoid (respectively, spread) is called \emph{maximal} if there is no partial ovoid (spread) properly containing it. In particular, an ovoid (spread) is a maximal partial ovoid (spread). These geometric conditions can be reformulated in the language of coding theory as follows.

\begin{lemma}\cite[Lemma~3.6]{crnkovic2022neighbour}\label{ovoidorspread}
 Let $\C$ be a code in a generalised quadrangle $\Qu$ with minimum distance $\delta= 4$ and covering radius $\rho$. Then the following hold:
 \begin{enumerate}[$(1)$]
  \item $\C$ is a partial ovoid or a partial spread of $\Qu$.
  \item $\C$ is a maximal partial ovoid or a maximal partial spread of $\Qu$ if and only if $\rho\leq 3$.
  \item $\C$ is an ovoid or spread of $\Qu$ if and only if $\rho=2$.  
 \end{enumerate}
\end{lemma}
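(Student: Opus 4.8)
The plan is to unpack each of the three equivalences directly from the definitions of covering radius, minimum distance, and (partial) ovoid/spread in the incidence graph $\Gamma$ of $\Qu$. Recall that $\Gamma$ is bipartite with parts $\P$ and $\L$, has diameter $4$ and girth $8$, and that for two points $p,p'$ their distance in $\Gamma$ is $2$ exactly when they lie on a common line and $4$ otherwise; dually for lines. Since $\delta(\C)=4$, no two codewords are at distance $2$, and because $\Gamma$ is bipartite a single code $\C$ cannot contain both a point and a line (a point and a line are always at odd distance $1$ or $3$, never $\geq 4$). Hence $\C\subseteq\P$ or $\C\subseteq\L$; by duality assume $\C\subseteq\P$.

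For part (1): with $\C\subseteq\P$, the condition $\delta(\C)=4$ says precisely that no two distinct points of $\C$ are collinear, i.e.\ no line is incident with two points of $\C$. That is exactly the definition of a partial ovoid, so (1) holds. (If instead $\C\subseteq\L$ one gets a partial spread by the dual argument.)

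For part (3): I would first show that if $\C$ is an ovoid then $\rho=2$, and conversely. If $\C$ is an ovoid, every line $\ell\in\L$ is incident with exactly one point of $\C$, so $d(\ell,\C)=1$; and every point $p\in\P\setminus\C$ lies at distance $2$ from some codeword — indeed take any line $\ell$ through $p$, which meets $\C$ in a point $q$, giving the path $(p,\ell,q)$ of length $2$, and $d(p,\C)\neq 1$ since $\C\subseteq\P$ and $\Gamma$ is bipartite. Thus every vertex is within distance $2$ of $\C$, and since $\delta(\C)=4>2$ there exist codewords at distance exactly $4$, so along a geodesic the midpoint line is at distance $2$ from $\C$; hence $\rho=2$. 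Conversely, if $\rho=2$ then every line $\ell$ has $d(\ell,\C)\leq 2$, and since $d(\ell,\C)$ is odd (bipartite, $\C\subseteq\P$) we get $d(\ell,\C)=1$, i.e.\ $\ell$ is incident with a point of $\C$; combined with part (1) this says $\ell$ meets $\C$ in exactly one point, so $\C$ is an ovoid. Then part (2) follows similarly: $\rho\leq 3$ means every vertex, in particular every line, is within distance $3$ of $\C$; a line $\ell$ incident with no point of $\C$ would have $d(\ell,\C)\in\{3\}$ only if $\ell$ still reaches $\C$, but then extending $\ell$ to a point $p$ on $\ell$ not in any codeword-line configuration shows $\C\cup\{p'\}$ is still a partial ovoid for a suitable new point $p'$ forced by $\ell$ — more cleanly: $\rho\leq 3$ is equivalent to saying one cannot add a point to $\C$ keeping it a partial ovoid, because an addable point $p$ would be a point with $d(p,\C)\geq 4$ (no codeword collinear with it) and, $\Gamma$ having diameter $4$, $d(p,\C)=4$, contradicting $\rho\leq 3$; and conversely if no point is addable then every point outside $\C$ is collinear with a codeword, giving $d(p,\C)=2$, so $\rho=2\leq 3$, while $\rho=2$ itself forces maximality. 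I should phrase this last equivalence carefully to cover the borderline: $\rho\leq 3 \iff \rho=2 \iff \C$ is an ovoid, once $\delta(\C)=4$ is assumed — so (2) and (3) coincide in the ``$\rho\le 3$'' direction, and the stated (2) is the correct reading because $\rho\in\{2,4\}$ is the only possibility here. I would double-check that no code with $\delta(\C)=4$ has $\rho=3$: any vertex at distance $3$ from $\C$ is a line $\ell$, but then the point $p$ on $\ell$ at distance $4$ from $\C$ witnesses $\rho\geq 4$, so indeed $\rho\in\{2,4\}$, which makes (2) and (3) consistent.

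The main obstacle is the bookkeeping in part (2): making precise the claim that ``maximal partial ovoid'' corresponds exactly to ``$\rho\leq 3$'' rather than ``$\rho\leq 2$'', and reconciling this with part (3). The resolution is the observation just noted — that $\delta(\C)=4$ forces $\rho\in\{2,4\}$, so $\rho\le 3$ and $\rho=2$ are the same condition, and both are equivalent to $\C$ being an ovoid/spread, which is in turn the maximal case. Everything else is a routine translation between graph distance in the incidence graph and the incidence-geometric language, using only bipartiteness, diameter $4$, and girth $8$ of $\Gamma$.
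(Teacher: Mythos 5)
Your reduction to $\C\subseteq\P$ (or $\C\subseteq\L$), your proof of part (1), and your proof of part (3) are all essentially correct (the phrase ``midpoint line'' is a slip --- the middle vertex of a length-$4$ geodesic between two codeword points is a \emph{point}, and the cleaner way to get $\rho\geq 2$ is just that $\C\neq\P$ and points of $\P\setminus\C$ lie at even distance $\geq 2$ from $\C$). The genuine gap is in part (2): your claim that $\delta=4$ forces $\rho\in\{2,4\}$, so that (2) collapses onto (3), is false. The error is the inference ``every point outside $\C$ is collinear with a codeword, giving $d(p,\C)=2$, so $\rho=2$'': the covering radius is the maximum over \emph{all} vertices of $\Gamma$, including lines, and a line containing no point of $\C$ sits at distance exactly $3$ from $\C$ even when every one of its points is collinear with some codeword. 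Your attempted repair --- that a line at distance $3$ would carry a point at distance $4$ --- also fails: by the generalised quadrangle axiom, each codeword not on $\ell$ is collinear with exactly one point of $\ell$, so once $|\C|\geq s+1$ all $s+1$ points of $\ell$ can be covered while $\ell$ itself meets no codeword. Maximal partial ovoids that are not ovoids exist and have $\rho=3$; indeed Table~\ref{tab:GQsmainresult} in this chapter lists neighbour-transitive codes with $\delta=4$ and $\rho=3$ (lines (3)--(5)), so parts (2) and (3) are genuinely distinct statements.

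The correct argument for (2) is short and is already half-present in your write-up: $\C$ (say a partial ovoid) is maximal if and only if no point $p\in\P\setminus\C$ can be adjoined, i.e.\ if and only if every such $p$ is collinear with a codeword, i.e.\ if and only if every point is at distance $\leq 2$ from $\C$. If this holds, then every line, being adjacent to a point, is at distance $\leq 3$, so $\rho\leq 3$; conversely, if $\rho\leq 3$ then every point is at distance $\leq 3$, hence (by parity) at distance $\leq 2$, giving maximality. With this fix, (2) and (3) together say: $\rho=2$ iff ovoid, $\rho=3$ iff maximal partial ovoid that is not an ovoid, $\rho=4$ iff non-maximal.
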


Now we consider 
\emph{classical} generalised quadrangles, which are associated with certain classical groups; see \cite[Chapter 3]{paynefinite} for their constructions and \cite[Sections~3.5.6 and 3.6.4]{wilson2009finite} for more about their automorphism groups. 
Those that arise in our next result are the \emph{symplectic generalised quadrangle} $\W_3(q)$ and the  \emph{hermitian generalised quadrangle} $H_3(q^2)$,  defined as follows: let $V$ be the underlying vector space of the projective geometry $\PG_3(q^\tau)$ equipped with a non-singular symplectic or hermitian form $f$, where $\tau=1$ or $2$, respectively. 
%alternating bilinear form $f$, {\em i.e.} $f$ satisfies $f(x,x)=0$ for all $x\in V$, The points of $\W_3(q)$ are the points of $\PG_3(q)$ and the lines of $\W_3(q)$ ...  {\em i.e.} the $2$-dimensional subspaces $X\leq V$ such that $f(x,y)=0$ for all $x,y\in X$; incidence is given by symmetrised inclusion.
The points are the totally isotropic $1$-dimensional subspaces, and the lines are are the totally isotropic $2$-dimensional subspaces of $V$, with incidence given by symmetrised inclusion. If $q$ is a square then a regular spread of $\W_3(q)$ can obtained by embedding $\W_1(q^2)$ into it, see \cite[Section~3.2]{bamberg2009classification}; and a classical ovoid of $\He_3(q^2)$ can be constructed by taking the absolute points of a non-degenerate unitary polarity, see \cite[Section~3.1]{bamberg2009classification}. The associated codes were shown to be neighbour-transitive in \cite[Lemmas~4.2 and 4.4]{crnkovic2022neighbour}.

\begin{theorem}\cite[Theorem~4.5]{crnkovic2022neighbour}\label{thm:NTovoidsspreads}
 Let $\C$ be a neighbour-transitive code with minimum distance $4$ and covering radius $\rho=2$ in the incidence graph of a thick classical generalised quadrangle $\Qu$ and assume that $\Aut(\C)$ is insoluble. Then $\C$ is equivalent to one of the following:
 \begin{enumerate}[$(1)$]
  \item A regular spread of $\W_3(q)$, where $q$ is a square.
  \item A classical ovoid of $\He_3(q^2)$.
 \end{enumerate}
\end{theorem}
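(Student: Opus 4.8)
The plan is to move the problem out of the coding-theoretic setting into finite geometry and then apply classification results about ovoids of classical quadrangles. By Lemma~\ref{ovoidorspread}(3), since $\delta(\C)=4$ and $\rho(\C)=2$, the code $\C$ is an ovoid or a spread of $\Qu$. Because the incidence graph of a generalised quadrangle coincides with that of its dual (the sets $\P$ and $\L$ being merely interchanged), and a spread of $\Qu$ is an ovoid of the dual, I may assume without loss that $\C=\O$ is an ovoid, $\O\subseteq\P$. As $\Gamma$ is bipartite with parts $\P$ and $\L$, and every line of $\Qu$ is incident with exactly one point of $\O$, we get $\C_1=\L$ and $\C_2=\P\setminus\O$ (consistent with $\rho=2$). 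So neighbour-transitivity says precisely that $G:=\Aut(\C)$ is transitive on $\O$ and on $\L$; moreover $G$ fixes $\O\subseteq\P$ setwise, so $G$ contains no duality of $\Qu$, that is, $G\leq\Aut(\Qu)$ is a group of collineations, and by hypothesis $G$ is insoluble. Transitivity of $G$ on $\L$ already forces transitivity on $\O$, via the $G$-equivariant surjection $\L\to\O$ sending a line to its unique point of $\O$; so in fact $\Aut(\Qu)_\O\geq G$ is itself transitive on $\L$, and it suffices to find the ovoids $\O$ of thick classical quadrangles for which $\Aut(\Qu)_\O$ is line-transitive and has an insoluble subgroup transitive on $\O$.

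I would then run through the thick classical quadrangles and their duals. The quadrangles $Q_5^-(q)$ and $\He_4(q^2)$ (and the dual of $\He_4(q^2)$) possess no ovoid by the standard non-existence results, and $\W_3(q)$ has an ovoid only for $q$ even, in which case $\W_3(q)\cong Q_4(q)$; hence $\O$ is an ovoid of $Q_4(q)$ or of $\He_3(q^2)$. For each of these I would invoke the classification of the ovoids admitting a transitive automorphism group: for $Q_4(q)$ these are, up to equivalence, the classical (elliptic-quadric) ovoid, the Tits ovoids when $q=2^{2e+1}$, and the Kantor-type ovoids for the relevant $q=3^m$; for $\He_3(q^2)$ the classical (Hermitian-curve) ovoid. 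A short order-and-divisibility computation then discards the non-classical families: for the Tits ovoid of $Q_4(q)$ one has $\Aut(Q_4(q))_\O\cong\Sz(q).(2e+1)$, whose order is not divisible by the number $(q+1)(q^2+1)$ of lines of $Q_4(q)$, so $\Aut(Q_4(q))_\O$ is not line-transitive; the Kantor ovoids are eliminated similarly. This leaves the classical ovoid of $Q_4(q)$, whose stabiliser is $3$-transitive on the $q^2+1$ ovoid points and, the stabiliser of an ovoid point being transitive on the $q+1$ lines through it, transitive on $\L$, and which contains the insoluble group $\PSL_2(q^2)$; dually this is a regular spread of $\W_3(q)$, giving conclusion~(1). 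For $\He_3(q^2)$ the classical ovoid has stabiliser containing $\PSU_3(q)$ (extended by the relevant field automorphisms), which acts $2$-transitively on the $q^3+1$ points and transitively on the $(q+1)(q^3+1)$ lines and is insoluble, giving conclusion~(2). That these two codes are genuinely neighbour-transitive is precisely the content of the lemmas recalled before the statement, so this would complete the classification.

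The main obstacle is the input used in the second paragraph: the classification of the ovoids --- equivalently, in the dual, the spreads --- of $Q_4(q)$ and of $\He_3(q^2)$ that admit a transitive automorphism group, and in particular the exclusion of any hitherto-unknown ovoid carrying an insoluble line-transitive collineation group. This ultimately rests on the classification of finite $2$-transitive (and, more broadly, line-transitive) permutation groups, hence on the finite simple group classification, together with structural facts about how such groups act on the ambient polar space. A second, more technical obstacle is the bookkeeping --- carried out using the subgroup structure of $\PGaL_4(q)$, respectively $\PGaU_4(q)$, and of the ovoid/spread stabilisers $\PGaL_2(q^2)$, respectively $\PGaU_3(q)$ --- needed both to confirm line-transitivity and insolubility for the surviving candidates and to pin down the exact field conditions in the statement (for instance, the requirement that $q$ be a square in part~(1)).
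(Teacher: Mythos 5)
The chapter does not reprove this statement (it is quoted from \cite[Theorem~4.5]{crnkovic2022neighbour}), and your outline reconstructs essentially the argument that source runs: Lemma~\ref{ovoidorspread}(3) turns $\delta=4$, $\rho=2$ into ``$\C$ is an ovoid or spread'', dualising reduces to ovoids, neighbour-transitivity becomes line-transitivity of the ovoid stabiliser (which, as you observe, already forces transitivity on the ovoid), and the heavy lifting is the Bamberg--Penttila classification \cite{bamberg2009classification} of transitive ovoids and spreads of classical polar spaces. Two corrections. First, the insolubility hypothesis is not just a device for discarding stray small examples at the end, as your write-up suggests: it is the standing hypothesis under which the Bamberg--Penttila classification itself is proved, so it is consumed the moment you first invoke that classification. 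Second, the claim that the dual of $\He_4(q^2)$ ``possesses no ovoid by the standard non-existence results'' is not available: non-existence of ovoids of $\He_4(q^2)$ is a theorem of Thas, but non-existence of \emph{spreads} of $\He_4(q^2)$ (equivalently, ovoids of its dual) is open in general, known only for $q=2$. What you need, and what is true, is that no such spread admits an insoluble transitive collineation group, which is again \cite{bamberg2009classification}; similarly, your list of candidate transitive ovoids of the parabolic quadrangle should be read off from that classification rather than from the general ovoid literature. The divisibility computation you give to exclude the Tits ovoid is correct.

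The substantive loose end is the one you flag yourself: the clause ``where $q$ is a square'' in conclusion (1). Your argument, run to completion, yields the regular spread of $\W_3(q)$ for \emph{every} prime power $q$: it is the dual of the elliptic-quadric ovoid of the dual quadrangle, its stabiliser contains the insoluble group induced by $\SL_2(q^2)\leq \Sp_4(q)$, which is transitive on the nonzero vectors of $\F_{q^2}^2=\F_q^4$ and hence on all points of $\PG_3(q)$, so the code is neighbour-transitive with $\rho=2$ for all $q$. This is consistent with Theorem~\ref{thm:GQsmainresult}(1), which asserts equivalence to a regular spread whenever $\rho=2$ in $\W_3(q)$, with no squareness restriction. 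So either there is a genuine obstruction for non-square $q$ that your outline (and my reading) does not supply, or the field condition should be rechecked against \cite{crnkovic2022neighbour}; as written, your proof does not deliver that clause, and you should not present the classification as complete until this discrepancy is resolved.
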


%\textcolor{blue}{CP: Is a classical ovoid different from an ovoid? Should we say what a regular spread is? When reading this, I found it better ti=o give defs of $W$ and $H$ before this result. please check I got it OK. DH: moved two sentences from below up and added some description...}

% \begin{lemma}\cite[Lemma~4.3]{crnkovic2022neighbour}\label{lem:W3regspreadminusline}
%  A regular spread of $\W_3(q)$ with a line removed is a neighbour-transitive code of size $q^2$.
% \end{lemma}

% \begin{proposition}\cite[Proposition~5.2]{crnkovic2022neighbour}\label{prop:W3hyplineNT}
%  Let $\C$ be the set of all points of a hyperbolic line of $\W_3(q)$. Then $\C$ is a neighbour-transitive maximal partial ovoid.
% \end{proposition}

%A sporadic example of a neighbour-transitive code of size $5$, minimum distance $4$ and covering radius $3$ in the incidence graph of $\W_3(3)$ is given in \cite[Example~5.3]{crnkovic2022neighbour}. 
Some sporadic examples of maximal partial spreads in $\W_3(q)$ are given in Example~\ref{maxspreadconstr}. 
We note that, if $q$ is even then $\W_3(q)$ is self dual, while if $q$ is odd then the dual of $\W_3(q)$ is the classical generalised quadrangle $\Quu_4(q)$. Thus for odd $q$, a maximal partial spread in $W_3(q)$ is a maximal partial ovoid in $\Quu_4(q)$. 
It is a conjecture of Thas \cite[Conjecture, p.~13]{thas2004symmetry} that when $q$ is sufficiently large then there are no maximal partial ovoids of size $q^2-1$ in $\Quu_4(q)$ (the dual of $\W_3(q)$).

%\textcolor{blue}{CP: I edited previous paragraph - have I got it right? Changed $W_3(3)$ to $W_3(q)$ (twice). Does Thas conjecture all, or `all of size $q^2-1$' being from  Example~\ref{maxspreadconstr}? Unclear to me; but I chose the latter when I revised the above. DH: Yes, should have been $W_3(q)$, thanks. I looked it up and it is $q^2-1$, but the conjecture is also a little weaker than what I stated above, so I've revised it. }

\begin{example}\cite[Example~5.4]{crnkovic2022neighbour}\label{maxspreadconstr}
 Let $V\cong\F_q^4$ with symplectic form $f$ such that $f(x,y)=x_1y_2-x_2y_1-x_3y_4+x_4y_3$. Let $q, G$ be as in one of the rows of Table~\ref{table:sharplytransgps}, so $G\leq \SL_2(q)$ (represented as $2\times 2$ matrices) and $G$  is sharply transitive on the non-zero vectors of $\F_q^2$. Let $\C$ be the following set of $2$-dimensional  subspaces of $V$, where each is represented as the row-space of a $2\times 4$ matrix:
 \[
  \C=\{[I \ A]\mid A\in G\},
 \]
 where $I$ is the $2\times 2$ identity matrix. Letting $x$ be the first row of $[I \ A]$ and $y$ be the second row, we have $f(x,y)={\rm det} I - {\rm det} A$. Thus the row-space of $[I \ A]$ is an isotropic $2$-space if and only if ${\rm det} I - {\rm det} A=0$, that is, ${\rm det} A=1$. Since $A\in \SL_2(q)$, the code $\C$ is indeed a subset of lines of $\W_3(q)$. By \cite[Lemma~5.5]{crnkovic2022neighbour}, $\C$ is a neighbour-transitive maximal partial ovoid of $\W_3(q)$. Note that this is the dual of a construction for maximal partial spreads given in \cite{coolsaet2013known}.
\end{example}

\begin{table}
 \begin{center}
 \begin{tabular}{c|ccccc}
  \hline
  $q$ & $2$ & $3$ & $5$ & $7$ & $11$ \\ 
  \hline
  $G$ & $\GL_1(4)$ & $Q_8$ & $2.\alt_4$ & $2.\s_4$ & $\SL_2(5)$\\
  \hline
 \end{tabular}
 \caption{Subgroups of $\SL_2(q)$ of order $q^2-1$ (see \cite[Chapter 3, Section 6]{suzuki1982group}). }
 \label{table:sharplytransgps}
 \end{center}
\end{table}

% \begin{lemma}
%  Let $\C$ be one of the examples in Example~\ref{maxspreadconstr}, where $G$ is one of the groups listed in Table~\ref{table:sharplytransgps}. Then $\C$ is a neighbour-transitive maximal partial spread of $\W_3(q)$.
% \end{lemma}

%\textcolor{blue}{I've sort of stated all the parts of the next theorem separately above, so maybe we delete the theorem below...}

We collect together, in Theorem~\ref{thm:GQsmainresult}, information about neighbour-transitive partial ovoids and spreads in $\W_3(q)$, focusing mainly on maximal partial ovoids and spreads. Examples for each case can be found in \cite{crnkovic2022neighbour}.

% \begin{theorem}\cite[Theorem~1.2]{crnkovic2022neighbour}\label{thm:GQsmainresult}
%  Let $\C$ be a neighbour-transitive code with minimum distance $4$ and covering radius $\rho$ in the incidence graph of the generalised quadrangle $\W_3(q)$. Then the following hold:\textcolor{blue}{make table}
%  \begin{enumerate}[$(1)$]
%   \item $\C$ is equivalent to a regular spread if and only if $\rho=2$.
%   \item If $|\C|=q^2$ then $\rho=4$, and hence $\C$ can be extended to a spread or an ovoid. 
%   \item If $|\C|=q^2-1$ and $\rho=3$, then $q=2,3,5,7$ or $11$ and $\C$ is equivalent to one of the codes in Example~\ref{maxspreadconstr}.
%   \item If $|\C|=q+1$ and $\rho=3$, then $\C$ is equivalent to the set of points on a hyperbolic line.
%   \item If $q=\rho=3$ then $\C$ is either as in part (3) or part (4), or is equivalent to the sporadic code given in \cite[Example~5.3]{crnkovic2022neighbour} with $|\C|=5$.
%  \end{enumerate}
% \end{theorem}

\begin{theorem}\cite[Theorem~1.2]{crnkovic2022neighbour}\label{thm:GQsmainresult}
 Let $\C$ be a neighbour-transitive code with minimum distance $4$ and covering radius $\rho$ in the incidence graph of the generalised quadrangle $\W_3(q)$. 
 Then, for each line of Table~\ref{tab:GQsmainresult}, if the `Conditions' hold then the `Conclusions' also hold. Moreover, in line (1), the converse assertion is also valid. 
\end{theorem}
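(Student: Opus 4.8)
The plan is to derive Theorem~\ref{thm:GQsmainresult} by combining the geometric dictionary of Lemma~\ref{ovoidorspread} with the classification Theorem~\ref{thm:NTovoidsspreads}, organising the casework according to the covering radius $\rho$ and to whether $\Aut(\C)$ is soluble or insoluble. Since the incidence graph $\Gamma$ of $\W_3(q)$ is bipartite with parts $\P$ and $\L$ and has diameter $4$, and since $\delta(\C)=4$, every codeword lies in a single bipart: either $\C\subseteq\P$ or $\C\subseteq\L$. Using that $\W_3(q)$ is self-dual when $q$ is even, and that the dual of $\W_3(q)$ is $\Quu_4(q)$ when $q$ is odd, I would reduce throughout to the case $\C\subseteq\L$, so that $\C$ is a partial spread of $\W_3(q)$ (equivalently, a partial ovoid of $\Quu_4(q)$ when $q$ is odd). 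By Lemma~\ref{ovoidorspread}, the covering radius then governs the geometry: $\rho=2$ forces $\C$ to be an honest spread, $\rho=3$ forces $\C$ to be a maximal partial spread which is not a spread, and $\rho=4$ forces $\C$ to be a non-maximal partial spread. Each line of Table~\ref{tab:GQsmainresult} corresponds to one of these regimes together with a hypothesis on $\Aut(\C)$, so it suffices to verify the stated conclusions regime by regime.

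In the regime $\rho=2$, the code $\C$ is a neighbour-transitive spread of $\W_3(q)$. Assuming $\Aut(\C)$ insoluble, Theorem~\ref{thm:NTovoidsspreads} applies to the thick classical quadrangle $\W_3(q)$ (and, via duality, to $\Quu_4(q)$) and leaves only the regular spread, which exists precisely when $q$ is a square; conversely, the construction via the embedding of $\W_1(q^2)$ (cf.\ Example~\ref{exRegularSpreadGrassmann} and Example~\ref{exam:pg3q}) shows that a regular spread is neighbour-transitive with $\rho=2$, which gives line~(1) and its converse. If instead $\Aut(\C)$ is soluble, one argues directly: by Lemma~\ref{lem:largeDeltaDistTrans} a line-stabiliser in $\Aut(\C)$ acts transitively on the points collinear with, but not on, that line, and a soluble transitive action of this type, combined with the divisibility constraints imposed by the GQ parameters $(s,t)=(q,q)$, restricts $q$ to a short explicit list.

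In the regime $\rho=3$, the code $\C$ is a neighbour-transitive maximal partial spread of $\W_3(q)$ that is not a spread; dually, for odd $q$, a maximal partial ovoid of $\Quu_4(q)$. For the insoluble subcase I would analyse the (highly transitive) action of $\Aut(\C)$ on $\C$ through the O'Nan--Scott description of its primitive constituents, invoke the finite simple group classification to list the possible socles, and eliminate candidates by comparing the degree $|\C|$ with the known bounds on the size of a partial spread of $\W_3(q)$. The soluble subcase produces the sporadic family of Example~\ref{maxspreadconstr}: one shows that a soluble neighbour-transitive partial spread of size $q^2-1$ arising from a sharply transitive linear group must be conjugate to $\{[I\ A]\mid A\in G\}$ for one of the subgroups $G\leq\SL_2(q)$ of order $q^2-1$ in Table~\ref{table:sharplytransgps}, and that no larger soluble examples survive. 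The residual regime $\rho=4$ is the least rigid case, where $\C$ is merely a non-maximal neighbour-transitive partial spread, and the correspondingly weaker conclusion recorded in Table~\ref{tab:GQsmainresult} follows from the same local-transitivity bookkeeping.

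The main obstacle is the $\rho=3$ analysis: there is no off-the-shelf classification of maximal partial spreads to quote, so one must bound the size of a neighbour-transitive maximal partial spread, rule out the almost simple and affine candidates for $\Aut(\C)$ by delicate arithmetic against the GQ parameters, and --- in the soluble case --- carry out the explicit reduction to the sharply transitive subgroups of $\SL_2(q)$. Assembling these pieces with the converse construction for line~(1) completes the proof; the full details are in \cite[Section~5]{crnkovic2022neighbour}.
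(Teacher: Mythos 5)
The chapter itself contains no proof of Theorem~\ref{thm:GQsmainresult}: the result is imported (with the corrections noted in the remark that follows it) from \cite[Theorem~1.2]{crnkovic2022neighbour}, so there is no in-paper argument to match, and your sketch in effect concedes this by ending with a deferral to \cite[Section~5]{crnkovic2022neighbour}. Judged as an outline of that external argument, you correctly identify the relevant machinery --- Lemma~\ref{ovoidorspread} to translate $(\delta,\rho)=(4,\rho)$ into (maximal) partial ovoids/spreads, Theorem~\ref{thm:NTovoidsspreads} for the insoluble $\rho=2$ case, and Example~\ref{maxspreadconstr} for the sporadic examples --- but the casework you propose does not track what Table~\ref{tab:GQsmainresult} actually asserts. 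Lines (2)--(5) are conditioned on $|\C|$ and $q$, not on a soluble/insoluble dichotomy: line (2) claims that $|\C|=q^2$ \emph{forces} $\rho=4$ and that the deficiency-one partial spread or ovoid extends, a genuine counting-and-extension argument which your remark that ``$\rho=4$ is the least rigid case \ldots follows from the same local-transitivity bookkeeping'' does not supply; and lines (3)--(4) ask you to identify the codes with $|\C|\in\{q^2-1,\,q+1\}$, not to run an open-ended O'Nan--Scott analysis of all maximal partial spreads.

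Two further gaps are more than expository. First, line (1) carries no insolubility hypothesis, so Theorem~\ref{thm:NTovoidsspreads} cannot deliver it alone; your soluble branch stops at ``restricts $q$ to a short explicit list'', which is not the required conclusion that $\C$ is equivalent to a regular spread. The argument actually rests on the classification of transitive ovoids and spreads of classical generalised quadrangles \cite{bamberg2009classification}, which carries no solubility restriction; without that input (or a substitute) line (1) and its converse remain unproved. Second, the opening reduction ``throughout to the case $\C\subseteq\L$'' by duality is invalid for odd $q$: $\W_3(q)$ is self-dual only when $q$ is even, and for odd $q$ no automorphism of the incidence graph exchanges the two biparts, so partial ovoids (point sets) must be treated in their own right. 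Indeed the conclusion of line (4) --- the $q+1$ points of a hyperbolic line --- is a set of points and would be lost entirely under your reduction, while line (3) concerns sets of lines; the theorem genuinely needs both sides of the bipartition.
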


\begin{table}
 \begin{center}
 \begin{tabular}{l|ll}
  \hline
    & Conditions & Conclusions  \\ \hline
   (1) &  $\rho=2$ & $\C$ is equivalent to a regular spread \\
   (2) & $|\C|=q^2$ & $\rho=4$, and $\C$ can be extended to a spread or an ovoid \\
   (3) & $|\C|=q^2-1$ and $\rho=3$ & $q\in\{2,3,5,7,11\}$ and $\C$ is equivalent to a code in Example~\ref{maxspreadconstr} \\
   (4) & $|\C|=q+1$ and $\rho=3$ & $\C$ is equivalent to the set of points on a hyperbolic line \\
   (5) & $q=\rho=3$  & $\C$ as in line (3) or  (4), or $\C$ is equivalent to the sporadic code\\
      &               &given in \cite[Example~5.3]{crnkovic2022neighbour} with $|\C|=5$ \\
  \hline
 \end{tabular}
 \caption{Results table for Theorem~\ref{thm:GQsmainresult}}
 \label{tab:GQsmainresult}
 \end{center}
\end{table}

%\textcolor{blue}{CP: I rewrote the Remark, please check; but the reference to part (2) is wrong: should it have been (3) or (4)? DH: Yep, should have been (3)}

\begin{remark}
 The statement of Theorem~\ref{thm:GQsmainresult} differs slightly from that of \cite[Theorem~1.2]{crnkovic2022neighbour}: in lines  (3) and  (5) of Table~\ref{tab:GQsmainresult} we have $\rho=3$ as a hypothesis rather than a conclusion (correcting a mistake in \cite[Theorem~1.2]{crnkovic2022neighbour}), and the conclusion in line (5) of Table~\ref{tab:GQsmainresult} is stronger than in  \cite[Theorem~1.2]{crnkovic2022neighbour}, reflecting the discussion preceding \cite[Conjecture~1.4]{crnkovic2022neighbour}.
 
 % There are two mistake in the statement of \cite[Theorem~1.2]{crnkovic2022neighbour}, which we have corrected in our statement of , below. In particular, in part. Moreover, we have strengthened the conclusion of part (5), reflecting the discussion preceding \cite[Conjecture~1.4]{crnkovic2022neighbour}.
\end{remark}

Generalised quadrangles are examples of a broader class of incidence structures called \emph{polar spaces}. We pose the following open problem.

\begin{problem}\label{prob:GQsandPolarSpaces}
 Investigate $s$-neighbour-transitive codes in the incidence graphs of other classical generalised quadrangles and, more generally, in other classical polar spaces.
\end{problem}

\section{Coda: final reflections and summary of open problems}

%\textcolor{blue}{CP: I have rewritten this - I think it works fine without moving anything. Please take a look. DH: looks good to me, thanks.}

Our aim in the chapter has been to outline the state-of-the-art regarding our understanding of $s$-neighbour-transitive codes in various graphs. This is an area of active research and we have posed several open problems throughout the chapter; these are summarised for reference in Table~\ref{tab:problems}. In the remainder of this section we briefly reflect on the most significant achievements and major open problems we have covered. We also mention a few interesting related results and areas of research which for reasons of space we could not discuss in detail. 

\begin{table}
 \begin{center}
 \begin{tabular}{ll}
  \hline
  Problem & Topic \\
  \hline
    \ref{prob:covRadOne} & Codes with covering radius $1$.\\
    \ref{prob:sElusive} & $s$-Elusive codes. \\
    \ref{prob:sdt} & $s$-Distance-transitive quotient graphs. \\
    \ref{prob:bilinearForms} & Codes in bilinear forms graphs. \\
    \ref{prob:freqPermArrays} & Frequency permutation arrays. \\
    \ref{prob:binaryDeltaEquals4} & Codes in $H(n,2)$ with minimum distance $4$. \\
    \ref{prob:binaryCT} & Binary completely transitive codes. \\
    \ref{prob:2NTmodules} & $2$-Neighbour-transitive codes from submodules. \\
    \ref{prob:submoduleParams} & Parameters of codes from submodules. \\
    \ref{prob:Kneser2hom} & $2$-Homogeneous actions and Kneser graphs. \\
    \ref{prob:OddPrim} & Primitive actions and odd graphs. \\
    \ref{prob:oddImprimitive} & Imprimitive actions and odd graphs. \\
    \ref{prob:GQsandPolarSpaces} & $s$-Neighbour-transitive codes in polar spaces. \\
  \hline
 \end{tabular}
 \caption{References for open problems stated in this chapter and a rough description of each.}
 \label{tab:problems}
 \end{center}
\end{table}

Regarding codes in Hamming graphs, Theorem~\ref{thm:CTham} effectively gives an upper bound on $\min\{e,s\}$, where $e$ is the error-correction capacity of an $s$-neighbour-transitive code. Furthermore, progress has been made classifying completely transitive codes with minimum distance at least $5$ in the binary case (see Section~\ref{sec:bin}), though there is still significant work to be done here (see Problem~\ref{prob:binaryCT}). The results of Sections~\ref{sec:aaff} and~\ref{sec:poly} pave the way towards a deeper understanding of $2$-neighbour-transitive codes with minimum distance at least $5$ in $H(n,q)$ when $q\geq 3$; these may lead in the future towards classification results for completely transitive codes with alphabet size larger than $2$.

Turning to other graphs, classification results have been obtained for neighbour-transitive codes in Johnson graphs (see \cite{praeger2021codes}) and progress has been made on neighbour-transitive codes in Kneser graphs (see Section~\ref{sec:KneserSection}). There are still open problems related to codes in each of these families of graphs, as there are for numerous other families of distance-regular and distance-transitive graphs (see, for instance, Sections~\ref{sec:qAnaloguesPrelim} and~\ref{sec:GQs}).

Many of the results we have stated for codes in Hamming graphs assume some small lower bound (typically $5$) on the minimum distance of a code. That is not to say that codes with smaller minimum distances are not interesting. For example, recently Borges, Rif\`{a} and Zinoviev \cite{borges2023newCT} investigated the complete transitivity of certain ``supplementary'' codes in $H(n,q)$. These are constructed via a concatenation method previously introduced by the same authors. They find several infinite families of codes with minimum distance $3$ and covering radius $1$ or $2$. They also conjecture that these are all the completely transitive codes that may be obtained via their construction -- their conjecture is informed by computational results on the sizes of the automorphism groups of some of the codes.

A \emph{maximum distance separable} (MDS) code is a code $\C$  in the Hamming graph $H(n,q)$ that meets the Singleton bound \cite[Theorem~11, Chapter~1]{macwilliams1978theory}, that is, if $|\C|=q^k$ and $\delta$ is the minimum distance of $\C$ then $n=\delta+k-1$ (see \cite{thas1992mds}). By \cite[Theorem~3]{thas1992mds}, a linear MDS code $\C$ is ``equivalent'' to an \emph{$n$-arc} in the projective space $\PG_{k-1}(q)$, that is, the column vectors of a generator matrix for $\C$ are representatives for a set of $n$ points in $\PG_{k-1}(q)$, with $n\geq k$, such that no subset of $k$ points is contained in a hyperplane. The archetypal example of an MDS code is a Reed--Solomon code $\C$ in $H(q+1,q)$, which corresponds to a geometric object known as a \emph{normal rational curve} (see \cite[Section~2]{thas1992mds}). In fact, such a Reed--Solomon code is equivalent to the projective Reed--Muller code $\C=\PRM_q(k-1,2)$ and, by Proposition~\ref{prop:moduleCodesProjective}, is $2$-neighbour-transitive when $\gcd(k-1,q-1)=1$ with automorphism group $T_\C\rtimes \GammaL_2(q)$ (see also \cite{dur1987reedsolomon}, noting that a different notion of automorphism group is used there). The \emph{MDS conjecture} states that if $\C$ is an MDS code of size $q^k$ in $H(n,q)$ and $4\leq k\leq q-3$ then $n\leq q+1$. Ball \cite{ball2011mds} proved the MDS conjecture when $q$ is a prime; but it is still open for non-prime  $q$. More recently, additive MDS codes have been classified over small fields \cite{ball2020additiveMDS} and certain additive MDS codes have been shown to be equivalent to linear codes \cite{adriansen2023additiveMDS}.

A code $\C$ in a graph is called \emph{propelinear}\footnote{The reader should note that we have stated this definition in the language of this chapter. Much of the literature regarding propelinear codes uses a different, but equivalent, definition.} if $\Aut(\C)$ contains a subgroup $H$ such that $H$ acts regularly on $\C$ (that is $H$ is transitive on $\C$ and codeword stabilisers fix $\C$ pointwise). In particular, the automorphism group of any linear code $\C$ in $H(n,q)$ contains the group of translations by codewords; this group acts regularly on $\C$, and thus each linear code is propelinear. Propelinear codes have primarily been studied in the Hamming graphs: for example, Rif\`{a} and Pujol \cite{rifapujol1997prop} studied a subclass of propelinear codes, known as \emph{translation-invariant} propelinear codes. In addition, many interesting codes in $H(2k,q)$ have been constructed as additive codes in $\Z_4^k$, and are thus propelinear (see \cite{hammons1994z4}). This is an active research area, with new interesting examples of propelinear codes still being discovered (see, for instance, \cite{armario2023butson}). It would be interesting to study propelinear codes in other distance-regular graphs.

\printindex
\cleardoublepage
\end{document}